\newcommand{\ZZ}{\mathbb{Z}}
\newcommand{\NN}{\mathbb{N}}
\newcommand{\RR}{\mathbb{R}}
\renewcommand{\AA}{\mathbb{A}}
\newcommand{\A}{\mathcal{A}}
\renewcommand{\O}{\mathcal{O}}
\renewcommand{\L}{\mathcal{L}}
\newcommand{\I}{\mathcal{I}}
\newcommand{\K}{\mathcal{K}}
\newcommand{\F}{\mathcal{F}}
\newcommand{\D}{\mathcal{D}}
\newcommand{\E}{\mathcal{E}}
\newcommand{\X}{\mathcal{X}}
\newcommand{\J}{\mathcal{J}}
\newcommand{\U}{\mathcal{U}}
\newcommand{\V}{\mathcal{V}}
\newcommand{\un}{\underline}
\newcommand{\ov}{\overline}
\newcommand{\wh}{\widehat}
\newcommand{\wt}{\widetilde}
\newcommand{\s}{\mathfrak{s}}
\renewcommand{\t}{\mathfrak{t}}
\newcommand{\Mbargn}{\overline{\mathcal{M}}_{g,n}}
\newcommand{\tf}{\operatorname{TF}}
\newcommand{\fX}{\mathfrak{X}}
\newcommand{\val}{\operatorname{val}}
\newcommand{\supp}{\operatorname{supp}}
\newcommand{\Proj}{\operatorname{Proj}}
\newcommand{\Hilb}{\operatorname{Hilb}}
\newcommand{\coker}{\operatorname{coker}}
\newcommand{\VStab}{\operatorname{VStab}}
\newcommand{\Pol}{\operatorname{Pol}}
\newcommand{\Deg}{\operatorname{Deg}}
\newcommand{\rk}{\operatorname{rk}}
\newcommand{\BCon}{\operatorname{BCon}}
\newcommand{\Con}{\operatorname{Con}}
\newcommand{\Sub}{\operatorname{Sub}}
\newcommand{\Pic}{\operatorname{Pic}}
\renewcommand{\Im}{\operatorname{Im}}
\newcommand{\End}{\operatorname{End}}
\newcommand{\Aut}{\operatorname{Aut}}
\newcommand{\Spec}{\operatorname{Spec}}
\newcommand{\Gr}{\operatorname{Gr}}
\newcommand{\Ext}{\operatorname{Ext}}
\newcommand{\Xsing}{\operatorname{X_{\rm{sing}}}}
\newcommand{\NF}{\operatorname{NF}}
\newcommand{\TF}{\operatorname{TF}}
\newcommand{\Simp}{\operatorname{Simp}}
\newcommand{\PIC}{\operatorname{PIC}}
\newcommand{\Gm}{\operatorname{\mathbb{G}_m}}
\newcommand{\ST}{\operatorname{ST}}
\newcommand{\Z}{\mathbb{Z}}
\pgfplotsset{compat=1.17}
\definecolor{LivGreen}{cmyk}{68 0 100 0}
\newtheorem{theorem}{Theorem}[section]
\newtheorem{corollary}[theorem]{Corollary}
\newtheorem{lemma}[theorem]{Lemma}
\newtheorem{proposition}[theorem]{Proposition}
\newtheorem{proposition-definition}[theorem]{Proposition-Definition}
\newtheorem{lemma-definition}[theorem]{Lemma-Definition}
\newtheorem{question}[theorem]{Question}
\newtheorem*{theoremalpha*}{Main Theorem}
\newtheorem*{theorem*}{Theorem}
\newtheorem{theoremalpha}{Theorem}
\theoremstyle{definition}
\newtheorem{definition}[theorem]{Definition}
\newtheorem{example}[theorem]{Example}
\newtheorem{remark}[theorem]{Remark}
\numberwithin{equation}{section}
\newenvironment{sis}{\left\{\begin{aligned}}{\end{aligned}\right.}
\begin{document}

\title{A new class of compactified Jacobians for families of reduced curves}

%\author{Marco Fava, Nicola Pagani, Filippo Viviani}
%\\ Dipartimento di Matematica, Universit\`a di Roma ``Tor Vergata'', Via della Ricerca Scientifica 1, 00133 Roma, Italy\\ viviani@mat.uniroma2.it}

%\author{}
%\date{}
\author{Marco Fava}
\address{Marco Fava, Department of Mathematical Sciences, University of Liverpool, Liverpool, L69 7ZL, United Kingdom}
\email{marco.fava@liverpool.ac.uk}

\author{Nicola Pagani}
\address{Nicola Pagani, Department of Mathematical Sciences, University of Liverpool, Liverpool, L69 7ZL, United Kingdom; Dipartimento di Matematica,  Universit\`a di Bologna, Piazza di Porta S. Donato, 5, 40126.}
\email{pagani@liv.ac.uk}
\email{nicolatito.pagani@unibo.it}
\urladdr{http://pcwww.liv.ac.uk/~pagani/}\urladdr{http://pcwww.liv.ac.uk/~pagani/}

\author{Filippo Viviani}
\address{Filippo Viviani, Dipartimento di Matematica, Universit\`a di Roma ``Tor Vergata'', Via della Ricerca Scientifica 1, 00133 Roma, Italy}
\email{viviani@mat.uniroma2.it}

\keywords{Compactified Jacobians, nodal curves.}

\subjclass[msc2000]{{14H10}, {14H40}, {14D22}.}

		\begin{abstract}
This is the first paper of a series of three. Here we give an abstract definition of the relative compactified Jacobian of a family of reduced curves. 
We prove that, under some mild assumptions on the family of curves, the fibres of the relative Jacobian are schemes (and not just algebraic spaces).

We define V-stability conditions, and use them to construct  relative compactified Jacobians. This extends the classical methods to produce modular compactifications of the Jacobian. To conclude, we show that, in the case when the curves have at worst planar singularities, the compactified Jacobians constructed from V-stability conditions have the same good properties of the classical ones.
	\end{abstract} %%%%%%%%%

\maketitle
	\bigskip
	
	\tableofcontents

%\section{Relative V-compactified Jacobians}

\section{Introduction}

The aim of this paper (which continues the work started in \cite{pagani2023stability} and \cite{viviani2023new}) is to produce a combinatorial method to construct compactifications of the Jacobian (or Picard) variety parameterizing line bundles on a fixed singular curve, and to similarly produce a method to construct relative compactifications in the case of a family of curves.

\vspace{0.1cm}

When $X$ is a reduced curve, a natural starting point to construct a modular compactification of the Jacobian is the moduli stack ${\TF}_X$ of torsion free, rank-$1$ sheaves on $X$.  By results of Altman-Kleiman \cite{altmankleiman}, if $X$ is also irreducible, then the rigidification $\TF_X \fatslash \Gm$ is a scheme, which may be decomposed into connected components ${\TF}^\chi_X\fatslash \mathbb{G}_m$,  indexed by the Euler characteristic of its points, and each component is proper.

\vspace{0.1cm}

When $X$ is reduced, connected but not necessarily irreducible, by work of Esteves \cite{esteves}, each moduli stack ${\TF}^\chi_X$ still satisfies the existence part of the valuative criterion of properness, but its rigidification is not a scheme (or an algebraic space) and it fails to be separated and of finite type  (and hence it is not proper).  Esteves' work constructs then open substacks $\overline{\mathcal{J}}_X$ of $\TF^\chi_X$ whose $\mathbb{G}_m$-rigidification $\overline{{J}}_X$ is a proper scheme, i.e. they are  \emph{fine compactified Jacobians} (in more classical terminology, there exists a Poincar\'e or tautological sheaf  on the product $X \times \overline{{J}}_X$ that is unique up to a pullback from $ \overline{{J}}_X$).

However, in the literature there are several examples of 'compactified Jacobians' of $X$ that are not fine in the sense of the previous paragraph, but are instead constructed as an open substack of $\TF^\chi_X$ (often out of some stability condition), and which admit a proper moduli space (often parameterizing polystable sheaves). 

We thus propose the following general definition: a \textbf{compactified Jacobian stack} $\ov \J_X^\chi$ of characteristic $\chi \in \mathbb{Z}$ of  a reduced curve $X$ is an open  substack  of the stack $\TF_X^\chi$  that admits a proper good moduli space $\ov J_X^\chi$ (in the sense of Alper \cite{Alp}), which is called the associated \textbf{compactified Jacobian space}  (see Definition~\ref{D:cJ-fam}). 
%(For the purposes of this introduction, we just mention that Alper's definition of a good moduli space for a stack generalizes the GIT notion of a good quotient)
 %The compactified Jacobian contains the generalized Jacobian of $X$, but the latter containment is, in general, not \emph{dense}.
\vspace{0.1cm}

%The generality of reduced curves, and the ideas behind  some of our arguments, are inspired by the seminal work \cite{esteves} by Esteves. Our work generalizes loc.cit. in two main directions. Firstly, the compactified Jacobians here are not necessarily \emph{fine} (by the latter we mean that the (rigidified) compactified Jacobian stack is proper). Secondly, 

Our main result is the construction of several compactified Jacobians as moduli spaces of sheaves that are semistable with respect to  what we call a \emph{V-stability condition}. This terminology was introduced in \cite{viviani2023new}, where 'general' V-stability conditions were introduced to construct \emph{fine} compactified Jacobians of nodal curves only. The terminology is motivated by the fact that one such stability condition consists of the datum of an integer for each partial smoothing of the starting curve to a \emph{vine curve}, i.e. to a curve with two irreducible components. The paper \cite{viviani2023new} established the existence of fine compactified Jacobians of nodal curves that come from a V-stability condition, but that cannot be constructed from Esteves' method.

\vspace{0.1cm}

This paper is the first of a series of three. In the second paper \cite{FPV} we restrict to the case of nodal curves and prove a completeness result for our theory: every compactified Jacobian that is also \emph{smoothable}, i.e. that can be extended to a relative compactified Jacobian for any smoothing of the nodal curve over  a trait, comes from some V-stability condition. (Exotic examples of connected yet non-smoothable fine compactified Jacobians for nodal curves have been constructed in \cite{PTgenus1}). This generalizes the analogue statement obtained for \emph{fine} compactified Jacobians of nodal curves from a combination of \cite{pagani2023stability} and \cite{viviani2023new}.  In the third paper \cite{FPV3}, we classify and study compactified universal Jacobians over the moduli stack $\Mbargn$ of $n$-marked stable curves of genus $g$,  generalizing earlier results in the fine case (\cite{pagani2023stability} for $n=0$ and \cite{fava2024}).

%In the special case where a compactified Jacobian stack $\ov \J_X^d$ is proper (which is equivalent to the fact that it coincides with its associated compactified Jacobian space $\ov J_X^d$), we say that $\ov \J_X^d=\ov J_X^d$ is a \emph{fine compactified Jacobian}. For fine compactified Jacobians, the above Question was solved by Pagani-Tommasi in  \cite{pagani2023stability} and by Viviani in \cite{viviani2023new}. The aim of this paper is to answer the above Question in the general case.
%where new subtleties appear. 

\vspace{0.1cm}

Non-fine compactified Jacobians appear naturally. For example, if $X$ is a nodal stable curve, then one can consider slope-stability with respect to the ample canonical polarization $\omega_X$ (as defined by Simpson \cite{simpson}); this yields  `canonical'  compactified Jacobians of any given Euler characteristic $\chi$, which are fine if and only if $\gcd(\chi, 2g(X)-2)=1$ and which can acquire different  `degree of degeneracy' for different $\chi$'s, with the maximally degenerate case appearing when $\chi$ is divisible by $2g(X)-2$.
%Vecchia frase col grado 
%degree $d$, which are fine if and only if $\gcd(d+1-g(X), 2g(X)-2)=1$ and which can acquire a different `degree of degeneracy' for different $d$'s, with the maximally degenerate case appearing when $d$ equals $g(X)-1$ modulo $2g(X)-2$. 
These canonical compactified Jacobians are the fibers of Caporaso's compactification of the universal Jacobian over the moduli space $\ov{\mathcal{M}}_g$ of stable curves of genus $g$ (see \cite{caporaso}) and in \cite{FPV3} we prove that these are the only compactified universal Jacobians over $\ov{\mathcal{M}}_g$ up to translation (a fact that was proved in \cite{pagani2023stability} in the \emph{fine} case only).

\vspace{0.1cm}

%Let us give some historical prospective to the results of this paper. 

The problem of finding compactified Jacobians is very old and it goes back to the work of Igusa in \cite{igusa}  and of Mayer-Mumford in \cite{mayermumford} in the 50's--60's. Since then, several authors have worked on the problem using different techniques and constructing a class of compactified Jacobians that we refer to as \emph{classical compactified Jacobians} (see later in the introduction for a precise definition of this notion): Oda-Seshadri \cite{Oda1979CompactificationsOT} using graph theory and toric geometry; Simpson \cite{simpson} using slope stability;  Caporaso \cite{caporaso} using GIT of embedded curves; Esteves \cite{esteves} using vector bundle stability. We refer to \cite[Sec. 1,2]{alexeev}, \cite[Sec. 2]{meloviviani}, \cite[Sec. 2.2]{CMKVlocal}, \cite[Sec. 2.2]{MRV}  for an account on the way the different constructions relate to one another.  The theory of V-stability conditions provides a common framework to describe all classical compactified Jacobians, and at the same time it produces new compactified Jacobians. 

The theory of compactified Jacobians of singular curves has seen a resurgence in recent years, for example in the context of the double ramification cycle theory (see the introduction of \cite{panda} and the references therein), in the context of the Hitchin system (see \cite{MSY} and the references therein) and in the context of a mathematical definition of the Gopakumar-Vafa invariants (see \cite{mauliktoda}).

\subsection*{The results}

 Our results are valid more generally for the case of families of reduced curves. In order to discuss the results, we fix some notation. 
 
 We start by introducing the notion of a V-stability condition for (torsion-free, rank~$1$) sheaves on a fixed reduced curve $X$. A subcurve $Y\subseteq X$ is a union of some of the irreducible components of $X$, and we say $Y$  is \emph{biconnected} if both 
$Y$ and its complementary subcurve $Y^\mathsf{c}:=\ov{X\setminus Y}$ are connected. We denote by $\BCon(X)$ the collection of all biconnected subcurves of $X$. (Note that, if $Y$ is biconnected, smoothing out the singular locus of $Y\setminus Y^\mathsf{c}$ and of $Y^\mathsf{c} \setminus Y$ produces a curve with two irreducible components, i.e. a \emph{a vine curve}).

A \emph{stability condition of vine type} (or simply a \textbf{V-stability condition}, see Definition~\ref{D:VStabX}) of characteristic $\chi$ on  $X$ is a function  
 \begin{align*}
        \s:\BCon(X)&\to \ZZ\\
        Y&\mapsto \s_Y
    \end{align*}
    satisfying the following:
\begin{enumerate}
\item  for any $Y\in \BCon(X)$, we have 
\begin{equation*}
\mathfrak s_Y+\mathfrak s_{Y^\mathsf{c}}-\chi
\in \{0,1\}.
\end{equation*}
%\begin{equation}\label{E:sum-n}
%\mathfrak n_Y+\mathfrak n_{Y^\mathsf{c}}-g(Y)-g(Y^c)+g(X)-d+1
%\in \{0,1\}.
%\end{equation}
A subcurve $Y\in \BCon(X)$ is said to be \emph{$\s$-degenerate} if $\s_Y+\s_{Y^c}-\chi=0$,
%$\mathfrak n_Y+\mathfrak n_{Y^\mathsf{c}}-g(Y)-g(Y^c)+g(X)-d+1=0$, 
and \emph{$\s$-nondegenerate} otherwise.

\item   for any subcurves $Y_1,Y_2,Y_3\in \BCon(X)$ without pairwise common irreducible components, and such that $X=Y_1\cup Y_2\cup Y_3$, we have:
\begin{enumerate}
 \item if two of the subcurves $Y_1,Y_2,Y_3$ are $\s$-degenerate, then so is  the third. %is $\s$-degenerate;
            \item the following property holds
            \begin{equation*}
            \sum_{i=1}^{3}\s_{Y_i}-\chi
            %\sum_{i=1}^{3}\n_{Y_i}-\sum_{i=1}^3 g(Y_i)+g(X)-d+2
            \in \begin{cases}
                \{1,2\} \textup{ if $Y_i$ is $\s$-nondegenerate for all $i=1,2,3$};\\
                \{1\} \textup{ if there exists a unique   } i\in \{1,2,3\} \text{ such that $Y_i$ is $\s$-degenerate};\\
                \{0\} \textup{ if $Y_i$ is $\s$-degenerate for all $i=1,2,3$}.
            \end{cases}
        \end{equation*}
\end{enumerate}
\end{enumerate}
The \emph{degeneracy subset} of $\s$ is the collection
\begin{equation*}
\D(\s):=\{Y\in \BCon(X): Y \text{ is $\s$-degenerate}\}.
\end{equation*}
    A V-stability condition $\s$ is called \emph{general}  if $\D(\s)=\emptyset$.

%The terminology of vine type (or V-type for short) is due to the following interpretation: given a biconnected subcurve $Y\subset X$, by smoothing all the nodes of $X$ except the ones in $Y\cap Y^\mathsf{c}$, we obtain a partial smoothing of $X$ to a vine curve $\wt X_Y$ and all such partial smoothings are obtained in this way. Then a pair $\{\n_Y,\n_{Y^\mathsf{c}}\}$ satisfying Condition \eqref{E:condi1} is equivalent to the datum of a compactified Jacobian of degree $d$ on the vine curve $\wt X_Y$ by Proposition \ref{P:vine-cJ}, and Condition \eqref{E:condi2} can be seen as a compatibility condition among the chosen compactified Jacobians on the partial smoothings $\{\wt X_Y\}_{Y\in \BCon(X)}$.

%Note that a  V-stability conditions on a nodal curve is the same as a V-stability condition on its dual graph in the sense of Definition~\ref{D:Vstab}.  

\vspace{0.1cm} 

Then we observe that both the notion of a compactified Jacobian and that of a V-stability condition can be extended to the relative case.  Let $X\to S$ be a family of connected reduced curves. 

A \emph{compactified Jacobian stack} (Definition~\ref{D:cJ-fam}) of characteristic $\chi$ for $X/S$ is an open substack $\ov \J_{X/S}^{\chi}$ of the stack $\TF_{X/S}^{\chi}$ parametrizing relative rank-$1$ torsion-free sheaves on $X/S$ of relative Euler characteristic $\chi$, which admits a relative proper good moduli space $\ov \J_{X/S}^{\chi}\to  \ov J_{X/S}^{\chi}\to S$, called its associated \emph{compactified Jacobian space}.

The notion of a V-stability condition on $X/S$ of characteristic $\chi$ is given in Definition~\ref{D:VStabXS} as the datum $\s = \{ \s^s\}$ of a V-stability condition of characteristic $\chi$ (as defined earlier) for each geometric $s$ point of $S$, which is also compatible with respect to all \'etale specializations of geometric points of $S$.

Our main result is that to any V-stability condition we can associate a compactified Jacobian:

\begin{theoremalpha}\label{ThmA} (Theorem \ref{T:VcJ})
%\begin{theoremalpha*}\label{ThmA} (Theorem \ref{T:VcJ}) 
Let $\pi:X\to S$ be a family of connected reduced curves over a  quasi-separated and locally Noetherian algebraic space $S$.
For any V-stability condition $\s= \{\s^s\}$ on $X/S$ of characteristic $\chi$,  the substack $\ov \J_{X/S}(\s)$  of  $\TF^\chi_{X/S}$ defined  by
\begin{equation*}
\ov \J_{X/S}(\s):=\{I \in \TF_{X/S}^\chi\: : \:  \chi(I_{Y_s})\geq \mathfrak \s^s_{Y_s}  \text{ for any geometric point $s$ of $S$ and any $ Y_s\in \BCon(X_s)$}\}.
\end{equation*}
(see Lemma-Definition~\ref{LD:VStab}) is a compactified Jacobian stack of $X/S$ of characteristic $\chi$, called  \textbf{the V-compactified Jacobian (stack) associated to $\s$}. 
%\end{theoremalpha*}
\end{theoremalpha}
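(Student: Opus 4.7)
The plan is to establish the two defining conditions of a compactified Jacobian stack for $\ov\J_{X/S}(\s)$: first that it is an open substack of $\TF^\chi_{X/S}$, and second that it admits a relative proper good moduli space $\ov J_{X/S}(\s)\to S$. Throughout I would work étale-locally on $S$, so that the dual graphs of all geometric fibers of $X/S$ become constant; the compatibility of $\s=\{\s^s\}$ with étale specialization built into Definition~\ref{D:VStabXS} ensures that one may then treat $\BCon(X_s)$ and the values $\s_{Y_s}^s$ as locally independent of $s$.

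For openness, given a relative torsion-free sheaf $I$ on $X/S$ and a relative biconnected subcurve $Y\subseteq X$, the function $s\mapsto \chi(I_{Y_s})$ on $S$ is locally constant on the flat locus of $I|_Y$ and jumps semicontinuously elsewhere. Since each inequality $\chi(I_{Y_s})\ge \s_{Y_s}^s$ defines an open condition and there are only finitely many biconnected subcurves to check in each stratum, $\ov\J_{X/S}(\s)$ is cut out as a finite intersection of open substacks of $\TF^\chi_{X/S}$.

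For the proper good moduli space, I would proceed through four steps. First, \emph{finite type / boundedness}: each constraint $\s_Y\le \chi(I_Y)\le \chi-\s_{Y^\mathsf c}$ confines $\chi(I_Y)$ to a finite interval, and a standard argument bounds the family of torsion-free rank-$1$ sheaves with prescribed multi-Euler-characteristic profile. Second, \emph{universal closedness}: starting from a V-semistable family over the punctured spectrum of a DVR, Esteves' existence theorem for $\TF^\chi$ produces a torsion-free limit, and an inductive sequence of elementary modifications along maximally destabilizing biconnected subcurves $Y$ with $\chi(I_Y)<\s_Y$ should produce a V-semistable limit. Third, \emph{separatedness up to S-equivalence}: two such limits would differ by modifications supported on $\s$-degenerate subcurves, and the polystable representatives of their S-equivalence classes should coincide. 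Fourth, I would apply Alper's criterion for the existence of a good moduli space, using that the automorphism group of a V-semistable sheaf is a reductive torus factored through its polystable decomposition and that closed orbits are separated by the Euler characteristics indexed over $\BCon(X)$.

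The principal obstacle is the Langton-type semistable-reduction argument underpinning universal closedness and the separatedness step. At each stage one must pick a destabilizing $Y\in\BCon(X)$, perform the elementary modification replacing $I$ by the appropriate subsheaf or quotient supported on $Y$, and exhibit a strictly decreasing monovariant measuring the total defect $\sum_{Y}\max(0,\s_Y-\chi(I_Y))$. The numerical axioms of a V-stability condition — in particular the dichotomy $\s_Y+\s_{Y^\mathsf c}-\chi\in\{0,1\}$ and the refined range of $\sum_i \s_{Y_i}-\chi$ for triple decompositions $X=Y_1\cup Y_2\cup Y_3$ — are tailored precisely so that an elementary modification along $Y$ cannot violate the inequality for a different subcurve $Y'$ without strictly improving the defect at $Y$, and so that ties are resolved only in $\s$-degenerate configurations where polystable limits coincide. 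Verifying that these combinatorial axioms suffice to close both arguments, and in particular analyzing the interactions between biconnected subcurves sharing components at reducible configurations, is the technical heart of the theorem.
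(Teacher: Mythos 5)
Your overall architecture (openness, then boundedness, semistable reduction, and an appeal to a good-moduli-space existence criterion) is close to the paper's, and your semistable-reduction step is indeed the same Esteves-style twisting algorithm the paper runs in Proposition~\ref{P:univ-cl}. But there are two genuine problems.

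First, your opening reduction is false: you cannot in general pass to an \'etale cover of $S$ on which ``the dual graphs of all geometric fibers become constant.'' A family of reduced curves degenerates --- the special fiber acquires components and singular points that the generic fiber does not have --- and no \'etale localization removes this. The specialization map $\xi_*$ on subcurves is surjective but far from bijective, so $\BCon(X_s)$ genuinely varies with $s$. The paper's openness argument (Lemma-Definition~\ref{LD:VStab}) instead shows the locus is constructible and stable under generalization, comparing $\chi((I_{|X_s})_Y)$ with $\chi((I_{|X_t})_{\xi_*(Y)})$ via the flat limit of $(I_{|X_s})_Y$ over $\ov{\{s\}}$ and the compatibility $\xi^*(\s^t)=\s^s$; no equisingularity is needed or available.

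Second, and more seriously, your route to the good moduli space is missing a whole valuative criterion. Since $\ov\J_{X/S}(\s)$ is not presented as a GIT quotient, the relevant existence theorem is that of Alper--Halpern-Leistner--Heinloth, which requires verifying both $S$-completeness and $\Theta$-completeness (in addition to the existence part of the valuative criterion of properness and linear reductivity of stabilizers). Your ``separatedness up to S-equivalence'' step is morally the $S$-completeness check (Proposition~\ref{P:S-compl}), but nothing in your plan addresses $\Theta$-completeness: given a family over a DVR and a filtration of the generic fiber whose associated graded is $\s$-semistable, one must extend the filtration over the whole DVR so that the associated graded of the special fiber is again $\s$-semistable. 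This is Proposition~\ref{P:Th-compl} in the paper and is a substantial verification in its own right, resting on the stability of $\s$-semistability under restriction to degenerate subcurves (Proposition~\ref{P:res-ss}) and under extensions (Proposition~\ref{P:ext-ss}). Without it the existence criterion cannot be invoked, so as written the proof does not close.
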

\noindent The sheaves belonging to $\ov \J_{X/S}(\s)$ are called  $\s$-semistable and they are studied in detail in Section~\ref{Sub:ss}. 
We prove in Proposition~\ref{P:pt-VcJspa} that the geometric points of the corresponding compactified Jacobian \emph{space} $\overline{J}_{X/S}(\s)$ are the $\s$-polystable sheaves.

The proof of our Main Theorem is in three steps (in increasing order of difficulty). To prove openness, we show that the moduli stack of semistable sheaves is constructible and that it is stable under generalization (see Lemma-Definition \ref{LD:VStab}). To prove that it satisfies the existence part of the valuative criterion of properness, we generalize an existing algorithm by Esteves \cite{esteves} for the case of classical compactified Jacobians (see Proposition \ref{P:univ-cl}). Finally, to show the existence of a separated good moduli (algebraic) space, we use the criteria established in \cite{AHLH}, i.e. we prove that the moduli space of $\s$-semistable sheaves is $\Theta$-complete (see Proposition \ref{P:Th-compl}) and $S$-complete (see Proposition \ref{P:S-compl}). 

%In fact, both the definition of a compactified Jacobian and the notion of a V-stability condition are also given for a \emph{family} of reduced curves, and Theorem~\ref{T:VcJ} is stated and proved in that generality.
\vspace{0.1cm}

The classical compactified Jacobians (constructed in \cite{Oda1979CompactificationsOT}, \cite{simpson},  \cite{caporaso} and  \cite{esteves}) are special cases of V-compactified Jacobians, as we now explain. 
If $\psi$ is a  \emph{numerical polarization} on $X/S$ of characteristic $\chi\in \Z$, i.e. the datum $\psi=\{ \psi_s\}$ of an additive function 
$$
\psi_s:\{\text{Subcurves of } X_s\} \longrightarrow \RR \: \text{ such that } \psi_{X_s}=\chi,
$$
for all geometric points $s$ of $S$ that is in addition compatible under \'etale specializations of geometric points of $S$ (see Lemma-Definition~\ref{LD:numpol-fam}), then the datum $\s(\psi)= \{ \s(\psi)^s\}$ defined by 
 \begin{equation*}
 \s(\psi)^s:=\left\{\s(\psi^s)_Y:=\lceil \psi^s_Y \rceil  \: \text{ for any  } Y_s\in \BCon(X_s)\right\}
 \end{equation*}
 is a  V-stability condition of characteristic $\chi$, called the V-stability condition associated to $\psi$. We say that a V-compactified Jacobian associated to a V-stability of the form  $\mathfrak \s(\psi)$ is a \textbf{classical compactified Jacobian}. %associated to $\psi$:  
%\begin{equation}\label{E:fcJphi}
%\ov \J_X(\psi):=\ov \J_X(\mathfrak \s(\psi))=\left\{I \in \TF_X^\chi : \:  \chi(I_Y)\geq \psi_Y \text{ for any } Y\in \BCon(X)\right\}.
%\end{equation}
A first example of a non-classical (fine) V-compactified Jacobian for a family (of nodal curves) was found in \cite[Sec. 6]{PTgenus1}, and of a non-classical (fine) V-compactified Jacobian for a (nodal) curve over an algebraically closed field was found in \cite[Thm. C]{viviani2023new}. 

 \vspace{0.1cm}

Note that not all compactified Jacobians are V-compactified Jacobians. If $X$ is a reducible curve on an algebraically closed field, it is easy to see that there are exist always two (far away) V-stability conditions that give rise to V-compactified Jacobians whose union is not a V-compactified Jacobian. Adding the requirement that compactified Jacobians should be \emph{connected} does not solve the issue.  On the one hand, there are exotic examples from \cite[Sec. 3]{PTgenus1} of connected fine compactified Jacobians for nodal curves, that are not V-compactified Jacobians (see \cite[Thm. C]{viviani2023new}). Moreover, we do not yet know whether all V-compactified Jacobians of \emph{reduced} curves are connected (even in the classical case, this is a well-known open problem, see Remark~\ref{R: connected}), though we show that this is the case for connected curves with planar singularities (in Theorem~\ref{T:vcJ-fam-pla}).

 \vspace{0.1cm}

Then we prove that the class of V-compactified Jacobians of a fixed curve $X$ has good combinatorial and geometric properties. Firstly, we show via Definition~\ref{D:VStab-pos} that the set of all V-compactified Jacobians of $X$ can naturally be organized in a poset in such a way that the maximal elements are exactly the general V-stability conditions (see Corollary \ref{C:max-pos}); then we show in Proposition~\ref{P:fin-tran} that, modulo a natural translation action, there are only finitely many V-stability conditions of $X$. In particular (see Remark~\ref{R:inc-VcJ}) this shows that there are only finitely many isomorphism classes of V-compactified Jacobians of $X$.

Secondly, we investigate when relative compactified Jacobian spaces are (relative) schemes or projective schemes.

\begin{theoremalpha}\label{ThmB}
(Corollary \ref{C:projective} and Theorem~\ref{T:scheme}). Let $X\to S$ be a family of connected reduced curves over a quasi-separated and locally Noetherian algebraic space  $S$. Assume that the following holds: 

\un{Condition (*)}: There exist smooth sections $\{\sigma_i:S\to X\}_{i=1}^n$ of $X/S$ such that, for any $s\in S$, any irreducible component of $X_s$ contains $\sigma_i(s)$ for some $i$.

Then we have that:
\begin{enumerate}
\item  Any classical compactified Jacobian space $\ov J_{X/S}(\s(\psi))$ of $X/S$ is locally projective over $S$.
    \item Any compactified Jacobian space $\ov J^\chi_{X/S}$ of $X/S$ is representable by schemes over $S$.
\end{enumerate}  
\end{theoremalpha}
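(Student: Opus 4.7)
The two parts follow by combining Condition~($\ast$) with classical results on compactified Jacobians and Quot schemes.

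For (1), the plan is to identify the V-compactified Jacobian $\ov J_{X/S}(\s(\psi))$ with the Esteves compactified Jacobian attached to the polarization $\psi$ and one of the sections $\sigma_i$ supplied by Condition~($\ast$). The identification is via the defining inequality: a sheaf $I$ lies in $\ov\J_{X/S}(\s(\psi))$ iff $\chi(I_{Y_s})\geq \lceil \psi^s_{Y_s}\rceil$ (equivalently $\chi(I_{Y_s})\geq \psi^s_{Y_s}$) for every biconnected $Y_s$, and a standard induction using the additivity of V-stability conditions extends the inequality to all connected proper subcurves, recovering classical $\psi$-semistability. Local projectivity over $S$ is then the content of the main theorem of \cite{esteves}: Esteves rigidifies by the section $\sigma_i$ and constructs a relatively ample determinantal line bundle on the resulting moduli space. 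The bookkeeping subtlety is to check that the V-notion of S-equivalence coincides with Esteves' notion on polystable sheaves, so that the two moduli spaces agree as schemes rather than merely as sets.

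For (2), set $D:=\sum_i \sigma_i(S)\subset X$. By Condition~($\ast$), $D|_{X_s}$ has positive degree on every irreducible component of every geometric fibre, so $\O_X(D)$ is $\pi$-relatively ample by Nakai-Moishezon on reduced projective curves. Given a compactified Jacobian $\ov\J^\chi_{X/S}\to \ov J^\chi_{X/S}$ proper over $S$, boundedness and Serre vanishing furnish an integer $N$ such that, for every $I$ in every geometric fibre, $I(ND):=I\otimes\O_{X_s}(ND)$ is globally generated with $H^1=0$ and with a fixed number $h$ of sections. Condition~($\ast$) also forces every such $I$ to be locally free at $\sigma_1$, so the $\Gm$-automorphism acts faithfully on the fibre $I|_{\sigma_1(s)}$; rigidification by this fibre identifies $\ov J^\chi_{X/S}$ with the automorphism-free moduli of pairs $(I,\phi)$ with $\phi:I|_{\sigma_1(s)}\xrightarrow{\sim}k(s)$. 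One then builds a scheme $R$ as an open subscheme of a relative Quot scheme parametrizing quotients $q:\O_X^{\oplus h}\twoheadrightarrow I(ND)$ together with the rigidification at $\sigma_1$; relative Quot schemes are representable by schemes over $S$ by \cite{altmankleiman}. The free $GL_h$-action rescaling the basis of sections exhibits $R\to \ov J^\chi_{X/S}$ as a $GL_h$-torsor in the étale topology.

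The main obstacle is the final representability conclusion. Since $GL_h$ is a special group in the sense of Serre-Grothendieck, every étale $GL_h$-torsor is Zariski-locally trivial; hence the torsor $R\to \ov J^\chi_{X/S}$ admits Zariski-local sections, which exhibit $\ov J^\chi_{X/S}$ Zariski-locally as a locally closed subscheme of a Quot scheme, via the rigidified universal sheaf $\I$ and the direct image $\pi_*(\I(ND))$. Verifying that the universal rigidified sheaf $\I$ actually exists on $\ov J^\chi_{X/S}$ (i.e.\ that the $\Gm$-gerbe $\ov\J^\chi_{X/S}\to \ov J^\chi_{X/S}$ trivializes upon rigidifying by $\sigma_1$) and that the resulting morphism is a locally closed immersion are the most delicate points; both reduce to standard deformation-theoretic checks once the set-up is in place.
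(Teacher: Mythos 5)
Your proposal for part (1) has the right skeleton --- use Condition~(*) to realize the numerical polarization $\psi$ (after a harmless perturbation to rational values) as $\psi(E)$ for a vector bundle $E$, then invoke Esteves --- but the citation of Esteves only covers the \emph{fine} case. For a general classical polarization, $\ov J_{X/S}(E)$ is a good moduli space of a stack whose closed points are polystable sheaves $I=I_{Y_1}\oplus\cdots\oplus I_{Y_c}$ with stabilizer $\Gm^c$, and the relatively ample line bundle must live on the \emph{space}, not the stack. The missing step is the descent of the theta line bundle $\L_E=\D_{p_2}(\I\otimes p_1^*E)$ along the good moduli morphism, which requires checking that $\Gm^c$ acts trivially on the fiber of $\L_E$ at every closed point; this holds precisely because each stable summand satisfies $\chi(E_{Y_i}\otimes I_{Y_i})=\rk(E_{Y_i})[\mu(E_{Y_i})+\chi(I_{Y_i})]=0$. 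This computation (Step~1 of the proof of Theorem~\ref{T:projective}) is the genuinely new content beyond Esteves, and your "S-equivalence bookkeeping" remark does not engage with it.

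Part (2) has a more serious gap: the rigidification-plus-$GL_h$-torsor strategy only works when the compactified Jacobian is fine, whereas the statement concerns arbitrary compactified Jacobian spaces. If $I=I_{Y_1}\oplus I_{Y_2}$ is polystable and $\sigma_1(s)\in Y_1\setminus Y_2$, then the automorphism $(1,\lambda)\in\Gm^2$ acts trivially on $I|_{\sigma_1(s)}$, so the moduli of pairs $(I,\phi)$ is \emph{not} automorphism-free and $\ov J^\chi_{X/S}$ is not its coarse space --- it is a good moduli space, and the fiber of your parameter scheme $R$ over a strictly polystable point contains every S-equivalent semistable quotient, not a single $GL_h$-orbit. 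Hence $R\to \ov J^\chi_{X/S}$ is not a torsor and the specialness of $GL_h$ cannot be invoked. The paper's route is different and seems hard to avoid: given a closed point $p$, it takes the unique closed point $I$ of $\Xi^{-1}(p)$, produces (Lemma~\ref{L:clospoint}) a classical polarization for which $I$ is again polystable, uses the universal property of good moduli spaces to get a map $g$ from a neighbourhood of $p$ to the locally projective $\ov J_{X/S}(E)$ that is quasi-finite and separated near $p$, and concludes by Zariski's Main Theorem that a neighbourhood of $p$ is an open subscheme of a scheme finite over a scheme. You would need some comparison of this kind with a projective model to repair your argument.
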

As an intermediate step, we also show in Theorem~\ref{T:projective} that Esteves' classical compactified Jacobians are always (i.e. even without assuming Condition (*)) locally projective.

Observe that Condition (*) implies that, for any $s\in S$, any irreducible component of the fiber $X_s$ is geometrically integral by \cite[\href{https://stacks.math.columbia.edu/tag/0CDW}{Tag 0CDW}]{stacks-project}.  By an example due to Mumford (see \cite[Sec. 8.2]{Neron}),  without the latter condition even the Picard algebraic space is not representable by schemes over $S$.  Furthermore, observe that Condition (*) is always satisfied if $S=\Spec k$ with $k$ an algebraically closed field.

%As an intermediate step, we also show in Theorem~\ref{T:projective} that, under mild extra assumptions, \emph{classical} compactified Jacobians are projective. 

\vspace{0.1cm}

A lot of interest has recently been drawn to the case of compactified Jacobians of curves whose singularities are planar (see e.g. \cite{MRV} and \cite{Migliorini_2021}), and in the last section we restrict ourselves to this class of curves. We extend the results from the classical case (from \cite{MRV}) to the general case of all V-compactified Jacobians:

\begin{theoremalpha}\label{ThmC}  (Theorem~\ref{T:VcJ-univ}, Corollary~\ref{C:VcJ-pla}, Theorem~\ref{T:vcJ-fam-pla}, Theorem~\ref{T:char-vcJ-pla}, Corollary~\ref{C:unic-s}).
     Let $X\to S$ be a family of connected reduced curves with planar singularities  over a quasi-separated and locally Noetherian algebraic space  $S$.
    \begin{enumerate}
        \item For any relative V-stability condition $\s$ on $X/S$, we have that 
        \begin{enumerate}
            \item The V-compactified Jacobian stack $F_{X/S}(\s):\ov \J_{X/S}(\s)\to S$ is flat of relative dimension $g(X)-1$ with geometric fibers that are connected, reduced with locally complete intersection singularities and trivial dualizing sheaf. Moreover, the relative smooth locus is the open substack parametrizing line bundles on $X/S$ and the total space $\ov \J_{X/S}(\s)$ is regular along the fiber over a point  $t\in S$ if the family $X/S$ is versal at $t$. 
     \item The V-compactified Jacobian space  $f_{X/S}(\s):\ov J_{X/S}(\s)\to S$ is (proper and) flat of relative dimension $g(X)$ with geometrically connected and reduced fibers.
 \end{enumerate}
\item A relative compactified Jacobian stack $\ov \J^\chi_{X/S}$ is a V-compactified Jacobian stack, i.e. it is equal to $\J_{X/S}(\s)$ for some relative V-stability condition $\s$ on $X/S$, if and only if it is such on every geometric fiber. In this case, the V-stability condition $\s$ is unique. 
\end{enumerate}
\end{theoremalpha}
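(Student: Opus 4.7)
The plan is to prove the four parts of the theorem in cascade, with the geometric fiber analysis in (1)(a) being the technical core. The passage from the classical MRV results to the V-compactified Jacobian setting rests on the fact that, for a reduced curve $X$ with planar singularities, the étale-local geometry of $\TF^\chi_X$ at any torsion-free rank-$1$ sheaf $I$ is controlled by the versal deformation of the pair $(X,I)$, which is a local complete intersection of the expected dimension and smoothable (by results going back to Altman--Kleiman). Since $\ov \J_X(\s)$ is open in $\TF^\chi_X$, all local properties transfer.

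For (1)(a) on a single geometric fiber, I would fix $X$ with planar singularities and a V-stability $\s$. The versal-deformation analysis yields immediately the LCI property, the relative dimension $g(X)-1$ (after rigidifying by $\Gm$), trivial dualizing sheaf, and the identification of the smooth locus with the locus of line bundles. Connectedness is the subtler point: using smoothability of planar singularities, I would deform $X$ to a smooth curve, extend the V-compactified Jacobian to the smoothing, and use that generic fibers are honest (connected) Jacobians; properness plus flatness of this extension then forces connectedness of the central fiber. Together with Cohen--Macaulayness and generic smoothness along the line bundle locus, this yields reducedness. These are extensions of MRV's arguments, the novelty being that we work with any V-stability rather than only classical ones.

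Once fibers are controlled, the relative part of (1)(a) follows from ``miracle flatness'': reducing étale-locally to the case where $S$ is regular, the geometric fibers of $\ov \J_{X/S}(\s)\to S$ are Cohen--Macaulay of constant dimension $g(X)-1$, hence the morphism is flat. Regularity at a versal point $t\in S$ is obtained by identifying $\ov \J_{X/S}(\s)$ étale-locally at $t$ with the universal V-compactified Jacobian over the versal deformation space of $X_t$, which is smooth. For (1)(b), properness is Theorem A; the good moduli map $\ov \J_{X/S}(\s)\to \ov J_{X/S}(\s)$ has generic fiber $B\Gm$, so the relative dimension of $f_{X/S}(\s)$ becomes $g(X)$; geometric connectedness and reducedness descend from stack to good moduli space.

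For (2), the ``only if'' direction is tautological. For ``if'', given a compactified Jacobian $\ov \J^\chi_{X/S}$ each of whose geometric fibers has the form $\ov \J_{X_s}(\s^s)$, compatibility of the $\s^s$ under étale specialization is forced by the openness of $\ov \J^\chi_{X/S}\subseteq \TF^\chi_{X/S}$, so the family assembles into a relative V-stability $\s$. Then $\ov \J^\chi_{X/S}=\ov \J_{X/S}(\s)$ because both are open substacks of $\TF^\chi_{X/S}$ agreeing on all geometric fibers. Uniqueness follows by recovering $\s_Y$ intrinsically as the minimum of $\chi(I_Y)$ over $\s$-semistable $I$ with $I_Y$ torsion-free. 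The main obstacle throughout will be the fiberwise analysis in (1)(a) — specifically, coupling the LCI/dimension information from versal deformations with the connectedness argument via smoothing to a smooth curve.
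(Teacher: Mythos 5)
Your treatment of part (1) follows essentially the same route as the paper: local structure of $\TF_X$ for planar curves, connectedness by smoothing via the (semi)universal deformation and invoking properness plus flatness with smooth generic fibre, then miracle flatness and descent of the properties along the good moduli morphism. One caveat: miracle flatness cannot be applied by ``reducing \'etale-locally to the case where $S$ is regular'' for an arbitrary base; the correct order of operations is to apply miracle flatness once over the semiuniversal deformation space $\Spec R_X$ (which is regular because planar singularities are unobstructed, and over which the total space of $\TF$ is regular), and then to obtain flatness for general $S$ by noting that $\ov\J_{X/S}(\s)\to S$ is \'etale-locally a base change of the flat morphism over $\Spec R_{X_s}$. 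You do invoke the versal family for the regularity claim, so the idea is present, but as written the flatness step does not go through for non-regular $S$.

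Part (2) contains a genuine gap. You assert that ``compatibility of the $\s^s$ under \'etale specialization is forced by the openness of $\ov\J^\chi_{X/S}\subseteq\TF^\chi_{X/S}$.'' Openness alone only yields one inequality, namely $\s^{\ov\eta}\leq\xi^*(\s^o)$ for an \'etale specialization $\xi:\ov\eta\rightsquigarrow o$ over a trait (this is Proposition~\ref{P:s-comp}(1) in the paper, stated as an equivalence). The reverse inequality $\s^{\ov\eta}\geq\xi^*(\s^o)$ requires the universal closedness of $\ov\J^\chi_{X/S}\to S$ (which you do have, since a compactified Jacobian stack admits a proper good moduli space, but which you never use): one must extend a suitable generic line bundle across the special fibre and compare Euler characteristics of restrictions. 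Moreover, both inequalities, and your uniqueness argument via ``recovering $\s_Y$ as the minimum of $\chi(I_Y)$,'' rest on a nontrivial attainment statement: for every $Y\in\BCon(X)$ there exists a line bundle $L\in\ov\J_X(\s)$ with $\chi(L_Y)=\s_Y$ (Lemma~\ref{L:exis-sh}). A priori the semistability inequality $\chi(I_Y)\geq\s_Y$ could be strict for all semistable $I$, in which case $\s$ would not be recoverable from the stack. Proving attainment uses planarity essentially (smoothing all singularities away from $Y\cap Y^c$, producing a degenerate sheaf on the resulting two-component curve, and then deforming the limit sheaf to a line bundle using the density of $\PIC_X$ in $\TF_X$). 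Without this lemma, neither the assembly of the $\s^s$ into a relative V-stability nor the uniqueness claim is justified.
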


\subsection*{Open questions}

\vspace{0.1cm}

This paper leaves open some natural questions:
\begin{enumerate}
\item What is the structure of the poset $\VStab(X)$ of V-stability conditions on a reduced curve $X$? For example, is it a ranked poset?

See Question \ref{Q:Deg} for more specific open problems.

\item Are V-compactified Jacobian spaces (or even all compactified Jacobian spaces) on a reduced curve (or on a family satisfying Condition (*) of Theorem \ref{ThmB}) projective? 

Theorem \ref{ThmB} gives partial results on this Question. 
%As a partial result, we prove in Section~\ref{sub:scheme} that, under the same mild assumptions on the family, classical compactified Jacobian spaces are projective (see Theorem~\ref{T:projective} and Corollary \ref{C:projective}) and that all compactified Jacobian spaces are schemes (see Theorem~\ref{T:scheme}).
    \item Is there any sense in which V-compactified Jacobians form a complete theory of compactified Jacobians for reduced curves? 

    We  prove in \cite{FPV, FPV3} that, for nodal curves, V-compactified Jacobians are exactly those that are limits of Jacobians under any $1$-parameter smoothing of the curve or over the semiuniversal deformation space of the curve. We ask whether similar results hold for more general reduced curves, e.g. for reduced curves with planar singularities. 
    
 %   \item What happens for nonreduced curves?
\end{enumerate}

\subsection*{Outline of the paper} 

\vspace{0.1cm}

The paper is organized as follows. In Section~\cref{Sub:curves} we fix the notation for reduced curves and their families. In Section~\ref{Sub:sheaves} we discuss torsion-free rank~$1$ sheaves on families of reduced curves, and prove some first results. In particular, we describe how such sheaves degenerate under $1$-parameter specializations (see Subsection~\cref{S:spec}) and under isotrivial specializations (see Subsection~\cref{S:isospec}). In Section~\cref{Sub:Vstab} we introduce V-stability conditions and their associated degeneracy subsets. In Subsection~\cref{Sub:numpol} we study the V-stability conditions associated to  numerical polarizations, that we call "classical" V-stability conditions. In Subsection~\cref{Sub:posetV} we define a poset structure on the collection of all V-stability conditions, and show that this poset admits an order-preserving and upper lifting map to the poset of degeneracy subsets and that it naturally admits a translation action with finitely many orbits. Finally, in Subsection~\cref{Sub:Vfamily}, we define relative V-stability conditions (and relative numerical polarizations) for families of reduced curves. In Section~\ref{Sub:ss}, we introduce semistable (resp. stable, resp. polystable) sheaves with respect to a V-stability condition and we prove that the category of semistable sheaves is stable under restriction to suitable subcurves, it is closed   under taking kernels, images, cokernels
and suitable extensions. Furthermore, we prove that every semistable sheaf isotrivially degenerates to a unique polystable sheaf and that any polystable sheaf is the direct sum of stable sheaves.
%to a given $V$-stability condition, we associate a corresponding collection of sheaves that are semistable with respect to it. We prove that the category of semistable sheaves is closed under taking kernels, images and cokernels. 
In Section \cref{Sub:VcJ}, we introduce compactified Jacobians as stacks and as algebraic spaces, and prove our main result: moduli of semistable sheaves with respect to a V-stability condition are a compactified Jacobian (that we call V-compactified Jacobian). In Section~\ref{sub:scheme} we prove that, under mild assumptions on the family, classical compactified Jacobian spaces are projective and that all compactified Jacobian spaces are schemes. Finally, in Section~\ref{sec:VcJ-planar} we focus on families of curves with locally planar singularities, and  we prove that, for such families, V-compactified Jacobian stacks and spaces enjoy some nice geometric properties. 
%that describe the geometry of their compactified Jacobians can be extended from the classical case to the case of the V-compactified Jacobians.  

\vspace{0.5cm}

\paragraph { \bf Acknowledgements}
MF is supported by the DTP/EPSRC award 
EP/W524001/1.

NP is funded by the PRIN 2022 ``Geometry Of Algebraic Structures: Moduli, Invariants, Deformations'' funded by MUR, and he is a member of the GNSAGA section of INdAM. 

FV is funded by the MUR  ``Excellence Department Project'' MATH@TOV, awarded to the Department of Mathematics, University of Rome Tor Vergata, CUP E83C18000100006, by the  PRIN 2022 ``Moduli Spaces and Birational Geometry''  funded by MUR,  and he is a member of INdAM and of the Centre for Mathematics of the University of Coimbra (funded by the Portuguese Government through FCT/MCTES, DOI 10.54499/UIDB/00324/2020).

We thank Dario Weissmann for some inspiring discussions that led to the proof of Theorem~\ref{T:scheme}.

\section{Reduced curves} \label{Sub:curves}

Let $X$ be a reduced (projective) curve over $k=\ov k$, and let $\gamma(X)$ be the number of connected components of $X$. The arithmetic genus of $X$ is 
$$g(X):=1-\chi(\O_X)=h^1(\O_X)+1-\gamma(X).$$
 Since $X$ is reduced, then it is Cohen-Macaulay and its dualizing sheaf $\omega_X$ is a rank-$1$ torsion-free sheaf of degree 
 $$\deg(\omega_X):=\chi(\O_X)+g(X)-1=-\chi(\O_X)+g(X)-1=2g(X)-2.$$
We will denote by $I(X)$ the set of irreducible components of $X$ and by $X_v$ the irreducible component of $X$ corresponding to $v\in I(X)$. We will denote the singular locus of $X$ by $\Xsing$.

A \emph{subcurve} $Y\subseteq X$ is a closed subscheme of $X$ that is either a curve or the empty scheme. In other words, a subcurve $Y\subseteq X$ is the union of some irreducible components of $X$, so that there is a bijection 
\begin{equation}\label{E:sub-Vert}
 \begin{aligned}
   \left\{\text{Subsets of } I(X)\right\} & \leftrightarrow \left\{\text{Subcurves of } X \right\}:=\Sub(X)  \\
   W &\mapsto X[W]:=\bigcup_{v\in W} X_v  \\
   I(Y) & \mapsfrom Y.
 \end{aligned}   
\end{equation}
We say that a subcurve $Y$ is non-trivial if $Y\neq \emptyset, X$, which happens if and only if $I(Y)\neq \emptyset, I(X)$. The complementary subcurve of $Y$ is 
$$Y^\mathsf{c}:=\ov{X\setminus Y}=X[I(Y)^c].$$

If $X$ is connected, then a subcurve $Y$ of $X$ is called \emph{biconnected} if it is connected and if its complementary subcurve is also connected (in particular, $Y$ is non-trivial). The set of biconnected subcurves of $X$ is denoted by $\BCon(X)$. The set of connected subcurves of $X$ (which does not include the empty subcurve) is denoted by $\Con(X)$. 
Note that if $Y\in \Con(X)$ then $Y^c=\coprod_i W_i$ with each $W_i\in \BCon(X)$.

We define the \emph{join} and the \emph{meet} of two subcurves by 
$$
\begin{sis}
& Y_1\cup Y_2:=X[I(Y_1)\cup I(Y_2)], \\
& Y_1\wedge Y_2:=X[I(Y_1)\cap I(Y_2)].
\end{sis}
$$
In other words, the join of two subcurves is simply their union, while the meet of two subcurves is the union of their common irreducible components. 

% INTERSEZIONE di sottocurve
%Given two subcurves $Y_1,Y_2$ of $X$ without common irreducible components (i.e. such that $Y_1\wedge Y_2=\emptyset$), we denote by $Y_1\cap Y_2$ the (zero-dimensional) scheme-theoretic intersection of $Y_1$ and $Y_2$ and by $|Y_1\cap Y_2|$ its length. From the exact sequence
%$$
%0\to \O_{Y_1\cup Y_2}\to \O_{Y_1}\oplus \O_{Y_2}\to \O_{Y_1\cap Y_2}\to 0,
%$$
%we deduce 
%\begin{equation}\label{E:inter}
%    |Y_1\cap Y_2|=\chi(\O_{Y_1})+\chi(\O_{Y_2})-\chi(\O_{Y_1\cup Y_2})=g(Y_1\cup Y_2)-g(Y_1)-g(Y_2)+1.
%\end{equation}

A \emph{family of connected reduced curves} $\pi:X\to S$  over an algebraic stack $S$ is a projective and flat morphism $\pi$ whose geometric fibers are connected reduced curves.

\section{Torsion-free, rank-1 sheaves on reduced curves}\label{Sub:sheaves}

Let $X$ be a connected reduced curve over an algebraically closed field $k$. We say that  a coherent sheaf $I$ on $X$ is:
\begin{itemize}
\item  \emph{torsion-free on $X$}  if its associated points are the generic points of $X$, or equivalently if $I$ is pure of dimension $1$ (i.e. $I$ does not have torsion subsheaves) with support $\supp(I)$ equal to $X$.
%The support of $I$, denoted by $\supp(I)$, is a subcurve of $X$. We say that $I$ is a torsion-free sheaf \emph{on $X$} if $I$ is torsion-free with $\supp(I)=X$. 
\item \emph{rank-$1$} if $I$ has rank one on each generic point of $\supp(I)$. 
\item  \emph{simple} if $\End(I) = k $, or equivalently if $\Aut(I)=\Gm$.
\end{itemize}
Note that a torsion-free sheaf $I$ on $X$ is locally free away from the singular locus of $X$ and that each line bundle on $X$ is a simple, rank-$1$, torsion-free sheaf on $X$. 

The \emph{degree} of a torsion-free rank-$1$ sheaf $I$ on $X$ is defined to be
\begin{equation}\label{E:degI}
\deg(I)=\chi(I)-\chi(\O_X). 
\end{equation}

There are two natural ways of "restricting" a  torsion-free sheaf on $X$ to a torsion-free sheaf on a subcurve $Y\subseteq X$, as explained in the following.

\begin{definition}\label{D:IY}
    Let $I$ be a torsion-free sheaf on $X$ and let $Y$ be a subcurve of $X$. We denote by:
    \begin{itemize}
        \item $I_Y$ the quotient of the restriction $I_{|Y}$ modulo the torsion subsheaf; 
        \item $\leftindex_{Y^c}{I}$ the kernel of the surjection $I\twoheadrightarrow I_Y$.
    \end{itemize}
    Hence,  we have an exact sequence 
\begin{equation}\label{E:resY}
0\to \leftindex_{Y^c}{I}\to I \to I_Y\to 0,
\end{equation}
\end{definition}
from which we deduce the equality 
\begin{equation}\label{E:add-chi}
    \chi(I)=\chi(I_Y)+\chi(\leftindex_{Y^c}{I}).
\end{equation}
%We let $\deg_Y (I)$ denote the degree of $I_Y$, that is, $\deg_Y(I) := \chi(I_Y )-\chi(\O_Y)$.

\begin{remark}\label{R:IY}
Note that: 
    \begin{enumerate}[(i)]
\item $I_Y$ is torsion-free  on $Y$ and it is the smallest quotient of $I$ with support equal to $Y$.
\item $\leftindex_{Y^c}{I}$ is torsion-free  on $Y^c$ and it is the largest subsheaf of $I$ that is supported on $Y^c$.
\end{enumerate}
\end{remark}

The relation between the two constructions of Definition \ref{D:IY} is clarified by the following 

\begin{lemma}\label{L:IY}
    Let $I$ be a torsion-free sheaf on $X$ and let $Y$ be a subcurve of $X$. 
    \begin{enumerate}[(i)]
    \item \label{L:IY1} The following maps are injective: 
    \begin{enumerate}[(a)]
    \item the compositions $\leftindex_{Y}{I}\hookrightarrow I \twoheadrightarrow I_Y$ and  $\leftindex_{Y^c}{I}\hookrightarrow I \twoheadrightarrow I_{Y^c}$;
    \item the simultaneous restriction map $I\to I_Y\oplus I_{Y^c}$;
    \item the simultaneous inclusion map $\leftindex_{Y}{I}\oplus \leftindex_{Y^c}{I}\to I$.
   \end{enumerate}
    \item \label{L:IY2} We have the following canonical isomorphisms 
    $$\frac{I_Y}{\leftindex_{Y}{I}}\xleftarrow{\cong}\frac{I_Y\oplus I_{Y^c}}{I} \xrightarrow{\cong}  \frac{I_{Y^c}}{\leftindex_{Y^c}{I}} \xleftarrow{\cong}\frac{I}{\leftindex_{Y}{I}\oplus \leftindex_{Y^c}{I}} \xrightarrow{\cong}  \frac{I_{Y}}{\leftindex_{Y}{I}}$$
    \end{enumerate}
    of sheaves supported on the $0$-dimensional subscheme $Y\cap Y^c$. 
\end{lemma}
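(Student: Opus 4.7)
The proof hinges on a single key vanishing: the intersection $\leftindex_{Y}{I} \cap \leftindex_{Y^c}{I}$ inside $I$ is zero. I would establish this by localizing at the generic points of $X$: at a generic point $\eta$ of an irreducible component of $Y$, the short exact sequence $0 \to \leftindex_{Y^c}{I} \to I \to I_Y \to 0$ forces $(\leftindex_{Y^c}{I})_\eta = 0$, since both $I_\eta$ and $(I_Y)_\eta$ are free of rank $1$ at $\eta$ (as $I$ and $I_Y$ are rank-$1$ torsion-free on $X$ and $Y$ respectively); symmetrically, $(\leftindex_{Y}{I})_\eta = 0$ at every generic point $\eta$ of $Y^c$. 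Hence $\leftindex_{Y}{I} \cap \leftindex_{Y^c}{I}$ has zero stalk at every generic point of $X$, and being a subsheaf of the torsion-free sheaf $I$ (which has no embedded associated points), it must vanish. Note that one cannot shortcut this by appealing to (i)(b), as that would be circular: this is the main technical point of the lemma.

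Part (i) is then immediate: in (a), the kernel of $\leftindex_{Y^c}{I} \hookrightarrow I \twoheadrightarrow I_{Y^c}$ equals $\leftindex_{Y^c}{I} \cap \ker(I \to I_{Y^c}) = \leftindex_{Y^c}{I} \cap \leftindex_{Y}{I} = 0$ (symmetrically for the other composition); in (b), the kernel of $I \to I_Y \oplus I_{Y^c}$ is the same intersection; and in (c), an element $(a,b)$ killed by the summation map $\leftindex_{Y}{I} \oplus \leftindex_{Y^c}{I} \to I$ satisfies $a = -b \in \leftindex_{Y}{I} \cap \leftindex_{Y^c}{I} = 0$.

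For (ii), I would derive the four isomorphisms by standard quotient identifications. The composition $\leftindex_{Y}{I} \oplus \leftindex_{Y^c}{I} \hookrightarrow I \twoheadrightarrow I_Y$ has image exactly the copy of $\leftindex_{Y}{I}$ inside $I_Y$ (using (i)(a)), yielding $I/(\leftindex_{Y}{I} \oplus \leftindex_{Y^c}{I}) \xrightarrow{\cong} I_Y/\leftindex_{Y}{I}$, and symmetrically the isomorphism onto $I_{Y^c}/\leftindex_{Y^c}{I}$. To identify $(I_Y \oplus I_{Y^c})/I$, I would apply the snake lemma to the morphism of short exact sequences
$$0 \to \leftindex_{Y^c}{I} \to I \to I_Y \to 0 \quad \text{and} \quad 0 \to I_{Y^c} \to I_Y \oplus I_{Y^c} \to I_Y \to 0$$
connected by the natural inclusions on the left and middle and the identity on the right; the snake sequence collapses to give $(I_Y \oplus I_{Y^c})/I \xrightarrow{\cong} I_{Y^c}/\leftindex_{Y^c}{I}$, with the companion isomorphism to $I_Y/\leftindex_{Y}{I}$ obtained by swapping $Y \leftrightarrow Y^c$. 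Finally, for the support claim, I would localize at a point $x \notin Y \cap Y^c$: then one of $Y$ or $Y^c$ coincides with $X$ near $x$, so $\leftindex_{Y}{I} = I_Y = I$ (or its symmetric counterpart) on a neighborhood of $x$, making all four displayed quotients vanish there.
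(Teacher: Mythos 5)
Your proof is correct and follows essentially the same route as the paper: the single key vanishing you isolate, $\leftindex_{Y}{I}\cap\leftindex_{Y^c}{I}=0$, is exactly the paper's central observation (phrased there as: this kernel is a torsion-free sheaf supported on the $0$-dimensional $Y\cap Y^c$, hence zero), and your subsequent quotient identifications and snake-lemma step reproduce the content of the paper's three commutative diagrams.
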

\begin{proof}
From the commutative diagram
 \[
\begin{tikzcd}
  & \leftindex_{Y}{I} \arrow[r, equal] \arrow[d, hookrightarrow] & \leftindex_{Y}{I} \arrow[d]   \\
     \leftindex_{Y^c}{I} \arrow[r, hookrightarrow] \arrow[d, equal] & I \arrow[r, twoheadrightarrow] \arrow[d, twoheadrightarrow] & I_Y  \\
       \leftindex_{Y^c}{I} \arrow[r]  & I_{Y^c}  &    
\end{tikzcd}
\]
in which the central row and column are exact, we deduce that:

$\bullet$ $\ker(\leftindex_{Y^c}{I}\to I_{Y^c})\cong \ker(\leftindex_{Y}{I}\to I_{Y})=0$ because it is a torsion-free sheaf that is contained in $Y\cap Y^c$.

$\bullet$ $\coker(\leftindex_{Y^c}{I}\to I_{Y^c})\cong \coker(\leftindex_{Y}{I}\to I_{Y})$, which is supported on $Y\cap Y^c$.

From the commutative diagram
 \[
\begin{tikzcd}
 \leftindex_{Y}{I} \arrow[r,hookrightarrow] \arrow[d]& I  \arrow[r, twoheadrightarrow]\arrow[d] & I_{Y^c}\arrow[d,equal]     \\
    I_Y \arrow[r, hookrightarrow]  & I_Y\oplus I_{Y^c}  \arrow[r, twoheadrightarrow]  & I_{Y^c}        
\end{tikzcd}
\]
in which the two rows are exact and the left vertical arrow is injective as seen above, we deduce that:

$\bullet$ the map $I\to I_Y\oplus I_{Y^c}$ is injective.

$\bullet$ $\coker(I\to I_Y\oplus I_{Y^c})\cong \coker(\leftindex_{Y}{I}\to I_{Y})$. 

Finally, from the commutative diagram
 \[
\begin{tikzcd}
\leftindex_{Y}{I} \arrow[d, equal]\arrow[r, hookrightarrow] & \leftindex_{Y}{I}\oplus \leftindex_{Y^c}{I} \arrow[r,twoheadrightarrow] \arrow[d]& \leftindex_{Y^c}{I}\arrow[d] \\
 \leftindex_{Y}{I} \arrow[r,hookrightarrow]& I  \arrow[r, twoheadrightarrow] & I_{Y^c}     
\end{tikzcd}
\]
in which the two rows are exact and the right vertical arrow is injective by what proved above, we deduce that:

$\bullet$ the map $\leftindex_{Y}{I}\oplus \leftindex_{Y^c}{I}\to I$ is injective.

$\bullet$ $\coker(\leftindex_{Y}{I}\oplus \leftindex_{Y^c}{I}\to I)\cong \coker(\leftindex_{Y^c}{I}\to I_{Y^c})$. 
 
 We obtain the proof of the Lemma by combining all of the above.
\end{proof}

\begin{example}\label{Ex:IY-nodal}
    Let $X$ be a nodal curve and let $I$ be a rank-$1$ torsion-free sheaf on $X$. Then for any subcurve $Y\subseteq X$, we have that 
    $$
    \leftindex_Y{I}=I_Y(-(Y\cap Y^c\cap \NF(I)^c)),
    $$
    where $\NF(I)$ is the set of nodes of $X$ at which $I$ is not free. 
    Therefore, $\displaystyle \frac{I_Y}{\leftindex_Y{I}}$ is the structure sheaf of the $0$-dimensional reduced scheme $Y\cap Y^c\cap \NF(I)^c$. In particular, $I$ splits at $Y$ (i.e. $I=I_Y\oplus I_{Y^c}$) if and only if $I$ is not free at all nodes of $Y\cap Y^c$.
\end{example}

We now investigate the behaviour of the constructions in Definition \ref{D:IY} with respect to two subcurves $Y\subset Z$. 

\begin{lemma}\label{L:IYZ}
   For any  torsion-free sheaf $I$ on $X$ and any two subcurves $Y\subset Z\subseteq X$, we have: 
\begin{enumerate}[(i)]
\item \label{L:IYZ1}  $(I_Z)_{Y}=I_{Y} \quad \text{ and }  \quad \quad \leftindex_{Y}{I}=\leftindex_{Y}{(\leftindex_{Z}I)}$.
%\item \label{L:IYZ2}  $\leftindex_{Y}{(I_Z)}\cong\leftindex_{Y}{I}$.
%\textcolor{green}{Wrong!}
%\item \label{L:IYZ3} $(\leftindex_{Z}{I})_Y\cong I_Y$.
%\textcolor{green}{Wrong!}
\item \label{L:IYZ2} $\leftindex_{Z-Y}(I_Z)=(\leftindex_{Y^c}{I})_{Y^c- Z^c}$.
\end{enumerate}
\end{lemma}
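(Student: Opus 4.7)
My plan is to prove everything through universal properties. By Remark~\ref{R:IY}, the sheaf $I_A$ is the initial torsion-free quotient of $I$ supported on $A$ (every surjection from $I$ to a torsion-free sheaf on $A$ factors uniquely through $I_A$), while $\leftindex_A I$ is the largest subsheaf of $I$ supported on $A$. These two characterizations will do all the work.

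For part~\eqref{L:IYZ1}, consider the first equality $(I_Z)_Y = I_Y$. Since $Y \subseteq Z$, the sheaf $I_Y$ is a torsion-free quotient of $I$ supported on $Z$ (because its support $Y$ is contained in $Z$ and it is pure of dimension one), so by the universal property of $I_Z$ the projection $I\twoheadrightarrow I_Y$ factors through $I_Z$, producing a surjection $I_Z\twoheadrightarrow I_Y$ whose target is supported on $Y$. The universal property of $(I_Z)_Y$ then yields a surjection $(I_Z)_Y\twoheadrightarrow I_Y$. Conversely, the composition $I\twoheadrightarrow I_Z\twoheadrightarrow (I_Z)_Y$ expresses $(I_Z)_Y$ as a torsion-free quotient of $I$ supported on $Y$, giving a surjection $I_Y\twoheadrightarrow (I_Z)_Y$. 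Two mutually surjective coherent sheaves with the same finite length on every stalk must agree. For the second equality $\leftindex_Y I = \leftindex_Y(\leftindex_Z I)$, any subsheaf of $I$ supported on $Y\subseteq Z$ is automatically supported on $Z$, hence contained in $\leftindex_Z I$; so the largest subsheaf of $I$ supported on $Y$ and the largest subsheaf of $\leftindex_Z I$ supported on $Y$ coincide.

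The main step is part~\eqref{L:IYZ2}, and here the key idea is to realize both sides as the image of the natural composition
\[
\varphi\colon \leftindex_{Y^c}I \hookrightarrow I \twoheadrightarrow I_Z.
\]
Set $W:=Z-Y=Y^c-Z^c$ and write $J:=\Im(\varphi)$. Since $Z^c \subseteq Y^c$, the kernel of $\varphi$ is $\leftindex_{Y^c}I \cap \leftindex_{Z^c}I = \leftindex_{Z^c}I$, while its cokernel inside $I_Z$ equals $I / (\leftindex_{Y^c}I + \leftindex_{Z^c}I) = I/\leftindex_{Y^c}I = I_Y$, so we obtain a short exact sequence $0\to J\to I_Z\to I_Y\to 0$. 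Comparing this with the defining sequence $0\to \leftindex_W(I_Z)\to I_Z\to (I_Z)_Y\to 0$ and invoking $(I_Z)_Y=I_Y$ from part~\eqref{L:IYZ1}, we conclude $J=\leftindex_W(I_Z)$.

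It remains to identify $J$ with $(\leftindex_{Y^c}I)_W$. By construction $J$ is a torsion-free quotient of $\leftindex_{Y^c}I$ supported on $W$ (it is a subsheaf of the torsion-free sheaf $I_Z$, and its only possible associated points lie in $Y^c \cap Z$, whose one-dimensional part is $W$). The universal property of $(\leftindex_{Y^c}I)_W$ gives a surjection $(\leftindex_{Y^c}I)_W \twoheadrightarrow J$ whose kernel $K/\leftindex_{Z^c}I$ satisfies $K\subseteq \leftindex_{Z^c}I$ would make it zero; so it suffices to check the reverse inclusion, i.e.\ that $\leftindex_{Z^c}I$ maps to zero in $(\leftindex_{Y^c}I)_W$. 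But $\leftindex_{Z^c}I$ is supported on $Z^c$, which shares no component with $W$, so its image in $(\leftindex_{Y^c}I)\otimes\O_W$ is supported on the finite scheme $Z^c\cap W$ and hence is torsion; it is therefore killed by the passage to the torsion-free quotient $(\leftindex_{Y^c}I)_W$. This gives $(\leftindex_{Y^c}I)_W=J=\leftindex_W(I_Z)$, completing the proof.

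The only slightly delicate point is the last torsion argument, because $W$ and $Z^c$ do meet in nodes even though they share no irreducible component; this is precisely why the torsion-free quotient is needed on the right-hand side.
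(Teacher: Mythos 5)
Your proposal is correct and takes essentially the same route as the paper: part (i) via the universal characterizations of Remark~\ref{R:IY}, and part (ii) by computing the kernel and cokernel of the composite $\leftindex_{Y^c}{I}\hookrightarrow I\twoheadrightarrow I_Z$ and matching the resulting short exact sequence against the defining one for $\leftindex_{Z-Y}{(I_Z)}$ (the paper packages exactly this computation as a snake-lemma diagram chase). The only real divergence is cosmetic: in the last step you re-derive the equality $\leftindex_{Z^c}{I}=\leftindex_{Z^c}{(\leftindex_{Y^c}{I})}$ by a torsion argument, whereas you could simply have cited your own part (i) applied to the inclusion $Z^{\mathsf{c}}\subseteq Y^{\mathsf{c}}$, which is what the paper does.
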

\begin{proof}
 Part \eqref{L:IYZ1}: $(I_Z)_Y$ is a quotient of $I$ that is supported on $Y$ and is torsion-free, hence it must equal $I_Y$.
On the other hand, the sheaf $\leftindex_{Y}{(\leftindex_{Z}I)}$ is a subsheaf of $I$ supported on $Y$ and it is saturated, i.e. the quotient of $I$ by $\leftindex_{Y}{(\leftindex_{Z}I)}$ is torsion-free, and hence it must be equal to $\leftindex_{Y}{I}$.

 In order to prove Part \eqref{L:IYZ2}, consider the following commutative diagram 
 \[
\begin{tikzcd}
    0\arrow[r] & \leftindex_{Y^c}{I} \arrow[r] \arrow[d, "\alpha"] & I \arrow[r] \arrow[d, "\beta"] & I_{Y} \arrow[r]\arrow[d, equal] & 0  \\
     0\arrow[r] & \leftindex_{Z-Y}{(I_Z)} \arrow[r]  & I_Z \arrow[r] & (I_Z)_Y\arrow[r] & 0,
\end{tikzcd}
\]
with exact rows, and where we have used that $(I_Z)_Y=I_Y$ by part \eqref{L:IYZ1}.
By applying the snake lemma, we get that 
$$
\begin{sis}
& \coker \alpha\cong \coker \beta=0,\\   
& \ker \alpha\cong \ker \beta=\leftindex_{Z^c}{I}.
\end{sis}
$$
Therefore, using that $\leftindex_{Z^c}{I}=\leftindex_{Z^c}{(\leftindex_{Y^c}{I})}$ by Part \eqref{L:IYZ1}, we get that $\alpha$ induces the required  isomorphism 
$$
(\leftindex_{Y^c}{I})_{Y^c-Z^c}=\frac{\leftindex_{Y^c}{I}}{\leftindex_{Z^c}{(\leftindex_{Y^c}{I})}}\xrightarrow{\cong} \leftindex_{Z-Y}{(I_Z)}.
$$ 
\end{proof}

Any rank-$1$ torsion-free sheaf $I$ on $X$ admits a canonical decomposition
\begin{equation}\label{E:decI}
I=I_{Y_1}\oplus \cdots \oplus I_{Y_{c(I)}},
\end{equation}
such that each $I_{Y_i}$ is indecomposable on $Y_i$, i.e. it cannot be written in a non-trivial way as a sum of sheaves of the form $I_{Z}$ for $Z\subset Y_i$.  The automorphism group of $I$ is equal to 
\begin{equation}\label{E:AutI}
\Aut(I)=\Gm^{c(I)},
\end{equation}
where each copy of the multiplicative group acts on the corresponding summand in \eqref{E:decI} by scalar multiplication. In particular, a rank-$1$ torsion-free sheaf $I$ is simple (i.e. $\Aut(I)=\Gm$) if and only if $I$ is indecomposable.

For any family $\pi:X\to S$ of connected reduced curves over an algebraic stack $S$, we denote by $\TF_{X/S}$ the  \emph{stack of relative torsion-free rank-$1$ sheaves} on $X/S$.  More precisely, $\TF_{X/S}$ is the stack  over $S$-schemes such that the fiber over a $S$-scheme $T$ is the groupoid of $T$-flat coherent sheaves $\I$ on $X\times_S T$ such that $\I|_{X_t}$ is a torsion-free, rank-$1$ sheaf on $X_t$ for each geometric point $t$ of $T$. The stack $\TF_{X/S}$ is an algebraic stack that is quasi-separated, locally of finite type over $S$ and its diagonal is affine and of finite presentation over $S$ by \cite[\href{https://stacks.math.columbia.edu/tag/0DLY}{Lemma 0DLY}]{stacks-project} and \cite[\href{https://stacks.math.columbia.edu/tag/0DLZ}{Lemma 0DLZ}]{stacks-project}. We denote by  $\I$ the universal sheaf on $X\times_S \TF_{X/S}$. Since $\Gm$ sits in the automorphism group of each sheaf $I\in \TF_{X/S}$ as the scalar automorphism, we can form the rigidification $\TF_{X/S}\to \TF_{X/S}\fatslash \Gm$ which is a $\Gm$-gerbe. 

The stack $\TF_{X/S}$ contains two open substacks
$$\PIC_{X/S}\subseteq \Simp_{X/S}\subseteq \TF_{X/S},$$ 
where $\Simp_{X/S}$ is the open substack parameterizing relative simple  sheaves, and $\PIC_{X/S}$ is the Picard stack of $X$, which parameterizes relative line bundles. Note that $\Simp_{X/S}\fatslash \Gm$ is the largest open substack of $\TF_{X/S}\fatslash \Gm$ such that  the morphism $\Simp_{X/S}\fatslash \Gm\to S$ is representable.  

We have a decomposition into open and closed substacks  
\begin{equation}\label{E:Torsd}
\TF_{X/S}=\coprod_{\chi\in \ZZ} \TF^\chi_{X/S}
\end{equation}
where $\TF^\chi_{X/S}$ parameterizes sheaves of Euler characteristic  $\chi$. The decomposition of \eqref{E:Torsd} induces the following decompositions 
$$
\Simp_{X/S}=\coprod_{\chi\in \ZZ} \Simp^\chi_{X/S} \: \text{ and }\: \PIC_{X/S}=\coprod_{\chi\in \ZZ} \PIC^\chi_{X/S}.
$$

%\begin{fact}\label{F:diagTF}
%\noindent 
%    \begin{enumerate}
%        \item \label{F:diagTF1} The stack $\TF_X$ is quasi-separated and locally of finite type over $k$.
%        \item \label{F:diagTF2} The diagonal of $\TF_X$ is affine and of finite presentation.
%    \end{enumerate}
%\end{fact}

%The Picard variety $\PIC_X$ of $X$ acts on $\TF_X$ via tensor product. We can restrict this action to the subgroups 
%$$
%\PIC_X^{\un 0}\subseteq \PIC_X^0\subseteq \PIC_X
%$$
%where $\PIC^{\un 0}_X$, called the \emph{generalized Jacobian} of $X$, is the smooth commutative algebraic group parametrizing  line bundles of multidegre zero.

\subsection{Specializations}\label{S:spec}

In this subsection, we study the $1$-parameter specializations of torsion-free rank-$1$ sheaves on connected reduced curves.

Let $R$ be  a discrete valuation ring with residue field $k=\ov k$ and quotient field $K$, and denote by $\val:K\to \ZZ\cup\{\infty\}$ the associated valuation. Set $\Delta:=\Spec R$ with generic point $\eta:=\Spec K$ and special point $o:=\Spec k$. 

Let $\pi:X\to \Delta$ be a family of connected reduced curves, and denote by $X_o$ the special fiber of $\pi$ and by $X_{\eta}$ the generic fiber of $\pi$.   
Given a relative torsion-free rank-$1$ sheaf $\I\in \TF_{X/\Delta}(\Delta)$, we denote by $\I(\eta)\in \TF_{X_{\eta}}(\eta)$ its generic fiber and by $\I(o)\in \TF_{X_o}(k)$ its special fiber.  

Our goal here is to describe the fibers of the  map 
$$
\begin{aligned}
(-)_{\eta}: |\TF_{X/\Delta}(\Delta)|& \to |\TF_{X_{\eta}}(\eta)|\\
\I & \mapsto \I(\eta),
\end{aligned}
$$
where $|\TF_{X/\Delta}(\Delta)|$ denotes the set of isomorphism classes of the groupoid $\TF_{X/\Delta}(\Delta)$ and similarly for $|\TF_{X_{\eta}}(\eta)|$.

With this aim, we introduce the twist of a sheaf $\I\in \TF_{X/\Delta}(\Delta)$ by a subcurve of the central fiber.

\begin{definition}\label{D:twist}
  Let $\I$ be a relative rank-$1$ torsion-free sheaf on $X/\Delta$.
  Then  the \emph{twist} of $\I$ by a subcurve $Y \subseteq X_o$ is defined by
  $$
  \I^Y:=\ker(\I\twoheadrightarrow \I(o)\twoheadrightarrow \I(o)_Y).
  $$
\end{definition}

We summarize in the following lemma the properties of the twisting operation.

\begin{lemma}\label{L:twist}
    Let $\I$ be a relative rank-$1$ torsion-free sheaf on $X/\Delta$ of relative degree $d$ and let $Y\subseteq X_o$ be a subcurve. 
\begin{enumerate}[(i)]
    \item \label{L:twist1}  $\I^Y$ is a relative rank-$1$ torsion-free sheaf on $X/\Delta$ of relative degree $d$ such that $(\I^Y)_{\eta}=\I_\eta$.
    \item \label{L:twist2}  We have that 
    $$
    \begin{sis}
     & \Im(\I^Y(o)\to \I(o))=\I^Y(o)_{Y^c}=\leftindex_{Y^c}{\I(o)}, \\
     & \leftindex_{Y}{\I^Y(o)}=\ker(\I^Y(o)\to \I(o))\cong \coker(\I^Y(o)\to \I(o))=\I(o)_Y.
    \end{sis}$$
    %\begin{equation}\label{E:IIY}
   %\begin{tikzcd}
    %0 & \I^Y(o)_{Y^c}\arrow[l] \arrow[d, equal] & \I^Y(o) \arrow[l] %\arrow[d] & \I(o)_Y \arrow[l] & 0\arrow[l]  \\
    % 0\arrow[r] & \leftindex_{Y^c}{\I(o)} \arrow[r]  & \I(o) \arrow[r] %&\I(o)_Y \arrow[r] & 0, 
  %\end{tikzcd}
  %\end{equation}

    \item \label{L:twist3} If $Z\subseteq X_o$ is another subcurve then 
    $$
    \begin{sis}
    & \I^{Y\cup Z}=\I^Y\cap \I^Z\subseteq \I,\\
     & (\I^Y)^Z=(\I^Z)^Y=\I^Y\cap \I^Z\subseteq \I \text{ if } Y\wedge Z=\emptyset.
    \end{sis}$$
    \item \label{L:twist4} We have that $\I^{X_o}=t \I\cong \I$.
    \item \label{L:twist5} If $Y,Z$ are two subcurves of $X_o$ such that $Y\wedge Z=\emptyset$, then we have that 
    $
    \leftindex_{Z}\I^Y(o)\cong \leftindex_{Z}\I(o).
    %\quad \text{ and } \quad \I^Y(o)_Z\cong \I(o)_Z. 
    $
\end{enumerate}
\end{lemma}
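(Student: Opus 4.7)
The plan is to derive all five statements from the defining short exact sequence $0 \to \I^Y \to \I \to \I(o)_Y \to 0$ together with its $\operatorname{Tor}$ long exact sequence modulo $t$, combined with the subcurve restriction/saturation identities recorded in Lemmas~\ref{L:IY} and~\ref{L:IYZ}.

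For (i), $R$-flatness of $\I^Y$ is immediate since $\I^Y$ is a submodule of the $R$-flat sheaf $\I$ and $R$ is a DVR (so flat $=$ $R$-torsion-free). Tensoring the defining sequence with $K$ kills $\I(o)_Y$ (annihilated by $t$), giving $(\I^Y)_\eta = \I_\eta$ and hence preserving the relative degree $d$. For the special fiber, applying $-\otimes_R k$ and using $\operatorname{Tor}_1^R(\I,k)=0$ (flatness) and $\operatorname{Tor}_1^R(\I(o)_Y,k)\cong \I(o)_Y$ (annihilation by $t$) yields the four-term exact sequence
\[
0 \to \I(o)_Y \to \I^Y(o) \to \I(o) \to \I(o)_Y \to 0,
\]
which exhibits $\I^Y(o)$ as an extension of $\leftindex_{Y^c}\I(o)$ on $Y^c$ by $\I(o)_Y$ on $Y$; since $Y$ and $Y^c$ share no components, rank~$1$ and torsion-freeness can then be verified componentwise. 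The same four-term sequence is exactly (ii): the image of $\I^Y(o) \to \I(o)$ is $\ker(\I(o) \to \I(o)_Y) = \leftindex_{Y^c}\I(o)$, the kernel is $\I(o)_Y$, and because $\I(o)_Y$ is torsion-free on $Y$ while $\leftindex_{Y^c}\I(o)$ is torsion-free on $Y^c$, the universal properties of $(-)_{Y^c}$ and $\leftindex_Y(-)$ force them to be the $Y^c$-quotient and $Y$-saturation of $\I^Y(o)$ respectively.

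The remaining parts are short consequences. For (iii), the first identity follows because $\leftindex_{(Y\cup Z)^c}\I(o) = \leftindex_{Y^c}\I(o) \cap \leftindex_{Z^c}\I(o) = \ker(\I(o) \to \I(o)_Y \oplus \I(o)_Z)$ (via Lemma~\ref{L:IY}), so that both $\I^{Y\cup Z}$ and $\I^Y \cap \I^Z$ are the preimage of this subsheaf under $\I \twoheadrightarrow \I(o)$; for the second identity, $Y \wedge Z = \emptyset$ forces $Z \subseteq Y^c$, and (ii) combined with Lemma~\ref{L:IYZ}\eqref{L:IYZ1} factors $\I^Y(o) \to \I^Y(o)_Z$ as $\I^Y(o) \twoheadrightarrow \leftindex_{Y^c}\I(o) \twoheadrightarrow \I(o)_Z$, giving $(\I^Y)^Z = \I^Y \cap \I^Z$. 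For (iv), $\I(o)_{X_o} = \I(o)$ immediately gives $\I^{X_o} = \ker(\I \twoheadrightarrow \I(o)) = t\I$, and $R$-flatness makes multiplication by $t$ an isomorphism. For (v), the image $\leftindex_{Y^c}\I(o)$ of $\I^Y(o) \to \I(o)$ contains $\leftindex_Z\I(o)$ (by Lemma~\ref{L:IYZ}\eqref{L:IYZ1}), while the kernel $\I(o)_Y$ is supported on $Y$ and so cannot contribute to $\leftindex_Z$; a short diagram chase then yields the isomorphism. I expect the main delicacies to be in (iii)(b) and (v), where one must carefully track the interaction between support, torsion-freeness and the various restriction/saturation operations at the points of $Y \cap Y^c$.
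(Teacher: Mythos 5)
Your treatment of (i) and (ii) via the $\operatorname{Tor}$ long exact sequence of $0\to \I^Y\to\I\to\I(o)_Y\to 0$ modulo $t$ is a legitimate and more self-contained route than the paper's (which simply cites Langton and Esteves for these two parts): the four-term sequence $0\to\I(o)_Y\to\I^Y(o)\to\I(o)\to\I(o)_Y\to 0$ is correct, and identifying its kernel and image with $\leftindex_{Y}{\I^Y(o)}$ and $\I^Y(o)_{Y^c}$ by the universal properties of saturation and torsion-free restriction is exactly right (though ``verified componentwise'' is not quite the right justification for torsion-freeness of the extension --- the clean argument is that a torsion subsheaf dies in the torsion-free quotient and hence lands in the torsion-free sub). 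Parts (iii)(a) and (iv) match the paper's argument.

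However, (iii)(b) as written rests on a false claim. You assert that $\I^Y(o)\to\I^Y(o)_Z$ factors as $\I^Y(o)\twoheadrightarrow\leftindex_{Y^c}{\I(o)}\twoheadrightarrow\I(o)_Z$, i.e.\ that $\I^Y(o)_Z=\I(o)_Z$. This is wrong in general: by (ii) and Lemma~\ref{L:IYZ}\eqref{L:IYZ1} one has $\I^Y(o)_Z=(\leftindex_{Y^c}{\I(o)})_Z$, and already for $\I$ a line bundle on a nodal curve this equals $\I(o)_Z(-(Y\cap Z))\subsetneq \I(o)_Z$ whenever $Y\cap Z\neq\emptyset$; the map $\leftindex_{Y^c}{\I(o)}\to\I(o)_Z$ has cokernel supported on $Y\cap Y^c$ and is not surjective. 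What is true, and what the paper proves, is only that $\I^Y(o)_Z\to\I(o)_Z$ is \emph{injective} (because the kernel of $\I^Y(o)\to\I(o)$ is supported on $Y$ and $Y\wedge Z=\emptyset$); injectivity already gives $\ker(\I^Y\to\I^Y(o)_Z)=\ker(\I^Y\to\I(o)_Z)=\I^Y\cap\I^Z$, so your conclusion survives but your justification does not. A similar problem affects (v): your two observations only show that $\leftindex_{Z}{\I^Y(o)}$ injects into $\I(o)$ with image $J$ contained in $\leftindex_{Z}{\I(o)}$, and that every section of $\leftindex_{Z}{\I(o)}$ lifts to $\I^Y(o)$ --- but such a lift lives a priori only in $\leftindex_{Y\cup Z}{\I^Y(o)}$, not in $\leftindex_{Z}{\I^Y(o)}$, so the ``short diagram chase'' does not give surjectivity of $J\hookrightarrow\leftindex_{Z}{\I(o)}$. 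The missing step is the paper's saturation argument: one shows $\coker(J\hookrightarrow\leftindex_{Y^c}{\I(o)})\cong\I^Y(o)_{Y^c\cap Z^c}$ is torsion-free, whence $J$ is the saturation $\leftindex_{Z}{(\leftindex_{Y^c}{\I(o)})}=\leftindex_{Z}{\I(o)}$. You correctly flagged (iii)(b) and (v) as the delicate points, but as written neither is actually proved.
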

\begin{proof}
Part \eqref{L:twist1}: the fact that $\I^Y$ is a relative rank-$1$ torsion-free sheaf on $X/\Delta$ can be shown as in \cite[Prop. 6]{langton}. The equality $(\I^Y)_\eta=\I_\eta$ follows from the fact that the inclusion $\I^Y\hookrightarrow \I$ is an equality on $X\setminus Y$ by definition.

Part \eqref{L:twist2} is proved in \cite[Lemma 23(1)]{esteves}.

Part \eqref{L:twist3}: first of all, observe that 
$$
\I^Y\cap \I^Z=\ker(\I\rightarrow \I(o)_Y\oplus \I(o)_Z).
$$
Since we have a factorization 
$$
\I\twoheadrightarrow \I(o)_{Y\cup Z}\hookrightarrow \I(o)_Y\oplus \I(o)_Z,
$$
where the second map is injective, we get that $\I^{Y\cup Z}=\I^Y\cap \I^Z$. 

Assume now that $Y\wedge Z=\emptyset$. Consider the commutative diagram  
 \[
\begin{tikzcd}
    0\arrow[r] & (\I^Y)^Z \arrow[r] \arrow[d, hookrightarrow] & \I^Y \arrow[r] \arrow[d, hookrightarrow] & \I^Y(o)_Z \arrow[r]\arrow[d] & 0  \\
     0\arrow[r] & \I^Z \arrow[r]  & \I \arrow[r] & \I(o)_Z \arrow[r] & 0,
\end{tikzcd}
\]
which has exact rows and injective central and left vertical arrows by construction. Since $\I/\I^Y$ is supported on $Y$ and $Y\wedge Z=\emptyset$, then the right vertical arrow $\I^Y(o)_Z\to \I(o)_Z$ is also injective. Using this and the above commutative diagram, we get that $(\I^Y)^Z=\I^Y\cap \I^Z$. By exchanging the roles of $Y$ and $Z$, we also get the other equality $(\I^Z)^Y=\I^Y\cap \I^Z$.

Part \eqref{L:twist4}: the equality $\I^{X_o}=t \I$ follows from the fact that $\I^{X_o}=\ker(\I\twoheadrightarrow \I(o))$. The isomorphism $\I\xrightarrow{\cong} t\I$ is given by multiplication by $t$.

Part \eqref{L:twist5}: denote by $J$ the image of $\leftindex_Z{\I^Y(o)}$ via the morphism $\psi: \I^Y(o)\to \I(o)$. Part \eqref{L:twist2} implies that $J\subseteq \leftindex_{Y^c}{\I(o)}=\Im(\psi)$ and that the restriction of $\psi$ to $\leftindex_Z{\I^Y(o)}$ is injective since $\leftindex_Z{\I^Y(o)}\cap \ker(\psi)=\leftindex_Z{\I^Y(o)}\cap \leftindex_{Y}{\I^Y(o)}=0$ because of our assumption that $Y\wedge Z=\emptyset$. Consider now the following commutative diagram
$$
\begin{tikzcd}
 0 \arrow[r] \arrow[d] &\leftindex_{Y}{\I^Y(o)}\arrow[r,equal] \arrow[d,hookrightarrow] & \leftindex_Y{\I^Y(o)} \arrow[d, hookrightarrow] \\
\leftindex_Z{\I^Y(o)} \arrow[r, hookrightarrow] \arrow[d, "\psi"', "\cong"] & \I^Y(o) \arrow[r,twoheadrightarrow] \arrow[d,twoheadrightarrow] & \I^Y(o)_{Z^c} \\ 
J \arrow[r,hookrightarrow] & \leftindex_{Y^c}{\I(o)} & 
\end{tikzcd}
$$
in which the two upper rows and two left columns are exact, and the morphisms in the last row and and in the  right column are injective. 
Therefore, we get that 
$$\coker(J\hookrightarrow \leftindex_{Y^c}{\I(o)})\cong \coker(\leftindex_Y{\I^Y(o)}\hookrightarrow \I^Y(o)_{Z^c})=\I^Y(o)_{Y^c\cap Z^c}. $$
Hence, $J$ is a subsheaf of $\leftindex_{Y^c}{\I^(o)}$ supported on $Z\subseteq Y^c$ whose quotient is torsion-free, which implies (see Remark \ref{R:IY}) that $J=\leftindex_{Z}(\leftindex_{Y^c}{\I(o)})=\leftindex_{Z}{\I(o)}$. Therefore, we have proved that $\psi$ induces the required isomorphism $\leftindex_Z{\I^Y(o)}\xrightarrow{\cong} \leftindex_{Z}{\I(o)}$.
\end{proof}

Recall that $I(X)$ denotes the set of irreducible components of $X$. Using Lemma \ref{L:twist}\eqref{L:twist3}, we can  extend  the above twisting construction linearly, by defining  for any $g:I(X)\to \NN$ and any $\I\in \TF_{X/\Delta}(\Delta)$:
$$
g(\I):=\I^{\sum g(v) (X_o)_v},
$$
where $\I^{\sum g(v) (X_o)_v}$ denotes the iteration of the twisting construction $g(v)$ times for each irreducible component $(X_o)_v\subseteq X_o$. 
Moreover, since $\I^{nX_o}=t^n\I\cong \I$ for any $n\in \NN$ by Lemma \ref{L:twist}\eqref{L:twist4}, we can define an action  \begin{equation}\label{E:action}
\begin{aligned}
\frac{\ZZ^{I(X)}}{\ZZ}\times |\TF_{X/\Delta}(\Delta)|& \longrightarrow |\TF_{X/\Delta}(\Delta)|\\
([g],\I) & \mapsto g(\I).
\end{aligned}
\end{equation}
where $\ZZ$ is embedded in $\ZZ^{I(X)}$  diagonally, and the representative $g$ of $[g]$ is chosen to be with all values in $\NN$.

\begin{theorem}\label{T:limits}
The action of \eqref{E:action} induces a bijection
$$
\begin{aligned}
\ov{(-)_{\eta}}:\frac{|\TF_{X/\Delta}(\Delta)|}{\ZZ^{I(X)}/\ZZ}& \xrightarrow{\cong} |\TF_{X_{\eta}}(\eta)|\\
[\I] &\mapsto \I_{\eta}.
\end{aligned}
$$
\end{theorem}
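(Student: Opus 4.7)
The map $\overline{(-)_\eta}$ is well-defined: for any $Y\subseteq X_o$ the inclusion $\I^Y\hookrightarrow \I$ is an equality on $X\setminus Y$, hence $(\I^Y)_\eta=\I_\eta$. For surjectivity, the existence part of the valuative criterion of properness for the stack of torsion-free rank-$1$ sheaves (due to Esteves and mentioned in the introduction) extends any $\I_\eta\in |\TF_{X_\eta}(\eta)|$ to some $\I\in \TF_{X/\Delta}(\Delta)$. The substantive content is injectivity modulo the action: given $\I,\I'\in \TF_{X/\Delta}(\Delta)$ with isomorphic generic fibers, we must find $[g]\in \ZZ^{I(X)}/\ZZ$ such that $g(\I)\cong \I'$.

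Fix an isomorphism $\phi_\eta:\I_\eta\xrightarrow{\cong}\I'_\eta$. Properness of $X/\Delta$ together with flatness of $\I,\I'$ over $R$ gives $\operatorname{Hom}_{\O_X}(\I,\I')\otimes_R K=\operatorname{Hom}_{\O_{X_\eta}}(\I_\eta,\I'_\eta)$, so multiplying $\phi_\eta$ by a suitable $t^N$ extends it to a morphism $\phi:\I\to \I'$. After dividing $\phi$ by the largest possible power of $t$, at each generic point $\xi_v$ of an irreducible component $(X_o)_v$ of $X_o$ the local ring $\O_{X,\xi_v}$ is a DVR with uniformizer $t$, the stalks $\I_{\xi_v},\I'_{\xi_v}$ are free of rank one, and for a choice of local generators $e,e'$ we may write $\phi_{\xi_v}(e)=t^{n_v}u_v e'$ with $u_v$ a unit; this gives a valuation vector $(n_v)_v\in \NN^{I(X_o)}$ with $\min_v n_v=0$.

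We proceed by induction on $\sum_v n_v$. In the base case all $n_v=0$, so the cokernel $Q$ of the injection $\phi:\I\hookrightarrow \I'$ has support of dimension $\leq 0$ on $X_o$. Since $\I$ and $\I'$ are $R$-flat with fibers of depth $1$ at closed points (being rank-$1$ torsion-free on the reduced curve $X_o$), the depth formula for flat extensions yields $\operatorname{depth}_{\O_{X,x}}\I_x=\operatorname{depth}_{\O_{X,x}}\I'_x=2$ at every closed point $x\in X$. The depth lemma applied to $0\to \I\to \I'\to Q\to 0$ then forces $\operatorname{depth} Q_x\geq 1$ at each $x\in \supp(Q)$, contradicting $\dim Q\leq 0$. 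Hence $\phi$ is an isomorphism and $[g]=0$ works.

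For the inductive step, set $V:=\{v:n_v=0\}$ (nonempty by the normalization) and $Y:=\bigcup_{v\in V}(X_o)_v$. A local calculation at each $\xi_v$ shows that $\phi|_{\I^Y}:\I^Y\to \I'$ has valuation $\geq 1$ everywhere: equal to $1$ for $v\in V$ because a generator of $\I^Y_{\xi_v}=t\I_{\xi_v}$ is $t$ times a generator of $\I_{\xi_v}$, and equal to $n_v\geq 1$ for $v\notin V$. Since $\I'(o)$ is pure on $X_o$, the induced map $\I^Y\to \I'(o)$ vanishes at every generic point of $X_o$ and hence vanishes identically; therefore $\phi|_{\I^Y}=t\cdot \phi''$ for a unique $\phi'':\I^Y\to \I'$ whose valuation vector has strictly smaller total. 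Induction produces $[g']$ with $g'(\I^Y)\cong \I'$; writing $\mathbf{1}_V\in \NN^{I(X_o)}$ for the characteristic function of $V$, we have $\I^Y=\mathbf{1}_V(\I)$, so $\I'\cong (g'+\mathbf{1}_V)(\I)$. The main obstacle is the depth argument in the base case, which relies on the Cohen--Macaulay-of-dimension-$2$ structure of $\I$ and $\I'$ on $X$ inherited from flatness over $R$ combined with fiberwise pure-dimension-$1$.
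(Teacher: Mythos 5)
Your proof is correct, and it reaches the paper's conclusion by essentially the same iteration scheme (twist by the subcurve of $X_o$ where the central-fibre map degenerates, and repeat), but with two genuinely different ingredients worth comparing. The paper, following Esteves, detects the twisting subcurve as the complement of $\supp(\Im\lambda(o))$ and twists the \emph{target}; you detect the same subcurve via the valuations $n_v$ at the generic points of the components of $X_o$ and twist the \emph{source} -- the two agree because $\supp(\Im\lambda(o))=\bigcup_{n_v=0}(X_o)_v$ and the conventions differ only by $\I^{X_o}\cong\I$. Where your argument adds content: the termination of the iteration, which the paper merely asserts, is made explicit by the strictly decreasing total valuation $\sum_v n_v$; and the base case is settled by a Langton-style depth argument on the total space ($R$-flatness plus depth $1$ on the reduced fibre gives depth $2$, and the depth lemma forbids a nonzero $0$-dimensional cokernel), whereas the paper observes that once $\lambda(o)$ is an embedding, equality of Euler characteristics $\chi(\I(o))=\chi(\J(o))$ forces it to be an isomorphism -- a more elementary route to the same point. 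Two minor presentational remarks: the initial normalization (``divide $\phi$ by the largest power of $t$'') is justified by exactly the purity argument you give later in the inductive step, so a forward reference there would help; and the injectivity of $\phi$ in the base case (needed before speaking of its cokernel) deserves a word -- it follows since $\ker\phi$ is supported on $X_o$ while $t$ acts injectively on the $R$-flat sheaf $\I$.
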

\begin{proof}
The fact that the generic fiber map $(-)_{\eta}$ descends to the quotient by the action of \eqref{E:action} follows from the fact that the twisting operation does not change the generic fiber by Lemma \ref{L:twist}\eqref{L:twist1}.

The fact that the map $()_{\eta}$ (and hence also $\ov{(-)_{\eta}}$) is surjective follows from \cite[Lemma 7.8(i)]{altmankleiman}. 

In order to prove the injectivity of $\ov{(-)_{\eta}}$, we argue similarly to the proof of \cite[Prop. 26]{esteves}. Suppose that $\I,\J\in \TF_{X/\Delta}(\Delta)$ are such that $\I(\eta)\cong \J(\eta)$. Then there exists $\lambda:\I\to \J$ such that $\lambda(\eta)$ is an isomorphism and $\lambda(o)\neq 0$. The image of $\lambda(o)$ is (non-zero) torsion-free rank-$1$ on a subcurve of $X_o$ since it is contained in $\J(o)$, and hence there exists a  subcurve $Z_1\subsetneq X_o$ such that $\Im(\lambda(o))=\I(o)_{Z_1^c}$. If $Z_1=\emptyset$ then $\lambda(o)$ is an embedding and, since $\lambda(\eta)$ is an isomorphism, it follows that $\lambda(o)$ is an isomorphism. Otherwise, $\lambda$ factors as $\lambda:\I\xrightarrow{\lambda_1} \J_1:=\J^{Z_1}\subsetneq \J$. Arguing as before, we find a  subcurve $Z_2\subsetneq X_o$ such that $\Im(\lambda_1(o))=\I(o)_{Z_2^c}$. 
Note that $Z_1\supseteq Z_2$ since the image of $\lambda_1(o)$ surjects onto the image of $\lambda(o)$. If $Z_2=\emptyset$ then $\lambda_1$ is an isomorphism, otherwise we iterate the argument. After finitely many steps, the process  terminates and we end up with a chain of subcurves $$X_o\supsetneq Z_1\supseteq Z_2\supseteq \ldots \supseteq Z_r\supsetneq \emptyset$$ 
such that  $\lambda$ gives rise to an isomorphism $\lambda:\I \xrightarrow{\cong} \J^{\sum_i Z_i}\subseteq \J$. Therefore, $\I\cong g(\J)$ for a certain $g\in \ZZ^{I(X)}$ and we are done. 
\end{proof}

\begin{remark}\label{R:twist}
\noindent
\begin{enumerate}
    \item If $\pi:X=X(o)\times \Delta\to \Delta$ is the constant family of a nodal curve $X(o)$, then the action of \eqref{E:action} coincides with the explicit action defined in \cite[Eq. (2.17)]{viviani2023new} and Theorem \ref{T:limits} coincides with \cite[Prop. 2.7(1)]{viviani2023new}.
    %was examined in detail in \cite[Sec. 2.2.1]{viviani2023new} and \cite[Sec. 4.2.1]{FPV}.
    \item If the total space $X$ is regular and the generic fiber $X_\eta$ is smooth, then we have that $\TF_{X/\Delta}(\Delta)=\PIC_{X/\Delta}(\Delta)$. In this case,  the action of \eqref{E:action} is the standard tensorization of a line bundle on $X$ by twistors, i.e. line bundles associated to Cartier divisors supported on $X_o$, and Theorem \ref{T:limits} says that two line bundles on $X$ that agree on the generic fiber differ by a twistor (see \cite{raynaud70}).
\end{enumerate}
\end{remark}

\subsection{Isotrivial specializations}\label{S:isospec}

In this subsection we describe a special class of specializations in the stack $\TF_X$, where $X$ is a connected reduced curve over $k=\ov k$.

Consider the quotient stack $\Theta_k:=[\AA_k^1/\Gm]$. This stack has two $k$-points: the open point $1:=[\AA^1\setminus\{0\}/\Gm]$ with trivial stabiizer and the closed point $0:=[0/\Gm]=B\Gm$ with stabilizer equal to $\Gm$. 
Given two sheaves $I,J\in \TF_X(k)$, we say that $J$ is an \emph{isotrivial (or very close) specialization} of $\I$ if there exists a morphism $f:\Theta_k\to \TF_X$ such that $f(1)=I$ and $f(0)=J$. The morphism $f:\Theta_k\to \TF_X$ is called an isotrivial (or very close) specialization from $f(1)$ to $f(0)$. 

In the next proposition, we describe isotrivial specializations in $\TF_X$. With this aim, consider the following construction. 

An ordered partition of $X$ (by subcurves)
    $$
    Y_{\bullet}=(Y_0, \ldots, Y_q),
    $$
    is an ordered collection of subcurves covering $X$ and without common pairwise irreducible components.
 The ordered partition $Y_\bullet$ determines a decreasing filtration of $X$ (by subcurves)
 \begin{equation}\label{E:WY}
 W_\supseteq(Y_\bullet): \  X=W_0\supsetneq W_1\supsetneq \ldots \supsetneq W_q\supsetneq W_{q+1}=\emptyset, \text{ where } W_k:=\bigcup_{h\geq k}Y_h.
 \end{equation}
 And, conversely, any decreasing filtration of $X$
 $$
 W_\supseteq: X=W_0\supsetneq W_1\supsetneq W_k \supsetneq W_q\supsetneq W_{q+1}=\emptyset,
 $$
determines an ordered partition of $X$ by 
\begin{equation}\label{E:YW}
Y_\bullet(W_\supseteq)=(Y_0,\ldots, Y_q) \text{ where } Y_k:=W_k-W_{k+1}. 
\end{equation}

 Consider now a sheaf $I\in \TF_X(k)$. An ordered partition $Y_\bullet$ with associated decreasing filtration $W_\supseteq=W_\supseteq(I_\bullet)$ determines a decreasing  filtration of  $I$ by setting
\begin{equation}\label{E:Ileq}
I_\geq(Y_\bullet): I=I_0\supsetneq I_1\supsetneq \ldots \supsetneq I_{q}\supsetneq I_{q+1}=0, \text{ where } I_i:=\leftindex_{W_i}{I}=\leftindex_{W_i}{(\leftindex_{W_{i-1}}I)}=\ker(I_{i-1}\twoheadrightarrow (I_{i-1})_{Y_{i-1}}).
\end{equation}
We now set 
\begin{equation}\label{E:GrY}
\Gr_{Y_\bullet}(I):=\bigoplus_{i=0}^q I_{i}/I_{i+1}= \bigoplus_{i=0}^q (I_{i})_{Y_i}=\bigoplus_{i=0}^q (\leftindex_{W_i}{I})_{Y_i}.
\end{equation}
The sheaf $\Gr_{Y_\bullet}(I)$ belongs to $\TF_X(k)$ and Formula~\eqref{E:add-chi} implies that $\chi(\Gr_{Y_\bullet}(I))=\chi(I)$.

\begin{proposition}\label{P:iso-spec}
    A sheaf $I\in \TF_X(k)$ isotrivially specializes to $J\in \TF_X(k)$ if and only if $J=\Gr_{Y_\bullet}(I)$ for some ordered partition $Y_\bullet$ of $X$.     
\end{proposition}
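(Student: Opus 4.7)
The plan is to invoke the standard equivalence (in the spirit of the filtered-point formalism of \cite{AHLH}) between morphisms $\Theta_k = [\AA^1_k/\Gm] \to \TF_X$ and descending $\ZZ$-filtrations on a torsion-free rank-1 sheaf whose associated graded is again torsion-free rank-1. Concretely, via $\Gm$-equivariant descent (the Rees construction), a morphism $f: \Theta_k \to \TF_X$ corresponds to a $\Gm$-equivariant, $\AA^1$-flat coherent sheaf $\mathcal{I}$ on $X \times \AA^1$ with torsion-free rank-1 geometric fibers, and this translates into a chain
\[
I = F^0I \supseteq F^1I \supseteq \ldots \supseteq F^{q+1}I = 0
\]
with $f(1) = I$ and $f(0) = \bigoplus_i F^iI/F^{i+1}I$; the requirement that $f(0)$ again be torsion-free rank-1 is the additional constraint coming from $\AA^1$-flatness together with the fiberwise condition.

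\emph{($\Leftarrow$):} Given an ordered partition $Y_\bullet = (Y_0, \ldots, Y_q)$ of $X$, take the filtration $I_\geq(Y_\bullet)$ of \eqref{E:Ileq}. By Remark \ref{R:IY}(i) each graded piece $(\leftindex_{W_i}I)_{Y_i}$ is torsion-free rank-1 on $Y_i$, and since the $Y_i$'s cover $X$ with pairwise disjoint sets of irreducible components, $\Gr_{Y_\bullet}(I)$ as defined in \eqref{E:GrY} is torsion-free rank-1 on $X$. The Rees construction applied to $I_\geq(Y_\bullet)$ then produces the required morphism $f: \Theta_k \to \TF_X$ with $f(1) = I$ and $f(0) = \Gr_{Y_\bullet}(I)$.

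\emph{($\Rightarrow$):} Conversely, starting from $f: \Theta_k \to \TF_X$ with $f(1) = I$, normalize its associated filtration to strict inclusions $I = F^0I \supsetneq F^1I \supsetneq \ldots \supsetneq F^{q+1}I = 0$. Set $W_i := \supp(F^iI)$, so $W_0 = X$ and $W_{q+1} = \emptyset$. Each graded piece $F^iI/F^{i+1}I$ is a direct summand of the torsion-free rank-1 sheaf $f(0)$, hence is torsion-free rank-1 on its support $Y_i$; the rank-1 condition forces the $Y_j$'s to have pairwise disjoint sets of irreducible components, so that $W_{i+1} = W_i - Y_i$ and $Y_\bullet = (Y_0, \ldots, Y_q)$ is an ordered partition of $X$ with associated chain $W_\supseteq$ as in \eqref{E:WY}. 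Since $F^{i+1}I$ is supported on $W_{i+1}$ we have $F^{i+1}I \subseteq \leftindex_{W_{i+1}}(F^iI)$, and the induced surjection $F^iI/F^{i+1}I \twoheadrightarrow (F^iI)_{Y_i}$ between torsion-free rank-1 sheaves on $Y_i$ is necessarily an isomorphism; hence $F^{i+1}I = \leftindex_{W_{i+1}}(F^iI)$. Inducting on $i$ via Lemma \ref{L:IYZ}\eqref{L:IYZ1} yields $F^iI = \leftindex_{W_i}I$, so $F^\bullet I = I_\geq(Y_\bullet)$ and $f(0) = \Gr_{Y_\bullet}(I)$ by \eqref{E:GrY}.

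\emph{Expected main obstacle.} The technical heart is the first paragraph: the precise equivalence between morphisms out of $\Theta_k$ and filtrations with torsion-free rank-1 associated graded. This is standard in the $\Theta$-stratification formalism but must be spelled out in the torsion-free, non-locally-free setting, where one has to verify that $\AA^1$-flatness of the Rees family combined with torsion-freeness of the central fiber imposes exactly the constraint that the graded pieces be supported on subcurves with pairwise disjoint component sets. Once that correspondence is in place, the purely sheaf-theoretic identification with ordered partitions is mechanical via Remark \ref{R:IY} and Lemma \ref{L:IYZ}.
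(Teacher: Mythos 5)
Your proposal is correct and follows essentially the same route as the paper: both reduce to the Heinloth-type correspondence between morphisms $\Theta_k\to\TF_X$ and decreasing filtrations of $I$ with torsion-free rank-$1$ associated graded, and then identify such filtrations with ordered partitions via $Y_i=\supp(F^iI/F^{i+1}I)$ and the observation that $F^iI/F^{i+1}I\cong (F^iI)_{Y_i}$ forces $F^iI=\leftindex_{W_i}I$. The paper likewise treats the first step by citing \cite[Lemma~1.10]{heinloth-HM} rather than re-deriving the Rees construction, so the "main obstacle" you flag is handled identically there.
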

\begin{proof}
By definition, a sheaf $I\in \TF_X(k)$ isotrivially specializes to $J\in \TF_X(k)$ if and only if there exists a morphism  $f:\Theta_k\to \TF_X$ such that $f(1)=I$ and $f(0)=J$. Arguing as in \cite[Lemma 1.10]{heinloth-HM},  such a morphism is equivalent to a decreasing filtration of $f(1)=I$
$$
I_\geq: I=I_{0}\supsetneq I_1\supsetneq \ldots \supsetneq I_q\supsetneq I_{q+1}=0
$$
such that 
$$f(0)=J=\Gr(I_\geq):=\bigoplus_{i=0}^q I_i/I_{i+1}.$$

We now claim that  a decreasing filtration $I_\geq$ of $I$ such that $\Gr(I_\geq)\in \TF_X(k)$ is equivalent to  an ordered partition $Y_{\bullet}=(Y_0, \ldots, Y_q)$ of $X$ and, under this correspondence, we have that $\Gr(I_\geq)=\Gr_{Y_\bullet}(I)$.

Indeed, we have seen that an ordered partition $Y_{\bullet}:=(Y_0, \ldots, Y_q)$ of $X$ defines a decreasing filtration of $I$:
$$
I_\geq(Y_\bullet):\  I=I_0\supsetneq I_1\supsetneq \ldots \supsetneq I_{q}\supsetneq I_{q+1}=0, \text{ where } I_i:=\ker(I_{i-1}\twoheadrightarrow (I_{i-1})_{Y_{i-1}}),
$$
such that 
$$
\Gr(I_\geq)=\Gr_{Y_\bullet}(I)\in \TF_X(k).
$$

Conversely, a decreasing  filtration $I_\geq$ of $I$ such that $\Gr(I_\geq)\in \TF_X(k)$ determines an ordered partition of $X$:
$$Y_{\bullet}(I_\geq):=(Y_0, \ldots, Y_q) \text{ where } Y_i:=\supp(I_i/I_{i+1}).$$
Note that this is an ordered partition of $X$ since $\Gr(I_\leq)$ is rank~$1$ on $X$.

We conclude by observing that these two associations are one the inverse of the other:

\begin{itemize}
    \item $Y_\bullet(I_\geq(Y_\bullet))=Y_\bullet$, since $I_{i}/I_{i+1}=(I_i)_{Y_i}$ is supported in $Y_i$ for any successive quotient $I_{i}/I_{i+1}$ of $I_\geq(Y_\bullet)$;
    \item $I_\geq(Y_\bullet(I_\geq))=I_\geq$ using that $I_i/I_{i+1}=(I_i)_{Y_i}$ since the former sheaf is supported on $Y_i$ (by definition of $Y_\bullet(I_\leq)$) and it is torsion-free by the assumption that $\Gr(I_\bullet)$ is torsion-free.  
\end{itemize}
%It remains to observe that if $I_\leq$ corresponds to $Y_\bullet$ under the above bijection, we have that $\Gr(I_\leq)=\Gr_{Y_\bullet}(I)$, which follows from the fact that  $I_i/I_{i+1}=(I_i)_{Y_i}$. 
\end{proof}

\section{V-stability conditions}\label{Sub:Vstab}

In this section, we define V-stability conditions for  connected reduced curves (and for their families).  

\begin{definition}\label{D:VStabX}
Let $X$ be a connected reduced curve. 
   A \emph{stability condition of vine type} (or simply a \textbf{V-stability condition)}    of characteristic $\chi\in \ZZ$ on  $X$ is a function
    \begin{align*}
        \s:\BCon(X)&\to \ZZ\\
        Y&\mapsto \s_Y
    \end{align*}
    satisfying the following properties:
\begin{enumerate}
\item \label{E:condi1} for any $Y\in \BCon(X)$, we have 
\begin{equation}\label{E:sum-n}
\mathfrak s_Y+\mathfrak s_{Y^\mathsf{c}}-\chi
\in \{0,1\}.
\end{equation}
%\begin{equation}\label{E:sum-n}
%\mathfrak n_Y+\mathfrak n_{Y^\mathsf{c}}-g(Y)-g(Y^c)+g(X)-d+1
%\in \{0,1\}.
%\end{equation}
A subcurve $Y\in \BCon(X)$ is said to be \emph{$\s$-degenerate} if $\s_Y+\s_{Y^c}-\chi=0$,
%$\mathfrak n_Y+\mathfrak n_{Y^\mathsf{c}}-g(Y)-g(Y^c)+g(X)-d+1=0$, 
and \emph{$\s$-nondegenerate} otherwise.

\item  \label{E:condi2} given subcurves $Y_1,Y_2,Y_3\in \BCon(X)$ without pairwise common irreducible components such that $X=Y_1\cup Y_2\cup Y_3$, we have that:
\begin{enumerate}
 \item if two among the subcurves $\{Y_1,Y_2,Y_3\}$ are $\s$-degenerate, then so is  the third. %is $\s$-degenerate;
            \item the following condition holds
            \begin{equation} \label{E:tria-n}
            \sum_{i=1}^{3}\s_{Y_i}-\chi
            %\sum_{i=1}^{3}\n_{Y_i}-\sum_{i=1}^3 g(Y_i)+g(X)-d+2
            \in \begin{cases}
                \{1,2\} \textup{ if $Y_i$ is $\s$-nondegenerate for all $i=1,2,3$};\\
                \{1\} \textup{ if there exists a unique   } i\in \{1,2,3\} \text{ such that $Y_i$ is $\s$-degenerate};\\
                \{0\} \textup{ if $Y_i$ is $\s$-degenerate for all $i=1,2,3$}.
            \end{cases}
        \end{equation}
\end{enumerate}
\end{enumerate}

The characteristic $\chi$ of $\s$ will also be denoted by $|\s|$. The \emph{degeneracy set} of $\s$ is the collection
\begin{equation*}
\D(\s):=\{Y\in \BCon(X): Y \text{ is $\s$-degenerate}\}.
\end{equation*}
    A V-stability condition $\s$ is called \emph{general}  if every $Y\in \BCon(X)$ is $\s$-nondegenerate, i.e. if $\D(\s)=\emptyset$.

The collection of all V-stability conditions of characteristic $\chi$ on $X$ is denoted by $\VStab^\chi(X)$ and the collection of all V-stability conditions on $X$ is denoted by 
$$\VStab(X)=\coprod_{\chi \in \ZZ}\VStab^\chi(X).$$
\end{definition}

Note that, if $X$ is a nodal curve over an algebraically closed field, then the notion of a general V-stability condition introduced here agrees with the notion of a general V-stability condition introduced in \cite[Definition~1.4]{viviani2023new}.

\begin{remark}\label{R:n-deg}
Let $\s$ be a V-stability condition on $X$.
\begin{enumerate}
%\item \label{R:n-deg0}
%Using \eqref{E:inter}, Conditions \eqref{E:condi1} and \eqref{E:condi2} can be restated using that 
%$$
%\begin{sis}
%&\mathfrak n_Y+\mathfrak n_{Y^\mathsf{c}}-g(Y)-g(Y^c)+g(X)-d+1= \mathfrak n_Y+\mathfrak n_{Y^\mathsf{c}}+|Y\cap Y^\mathsf{c}|-d,\\
%&\sum_{i=1}^{3}\n_{Y_i}-\sum_{i=1}^3 g(Y_i)+g(X)-d+2=\sum_{i=1}^{3}\n_{Y_i}-d+|Y_i\cap Y_j|+|(Y_i\cup Y_j)\cap Y_k|,
%\end{sis}
%$$
%for any $\{i,j,k\}=\{1,2,3\}$. If, moreover, the curve $X$ has locally planar singularities, then $|(Y_i\cup Y_j)\cap Y_k|=|Y_i\cap Y_k|+|Y_j\cap Y_k|$ and therefore we also have that
%\footnote{F: Questa formula e' falsa per curve ridotte qualsiasi}
%$$
%\sum_{i=1}^{3}\n_{Y_i}-\sum_{i=1}^3 g(Y_i)+g(X)-d+2=\sum_{i=1}^{3}\n_{Y_i}+\sum_{1\leq i<j\leq 3} |Y_i\cap Y_j|-d.
%$$
%\item \label{R:vgeneral} If $X$ is a nodal curve over an algebraically closed field, then the notion of a general V-stability condition introduced here agrees with the notion of a general V-stability condition introduced in \cite[Definition~1.4]{viviani2023new}. 

    \item \label{R:n-deg1}
   The degeneracy subset $\D(\s)$  satisfies the following two properties (which follow easily from Definition~\ref{D:VStabX}):
    \begin{itemize}
        \item for any $Y\in \BCon(X)$, we have that $Y\in \D(\s)$ if and only if $Y^\mathsf{c}\in \D(\s)$.
        \item for any $Y_1,Y_2\in \BCon(X)$ such that $Y_1\wedge Y_2=\emptyset$ and  $Y_1\cup Y_2\in \BCon(X)$, we have that if two  elements of $\{Y_1,Y_2,Y_1\cup Y_2\}$ belong to $\D(\s)$ then so does the third. 
    \end{itemize} 
   \item \label{R:n-deg2} If $Y_1,Y_2\in \BCon(X)$ are such that $Y_1\wedge Y_2=\emptyset$ and  $Y_1 \cup Y_2\in \BCon(X)$, then we have that 
   \begin{equation}\label{E:n-union}
    \s_{Y_1\cup Y_2}-\s_{Y_1}-\s_{Y_2}
   \in 
   \begin{cases}
        \{0\} & \text{ if $Y_1$  or  $Y_2$ is $\s$-degenerate};\\
        \{-1\} & \text{ if $Y_1$  and  $Y_2$ are $\s$-nondegenerate} \\
        & \text{ and $Y_1\cup Y_2$ is $\s$-degenerate};\\
      \{0,-1\} & \text{ if $Y_1, Y_2, Y_1\cup Y_2$ are $\s$-nondegenerate}.\\
   \end{cases}
   \end{equation}
   This follows by applying \eqref{E:sum-n} to $Y_1\cup Y_2$ and \eqref{E:tria-n} to $(Y_1,Y_2,(Y_1\cup Y_2)^\mathsf{c})$. 
   
   Conversely, Properties \eqref{E:sum-n} and \eqref{E:n-union} imply Property \eqref{E:tria-n}, as it follows by applying \eqref{E:n-union} to $(Y_1,Y_2)$ and by using \eqref{E:sum-n} for $(Y_1\cup Y_2,Y_3)$. 
   \end{enumerate}
\end{remark}

We now introduce the extended degeneracy set and the extended V-function of a V-stability condition.

\begin{definition}\label{D:ext-n}
    Let $\s$ be a V-stability condition on $X$.
    The \emph{extended degeneracy set} of $\s$ is 
       $$
        \wh \D(\s):=\left\{
    \begin{aligned}
        & W\in \Con(X)\: :  W^\mathsf{c}=Z_1\coprod\ldots \coprod Z_k \\
        &    \text{ with } Z_i \in \D(\s) \text{ for all } i=1, \ldots, k.
     \end{aligned}   
        \right\} \bigcup\, \{X\}. $$
        The elements $W$ of $\D(\s)$ are called $\s$-degenerate subcurves.
\end{definition}
Note that the extended degeneracy set is an extension of the degeneracy set since 
$$\D(\s)=\wh \D(\s)\cap \BCon(X).$$

\begin{definition}\label{D:ext-s}
Let $\s$ be a V-stability condition on $X$. We extend $\s$ to a function denoted with the same symbol $ \s: \Sub(X) \to \ZZ$, and called the \emph{extended V-function} (of the original $\s$), as follows.

If $Y$ is connected and $Y^\mathsf{c}=\bigsqcup_{i\in I}Z_i$ is the decomposition into connected components of its complement (so $Z_i \in \BCon(X)$ for all $i \in I$), we  set
\begin{equation} \label{eq: ext-conn}
 \s_Y:= |\s|-\sum_{i\in I}\s_{Z_i}+|\{Z_i : Z_i\not \in \D(\s)\}|.
\end{equation}
If $Y$ is not connected and $Y= \bigsqcup_{j \in J} W_j$ is its decomposition into connected components, we set
$$
s_Y:= \sum_{j \in J} \s_{W_j},
$$
where the summands on the right hand side have been defined in \eqref{eq: ext-conn}.
\end{definition}
Note that the above definition is indeed an extension of Definition~\ref{D:VStabX}: if $Y \in \BCon(X)$, then the right hand side of \eqref{eq: ext-conn} agrees with the original value of $\s_Y$  thanks to Equation~\eqref{E:sum-n}. 

\begin{remark}\label{R:ext-s}
Let $\s$ be a V-stability condition of characteristic $\chi$ on $X$. It follows from Definition \ref{D:ext-s}, that if  $Y\in \Con(X)$ is such that $Y^\mathsf{c}=\bigsqcup_{i\in I}Z_i$ with $Z_i\in \Con(X)$, then we have
        $$
        {\s}_Y+{\s}_{Y^\mathsf{c}}=\chi+|\{Z_i\notin\D(\s)\}|.
        $$
In particular, for any $W\in \wh \D(\s)$ we have that 
$$ \s_W=
    \begin{cases}
     |\s| & \text{ if } W=X,\\
     |\s|-\sum_{i=1}^k \s_{Z_i}  
     &\text{ if }  
        W^\mathsf{c}=Z_1\coprod\ldots \coprod Z_k 
   \text{ with } Z_i\in  \D(\s). \\
    \end{cases}
    $$
\end{remark}

\begin{lemma}\label{L:ExtV-funP}
        Let  $Y,Z\in\Con(X)$ be such that $Y\wedge Z=\emptyset$ and $Y\cup Z\in\Con(X)$. Write the  decomposition of $(Y\cup Z)^\mathsf{c}$ into connected components as
    $$
    (Y\cup Z)^\mathsf{c}=\bigsqcup_{i\in I}A_i\sqcup\bigsqcup_{j\in J}B_j\sqcup\bigsqcup_{k\in K}C_k,
    $$
    where $\{A_i\}$ are the connected components that intersect $Y$ but not $Z$, $\{B_j\}$ are the connected components that intersect both $Y$ and $Z$, and $\{C_k\}$ are the connected components that intersect $Z$ but not $Y$.
Set $Y':=Y \cup \bigcup_{i\in I} A_i\in \BCon(X)$ and $Z':=Z \cup \bigcup_{k\in K} C_k\in \BCon(X)$. For any  V-stability condition $\s$ on $X$, we have two cases:
\begin{enumerate}[(a)]
    \item \label{L:ExtV-funPA} If $Y', Z', B_j\in \D(\s)$ then 
    $$\s_{Y\cup Z}-\s_Y-\s_Z=0.$$
%    In particular, this holds if $Y,Z,Y\cup Z\in \wh \D(\s)$. 
    \item \label{L:ExtV-funPB} Otherwise, we have that 
     $$1-|\{Y', Z'\notin \D(\s)\}|\leq \s_{Y\cup Z}-\s_Y-\s_Z\leq |\{B_j\notin \D(\s)\}|-1$$
\end{enumerate}
 %   \end{enumerate}
\end{lemma}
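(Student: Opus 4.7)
My plan is to reduce to the case $Y,Z\in \BCon(X)$ (no $A_i$'s or $C_k$'s) and then induct on the number $m$ of bridges $B_j$.

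\textbf{Stage 1 (reduction).} For $Y\in \Con(X)$, the connected components of $Y^{\mathsf c}$ are the isolated $A_i$ together with $\tilde Z:=(Y')^{\mathsf c}=Z\cup\bigcup_j B_j\cup\bigcup_k C_k\in\BCon(X)$. Plugging this into Definition~\ref{D:ext-s} and using the identity $\s_{Y'}=\chi-\s_{\tilde Z}+\epsilon_{\tilde Z}$ coming from \eqref{E:sum-n} (with $\epsilon_W\in\{0,1\}$ equal to $1$ iff $W\notin\D(\s)$, and noting $\epsilon_{Y'}=\epsilon_{\tilde Z}$), a short computation yields
\[
\s_{Y'}-\s_Y=\sum_i \s_{A_i}-|\{A_i\notin\D(\s)\}|,
\]
and symmetrically $\s_{Z'}-\s_Z=\sum_k \s_{C_k}-|\{C_k\notin\D(\s)\}|$. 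Applying the definition to $(Y\cup Z)^{\mathsf c}=\bigsqcup A_i\sqcup\bigsqcup B_j\sqcup\bigsqcup C_k$ and $(Y'\cup Z')^{\mathsf c}=\bigsqcup B_j$ gives
\[
\s_{Y'\cup Z'}-\s_{Y\cup Z}=\sum_i\s_{A_i}+\sum_k\s_{C_k}-|\{A_i\notin\D(\s)\}|-|\{C_k\notin\D(\s)\}|,
\]
so subtracting yields $\s_{Y\cup Z}-\s_Y-\s_Z=\s_{Y'\cup Z'}-\s_{Y'}-\s_{Z'}$. Since the case dichotomy and the bounds only involve the degeneracies of $Y'$, $Z'$ and the $B_j$, it suffices to treat the case $Y=Y'$, $Z=Z'$.

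\textbf{Stage 2 (induction on $m$).} The base case $m=0$ means $Y=Z^{\mathsf c}$, so $\s_{Y\cup Z}=\chi$ and \eqref{E:sum-n} gives the result. For $m\ge 1$, set $Z'':=Z\cup B_m$; its complement $Y\cup\bigcup_{j<m}B_j$ is connected (each remaining $B_j$ meets $Y$), so $Z''\in\BCon(X)$ and $(Y,Z'',\{B_j\}_{j<m})$ satisfies the hypotheses with $m-1$ bridges. From the extended V-function one obtains $\s_{Y\cup Z''}-\s_{Y\cup Z}=\s_{B_m}-\epsilon_{B_m}$, while Remark~\ref{R:n-deg}\eqref{R:n-deg2} applied to $(Z,B_m)$ gives $\s_{Z''}-\s_Z-\s_{B_m}=\delta\in\{-1,0\}$, whose exact value is governed by the degeneracies of $Z,B_m,Z''$. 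Adding,
\[
\s_{Y\cup Z}-\s_Y-\s_Z=\bigl(\s_{Y\cup Z''}-\s_Y-\s_{Z''}\bigr)+\epsilon_{B_m}+\delta.
\]

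\textbf{Main obstacle.} The delicate point is bookkeeping degeneracies when propagating the inductive bounds. By Remark~\ref{R:n-deg}\eqref{R:n-deg1}, among $\{Z,B_m,Z''\}$ either all three or at most one lies in $\D(\s)$, which pins down $\delta$ in every scenario. I then split according to whether $(Y,Z'',\{B_j\}_{j<m})$ falls into case~\eqref{L:ExtV-funPA} (the inductive contribution is $0$; a direct enumeration of the allowed degeneracies of $(Z,B_m)$ shows $\epsilon_{B_m}+\delta$ hits exactly the required value) or case~\eqref{L:ExtV-funPB} (the upper bound propagates from $\delta\le 0$; the lower bound reduces to the inequality $\delta\ge d_Z+d_{B_m}-d_{Z''}-1$, where $d_W$ is the indicator of $W\in\D(\s)$, which one checks in the five admissible configurations of $(d_Z,d_{B_m},d_{Z''})$).
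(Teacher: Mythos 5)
Your proof is correct and follows essentially the same route as the paper's: both first use the extended V-function formula to reduce to the biconnected configuration $Y=Y'$, $Z=Z'$, then induct on the number of bridging components $B_j$ by absorbing one bridge into one side and controlling the increment via Remark~\ref{R:n-deg}\eqref{R:n-deg2}. The only differences are cosmetic (you merge $B_m$ into $Z$ where the paper merges $B_1$ into $Y'$, and you fold the $|J|=1$ base case into the induction), and your parenthetical that the degeneracy pattern "pins down $\delta$" is slightly overstated in the all-nondegenerate case, though your five-configuration check still closes the required inequality there.
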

The content of the above Lemma, if we further assume $Y,Z,Y\cup Z\in \BCon(X)$, follows immediately from Remark \ref{R:n-deg}\eqref{R:n-deg2}. 

\begin{proof}
For any subcurve $W\in\BCon(X)$, we define the integer $$
    \delta_W:=\begin{cases}
        1 \ \ \textup{ if } W\notin\D(\s),\\
        0 \ \ \textup{ otherwise}.
    \end{cases}
    $$
    Remark \ref{R:n-deg}\eqref{R:n-deg1} implies that $\delta_W=\delta_{W^c}$.
    
Observe that the decompositions into connected components of $Y^\mathsf{c}$ and $Z^\mathsf{c}$ are respectively
    $$
    Y^\mathsf{c}=\bigsqcup_{i\in I}A_i\sqcup\biggl(Z'\cup\bigcup_{j\in J}B_j\biggr)=\bigsqcup_{i\in I}A_i\sqcup (Y')^c \quad\textup{and}\quad Z^\mathsf{c}=\bigsqcup_{k\in K}C_k\sqcup\biggl(\ov Y\cup\bigcup_{j\in J}B_j\biggr)=\bigsqcup_{k\in K}C_k\sqcup (Z')^c.
    $$
    %For any subcurve $W\in\BCon(X)$, we define the integer $$
    %\delta_W:=\begin{cases}
    %    1,\textup{ if }W\notin\D(\s),\\0,\textup{ otherwise}.
    %\end{cases}
    %$$
    Hence, by Definition~\ref{D:ext-s} and Equation \ref{E:sum-n}, we have
    \begin{equation}\label{E:diff-s}
    \begin{aligned}
        & \s_{Y\cup Z}-\s_Y-\s_Z=\chi-\sum_{j\in J}\s_{B_j}+\sum_{j\in J}\delta_{B_j}-\chi+\s_{(Y')^c}-\delta_{Y'} -\chi+\s_{(Z')^c} -\delta_{Z'}=\\
        & =\chi-\sum_{j\in J}\s_{B_j}+\sum_{j\in J}\delta_{B_j}-\s_{Y'}-\s_{Z'}. 
        \end{aligned}
    \end{equation}
Observe that if $|J|=0$ (i.e. $X=Y'\cup Z'$), then the statement follows from \eqref{E:sum-n} and \eqref{E:diff-s}. Hence we can assume that $|J|\geq 1$.
    
We now proceed by induction on $n:=|J|\geq 1$. Write $J=\{1,\ldots, n\}$.

If $|J|=n=1$ (i.e. $X=Y'\cup Z'\cup B_1$) then the statement follows from 
   \eqref{E:tria-n} and \eqref{E:diff-s}.

Assume now $n\geq 2$ and we will use the induction hypothesis, so  the statement is valid for $Y'':=Y'\cup B_1\in \BCon(X)$ and $Z'\in \BCon(X)$ since 
$$
(Y''\cup Z')^c=\coprod_{2\leq j \leq n} B_j.$$
Equation \eqref{E:diff-s} implies that 
\begin{equation}\label{E:diff-s2}
     \s_{Y\cup Z}-\s_Y-\s_Z=\s_{Y'\cup Z'}-\s_{Y'}-\s_{Z'}=[\s_{Y''\cup Z'}-\s_{Y''}-\s_{Z'}]+[\s_{Y''}-\s_{Y'}-\s_{B_1}+\delta_{B_1}].
    \end{equation}
We now distinguish several cases:
\begin{enumerate}
    \item Assume that $\{Y'',Z',B_j: j\geq 2\}\subseteq \D(\s)$. Then, by induction we get 
\begin{equation}\label{E:eqa1}
    \s_{Y''\cup Z'}-\s_{Y''}-\s_{Z'}=0.
\end{equation}
Using Remark \ref{R:n-deg}\eqref{R:n-deg1}, we can distinguish two subcases:
\begin{enumerate}[(i)]
    \item If $Y',B_1\in \D(\s)$ (i.e. we are in Case \eqref{L:ExtV-funPA}) then Remark \ref{R:n-deg}\eqref{R:n-deg2} implies that 
\begin{equation}\label{E:eqa2}
\s_{Y''}-\s_{Y'}-\s_{B_1}+\delta_{B_1}=0+0=0.    
\end{equation}
We conclude by putting together \eqref{E:diff-s2}, \eqref{E:eqa1} and \eqref{E:eqa2}. 
\item If $Y',B_1\not \in \D(\s)$  then Remark \ref{R:n-deg}\eqref{R:n-deg2} implies that 
\begin{equation}\label{E:eqa3}
\s_{Y''}-\s_{Y'}-\s_{B_1}+\delta_{B_1}=-1+1=0.    
\end{equation}
By putting together \eqref{E:diff-s2}, \eqref{E:eqa1} and \eqref{E:eqa3} we deduce that $ \s_{Y\cup Z}-\s_Y-\s_Z=0$ which shows that \eqref{L:ExtV-funPB} holds in this case since the left and right hand of Case~\eqref{L:ExtV-funPB} are $0$. 
\end{enumerate} 
\item Assume that $\{Y'',Z',B_j: j\geq 2\}\not\subseteq \D(\s)$. Then, by induction we get 
\begin{equation}\label{E:eqa4}
    1-\delta_{Y''}-\delta_{Z'}\leq \s_{Y''\cup Z'}-\s_{Y''}-\s_{Z'}\leq \sum_{j\geq 2} \delta_{B_j}-1.
\end{equation}
We now distinguish two subcases:
\begin{enumerate}[(i)]
    \item If either $B_1\in \D(\s)$ or $Y'\in \D(\s)$ then Remark \ref{R:n-deg}\eqref{R:n-deg2} implies that 
\begin{equation}\label{E:eqa5}
\s_{Y''}-\s_{Y'}-\s_{B_1}+\delta_{B_1}=\delta_{B_1}.
\end{equation}
By putting together \eqref{E:diff-s2}, \eqref{E:eqa4} and \eqref{E:eqa5} we get the desired inequalities
\begin{equation*}\label{E:eqa6}
   1-\delta_{Y'}-\delta_{Z'}\leq 1-\delta_{Y''}-\delta_{Z'}+\delta_{B_1} \leq \s_{Y\cup Z}-\s_Y-\s_Z\leq \sum_j \delta_{B_j}-1,
\end{equation*}
where in the first inequality we have used that $\delta_{Y''}\leq \delta_{Y'}+\delta_{B_1}$ which follows from the fact that if $Y', B_1\in \D(\s)$ then also $Y''\in \D(\s)$ by Remark \ref{R:n-deg}\eqref{R:n-deg1}. 
 \item If $B_1\not\in \D(\s)$ and $Y'\not\in \D(\s)$ then Remark \ref{R:n-deg}\eqref{R:n-deg2} implies that 
\begin{equation}\label{E:eqa7}
-1+\delta_{B_1}\leq \s_{Y''}-\s_{Y'}-\s_{B_1}+\delta_{B_1}\leq \delta_{B_1}.
\end{equation}
By putting together \eqref{E:diff-s2}, \eqref{E:eqa4} and \eqref{E:eqa7} we get the desired inequalities
\begin{equation*}\label{E:eqa8}
   1-\delta_{Y'}-\delta_{Z'}\leq 1-\delta_{Y''}-\delta_{Z'}-1+\delta_{B_1} \leq \s_{Y\cup Z}-\s_Y-\s_Z\leq \sum_j \delta_{B_j}-1,
\end{equation*}
where in the first inequality we have used that $1+\delta_{Y''}\leq 2=\delta_{Y'}+\delta_{B_1}$.
\end{enumerate}
\end{enumerate}

\end{proof}

\begin{corollary}\label{C:add-whn} 
Let $W_1,W_2\in \wh \D(\s)$ such that $W_1\wedge W_2=\emptyset$ and $W_1\cup W_2\in \wh \D(\s)$.
 Then 
  $$
 \s_{W_1\cup W_2}= \s_{W_1}+\s_{W_2}.$$
\end{corollary}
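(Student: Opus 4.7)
My plan is to deduce the corollary directly from case~\eqref{L:ExtV-funPA} of Lemma~\ref{L:ExtV-funP}, applied to $Y=W_1$ and $Z=W_2$. The hypotheses of that lemma are immediate: $W_1\wedge W_2=\emptyset$ is given, and $W_1,W_2,W_1\cup W_2$ are all connected because they lie in $\wh\D(\s)\subseteq \Con(X)\cup\{X\}$. Before applying the lemma, I would dispose of the trivial case $W_1=X$ (or $W_2=X$): this would force $W_2\subseteq W_1^\mathsf{c}=\emptyset$, contradicting $W_2\in\wh\D(\s)$. Hence $W_1,W_2\neq X$, and by definition of $\wh\D(\s)$ both $W_1^\mathsf{c}$ and $W_2^\mathsf{c}$ decompose as disjoint unions of elements of $\D(\s)$.

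Following the notation of Lemma~\ref{L:ExtV-funP}, I would split $(W_1\cup W_2)^\mathsf{c}$ into its connected components $\bigsqcup_i A_i\sqcup\bigsqcup_j B_j\sqcup\bigsqcup_k C_k$, and form $Y':=W_1\cup\bigcup_i A_i$ and $Z':=W_2\cup\bigcup_k C_k$. The crux of the argument is to verify that $Y',Z'\in\D(\s)$ and every $B_j\in\D(\s)$, since case~\eqref{L:ExtV-funPA} of the lemma then yields $\s_{W_1\cup W_2}-\s_{W_1}-\s_{W_2}=0$, which is the desired equality. The $B_j$ (together with the $A_i$ and $C_k$) lie in $\D(\s)$ because, when $W_1\cup W_2\neq X$, they are precisely the connected components of $(W_1\cup W_2)^\mathsf{c}$ and $W_1\cup W_2\in\wh\D(\s)$; when $W_1\cup W_2=X$ there are no such components and $Y',Z'$ reduce to $W_1=W_2^\mathsf{c}, W_2=W_1^\mathsf{c}\in\D(\s)$ directly.

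For $Y'\in\D(\s)$, and symmetrically for $Z'$, the point is to identify $(Y')^\mathsf{c}$ as a connected component of $W_1^\mathsf{c}$. By definition each $A_i$ meets $W_1$ but shares no node with $W_2$, $B_j$, or $C_k$ (otherwise $A_i$ would not be a connected component of $(W_1\cup W_2)^\mathsf{c}$), so each $A_i$ is already a connected component of $W_1^\mathsf{c}$. On the other hand $(Y')^\mathsf{c}=W_2\cup\bigcup_j B_j\cup\bigcup_k C_k$ is connected, because $W_2$ is connected and every $B_j$ and $C_k$ meets $W_2$. Thus $W_1^\mathsf{c}=\bigsqcup_i A_i\sqcup (Y')^\mathsf{c}$ is the decomposition into connected components, and all these components lie in $\D(\s)$ by the assumption $W_1\in\wh\D(\s)\setminus\{X\}$. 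In particular $(Y')^\mathsf{c}\in\D(\s)$, whence $Y'\in\D(\s)$ by Remark~\ref{R:n-deg}\eqref{R:n-deg1}.

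The only real work is the component bookkeeping for $W_1^\mathsf{c}$ (and $W_2^\mathsf{c}$) versus $(W_1\cup W_2)^\mathsf{c}$; this is the main (modest) obstacle. Once it is settled, Lemma~\ref{L:ExtV-funP}\eqref{L:ExtV-funPA} delivers the conclusion without further computation.
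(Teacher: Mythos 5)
Your proof is correct and follows essentially the same route as the paper's: both reduce to case~\eqref{L:ExtV-funPA} of Lemma~\ref{L:ExtV-funP} by checking that $Y'$, $Z'$ and the $B_j$ are $\s$-degenerate, which is exactly the observation the paper invokes. The only cosmetic difference is that the paper dispatches the case $W_1\cup W_2=X$ separately via Remark~\ref{R:ext-s} whereas you fold it into the lemma, and your explicit bookkeeping identifying $(Y')^{\mathsf{c}}$ as a connected component of $W_1^{\mathsf{c}}$ spells out the detail the paper leaves implicit.
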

Observe that if $W_1,W_2\in \D(\s)$ with $W_1\cup W_2=X$ then the above Corollary follows from \eqref{E:sum-n}, while if $W_1,W_2,W_1\cup W_2\in \D(\s)$ then the above Corollary follows from \eqref{E:tria-n} (see Remark \ref{R:n-deg}\eqref{R:n-deg2}). 
\begin{proof}
If $W_1\cup W_2=X$ then the statement follows from Remark \ref{R:ext-s}. Instead, if  $W_1\cup W_2\neq X$ then the statement follows from Lemma  \ref{L:ExtV-funP}\eqref{L:ExtV-funPA} using that if $Y\in \wh \D(\s)$ (resp. $Z\in \wh \D(\s)$) then $Y'\in \D(\s)$ (resp. $Z'\in \D(\s)$) and if $Y\cup Z\in \wh \D(\s)$ then $B_j\in \D(\s)$ for any $j\in J$.
\end{proof}

Next, we show that a V-stability $\s$ on $X$  induces a natural V-stability on each connected $\s$-degenerate subcurve of $X$ (as in Definition \ref{D:ext-n}), using the extended V-function of Definition \ref{D:ext-s}.

\begin{lemma-definition}\label{LD:V-subgr}
Let $\s$ be a V-stability condition on  $X$ and let  $Y\in \wh \D(\s)$. 
Then the \emph{restriction of $\s$ to $Y$} is the V-stability condition $\s(Y)$ on $Y$  defined by 
$$
\s(Y)_W:=\s_W \text{ for any } W\in \BCon(Y).
$$
%VECCHIA DEFINIZIONE
%as follows: consider the decomposition into connected components $Y^\mathsf{c}=\coprod_{i=1}^k Z_i$ and for any $W\in \BCon(Y)$ pick a subset $I\subseteq \{1,\ldots, k\}$ such that $Z_I\bigcup W:=\coprod_{i\in I} Z_i\bigcup W\in \BCon(X)$ and define
%\begin{equation}\label{E:nG}
%\s(Y)_W:=\s_{Z_I\bigcup W}-\sum_{i\in I} \s_{Z_i}.
%\end{equation}
We have that 
\begin{itemize}
    \item $|\s(Y)|=\s_{Y}$;
   % \item $\D(\s(Y))=\{ W\in \BCon(Y)\: : \: Z_I\bigcup W \text{ is $\s$-degenerate}\}$, where the above condition is independent of the chosen $I$ such that $Z_I\bigcup W\in \BCon(X)$. 
    \item $\wh \D(\s(Y))=\wh \D(\s)\cap \Con(Y)$.
\end{itemize} 
\end{lemma-definition}
\begin{proof}
We have to show that $\s(Y)$ is a V-stability condition of $Y$ of characteristic $\s_{Y}$, i.e. that it satisfies Conditions~\eqref{E:sum-n} and \eqref{E:tria-n}, and that for any $W\in \Con(Y)\subseteq \Con(X)$
\begin{equation}\label{E:degnG}
W \text{ is $\s(Y)$-degenerate} \Leftrightarrow  W \text{ is $\s$-degenerate}.
\end{equation}
Clearly, we can assume that $Y\neq X$, otherwise the result is trivial.

Let $W\in \BCon(Y)$ and set $W^\mathsf{c}:=\ov{Y\setminus W}\in \BCon(Y)$.
Since the complement of $Y=W\cup W^c$ is a disjoint union of connected subcurves that are $\s$-degenerate (because $Y\in \wh \D(\s)$), Lemma \ref{L:ExtV-funP} implies that 
\begin{equation}\label{E:cond1-sY}
\s_Y-\s(Y)_W-\s(Y)_{W^c}=\s_Y-\s_W-\s_{W^c}=
\begin{cases}
0 & \text{ if } W,W^c\in \wh \D(\s),\\
-1 & \text{ if } W,W^c\not \in \wh \D(\s),
\end{cases}
\end{equation}
where we have used that 
$$W\in \wh \D(\s)\Leftrightarrow W'\in \D(\s)\Leftrightarrow (W^c)'\in \D(\s)\Leftrightarrow W^c\in \wh \D(\s). $$
Equation \eqref{E:cond1-sY} shows that Conditions~\eqref{E:sum-n} is satisfied by $\s(Y)$ and that \eqref{E:degnG} holds true.  

In order to show that Condition \eqref{E:tria-n} holds true for $\s(Y)$, it is enough to check that Condition \eqref{E:n-union} holds true for $\s(Y)$ (see Remark \ref{R:n-deg}\eqref{R:n-deg2}). Fix $W_1,W_2\in \BCon(Y)$ such that $W_1\wedge W_2=\emptyset$ and $W_1\cup W_2\in \BCon(Y)$, and set $W_3:=\ov{Y\setminus (W_1\cup W_2)}\in \BCon(Y)$.
We are going to apply Lemma \ref{L:ExtV-funP} to $W_1,W_2\in \Con(X)$, of which we will also adopt the notation. Since $Y=W_1\cup W_2\cup W_3\in \wh \D(\s)$, exactly one of the subcurves $B_j\in \BCon(X)$, say $B_1$, will contain $W_3$ while all the other subcurves $\{A_i: i \in I\}$, $\{B_j: j\geq 2\}$ and $\{C_k: k\in K\}$ are $\s$-degenerate. This also implies that 
$$
\begin{sis}
& W_1\in \D(\s(Y))\Leftrightarrow W_1'\in \D(\s), \\   
& W_2\in \D(\s(Y))\Leftrightarrow W_2'\in \D(\s), \\
& W_1\cup W_2\in \D(\s(Y))\Leftrightarrow W_3\in \D(\s(Y)) \Leftrightarrow B_1\in \D(\s).
\end{sis}
$$
Using the above equivalences, Lemma \ref{L:ExtV-funP} implies that we have two cases:
\begin{enumerate}[(a)]
    \item If $W_1,W_2,W_1\cup W_2\in \D(\s(Y))$ then we have that 
    $$\s(Y)_{W_1\cup W_2}-\s(Y)_{W_1}-\s(Y)_{W_2}=0.$$
     \item Otherwise, we have that 
    $$1-|\{W_1,W_2\in \D(\s(Y)) \}|\leq \s(Y)_{W_1\cup W_2}-\s(Y)_{W_1}-\s(Y)_{W_2}\leq |\{W_1\cup W_2\in \D(\s)\}|-1.$$
\end{enumerate}
This shows that $\s(Y)$ satisfies Condition \eqref{E:n-union} with respect to $W_1$ and $W_2$, and we are done.
\end{proof}

\subsection{Numerical polarizations}\label{Sub:numpol}

The easiest way of producing V-stability conditions is via numerical polarizations, as we now explain.

\begin{lemma-definition}\label{LD:numpol}
Let $X$ be a connected reduced curve. A \emph{numerical polarization} on $X$ of characteristic $\chi\in \ZZ$ is a function  
$$
\begin{aligned}
\psi:\Sub(X) & \longrightarrow \RR\\
Y & \mapsto \psi_Y
\end{aligned}
$$
that is \emph{additive}, i.e. if $Y_1,Y_2$ are subcurves of $X$ such that $Y_1\wedge Y_2=\emptyset$ then $\psi_{Y_1\cup Y_2}=\psi_{Y_1}+\psi_{Y_2}$, and such  that $|\psi|:=\psi_X=\chi$. The function 
\begin{align*}
        \left\lceil - \right\rceil :\BCon(X)&\to \ZZ\\
        Y&\mapsto \s(\psi)_Y:=\left\lceil\psi_Y\right\rceil.
    \end{align*}
is a V-stability condition on $X$ of characteristic $\chi$ (called the V-stability condition associated to $\psi$), such that 
$$
\D(\s(\psi))=\{Y\in \BCon(X): \: \psi_Y\in \ZZ\}.
$$
\end{lemma-definition}
The V-stabilities of the form $\s(\psi)$ are called \emph{classical}.
\begin{proof}
 This follows  by taking the upper integral parts in the following two equalities
$$
\begin{sis}
& \psi_Y+\psi_{Y^c}-\chi=0 \text{ for any } Y\in \BCon(X),\\    
& \sum_{i=1}^3 \psi_{Y_i}-\chi=0 \text{ for any } \{Y_1,Y_2,Y_3\} \text{ as in } \eqref{E:condi2}. 
\end{sis}
$$
\end{proof}

\begin{remark}
Let $\s=\s(\psi)$ for some numerical polarization $\psi$ on $X$.
\begin{enumerate}
    \item From Definition \ref{D:ext-s}, it follows that the extended V-function of $\s(\psi)$ satisfies the inequality
 $$
 \s(\psi)_Y\geq \lceil \psi_Y\rceil \text{ for any } Y\in \Sub(X).
 $$
 However, the inequality may be strict for non-biconnected subcurves.
  \item  A subcurve $Y\in \Con(X)$ belongs to $\wh \D(\s(\psi))$ if and only if for the decomposition into connected components $Y^c=Z_1\coprod \ldots \coprod Z_k$ we have that $\psi_{Z_i}\in \ZZ$ (which then implies that $\psi_Y\in \ZZ$). Furthermore, we have that $\s(\psi)_Y=\psi_Y$ for any $Y\in \wh \D(\s(\psi))$.
 \item  \label{R:restr-clas}
 If  $Y\in \wh \D(\s(\psi))$ then 
 $$\s(\psi)(Y)_W=\psi_W \text{ for any } W\in \BCon(Y).$$
 Note that $|\s(\psi)(Y)|=\psi_Y$.
\end{enumerate}

\end{remark}

Because of the additivity property, a numerical polarization is completely determined by its values  on the irreducible components of $X$. Hence, the space of numerical polarizations on $X$ of characteristic $\chi\in \ZZ$, denoted by $\Pol^{\chi}(X)$, is a real affine subspace of $\RR^{I(X)}$ (with $I(X)$ the set of irreducible components of $X$) whose underlying real vector space is $\Pol^0(X)$. The space of all numerical polarizations on $X$ is
$$
\Pol(X)=\coprod_{\chi \in \ZZ}\Pol^\chi(X)\subset \RR^{I(X)}.
$$
Consider the arrangement of hyperplanes in the affine space $\RR^{I(X)}$ given by  
%(see \cite[Sec. 7]{Oda1979CompactificationsOT} and \cite[Sec. 3]{MRV}):
\begin{equation}\label{E:arr-hyper}
\A_{X}:=\left\{\psi_Y=n\right\}_{Y\in \BCon(X), n\in \ZZ}.
\end{equation}
This arrangement yields an induced wall and chamber decomposition on $\Pol(X)$ such that two numerical polarizations $\psi, \psi'$ belong to the same region, i.e. they have the same relative positions with respect to all the hyperplanes, if and only if $\s(\psi)=\s(\psi')$. In other words, the map
 \begin{equation}\label{E:map-s}
 \begin{aligned}
     \left\lceil - \right\rceil: \Pol(X) & \longrightarrow \VStab(X)\\
    \psi & \mapsto \s(\psi)
 \end{aligned}
 \end{equation}
 induces a bijection between the set of regions induced by  $\A_{X}$ on $\Pol(X)$ and the set of classical V-stability conditions on $X$. Note also that $\s(\psi)$ is a general V-stability condition if and only if $\psi$ belongs to a chamber (i.e. a maximal dimensional region), or equivalently if it does not lie on any of the hyperplanes of $\A_X$, in which case we say that $\psi$ is a \emph{general} polarization.

\vspace{0.1cm}

\subsection{The poset of V-stability conditions}\label{Sub:posetV}

We now focus on the combinatorial properties of the set of V-stability conditions. We first define a poset structure on the set of V-stability conditions on $X$. 

\begin{definition}\label{D:VStab-pos}
  Let $X$ be connected reduced curve. The set  $\VStab(X)$ of V-stability conditions on $X$ is endowed with the following order relation 
  $$
  \s\geq \t \Longleftrightarrow 
  \begin{sis}
  &|\s|=|\t|,\\
  & \s_Y\geq \t_Y \text{ for all } Y\in \BCon(X).\\
  \end{sis}
  $$
\end{definition}
Note that each $\VStab^{\chi}(X)$ is a union of connected components of the poset $\VStab(X)$.

\begin{remark}\label{R:VStab-pos}
    Let $\s,\t\in \VStab(X)$. If $\s\geq \t$ then from \eqref{E:sum-n} we have that
    \begin{itemize}
    \item $\D(\s)\subseteq \D(\t)$ and for any $Y\in \BCon(X)$ we have:
$$
(\s_Y,\s_{Y^c})=
   \begin{cases}
   (\t_Y,\t_{Y^c}) & \text{ if either } Y\in \D(\s) \text{ or } Y\not \in \D(\t), \\
   (\t_Y+1,\t_{Y^c}) \text{ or } (\t_Y,\t_{Y^c}+1) & \text{ if } Y\in \D(\t)- \D(\s).
   \end{cases}
$$
    \item $\wh \D(\s)\subseteq \wh \D(\t)$ and $\wh \t_{|\wh \D(\s)}=\wh s$. 
   \end{itemize}
%   In particular, if $\s\in \VStab^{\chi}(X)$ is general, then it is maximal in $\VStab^{\chi}(X)$.
\end{remark}

The poset $\VStab(X)$ comes with a map to the set of degeneracy subsets, as we now formalize.

\begin{definition}\label{D:deg-set}
    Let $X$ be connected reduced curve. A \emph{degeneracy subset} of $X$ is a subset $\D\subseteq \BCon(X)$ such that:
     \begin{enumerate}[(i)]
         \item \label{D:deg-set1} If $Y\in \D$ then $Y^c\in \D$.
         \item \label{D:deg-set2} If $Y_1,Y_2\in \D$ with $Y_1\wedge Y_2=\emptyset$ and $Y_1\cup Y_2\in \BCon(X)$, then $Y_1\cup Y_2\in \D$.
     \end{enumerate}
The collection of all possible degeneracy subsets of $X$ is denoted by $\Deg(X)$ and we have a map (by Remark \ref{R:n-deg}\eqref{R:n-deg1}), called the \emph{degeneracy map} 
\begin{equation}\label{E:map-D}
   \begin{aligned}
    \D:\VStab(X)& \longrightarrow \Deg(X) \\
    \s & \mapsto \D(\s).
    \end{aligned}
\end{equation}
\end{definition}

\begin{remark}\label{R:min-D}
Let $\D\in \Deg(X)$. Properties \eqref{D:deg-set1} and \eqref{D:deg-set2} imply that 
\begin{enumerate}
    \item[(iii)]  If $W_1,W_2\in \D$ with $W_1\subsetneq W_2$ and $W_2-W_1\in \BCon(X)$, then $W_2-W_1\in \D$.
\end{enumerate}
Indeed, $W_2-W_1=(W_1\cup W_2^c)^c$ which belongs to $\D$ by applying \eqref{D:deg-set1} and \eqref{D:deg-set2}.

The above property implies that, if we define the set of minimal elements of $\D$ by
$$\D_{\min}:=\{Y\in \D: \text{there exists no $W\in \D$ such that $W\subset Y$ and $Y-W\in \BCon(X)$}\},$$
then any element of $\D$ can be written uniquely as a union of minimal elements of $\D$ with pairwise empty meet.  
\end{remark}

 We now define a translation action on the set of V-stability conditions. 

\begin{definition}
\label{D:tranVStab}
\noindent 
\begin{enumerate}
    \item The \emph{translation} of $\s\in \VStab(X)$ by a function $\tau:I(X)\to \ZZ$ is the V-stability condition $\s+\tau$ on $X$ defined by 
 $$
    \begin{aligned}
     \s+\tau:\BCon(X)& \longrightarrow  \ZZ, \\
     Y & \mapsto  \s_Y+\tau_Y:=\s_Y+\sum_{X_v\subseteq Y} \tau_{v},
    \end{aligned}
    $$
This defines an action of $\ZZ^{I(X)}$ on $\VStab(X)$ with the property that  $|\s+\tau|=|\s|+|\tau|:=|\s|+\tau_X$.

    \item  Two V-stability conditions $\s$ and $\s'$ on $X$ are \emph{equivalent by translation} if there exists a function $\tau:I(X)\to \ZZ$ such that  $\s'=\s+\tau$. 
\end{enumerate}
\end{definition}

\begin{proposition}\label{P:fin-tran}
The action of $\ZZ^{I(X)}$ on $\VStab(X)$ is such that 
\begin{enumerate}[(i)]
    \item \label{P:fin-tran1} the degeneracy map $\D$ of \eqref{E:map-D} is invariant under the action of $\ZZ^{I(X)}$ on the domain;
    \item \label{P:fin-tran2} the action preserves the poset structure, i.e. if $\s\geq t$ then $\s+\tau\geq \s+\tau$ for any $\tau \in \ZZ^{I(X)}$;
    \item \label{P:fin-tran3} there are finitely many orbits.
\end{enumerate}
\end{proposition}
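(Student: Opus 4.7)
Parts (i) and (ii) are routine and I dispose of them first. For (i), additivity of $\tau$ applied to the partition $X = Y \sqcup Y^c$ gives $\tau_Y + \tau_{Y^c} = \tau_X = |\tau|$, whence
\[
(\s+\tau)_Y + (\s+\tau)_{Y^c} - |\s+\tau| = \s_Y + \s_{Y^c} - |\s|,
\]
so $Y$ is $(\s+\tau)$-degenerate iff it is $\s$-degenerate, proving $\D(\s+\tau) = \D(\s)$. For (ii), the observation $(\s+\tau)_Y - (\t+\tau)_Y = \s_Y - \t_Y \geq 0$ is enough.

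For (iii) my plan is a three-step reduction followed by a cohomological finiteness computation. Since $|\s+\tau| = |\s|+|\tau|$ and $|\tau|$ can take any integer value, every $\ZZ^{I(X)}$-orbit meets $\VStab^0(X)$, so it suffices to count $\ZZ^{I(X)}_0$-orbits on $\VStab^0(X)$, where $\ZZ^{I(X)}_0 := \ker(|\cdot|)$. By part (i) the degeneracy map descends to the orbit space, and since $\Deg(X) \subseteq 2^{\BCon(X)}$ is finite, I further reduce to showing that each fiber $V_\D := \D^{-1}(\D) \cap \VStab^0(X)$ has finitely many orbits. Picking a reference point $\s_0 \in V_\D$ (assuming $V_\D \neq \emptyset$) and writing an arbitrary $\s \in V_\D$ as $\s = \s_0 + \omega$, comparing the V-stability axioms for $\s$ and $\s_0$ forces $\omega_Y + \omega_{Y^c} = 0$ for every $Y \in \BCon(X)$ (from \eqref{E:sum-n}) and $|\omega_{Y_1} + \omega_{Y_2} + \omega_{Y_3}| \leq 1$ for every triple as in \eqref{E:condi2}, since the permitted set for $\sum_i \s_{Y_i} - \chi$ in \eqref{E:tria-n} has cardinality at most two for each fixed degeneracy pattern.

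To finish, let $Z \subseteq \ZZ^{\BCon(X)}$ denote the antisymmetric sublattice, let $\mathcal T$ be the (finite) set of triples occurring in \eqref{E:condi2}, and consider the triangle-defect map $\delta : Z \to \ZZ^{\mathcal T}$, $\omega \mapsto (\omega_{Y_1} + \omega_{Y_2} + \omega_{Y_3})_{(Y_1,Y_2,Y_3)}$. The translations $\tau_\bullet := (\tau_Y)_Y$ lie in $Z$ (as $\tau_X = 0$) and in $\ker\delta$ (since $\sum_i \tau_{Y_i} = \tau_X = 0$), while the constraints above place the image of $V_\D - \s_0$ inside the finite set $\{-1,0,1\}^{\mathcal T}$. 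Thus $V_\D / \ZZ^{I(X)}_0$ injects into $\delta^{-1}(\{-1,0,1\}^{\mathcal T}) / \{\tau_\bullet\}$, and everything reduces to showing that $\ker\delta / \{\tau_\bullet\}$ is finite, or equivalently $\ker\delta \otimes \QQ = \{\tau_\bullet\}_\QQ$. I expect this rational identification to be the main obstacle: antisymmetry combined with $\sum_i \omega_{Y_i} = 0$ readily yields the additivity $\omega_{Y_1 \cup Y_2} = \omega_{Y_1} + \omega_{Y_2}$ whenever $Y_1, Y_2, Y_1 \cup Y_2 \in \BCon(X)$ are pairwise disjoint, but in order to construct a rational $\tau \in \QQ^{I(X)}_0$ with $\tau_\bullet = \omega$ I will need to propagate this additivity to all of $\Sub(X)$, handling the (possibly non-biconnected) individual components $X_v$ via the decomposition $X_v^c = \coprod_i W_i$ into biconnected connected components, in the spirit of the extension procedure of Definition~\ref{D:ext-s}.
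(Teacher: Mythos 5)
Parts (i) and (ii) are fine, and your reduction scheme for (iii) — translate into $\VStab^0(X)$, fiber over the finite set $\Deg(X)$, linearize around a reference point $\s_0$, and observe that the difference $\omega=\s-\s_0$ is antisymmetric with triangle defect in $\{-1,0,1\}^{\mathcal{T}}$ — is valid and genuinely different in flavor from the paper's argument, which instead normalizes $\s$ by translation along the cuts of a spanning tree $T$ of the incidence graph and then proves the explicit bound $-\val_T(Y)+1\le\s_Y\le\val_T(Y)$ by induction on $\val_T(Y)$. However, the step you yourself flag as "the main obstacle" — the identification $\ker\delta\otimes\QQ=\{\tau_\bullet\}_\QQ$ — is exactly where all the combinatorial content of the proposition lives, and as written it is a genuine gap: without it you have no bound on the rank of $\ker\delta$, hence no finiteness. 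It is not a formality, because a priori $\ker\delta$ could be strictly larger than the translation lattice (e.g.\ if $\BCon(X)$ contained a biconnected subcurve whose value were not forced by any chain of triangle relations).

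The missing ingredient is precisely the decomposition used in the paper's Claim 2: every $Y\in\BCon(X)$ with $\val_T(Y)>1$ can be written as $Y=Z_1\cup Z_2$ with $Z_1,Z_2\in\BCon(X)$ disjoint, $\val_T(Z_i)<\val_T(Y)$, and $(Z_1,Z_2,Y^{\mathsf c})$ a legitimate triple for \eqref{E:tria-n}. Granting this, your argument closes cleanly: given $\omega\in\ker\delta\otimes\QQ$, let $\tau\in\QQ^{I(X)}_0$ be the unique additive function with $\tau_{Y_e}=\omega_{Y_e}$ for the $|I(X)|-1$ tree cuts $Y_e$ (the cut map of a tree is an isomorphism $\QQ^{I(X)}_0\xrightarrow{\sim}\QQ^{E(T)}$); then $\omega-\tau_\bullet$ vanishes on all $\val_T=1$ subcurves by antisymmetry, and vanishes on every $Y\in\BCon(X)$ by induction on $\val_T(Y)$ using the triangle additivity $\omega_Y=\omega_{Z_1}+\omega_{Z_2}$. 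So the completed proof ultimately rests on the same decomposition lemma as the paper's, just packaged as a rank computation rather than an explicit bound. Your alternative route via $\tau_v:=-\sum_i\omega_{W_i}$ for $X_v^{\mathsf c}=\coprod_i W_i$ would work too, but verifying $\sum_{v\in Y}\tau_v=\omega_Y$ for all $Y\in\BCon(X)$ requires an additivity statement for the extended function — the exact (linear) analogue of Lemma~\ref{L:ExtV-funP} — whose proof is an induction of comparable length; it is not obtained for free from Definition~\ref{D:ext-s}. Either way, you should supply this step before the argument can be considered complete.
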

    \begin{proof}
    Parts \eqref{P:fin-tran1} and \eqref{P:fin-tran2} are obvious.
    
    In order to prove Part \eqref{P:fin-tran3}, consider the incidence graph $\Gamma$ of $X$, i.e. the simple graph whose vertex set is the set $I(X)$ of irreducible components of $X$ and such that there is an edge between $v$ and $w$ if and only if the corresponding irreducible components $X_v$ and $X_w$ have a nonempty intersection. (Note that $\Gamma$ is connected since so is $X$). We fix a spanning tree $T$ of $\Gamma$. 
    
    Each subcurve $Y\in \Sub(X)$ determines, respectively, an induced subgraph $\Gamma[Y]$ of $\Gamma$ and an induced subgraph $T[Y]$ of $T$, whose vertices correspond to the irreducible components of $Y$. For any $Y\in \Sub(X)$, denote by $\val_T(Y)$ the number of edges of $T$ that join a vertex of $T[Y]$ with a vertex of $T[Y^c]$. 

    Note that a subcurve $Y$ of $X$ is biconnected if and only if $\Gamma[Y]$ and $\Gamma[Y^c]$ are connected, which is implied by the stronger condition that $T[Y]$ and $T[Y^c]$ are connected, which is in turn equivalent to the condition that  $\val_T(Y)=1$. 
    
   \un{Claim 1:} Every V-stability condition on $X$  is equivalent by translation to a V-stability condition $\s\in \VStab(X)$ such that 
   \begin{equation}\label{E:equitrasl}
   |\s|=0 \quad \text{ and } \quad  0\leq \s_Y \leq 1 \quad \text{ for any } Y\in \BCon(X) \text{ such that } \val_T(Y)=1.
   \end{equation}

   Indeed, let $\t$ be any V-stability condition on $X$. Fix an orientation of $T$. For any $e\in E(T)$, we get a subcurve $Y_e\in \Sub(X)$ such that $T\setminus e=T[Y_e]\coprod T[Y_e^c]$ and that the edge $e$, with the chosen orientation, goes from $T[Y_e]$ to $T[Y_e^c]$. In particular, $\val_T(Y_e)=\val_T(Y_e^c)=1$ (which implies that $Y_e, Y_e^c\in \BCon(X)$) and all the subcurves of $X$ with this property are of the form $Y_e$ or $Y_e^c$ for some $e\in E(T)$. Consider now the following function
   $$
   \begin{aligned}
      f: \left\{Y\in \BCon(X): \: \val_T(Y)=1\right\} & \longrightarrow \NN\\
      Y_e & \mapsto -\t_{Y_e}\\
      Y_e^c & \mapsto 
      \begin{cases}
         -\t_{Y_e^c} & \text{ if } Y_e\in \D(\t),\\
         -\t_{Y_e^c}+1 & \text{ if } Y_e\not \in \D(\t).\\
      \end{cases}
   \end{aligned}
   $$
   By \eqref{E:sum-n}, we have that $f(Y_e)+f(Y_e^c)=-|\t|$ for any $e\in E(T)$. Hence, since $T$ is a tree, there exists a unique function 
   $\tau:I(X)\to \NN$ such that 
   $$\tau_{Y_e}=f(Y_e) \text{ and } \tau_{Y_e^c}=f(Y_e^c).$$
   If we set $\s:=\t+\tau$, then by construction we have that, for any $e\in E(T)$,
   $$\s_{Y_e}=0 \text{ and } \s_{Y_e^c}=
      \begin{cases}
         0 & \text{ if } Y_e\in \D(\t)=\D(\s),\\
         1 & \text{ if } Y_e\not \in \D(\t)=\D(\s).\\
      \end{cases}
   $$
   In particular, $\s$ satisfies \eqref{E:equitrasl} and we are done. 

   \un{Claim 2:} If $\s$ is a V-stability condition on $X$ that satisfies \eqref{E:equitrasl}, then we have that  
   \begin{equation}\label{E:norm-s}
-\val_T(Y)+1\leq  \s_Y\leq \val_T(Y) \text{ for any } Y\in \BCon(X).
\end{equation}

Indeed, we argue by induction on $\val_T(Y)$. If $\val_T(Y)=1$ then \eqref{E:norm-s} reduces to \eqref{E:equitrasl}. Assume that $\val_T(Y)>1$, or equivalently that either $T[Y]$ or $T[Y^c]$ is not connected. Upon possibly swapping $Y$ and $Y^\mathsf{c}$,  we can assume that $T[Y]$ is not connected. We write $T[Y]=T[W_1]\coprod \ldots \coprod T[W_k]$ for its decomposition into connected components (with $k\geq 2$).  Then we have that 
$$\val_T(Y)=\sum_{i=1}^k \val_T(W_i)\quad  \text{ and } \quad \val_T(W_i)\geq 1 \: \text{ for each } 1\leq i \leq k.$$
Since $Y$ is connected, there exists a non-trivial decomposition $Y=Z_1\bigcup Z_2$, where $Z_1$ and $Z_1$ are connected subcurves of $X$ that are unions of some of the subcurves $W_i$ and such that $Z_1\wedge Z_2=\emptyset$. We have that 
\begin{equation}\label{E:sum-val}
\val_T(Y)=\val_T(Z_1)+\val_T(Z_2) \quad  \text{ and } \quad \val_T(Z_i)\geq 1 \text{ for each } i=1,2.
\end{equation}
Moreover, $Z_1^\mathsf{c}=Z_2\cup Y^\mathsf{c}$ is a connected subcurve of $X$ since $Y^\mathsf{c}$ is connected, $Z_2$ is connected and there exists at least one edge of $T$ joining $T[Z_2]$ and $T[Y^\mathsf{c}]$. Similarly, also $Z_2^\mathsf{c}$ is a connected subcurve of $X$. By putting everything together, we get that $Z_1$ and $Z_2$ are biconnected subcurves of  $X$ such that $\val_T(Z_1),\val_T(Z_2)<\val_T(Y)$. Hence, we can apply our inductive hypothesis and deduce that 
$$
-\val_T(Z_i)+1\leq  \s_{Z_i}\leq \val_T(Z_i) \quad \text{ for } i=1,2.
$$
By applying \eqref{E:n-union} to $Z_1$, $Z_2$ and $Y=Z_1\cup Z_2$, we get
\begin{equation}\label{E:n-e-sum}
\s_{Y}-\s_{Z_1}-\s_{Z_2}\in \{-1,0\}
\end{equation}
Therefore, the inequality \eqref{E:norm-s} for $Y$ follows from the analogous inequalities for $Z_1$ and $Z_2$ (which hold true by induction) together with \eqref{E:n-e-sum} and \eqref{E:sum-val}.

\vspace{0.1cm}

    The proof of Part \eqref{P:fin-tran3} follows now by putting together Claim 1 and Claim 2, and using that there are a finite number of V-stability conditions that satisfy \eqref{E:norm-s}.
    \end{proof}

\begin{remark}\label{R:tran-pol}
We can also define the translation of a numerical polarization (Lemma-Definition~\ref{LD:numpol}) $\psi$ on $X$ by a function $\tau\in \ZZ^{I(X)}$ as the numerical polarization $\psi+\tau$ defined by 
 $$
    \begin{aligned}
     \psi+\tau:\left\{\text{Subcurves of } X \right\}& \longrightarrow  \RR, \\
     Y & \mapsto  \psi_Y+\tau_Y.
    \end{aligned}
    $$
    This defines an action of $\ZZ^{I(X)}$ on $\Pol(X)$ such that the map $ \left\lceil - \right\rceil $ of \eqref{E:map-s} is equivariant. We immediately deduce that the property of being classical for a V-stability is invariant under translation and that there are a finite number of classical V-stabilities up to translation. 
\end{remark}

In order to study the poset structure on $\VStab(X)$, we now define a poset structure on $\Deg(X)$ and study the compatibility of the degeneracy map $\D$ with the poset structures on the domain and the codomain. 

\begin{definition}\label{D:deg-pos}
Let $\D^1,\D^2\in \Deg(X)$. We say that  $\D^1\geq \D^2$ if $\D^1\subseteq \D^2$ and there exists a subset $\E\subset \D^2-\D^1$ such that the following conditions hold:
\begin{enumerate}[(i)]
\item \label{D:deg-pos1} $\D^2-\D^1=\E\coprod \E^c$, where $\E^c:=\{Y^c: \: Y\in \E\}$.
\item \label{D:deg-pos2} for any $Z_1,Z_2\in \D^2-\D^1$ such that $Z_1\wedge Z_2=\emptyset$ and $Z_1\cup Z_2\in \D^1$, we have that 
$$
|\{Z_i\in \E\}|=1.
$$
\item \label{D:deg-pos3} for any $Z_1,Z_2,Z_3\in \D^2-\D^1$ with pairwise empty meet and with  $X=Z_1\cup Z_2\cup Z_3$, we have that
$$
|\{Z_i\in \E\}|=1 \text{ or } 2.
$$
\end{enumerate}

\end{definition}
Note that the dominance relation $\D^1\geq \D^2$ is strictly stronger than the inclusion $\D^1\subseteq \D^2$, as we will show in Example \ref{Es:4comp}.

\begin{proposition}\label{P:D-pos}
    The degeneracy map $\D$ of \eqref{E:map-D} satisfies the following properties
    \begin{enumerate}[(i)]
        \item \label{P:D-pos1} $\D$ is order preserving, i.e. $\s^1\geq \s^2\Rightarrow \D(\s^1)\geq \D(\s^2)$.
        \item \label{P:D-pos2} $\D$ is upper lifting, i.e. for any $\D^1\geq \D^2$ in $\Deg(X)$ and any $\s^2\in \VStab(X)$ such that $\D(\s^2)=\D^2$, there exists $\s^1\in \VStab(X)$ such that $\s^1\geq \s^2$ and $\D(\s^1)=\D^1$.
    \end{enumerate}
\end{proposition}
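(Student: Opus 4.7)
For Part~(i), the starting observation is Remark~\ref{R:VStab-pos}: if $\s^1 \geq \s^2$, then automatically $\D(\s^1) \subseteq \D(\s^2)$, and for each $Y \in \D(\s^2) \setminus \D(\s^1)$ exactly one of the two equalities $\s^1_Y = \s^2_Y + 1$ or $\s^1_{Y^c} = \s^2_{Y^c} + 1$ holds. I would therefore set
$$
\E := \{Y \in \D(\s^2) \setminus \D(\s^1) : \s^1_Y = \s^2_Y + 1\},
$$
and verify directly that $\{Y^c : Y \in \E\}$ coincides with $(\D(\s^2) \setminus \D(\s^1)) \setminus \E$, giving condition~(i) of Definition~\ref{D:deg-pos}. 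Conditions~(ii) and (iii) are then obtained by applying the triangle axiom~\eqref{E:tria-n} once to $\s^1$ and once to $\s^2$ and subtracting: each $Z_i \in \D^1$ contributes $0$ to the difference $\sum \s^1_{Z_i} - \sum \s^2_{Z_i}$, while each $Z_i \in \E$ contributes $1$. Comparing this difference against the integer values of the two triangle sums (which are determined by the $\s^1$- and $\s^2$-degeneracy counts of the triple) forces the $\E$-count of the triple to take precisely the integer(s) prescribed by Definition~\ref{D:deg-pos}.

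For Part~(ii), given $\D^1 \geq \D^2$ in $\Deg(X)$ with decomposition $\D^2 \setminus \D^1 = \E \sqcup \E^c$ and a V-stability $\s^2$ with $\D(\s^2) = \D^2$, I would define the candidate $\s^1 \colon \BCon(X) \to \ZZ$ by
$$
\s^1_Y := \begin{cases} \s^2_Y + 1 & \text{if } Y \in \E, \\ \s^2_Y & \text{otherwise.} \end{cases}
$$
The relation $\s^1 \geq \s^2$ is then immediate. A case split according to whether $Y$ lies in $\D^1$, in $\D^2 \setminus \D^1$, or outside $\D^2$ verifies~\eqref{E:sum-n} for $\s^1$ and simultaneously identifies $\D(\s^1) = \D^1$, since $\s^1_Y + \s^1_{Y^c} - \chi$ vanishes exactly on $\D^1$. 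The triangle axiom~\eqref{E:tria-n} for $\s^1$ on a triple $(Y_1, Y_2, Y_3)$ is verified by the same subtract-and-count strategy, running through each configuration of the triple across $\D^1$, $\D^2 \setminus \D^1$, and the complement of $\D^2$. Property~(ii) of Definition~\ref{D:deg-set} applied to $\D^2 = \D(\s^2)$ restricts the total number of $Y_i$ lying in $\D^2$ to be $0$, $1$, or $3$, leaving only a handful of configurations; in each, the $\E$-count needed to upgrade the $\s^2$-triangle sum to the correct $\s^1$-triangle sum is precisely what conditions~(ii) and~(iii) of Definition~\ref{D:deg-pos} provide (the former when two $Y_i$ lie in $\D^2 \setminus \D^1$ and the third in $\D^1$, the latter when all three lie in $\D^2 \setminus \D^1$).

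The main obstacle, in both directions, is combinatorial bookkeeping rather than any geometric subtlety: one must enumerate every distribution of $\{Y_1, Y_2, Y_3\}$ across the three strata and verify that the V-stability axioms pin down $\sum \s^j_{Y_i}$ to exactly the integers predicted by Definition~\ref{D:deg-pos}. The content of the proposition is precisely that Definition~\ref{D:deg-pos} was engineered to make this integer matching work on the nose, and the two parts of the proposition are dual to each other through the same bijective correspondence $Y \mapsto (\s^1_Y = \s^2_Y + 1)$.
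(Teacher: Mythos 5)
Your proposal follows essentially the same route as the paper's proof: the same set $\E = \{Y \in \D(\s^2)\setminus\D(\s^1) : \s^1_Y = \s^2_Y + 1\}$ for Part~(i), the same candidate $\s^1$ (adding $1$ exactly on $\E$) for Part~(ii), and the same subtract-and-count application of the triangle axiom~\eqref{E:tria-n} to both stabilities across the case split on how the triple distributes over $\D^1$, $\D^2\setminus\D^1$, and the complement of $\D^2$. The details you defer (e.g.\ the configuration with one subcurve in $\D^2\setminus\D^1$ and two outside $\D^2$) work out exactly as you predict, so the argument is correct and matches the paper.
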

\begin{proof}
Part \eqref{P:D-pos1}: consider two V-stability conditions $\s_1\,s_2$ on $X$ such that $\s_1\geq \s_2$, and set $\chi:=|\s_1|=|\s_2|$. We have to show that $\D(\s^1)\geq \D(\s^2)$. First of all, observe that $\D(\s^1)\subseteq \D(\s^2)$ by Remark \ref{R:VStab-pos}. We now show that the subset 
$$
\E:=\{Z\in \D(\s^2)-\D(\s^1): \: \s^1_Z=\s_Z^2+1\}\subset \D(\s^2)-\D(\s^1),
$$
satisfies the conditions of Definition \ref{D:deg-pos}:

$\bullet$ Condition \eqref{D:deg-pos1} follows from Remark \ref{R:VStab-pos}.

$\bullet$ Take $Z_1,Z_2\in \D(\s^2)-\D(\s^1)$ such that $Z_1\wedge Z_2=\emptyset$ and $Z_1\cup Z_2\in \D(\s^1)$. Applying condition \eqref{E:sum-n} to $\s^1$ and $\s^2$ with respect to the biconnected subcurves $\{Z_1,Z_2,(Z_1\cup Z_2)^c\}$, we get
\begin{equation}\label{E:triango1}
\begin{sis}
&\s^2_{Z_1}+\s^2_{Z_2}+\s^2_{(Z_1\cup Z_2)^c}=0 \text{ because } Z_1,Z_2,(Z_1\cup Z_2)^c\in \D(\s^2),  \\
& \s^1_{Z_1}+\s^1_{Z_2}+\s^1_{(Z_1\cup Z_2)^c}=1 \text{ because } Z_1,Z_2\not \in \D(\s^1) \text{ and } (Z_1\cup Z_2)^c\in \D(\s^1).
\end{sis}
\end{equation}
Since $\s^2_{(Z_1\cup Z_2)^c}=\s^1_{(Z_1\cup Z_2)^c}$ (because $Z_1\cup Z_2\in \D^1\subseteq \D^2$), Condition \eqref{E:triango1} implies that 
$$
|\{Z_i\in \E\}|=1,
$$
and hence Condition \eqref{D:deg-pos1} holds. 

$\bullet$ Take now  $Z_1,Z_2,Z_3\in \D(\s^2)-\D(\s^1)$ with pairwise empty meet and with  $X=Z_1\cup Z_2\cup Z_3$. Applying condition \eqref{E:sum-n} to $\s^1$ and $\s^2$ with respect to the biconnected subcurves $\{Z_1,Z_2,Z_3\}$, we get
\begin{equation}\label{E:triango2}
\begin{sis}
& \s^2_{Z_1}+\s^2_{Z_2}+\s^2_{Z_3}=0 & \text{ because } Z_1,Z_2,Z_3\in \D(\s^2),\\
& \s^1_{Z_1}+\s^1_{Z_2}+\s^1_{Z_3}=1\text{ or } 2 & \text{ because } Z_1,Z_2,Z_3\not \in \D(\s^1).
\end{sis}
\end{equation}
This implies that
$$
|\{Z_i\in \E\}|=1 \text{ or } 2.
$$

%OLD PROOF: wrong
%consider two V-stability conditions $\s_1\,s_2$ on $X$ such that $\s_1\geq \s_2$, and set $\chi:=|\s_1|=|\s_2|$. We have to show that $\D(\s^1)\geq \D(\s^2)$. 

%First of all, observe that $\D(\s^1)\subseteq \D(\s^2)$ by Remark \ref{R:VStab-pos}. Consider the following equivalence relation $\sim$ on $\D(\s^2)_{\min}$:
%$$
%Y_1\sim Y_2\Longleftrightarrow 
%\begin{sis} 
%& \text{for any $Z\in \D(\s^1)$ we have that } \\
%& Y_1\subseteq Z \Longleftrightarrow Y_2\subseteq Z.\\
%\end{sis}
%$$

%\un{Claim:} For any equivalence class $[Y]$ of $\sim$ on $\D(\s^2)_{\min}$, we have that 
%$$
%\ov{[Y]}:=\bigcup_{W\in [Y]} W \text{ is either } X \text{ or it belongs to } \D(\s^1).
%$$

%\textcolor{green}{Da finire}

%Now the Claim, together with the definition of $\sim$ and the fact that any subcurve of $\D(\s^1)\subseteq \D(\s^2)$ is a union of subcurves of $\D(\s^2)_{\min}$, implies that the minimal subcurves of $\D(\s^1)$ are equal to 
%$$
%\D(\s^1)_{\min}=\{\ov{[Y]}: \text{ for every equivalence class } [Y]  \text{ such that } \ov{[Y]}\neq X\}.
%$$
%This shows that $\D(\s^1)\geq \D(\s^2)$ by Definition \ref{D:deg-pos}.

Part \eqref{P:D-pos2}: let $\D^1\geq \D^2$ in $\Deg(X)$ and $\s^2\in \VStab(X)$ such that $\D(\s^2)=\D^2$. By the Definition \ref{D:deg-pos} of the order relation in $\Deg(X)$, there exists $\E\subset \D^2-\D^1$ satisfying the three conditions of loc. cit.
We set 
$$
\s^1_Y=
\begin{cases}
\s_Y^2 & \text{ if } Y \in \D^1 \text{ or } Y\not\in \D^2,\\    
\s^2_Y+1 & \text{ if } Y\in \E,\\
\s^2_Y & \text{ if } Y\in \E^c.\\
\end{cases}
$$
We will now check that $\s^1$ is a V-stability on $X$ of characteristic $|\s^2|$ with $\D(\s^1)=\D^1$, which concludes the proof because then, by construction, we will also have that $\s^1\geq \s^2$.
In order to show this, we will check the conditions of Definition \ref{D:VStabX}. 

Equations \eqref{E:sum-n} holds with $\chi=|\s^2|$ by property \eqref{D:deg-pos1} of $\E$ and we also get that the $\s^1$-degenerate subcurves are exactly the ones belonging to $\D^1$. Hence, the first condition of Definition \eqref{E:cond2} holds since $\D^1\in \Deg(X)$. 

In order to check Equation \eqref{E:tria-n}, take $X=Z_1\cup Z_1\cup Z_3$ with $Z_1,Z_2,Z_3$ biconnected subcurves with pairwise empty meet. If each subcurve belongs to either $\D^1$ or $(\D^2)^c$, then we have that $\s^1_{Z_i}=\s^2_{Z_i}$ for any $1\leq i\leq 3$, and hence \eqref{E:tria-n} for $\s^2$ implies the same condition for $\s^1$. Otherwise, using that $\D^1,\D^2\in \Deg(X)$, we can have three cases (up to permuting $Z_1,Z_2,Z_3$): 

$\star$ \un{Case I}: $Z_1\in \D^2-\D^1$ and $Z_2,Z_3\not\in \D^2$. 

We have that 
$$
\sum_{i=1}^3 \s^1_{Z_i}-\chi=
\begin{cases}
    \sum_{i=1}^3 \s^2_{Z_i}-\chi=1 & \text{ if } Z_1\in \E^c,\\
     \sum_{i=1}^3 \s^2_{Z_i}+1-\chi=2 & \text{ if } Z_1\in \E,\\
\end{cases}
$$
where we have used that $\sum_{i=1}^3 \s^2_{Z_i}-\chi=1$ because of \eqref{E:tria-n} for $\s^2$. 

$\star$ \un{Case II}:  $Z_1\in \D^1$ and $Z_2,Z_3\in \D^2-\D^1$. 

We compute  
$$
\sum_{i=1}^3 \s^1_{Z_i}-\chi=\sum_{i=1}^3 \s^2_{Z_i}+|\{Z_2,Z_3\}\cap \E|-\chi= 1,
$$
where we have used that $\sum_{i=1}^3 \s^2_{Z_i}-\chi=1$ because of \eqref{E:tria-n} for $\s^2$ and the fact that exactly one among $Z_2$ and $Z_3$ belongs to $\E$ by Definition \ref{D:deg-pos}\eqref{D:deg-pos2} for $\E$.

$\star$ \un{Case III}: $Z_1,Z_2,Z_3\in \D^2-\D^1$.

We get that
$$
\sum_{i=1}^3 \s^1_{Z_i}-\chi=\sum_{i=1}^3 \s^2_{Z_i}+|\{Z_i\in \E\}|-\chi=1 \text{ or } 2,
$$
where we have used that $\sum_{i=1}^3 \s^2_{Z_i}-\chi=0$ because of \eqref{E:tria-n} for $\s^2$ and that $|\{Z_i\in \E\}|=1$ or $2$ by Definition \ref{D:deg-pos}\eqref{D:deg-pos3} for $\E$.

In each of the above three cases, Equation \eqref{E:tria-n} is satisfied for $\s^1$ wit respect to the three subcurves $\{Z_1,Z_2,Z_3\}$ and we are done. 
\end{proof}

\begin{example}\label{Ex:elem-moves}
There are two easy moves that imply $\D^1\geq \D^2$:
\begin{itemize}
    \item (Move I) $\D^1_{\min}=\D^2_{\min}- \{Y,Y^c\}$, for some subcurve $Y$ such that $Y,Y^c\in \D^2_{\min}$.
    \item (Move II) $\D^1_{\min}=\D^2_{\min}\cup\{Y_1\cup Y_2\}- \{Y_1,Y_2\}$, for some subcurves $Y_1,Y_2\in \D^2_{\min}$ such that $Y_1\wedge Y_2=\emptyset$ and $Y_1\cup Y_2\in  \BCon(X)$.
\end{itemize}
Indeed, in the first move we clearly have the inclusion $\D^1\subseteq \D^2$ and the subset 
$$\E=\{Z\in \D^2-\D^1: \: Y\subseteq Z\}$$ 
satisfies the conditions of Definition \ref{D:deg-pos}.

In the second move, again the inclusion $\D^1\subseteq \D^2$ is clear and the subset 
$$\E=\{Z\in \D^2-\D^1: \: Y_1\subseteq Z\}$$ 
satisfies the conditions of Definition \ref{D:deg-pos}.
\end{example}

\begin{corollary}\label{C:max-pos}
The poset $\VStab(X)$ satisfies the following two properties:
    \begin{enumerate}[(i)]
        \item a V-stability $\s\in \VStab(X)$ is maximal in $\VStab(X)$ if and only if $\D(\s)=\emptyset$  (i.e. $\s$ is general).
         \item a V-stability $\s\in \VStab(X)$ is submaximal in $\VStab(X)$, i.e. it is not maximal and it is dominated only by maximal elements, if and only if $\D(\s)=\{Y,Y^c\}$ for some $Y\in \BCon(X)$. 
    \end{enumerate}
\end{corollary}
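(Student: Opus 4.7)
The proof splits into two parts, with (ii) building on (i), and both using Proposition~\ref{P:D-pos} (order-preservation and upper-lifting of the degeneracy map).

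For part (i) $(\Leftarrow)$, the argument is immediate from Remark~\ref{R:VStab-pos}: if $\s'\geq \s$ and $\D(\s)=\emptyset$, then $\D(\s')\subseteq \D(\s)=\emptyset$, and the formula in \emph{loc.\ cit.}\ forces $(\s'_Y,\s'_{Y^c})=(\s_Y,\s_{Y^c})$ for every $Y\in\BCon(X)$, hence $\s'=\s$. For part (i) $(\Rightarrow)$, assume $\D(\s)\neq\emptyset$ and pick $Y\in\D(\s)_{\min}$, nonempty by Remark~\ref{R:min-D}. I propose the explicit strict upper bound
$$
\s^Y_Z:=\begin{cases}\s_Z+1 & \text{if } Z\in\D(\s) \text{ and }Y\subseteq Z,\\ \s_Z & \text{otherwise,}\end{cases}
$$
and verify that $\s^Y\in\VStab^{|\s|}(X)$. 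Since $Y\neq\emptyset$, at most one of the inclusions $Y\subseteq Z$, $Y\subseteq Z^c$ holds; combined with closure of $\D(\s)$ under complement, this yields Equation~\eqref{E:sum-n} for $\s^Y$ and the identification
$$
\D(\s^Y)=\{Z\in\D(\s): Y\not\subseteq Z \text{ and }Y\not\subseteq Z^c\}.
$$
For Equation~\eqref{E:tria-n} on a triple $(Z_1,Z_2,Z_3)$ with pairwise empty meet and union $X$, the first clause of Definition~\ref{D:VStabX}\eqref{E:cond2} applied to $\s$ forces $d(\s):=\#\{i:Z_i\in\D(\s)\}\in\{0,1,3\}$, and a case analysis on $d(\s)$ together with the unique index (if any) with $Y\subseteq Z_i$ yields $d(\s^Y)\in\{0,1,3\}$ and the required value of $\sum_i\s^Y_{Z_i}-\chi$. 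The delicate point is that a potentially bad $d(\s^Y)=2$ configuration would require $Y$'s components to split across two distinct elements of $\D(\s)$, thereby exhibiting $Y$ as a biconnected union of smaller elements of $\D(\s)$ and contradicting $Y\in\D(\s)_{\min}$; this is the main obstacle of the argument.

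For part (ii) $(\Leftarrow)$, part (i) shows $\s$ is not maximal. For any $\s'>\s$, Remark~\ref{R:VStab-pos} gives $\D(\s')\subsetneq\{Y,Y^c\}$; closure under complement forces $\D(\s')=\emptyset$, so $\s'$ is maximal by part (i). For part (ii) $(\Rightarrow)$ I argue by contrapositive. Suppose $|\D(\s)|\geq 4$. The key combinatorial lemma is the \emph{non-laminarity} of $\D(\s)$: there exist $Y,Z\in\D(\s)$ with $Y\not\subseteq Z$ and $Y\not\subseteq Z^c$. Granting the lemma, fix any complement pair $\{Z,Z^c\}\subsetneq\D(\s)$ and set $\D^1:=\{Z,Z^c\}\in\Deg(X)$; one verifies $\D^1\geq\D(\s)$ by exhibiting $\E\subseteq\D(\s)\setminus\{Z,Z^c\}$ that picks one element from each complement pair and satisfies the pair and triple constraints of Definition~\ref{D:deg-pos}. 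Upper-lifting (Proposition~\ref{P:D-pos}(ii)) then yields $\s^1>\s$ with $\D(\s^1)=\{Z,Z^c\}\neq\emptyset$, hence $\s^1$ is not maximal by (i), contradicting the submaximality of $\s$.

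To establish the non-laminarity lemma, suppose for contradiction that for all $Y,Z\in\D(\s)$ one has $Y\subseteq Z$ or $Y\wedge Z=\emptyset$. First, no strict inclusion $A\subsetneq B$ can occur in $\D(\s)$: complement-closure places $A^c,B^c\in\D(\s)$, and the laminarity dichotomy applied to $A^c$ versus $B$ yields either $A^c\subseteq B$ (whence $B=X$, impossible) or $A^c\wedge B=\emptyset$, i.e., $B\subseteq A$ (whence $A=B$, contradiction). Hence distinct elements of $\D(\s)$ are pairwise disjoint. But if $|\D(\s)|\geq 4$, pick $A,B\in\D(\s)$ with $B\notin\{A,A^c\}$; disjointness gives $A\subsetneq B^c$ (strict since $A\neq B^c$), with both in $\D(\s)$, contradicting the absence of strict inclusions. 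Hence $|\D(\s)|\leq 2$, against our hypothesis, so the lemma holds and the proof is complete.
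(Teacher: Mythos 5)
Your reverse direction of (i) and the reverse direction of (ii) are fine, but the forward direction of (i) contains a genuine gap: the proposed witness $\s^Y$ is not in general a V-stability condition, and the minimality of $Y$ does not rescue it. Concretely, let $X$ have five pairwise-meeting irreducible components $\{1,\dots,5\}$ (so every nonempty proper subcurve is biconnected), and let $\s=\s(\psi)$ for the numerical polarization $\psi$ of characteristic $0$ with $\psi_{\{1\}}=\psi_{\{3\}}=1/3$, $\psi_{\{2\}}=\psi_{\{4\}}=-1/3$, $\psi_{\{5\}}=0$. Then $\D(\s)$ consists of $\{5\}$, $\{1,2,3,4\}$, and $\{1,2\},\{1,4\},\{2,3\},\{3,4\}$ together with their complements; the subcurve $Y:=\{2,3\}$ lies in $\D(\s)_{\min}$ because $\{2\},\{3\}\notin\D(\s)$. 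Now take the triple $(Z_1,Z_2,Z_3)=(\{1,2\},\{3,4\},\{5\})$: all three lie in $\D(\s)$, none contains $Y$, and $Y$ meets $Z_1$ and $Z_2$ but not $Z_3$. Your formula leaves $\s$ unchanged on $Z_1,Z_2,Z_3,Z_1^{\mathsf c},Z_2^{\mathsf c}$ but bumps $\s_{Z_3^{\mathsf c}}=\s_{\{1,2,3,4\}}$, so $Z_1$ and $Z_2$ remain $\s^Y$-degenerate while $Z_3$ becomes $\s^Y$-nondegenerate; this is precisely the $d(\s^Y)=2$ configuration you flag, and it violates condition (2)(a) of Definition~\ref{D:VStabX}. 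Your proposed resolution fails because $Y\wedge Z_1=\{2\}$ and $Y\wedge Z_2=\{3\}$ need not belong to $\D(\s)$ (here they do not), so no contradiction with minimality arises. The statement is of course still true: a correct one-step witness is obtained by fixing an irreducible component $X_{v_0}$ and setting $\s'_Z:=\s_Z+1$ exactly when $Z\in\D(\s)$ and $X_{v_0}\subseteq Z$; since exactly one member of any complementary pair, and exactly one member of any covering triple, contains $X_{v_0}$, conditions \eqref{E:sum-n} and \eqref{E:tria-n} follow at once and $\D(\s')=\emptyset$. (This is the upper lift of $\emptyset\geq\D(\s)$ along Proposition~\ref{P:D-pos}\eqref{P:D-pos2} with $\E=\{Z\in\D(\s):X_{v_0}\subseteq Z\}$, which is the paper's route.)

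In the forward direction of (ii) there is a smaller but real gap of the same flavour: you assert the existence of $\E\subseteq\D(\s)\setminus\{Z,Z^{\mathsf c}\}$ witnessing $\{Z,Z^{\mathsf c}\}\geq\D(\s)$ but never construct it, and the non-laminarity lemma you carefully prove is never used afterwards. Note that the choice $\E=\{W:X_{v_0}\subseteq W\}$ that fixes part (i) fails condition \eqref{D:deg-pos2} of Definition~\ref{D:deg-pos} whenever $Z$ or $Z^{\mathsf c}$ is a disjoint union of two elements of $\D(\s)$ neither of which contains $X_{v_0}$, so a genuinely more careful choice is required here. The paper handles both parts by first identifying $\emptyset$ as the maximum of $\Deg(X)$ and the pairs $\{Y,Y^{\mathsf c}\}$ as its submaximal elements (via the moves of Example~\ref{Ex:elem-moves}) and then invoking Proposition~\ref{P:D-pos}; your plan for (ii) is essentially that, but the verification you omit is where the content lies.
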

\begin{proof}
  This follows from Proposition \ref{P:D-pos} and the fact that $\emptyset$ is the maximal element of $\Deg(X)$, while its submaximal elements are of the form $\{Y,Y^c\}$, for some $Y\in \BCon(X)$, as it follows easily from Example \ref{Ex:elem-moves}.   
\end{proof}

\begin{example}\label{Es:4comp}
    Let $X$ be a connected reduced curve with $4$ irreducible components and assume that all its subcurves are connected (which implies that all its non-trivial subcurves are biconnected). We fix an identification $I(X)=\{1,2,3,4\}$. Then the Hasse diagram of the poset $\Deg(X)$, up to the permutation action of the symmetric group $S_4$, looks as follows (where $\{i,j,k,l\}=\{1,2,3,4\}$)
    $$
    \begin{tikzcd}
      & \emptyset \arrow[dash]{dr} \arrow[dash]{dl}& \\
      \{\{i,j,k\},\{l\}\} \arrow[dash]{d} && \{\{i,j\},\{k,l\}\}\arrow[dash]{dll} \arrow[dash]{d} \\
      \{\{i,j\},\{k\},\{l\}\} \arrow[dash]{d} & &  \{\{i,j\},\{k,l\}, \{i,k\},\{j,l\}\}\arrow[dash]{d}\arrow[dash]{dll} \\
      \{\{i\},\{j\},\{k\},\{l\}\}  & &  \{\{i,j\},\{k,l\}, \{i,k\},\{j,l\},\{i,l\},\{j,k\}\} 
    \end{tikzcd}
    $$
    where we have represented a degeneracy subset by the set of its minimal elements.

This example exhibits two interesting features:
\begin{itemize}
    \item The minimal element on the right is contained in the minimal element on the left but it is not greater than it.
    \item All the covering relations are given by the two moves in Example \ref{Ex:elem-moves} except for the covering relation 
    $$\D^1:=\{\{i,j\},\{k,l\}, \{i,k\},\{j,l\}\}\geq \D^2:=\{\{i\},\{j\},\{k\},\{l\}\}, $$
    which is due to the following subset (or its complementary subset):
    $$
    \E:=\{S\subset \D^2-\D^1:\: S\subseteq \{i,l\} \text{ or } \{i,l\}\subseteq S\}.$$
\end{itemize}
\end{example}

We end up this discussion on the poset of V-stability conditions with the following two natural questions:

\begin{question}\label{Q:Deg}
\noindent
\begin{enumerate}
    \item Is the poset $\Deg(X)$ a ranked poset of rank equal to $|I(X)|-1$?
    \item Is the degeneracy map $\D:\VStab(X)\to \Deg(X)$ surjective?
\end{enumerate}
\end{question}

A positive answer to the Question~\ref{Q:Deg} would imply, using Proposition \ref{P:D-pos}, that also $\VStab(X)$ is a ranked poset of rank equal to $|I(X)|-1$ and that the map $\D$ is rank preserving. 

\begin{comment}
\textcolor{green}{F: Le cose qua sotto vanno rivalutate: servono o le possiamo togliere?}

\begin{corollary}\label{C:genV-funBound}
    Let $Y,Z\subseteq X$ be subcurves, such that $Y=\bigsqcup_{i\in I}Y_i$, $Z=\bigsqcup_{j\in J}Z_j$ and $Y\cup Z\in\Con(X)$. Then, there exists an integer $N$ such that 
    $$
     \max\{-1,-|Y_i,Z_j\textup{ is $\s$-nondegenerate}+1\}\leq \tilde{\s}_{Y\cup Z}-\tilde{\s}_Y-\tilde{\s}_Z\leq N.
    $$
\end{corollary}
\begin{proof}
    It follows by inductively applying \ref{P:genV-funP}(\ref{P:genV-funTria}) to $Y\cup Z=\bigsqcup_{i\in I}Y_i\cup\bigsqcup_{j\in J}Z_j$.
\end{proof}
\end{comment}

\subsection{V-stability conditions for families}\label{Sub:Vfamily}

The aim of this subsection is to define V-stability conditions for families of reduced curves. 

We start with some preliminary remarks. Given a family  $\pi:X\to S$ of connected reduced curves and an 
\'etale specialization $\xi: s\rightsquigarrow t$ of geometric points of $S$, there is an induced surjective map on subcurves 
\begin{equation}\label{E:xi*}
\begin{aligned}
\xi_*:\{\text{Subcurves of } X_s\}& \twoheadrightarrow \{\text{Subcurves of } X_t\} \\
Y & \mapsto \ov{Y}\cap X_t
\end{aligned}
\end{equation}
(here $\overline{Y}$ is the closure of $Y$ in $X$), which clearly preserves joins and meets, and the number of connected components (see \cite[Lemma 5.2]{MRV}). 
%Since the morphism $\ov Y\to \ov{\{s\}}$ is flat, the map $\xi_*$ preserves the arithmetic genus and the number of connected components. 
In particular,  $\xi_*$ sends connected (resp. biconnected) subcurves into connected (resp. biconnected) subcurves. 
Therefore, there is an induced pull-back map on V-stability conditions (of a given characteristic $\chi$):
\begin{equation}\label{E:VStab-fun}
   \begin{aligned}
      \xi^*:\VStab^\chi(X_t) & \to \VStab^\chi(X_s) \\
      \s & \mapsto \xi^*(\s)=\{\xi^*(\s)_Y:=\s_{\xi_*(Y)}: \: Y\in \BCon(X_s)\}.
   \end{aligned} 
\end{equation}
Observe that the degeneracy subset (resp. the extended degeneracy subset) and the V-function (resp. the extended V-function) of the pull-back are given by the following formulas:
$$
\begin{sis}
& \D(\xi^*(\s))=(\xi_*)^{-1}(\D(\s)),\\
& \wh \D(\xi^*(\s))=(\xi_*)^{-1}(\wh \D(\s)),\\ 
& \wh{\xi^*(\s)}=\xi^*(\wh{\s}).
\end{sis}
$$

\begin{definition}\label{D:VStabXS}
Let $\pi:X\to S$ be a family of connected reduced curves. 

 A \emph{(relative) stability condition of vine type} (or simply a \textbf{V-stability condition)} of characteristic $\chi$ on $X/S$ is a collection of V-stabilities 
 $$\s=\{\s^s\in \VStab^\chi(X_s)\: : s \text{ is a geometric point of } S\},$$
such that, for  any \'etale specialization $\xi: s\rightsquigarrow t$ of geometric points of $S$, we have that $\xi^*(\s^t)=\s^s$. 

We say that $\s=\{\s^s\}$ is \emph{general} if $\s^s$ is general for every geometric point $s$ of $S$.

We denote by $\VStab^\chi(X/S)$ the set of all V-stability conditions on $X/S$ of characteristic $\chi$.
\end{definition}
In other words, the set of V-stability conditions on $X/S$ of characteristic $\chi$ is equal to the inverse limit of sets
$$
\VStab^\chi(X/S)=\varprojlim \VStab^\chi(X_s),
$$
where the inverse limit is taken with respect to the pull-back maps induced by \'etale specializations of geometric points on $S$.

    \begin{example} \label{Ex; PTfine}  (Fine PT stabilities for families of nodal curves). Let $X/S$ be a family of connected nodal curves with $S$ connected. Then  each  general V-stability condition $\s$ of characteristic $\chi$ on $X/S$ 
    %with the property that $\s^s$ is general for every geometric point $s$ of $S$, 
    corresponds to a family of degree $d=\chi-1+g(X_s)$ stability assignments according to \cite[Definition~4.4, Definition~4.11]{pagani2023stability}. 

This follows by applying \cite[Theorem~1.20]{viviani2023new} to $\s^s$ for all geometric points $s$ of $S$, and by observing that the  condition prescribed in Definition~\ref{D:VStabXS} coincides with the compatibility condition of \cite[Definition~4.11]{pagani2023stability}.
    %with $S$ irreducible and with generic element $X_\theta/\theta$ smooth. 
    \end{example}

Again, the easiest way of producing relative V-stability conditions is via relative numerical polarizations, as we now explain. Observe that if $\xi: s\rightsquigarrow t$ is an \'etale specialization of geometric points of $S$, then, using the map of \eqref{E:xi*}, we can define a pull-back 
\begin{equation}\label{E:Pol-fun}
   \begin{aligned}
      \xi^*:\Pol^\chi(X_t) & \to \Pol^\chi(X_s) \\
      \psi & \mapsto \xi^*(\psi)=\{\xi^*(\psi)_Y:=\psi_{\xi_*(Y)}: \: Y\subseteq X_s\}.
   \end{aligned} 
\end{equation}

\begin{lemma-definition}\label{LD:numpol-fam}
Let $\pi:X\to S$ be a family of connected reduced curves. 

A \emph{(relative) numerical polarization} on $X/S$ of characteristic $\chi\in \ZZ$ is  a collection of numerical polarizations
 $$\psi=\{\psi^s\in \Pol^\chi(X_s)\: : s \text{ is a geometric point of } S\},$$
such that, for  any \'etale specialization $\xi: s\rightsquigarrow t$ of geometric points of $S$, we have that $\xi^*(\psi^t)=\psi^s$.

The function 
\begin{align*}
    \s(\psi)=\{\s(\psi^s)\in \VStab^\chi(X_s)\: : s \text{ is a geometric point of } S\}
    \end{align*}
is a V-stability condition on $X/S$ of characteristic $\chi$ (called the V-stability condition associated to $\psi$).
\end{lemma-definition}
The relative V-stability conditions of the form $\s(\psi)$ are called \emph{classical}.
\begin{proof}
 This follows  from Lemma-Definition \ref{LD:numpol} and from the fact that the compatibility $\psi^t\circ \xi_*=\psi^s$ implies $\xi^*(\s(\psi^t))=\s(\psi^s)$. 
\end{proof}

The set of numerical polarizations on $X/S$ of characteristic $\chi$ is denoted by $\Pol^\chi(X/S)$ and it is equal to the inverse limit 
$$
\Pol^\chi(X/S)=\varprojlim \Pol^\chi(X_s),
$$
with respect to the pull-back maps induced by \'etale specializations of geometric points on $S$. Therefore, $\Pol^\chi(X/S)$ is a real affine space  with underlying real vector space $\Pol^0(X/S)$. 

%We also set 
%$$
%\Pol(X/S)=\coprod_{\chi \in \ZZ}\Pol^\chi(X/S).
%$$

%Consider the arrangement of hyperplanes in the affine space $\RR^{I(X)}$ given by  
%\begin{equation}\label{E:arr-hyper}
%\A_{X}:=\left\{\psi_Y=n\right\}_{Y\in \BCon(X), n\in \ZZ}.
%\end{equation}
% We get an induced wall and chamber decomposition on $\Pol(X)$ such that two numerical polarizations $\psi, \psi'$ belong to the same region, i.e. they have the same relative positions with respect to all the hyperplanes, if and only if $\s(\psi)=\s(\psi')$. In other words, 

The map
 \begin{equation}\label{E:map-s-fam}
 \begin{aligned}
     \left\lceil - \right\rceil: \Pol(X/S):= \coprod_{\chi \in \ZZ}\Pol^\chi(X/S)& \longrightarrow \VStab(X/S):=\coprod_{\chi \in \ZZ}\VStab^\chi(X/S)\\
    \psi & \mapsto \s(\psi)
 \end{aligned}
 \end{equation}
 is the inverse limit of the maps of \eqref{E:map-s} on  geometric fibers. 
 
 %induces a bijection between the set of regions induced by  $\A_{X}$ on $\Pol(X)$ and the set of classical V-stability conditions on $X$. Note also that $\s(\psi)$ is a general V-stability condition if and only if $\psi$ belongs to a chamber (i.e. a maximal dimensional region), or equivalently if it does not lie on any of the hyperplanes of $\A_X$. 

\begin{example}\label{Ex:VBstab}
There are two standard ways of producing relative numerical polarizations on a family $X/S$ of connected reduced curves:
%(see \cite[Rmk. 2.16, 2.17]{MRV})

\begin{enumerate}
    \item \label{Ex:VBstab1}  Let $L$ be a relatively ample line bundle on $X$ of  constant relative degree $\deg(L):=\deg(L_{|X_s})>0$ (the condition that $\deg(L_{|X_s})$ is constant is automatic if $S$ is connected). Then, for any $\chi \in \Z$,  
    $$
    \psi(L,\chi):=\left\{\psi(L,\chi)^s_Y:=\frac{\deg(L_{|Y})\cdot \chi}{\deg(L)} \: \text{ for any } Y\subseteq X_s\right\}_{s}
    $$
    is a numerical polarization on $X/S$ of characteristic $\chi$.
    \item \label{Ex:VBstab2} Let $E$ be a vector bundle on $X/S$ with constant  integral relative slope $\mu_{X/S}(E):=\mu(E_{|X_s})\in \ZZ$ 
%and assume that the geometric fibers of $X/S$ have constant genus equal to $g(X/S)$ 
(the condition that $\mu(E_{|X_s})$ is constant is automatic if $S$ is connected). 
Then
$$
\psi(E):=\left\{\psi(E)^s_Y:=-\mu(E_{|Y})=-\frac{\deg(E_{|Y})}{\rk(E)}\: \text{ for any } Y\subseteq X_s \right\}_s
$$
is a numerical polarization on $X/S$ of characteristic $\chi=-\mu_{X/S}(E)$.    
\end{enumerate}
Notice that \eqref{Ex:VBstab1} is a special case of \eqref{Ex:VBstab2} since 
$$\psi(L,\chi)=\psi(\O_X^{\oplus (\deg(L)-1)}\oplus L^{-\chi}).$$

The fact that $\psi(E)$ is a well-defined numerical polarization on $X/S$ follows from the fact that, for any \'etale specialization $\xi:s\rightsquigarrow t$ of geometric points of $S$ and any subcurve $Y\subset X_s$, we have that $\mu(E_{|Y})=\mu(E_{|\xi_*(Y)})$ and $g(Y)=g(\xi_*(Y))$ because of the flatness of $\ov Y\to \{s\}$.

The geometric meaning of the above two relative numerical polarizations will be clarified in Example~\ref{Ex:classcJ}.
\end{example}

\section{The category of semistable sheaves}\label{Sub:ss}

Throughout this section, we fix a connected reduced curve $X$ and a V-stability condition $\s$ on $X$ of characteristic $\chi$.

The aim of this section is to study the category of rank-$1$ torsion-free sheaves on subcurves of $X$ that are semistable (resp. polystable, resp. stable) with respect to $\s$, as in the following:

\begin{definition}\label{D:ss}
   Let $\s$ be a V-stability of characteristic $\chi$ on a connected reduced curve $X$. Let $I$ be a coherent sheaf on $X$ that is rank~$1$ and torsion-free on some subcurve $Y=\supp(I)\subseteq X$, and let $Y=\coprod_{i=1}^h Y_i$ be its decomposition  into connected components. 

We say that $I$ is \emph{$\s$-semistable} if for any $1\leq i\leq h$:
\begin{enumerate}[(i)]
    \item $Y_i$ is $\s$-degenerate, i.e. $Y_i\in \wh \D(\s)$.
    \item $I_{Y_i}$ is $\s(Y_i)$-semistable, i.e. 
    \begin{itemize}
    \item $\chi(I_{Y_i})=|\s(Y_i)|=\s_{Y_i}$;
    \item $\chi(I_Z)=\chi((I_{Y_i})_Z)\geq \s(Y_i)_Z$, for any $Z\in \BCon(Y_i)$. 
    \end{itemize}
\end{enumerate}
If $I$ is $\s$-semistable, we also say that $I$ is:
\begin{itemize}
    \item\label{D:ss-ps} \emph{$\s$-polystable} if, for each $i=1,\ldots,h$, and for each $Z\in\D(\s(Y_i))$, whenever $\chi(I_Z)=\chi((I_{Y_i})_Z)= \s(Y_i)_Z$, we have $I_{Y_i}=(I_{Y_i})_Z\oplus (I_{Y_i})_{\ov{Y_i-Z}}$;
    \item\label{D:ss-s} \emph{$\s$-stable} if, for each $i=1,\ldots,h$, and for each $Z\in\D(\s(Y_i))$, we have $\chi(I_Z)=\chi((I_{Y_i})_Z)> \s(Y_i)_Z$.
\end{itemize}
\end{definition}
Note that, in particular, a rank-$1$ torsion-free sheaf on $X$ is $\s$-semistable if $\chi(I)=|\s|$ and $\chi(I_Z)\geq \s_Z$ for any $Z\in \BCon(X)$.

\begin{remark}\label{R:gen-s}
    For any V-stability condition $\s$ on $X$ and any rank-$1$ torsion-free sheaf on $X$, we have that 
    $$
    I \text{ is $\s$-stable} \Rightarrow I \text{ is $\s$-polystable} \Rightarrow I \text{ is $\s$-semistable}
    $$
    and the three notions coincide if $\s$ is general.
\end{remark}

From the definition of $\s$-semistability, we deduce several other inequalities which we collect in the following Lemma (in which we discuss, for simplicity, only sheaves supported on $X$).

\begin{lemma}\label{L:ss-deg}
 Let $I$ be a $\s$-semistable sheaf  on $\supp(I)=X$. Then we have:
 \begin{enumerate}[(i)]
     \item \label{L:ss-deg1} If $W\in \BCon(X)$, then 
     $$
     \chi(\leftindex_{W}{I})\leq 
     \begin{cases}
       \s_W-1 & \text{ if $W$ is $\s$-nondegenerate,}\\
        \s_W & \text{ if $W$ is $\s$-degenerate.}\\
     \end{cases}
     $$
     \item \label{L:ss-deg2}  For any  $W\in \Con(X)$, we have that    
     $$\chi(I_W)\geq \s_W\geq |\s|-\s_{W^c}\geq  \chi(\leftindex_W{I}).$$
     %VECCHI ENUNCIATI
     %\item \label{L:ss-deg2} If $W\in \Con(X)$ and we denote by $W^c=\coprod_i Z_i$ the decomposition into connected components (so that each $Z_i$ is biconnected), then 
  %$$\chi(I_W)\geq \s_W=|\s|-\sum_i \s_{Z_i}+|\{Z_i \text{ is $\s$-nondegenerate}\}|\geq |\s|-\sum_i \s_{Z_i}\geq \chi(\leftindex_W{I}).$$
   %  \item \label{L:ss-deg3} If $W\in \wh \D(\s)$, then     $$\chi(I_W)\geq \s_W\geq  \chi(\leftindex_W{I}).$$
 \end{enumerate}
\end{lemma}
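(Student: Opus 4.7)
My plan is to deduce both parts from the short exact sequence \eqref{E:resY} together with the additivity of Euler characteristics \eqref{E:add-chi}, by combining the semistability inequalities on the biconnected pieces of $W^c$ with the formulas for the extended V-function from Definition~\ref{D:ext-s} and Remark~\ref{R:ext-s}.

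For part \eqref{L:ss-deg1}, since $W\in\BCon(X)$ we have $W^c\in\BCon(X)$, so the $\s$-semistability of $I$ (viewed on $X$) gives $\chi(I_{W^c})\geq \s_{W^c}$. Applying \eqref{E:add-chi} to the decomposition of $I$ along $W$ (so that $\leftindex_W{I}$ is the kernel of $I\twoheadrightarrow I_{W^c}$) yields
\[
\chi(\leftindex_W{I})=|\s|-\chi(I_{W^c})\leq |\s|-\s_{W^c}.
\]
By \eqref{E:sum-n}, $|\s|-\s_{W^c}$ equals $\s_W-1$ if $W$ is $\s$-nondegenerate and $\s_W$ if $W$ is $\s$-degenerate, which is exactly the claimed bound.

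For part \eqref{L:ss-deg2}, I would first decompose $W^c=Z_1\sqcup\cdots\sqcup Z_k$ into its connected components, noting that each $Z_i\in\BCon(X)$. Since these components are disjoint as subschemes of $X$, both $I_{W^c}$ and $\leftindex_{W^c}{I}$ split as direct sums $\bigoplus_i I_{Z_i}$ and $\bigoplus_i \leftindex_{Z_i}{I}$ respectively (using Lemma~\ref{L:IYZ}\eqref{L:IYZ1} for the second decomposition). Applying part \eqref{L:ss-deg1} to each $Z_i$ gives
\[
\chi(\leftindex_{W^c}{I})=\sum_{i=1}^k \chi(\leftindex_{Z_i}{I})\leq \sum_{i=1}^k \s_{Z_i}-|\{Z_i\notin\D(\s)\}|=|\s|-\s_W,
\]
where the last equality is Formula~\eqref{eq: ext-conn} from Definition~\ref{D:ext-s}. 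Rearranging via \eqref{E:add-chi} gives $\chi(I_W)\geq \s_W$. The middle inequality $\s_W\geq |\s|-\s_{W^c}$ is then an immediate consequence of the identity $\s_W+\s_{W^c}=|\s|+|\{Z_i\notin \D(\s)\}|$ recorded in Remark~\ref{R:ext-s}. Finally, summing the semistability inequalities $\chi(I_{Z_i})\geq \s_{Z_i}$ over $i$ gives $\chi(I_{W^c})\geq \s_{W^c}$ (by Definition~\ref{D:ext-s} applied to the non-connected subcurve $W^c$), which via \eqref{E:add-chi} rewrites as $|\s|-\s_{W^c}\geq \chi(\leftindex_W{I})$.

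There is no serious obstacle here: the content is purely bookkeeping, the main point being the correct identification of $\s_W$ and $\s_{W^c}$ in terms of the biconnected pieces $Z_i$ of $W^c$, for which one reads off both the degeneracy-correction term in \eqref{eq: ext-conn} and the matching $-1$ appearing in part~\eqref{L:ss-deg1} for each nondegenerate~$Z_i$.
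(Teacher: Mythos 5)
Your proposal is correct and follows essentially the same route as the paper's proof: part (i) via $\chi(\leftindex_W I)=|\s|-\chi(I_{W^c})$ together with semistability and \eqref{E:sum-n}, and part (ii) by decomposing $W^c$ into its biconnected connected components $Z_i$, summing the inequalities from semistability and part (i), and matching the degeneracy-correction term against Definition~\ref{D:ext-s} and Remark~\ref{R:ext-s}. No substantive differences.
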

\begin{proof}
Part \eqref{L:ss-deg1} follows from 
$$
\chi(\leftindex_{W}{I})=\chi(I)-\chi(I_{W^c})\leq |\s|-\s_{W^c}=
\begin{cases}
       \s_W-1 & \text{ if $W$ is $\s$-nondegenerate,}\\
        \s_W & \text{ if $W$ is $\s$-degenerate,}\\
     \end{cases}
$$
where we have used \eqref{E:add-chi} in the first equality, the $\s$-semistability of $I$ in the inequality and \eqref{E:sum-n} in the second equality. 

Part \eqref{L:ss-deg2}: let $W^c=\coprod_i Z_i$ be the decomposition into connected components (so that each $Z_i$ is biconnected). From the $\s$-semistability of $I$ and from Part~\eqref{L:ss-deg1}, we get (for any $Z_i$):
\begin{equation*}\label{E:ineqZi}
\chi(I_{Z_i})\geq \s_{Z_i} \text{ and } \chi(\leftindex_{Z_i}{I})\leq 
\begin{cases}
    \s_{Z_i}-1 & \text{ if $Z_i$ is $\s$-nondegenerate,}\\
        \s_{Z_i} & \text{ if $Z_i$ is $\s$-degenerate.}\\
\end{cases}
\end{equation*}
Summing the above inequality over all the $Z_i$ and using that 
$I_{W^c}=\oplus_i I_{Z_i}$ and $\leftindex_{W^c}{I}=\oplus_i \leftindex_{Z_i}{I}$, we get 
\begin{equation}\label{E:ineqWc}
\chi(I_{W^c})\geq \sum_i\s_{Z_i} \text{ and } \chi(\leftindex_{W^c}{I})\leq \sum_i \s_{Z_i} -|\{Z_i: Z_i\not\in \D(\s)\}|. 
\end{equation}
Part \eqref{L:ss-deg2} follows now from the above inequalities \eqref{E:ineqWc} together with the fact that 
$$
\chi(I_W)=|\s|-\chi(\leftindex_{W^c}{I})\geq |\s|-\sum_i \s_{Z_i}+|\{Z_i: Z_i\not\in \D(\s)\}|=\s_W\geq |\s|- \s_{w^c}\geq |\s|-\chi(I_{W^c})=\chi(\leftindex_{W}{I}),
$$
where the first and last equality follows \eqref{E:add-chi} and $\chi(I)=|\s|$, and the middle equality follows from the Definition \ref{D:ext-s} of the extended V-function. 
%Part \eqref{L:ss-deg3} follows from \eqref{L:ss-deg2} using that $W\in \wh \D(\s)$ if and only if each $Z_i$ is $\s$-degenerate and that, in this case, Lemma \ref{L:add-whn} implies that 
%$$
%\wh \s_{W}=|\s|-\sum_i\s_{Z_i}.
%$$
\end{proof}

We now prove three properties of the category of $\s$-semistable sheaves. 

\begin{proposition}\label{P:res-ss}
  Let $I$ be $\s$-semistable and let $Y$ be a subcurve of $\supp(I)$. Then the following conditions are equivalent:
  \begin{enumerate}[(i)]
      \item \label{P:res-ss1} $I_Y$ is $\s$-semistable;
      \item \label{P:res-ss2} the connected components $\{Y_i\}_{i\in I}$ of $Y$ are $\s$-degenerate and we have $\chi(I_{Y_i})=\s_{Y_i}$ for any $i\in I$;
      \item \label{P:res-ss3} $\leftindex_{Y^c}{I}$ is $\s$-semistable;
      \item \label{P:res-ss4} the  connected components of $\{W_j\}_{j\in J}$ of $Y^c$ are $\s$-degenerate and we have $\chi(\leftindex_{W_j}{I})=\s_{W_j}$ for any $j\in J$.
  \end{enumerate}
\end{proposition}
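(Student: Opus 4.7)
The plan is to reduce to the case where $\supp(I) = X$ is connected (hence $X \in \wh\D(\s)$ and $\chi(I) = |\s|$) and $Y$ itself is connected; the passage from this case to the general one is a bookkeeping argument based on the direct sum decompositions $I_Y = \bigoplus_i I_{Y_i}$ and $\leftindex_{Y^c}{I} = \bigoplus_j \leftindex_{W_j}{I}$, together with the per-component structure of Definition~\ref{D:ss}. In the connected-$Y$ case every connected component $W_j$ of $Y^c$ automatically belongs to $\BCon(X)$, since $W_j^c = Y \cup \bigcup_{k \neq j} W_k$ is connected via $Y$. The equivalence $(i) \Leftrightarrow (ii)$ is then immediate from Definition~\ref{D:ss}: the nontrivial condition ``$\chi((I_Y)_Z) \geq \s(Y)_Z$ for each $Z \in \BCon(Y)$'' is automatic, since $(I_Y)_Z = I_Z$ by Lemma~\ref{L:IYZ}\eqref{L:IYZ1}, $\s(Y)_Z = \s_Z$ by Lemma-Definition~\ref{LD:V-subgr}, and $\chi(I_Z) \geq \s_Z$ by Lemma~\ref{L:ss-deg}\eqref{L:ss-deg2} applied to the connected subcurve $Z$ of $X$.

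For $(ii) \Leftrightarrow (iv)$, the exact sequence $0 \to \leftindex_{Y^c}{I} \to I \to I_Y \to 0$ combined with $\chi(I) = |\s|$ gives $\chi(I_Y) + \sum_j \chi(\leftindex_{W_j}{I}) = |\s|$. Lemma~\ref{L:ss-deg} supplies $\chi(I_Y) \geq \s_Y$ and $\chi(\leftindex_{W_j}{I}) \leq \s_{W_j} - \delta_j$, where $\delta_j := 1$ if $W_j \notin \D(\s)$ and $\delta_j := 0$ otherwise. Remark~\ref{R:ext-s} (applied to the connected $Y$) yields the identity $\s_Y + \sum_j \s_{W_j} = |\s| + \sum_j \delta_j$, which makes the chain of inequalities tight; equality must therefore hold in every summand, so $(ii)$ and $(iv)$ both coincide with the conjunction of $\delta_j = 0$ for every $j$ (equivalently, $Y \in \wh\D(\s)$), $\chi(I_Y) = \s_Y$, and $\chi(\leftindex_{W_j}{I}) = \s_{W_j}$ for every $j$.

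For $(iii) \Leftrightarrow (iv)$, one direction is Definition~\ref{D:ss} applied to $\leftindex_{Y^c}{I}$, since $(\leftindex_{Y^c}{I})_{W_j} = \leftindex_{W_j}{I}$. For the converse, we must check $\chi((\leftindex_{W_j}{I})_Z) \geq \s_Z$ for every $Z \in \BCon(W_j)$. The short exact sequence $0 \to \leftindex_{W_j - Z}{I} \to \leftindex_{W_j}{I} \to (\leftindex_{W_j}{I})_Z \to 0$ (where the kernel is identified via Lemma~\ref{L:IYZ}\eqref{L:IYZ1}), together with $(iv)$, yields $\chi((\leftindex_{W_j}{I})_Z) = \s_{W_j} - \chi(\leftindex_{W_j - Z}{I})$, while Lemma~\ref{L:ss-deg}\eqref{L:ss-deg2} applied to $W_j - Z \in \Con(X)$ gives $\chi(\leftindex_{W_j - Z}{I}) \leq |\s| - \s_{Z \cup W_j^c}$. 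Using $\s_{W_j} + \s_{W_j^c} = |\s|$ (from \eqref{E:sum-n}, since $W_j \in \D(\s)$), the desired bound reduces to the purely combinatorial inequality $\s_{Z \cup W_j^c} \geq \s_Z + \s_{W_j^c}$.

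This combinatorial inequality is the main obstacle: when $Z \cup W_j^c$ is disconnected in $X$ it is an equality by Definition~\ref{D:ext-s}, and when $Z \cup W_j^c \in \Con(X)$ it will follow from Lemma~\ref{L:ExtV-funP} through a short case analysis on whether $W_j - Z$ meets $W_j^c$ and on the $\s$-degeneracy of $Z$; the crucial input is that $W_j \in \D(\s)$, so Remark~\ref{R:n-deg}\eqref{R:n-deg1} synchronises the degeneracy statuses of $Z$ and $W_j - Z$ and makes every subcase close.
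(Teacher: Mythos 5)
Your treatment of the connected-$Y$ case is sound, and in one place arguably cleaner than the paper's: the arguments for (i)$\Leftrightarrow$(ii) and (ii)$\Leftrightarrow$(iv) match the paper's, while your route to (iv)$\Rightarrow$(iii) — writing $\chi((\leftindex_{W_j}{I})_Z)=\s_{W_j}-\chi(\leftindex_{\ov{W_j-Z}}{I})$, bounding the second term by Lemma~\ref{L:ss-deg}\eqref{L:ss-deg2}, and reducing everything to $\s_{Z\cup W_j^{\mathsf{c}}}\geq \s_Z+\s_{W_j^{\mathsf{c}}}$ — does close: since $\ov{W_j-Z}$ is connected and must meet $Z$, the only cases in Lemma~\ref{L:ExtV-funP} are the one where $\ov{W_j-Z}$ is a single $B_1$ (lower bound $1-|\{Z\notin\D(\s)\}|\geq 0$ because $W_j^{\mathsf{c}}\in\D(\s)$) and the one with no $B_j$'s at all (where \eqref{E:sum-n} for the degenerate $W_j$ gives equality), plus the trivial disconnected case. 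This is a legitimate alternative to the paper's direct manipulation of $(\leftindex_{W_j}{I})_Z$.

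The gap is the reduction of the general statement to connected $Y$. For (i)$\Leftrightarrow$(ii) the componentwise reduction is genuine bookkeeping, but the $Y^{\mathsf{c}}$-side is not. When $Y$ is disconnected, a connected component $W_j$ of $Y^{\mathsf{c}}$ need no longer be biconnected (chain of five components with $Y$ the second and fourth: the middle component of $Y^{\mathsf{c}}$ has disconnected complement), and such a $W_j$ never arises as a component of $Y^{\mathsf{c}}$ for any \emph{connected} $Y$, so it is not covered by the case you prove. Concretely, your key inequality $\s_{Z\cup W_j^{\mathsf{c}}}\geq\s_Z+\s_{W_j^{\mathsf{c}}}$ must now be established with $W_j^{\mathsf{c}}$ a disjoint union of several degenerate curves; this requires an induction over those components (it is essentially the content of the paper's Lemma~\ref{L:ExtVfunCompar}) and does not follow from the biconnected case. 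Likewise (ii)$\Leftrightarrow$(iv) for disconnected $Y$ cannot be assembled from the connected case, because (ii) is indexed by the components of $Y$ and (iv) by those of $Y^{\mathsf{c}}$ and these do not correspond componentwise; one needs the nontrivial facts that all $Y_i$ lie in $\wh\D(\s)$ if and only if all $W_j$ do (the paper's Lemma~\ref{L:deg-compl}, itself an induction) and that $\sum_i\s_{Y_i}+\sum_j\s_{W_j}=|\s|$ (Corollary~\ref{C:add-whn}), in place of your appeal to Remark~\ref{R:ext-s}, which only applies to a connected $Y$. Since the proposition is applied later to disconnected subcurves (e.g. in Proposition~\ref{P:ext-ss} and in the $\Theta$- and S-completeness arguments), this missing case is not cosmetic and must be supplied.
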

\begin{proof}
It is enough to argue on each connected component of $\supp(I)$; hence, we can assume that $\supp(I)=X$. We prove the following implications:

$\bullet$ $\eqref{P:res-ss1}\Longrightarrow \eqref{P:res-ss2}$ is obvious from Definition \ref{D:ss}.

$\bullet$ $\eqref{P:res-ss2}\Longrightarrow \eqref{P:res-ss1}$: we have to show that $(I_Y)_{Y_i}=I_{Y_i}$ (see Lemma \ref{L:IYZ}\eqref{L:IYZ1}) is $\s(Y_i)$-semistable for any $i\in I$. Fix one such index $i\in I$. By assumption,  we have that $Y_i$ is $\s$-degenerate and that $\chi(I_{Y_i})=\s_{Y_i}=|\s(Y_i)|$. It remains to observe that for any $Z\in \BCon(Y_i)$ we have that 
$$
\chi((I_{Y_i})_Z)=\chi(I_Z)\geq \s_Z=\s(Y_i)_Z,
$$
where the first equality follows from Lemma \ref{L:IYZ}\eqref{L:IYZ1}, the inequality follows from Lemma \ref{L:ss-deg}\eqref{L:ss-deg2} using that $I$ is $\s$-semistable on $X=\supp(I)$ and the last equality follows from the definition of $\s(Y_i)$ (see Lemma-Definition \ref{LD:V-subgr}).

$\bullet$ $\eqref{P:res-ss3}\Longrightarrow \eqref{P:res-ss4}$ follows from  Definition \ref{D:ss} together with the fact that $\leftindex_{Y^c}{I}=\oplus_{j} \leftindex_{W_j}{I}$ because $Y^c=\coprod_{j} W_j$, which then implies, using Lemma \ref{L:IYZ}\eqref{L:IYZ1} and Lemma \ref{L:IY}\eqref{L:IY2}, that 
\begin{equation}\label{E:resI-Wi} 
\leftindex_{W_j}{I}=\leftindex_{W_j}{(\leftindex_{Y^c}{I})}=(\leftindex_{Y^c}{I})_{W_j}.
\end{equation}

$\bullet$ $\eqref{P:res-ss4}\Longrightarrow \eqref{P:res-ss3}$:  we have  to show that $(\leftindex_{Y^c}{I})_{W_j}=\leftindex_{W_j}{I}$ (see \eqref{E:resI-Wi}) is $\s(W_j)$-semistable for any $j\in J$. Fix one such index $j\in J$. By assumption, we have that $W_j$ is $\s$-degenerate and that $\chi(\leftindex_{W_j}{I})=\s_{W_j}=|\s(W_j)|$.  It remains to observe that for any $Z\in \BCon(W_j)$ we have that 
$$
\chi((\leftindex_{W_j}{I})_Z)=\chi(I_Z)\geq \s_Z=\s(W_j)_Z,
$$
where the inequality follows from Lemma \ref{L:ss-deg}\eqref{L:ss-deg2} using that $I$ is $\s$-semistable on $X=\supp(I)$, the last equality follows from the definition of $\s(W_j)$ (see Lemma-Definition \ref{LD:V-subgr}) and the first equality follows by applying Lemma \ref{L:IYZ}\eqref{L:IYZ1}
$$
(\leftindex_{W_j}{I})_Z=\leftindex_{Z}{(I_{\ov{X-(W_j-Z)}})}=\leftindex_{Z}{(I_{Z\coprod_j Y_j \coprod_{k\neq j }W_k})}=I_Z.
$$

$\bullet$ $\eqref{P:res-ss2}\Longleftrightarrow \eqref{P:res-ss4}$: Lemma \ref{L:deg-compl} below implies that each $Y_i$ is $\s$-degenerate if and only if each $W_j$ is $\s$-degenerate. 
Hence, we can assume that each $Y_i$ and each $W_j$ is $\s$-degenerate. 
Therefore we have that 
$$
\sum_i \s_{Y_i}+\sum_j \s_{W_j}=|\s|=\chi(I)=\sum_{i} \chi(I_{Y_i})+\sum_j\chi(\leftindex_{W_j}{I}),
$$
where the first equality follows from Corollary \ref{C:add-whn}, the second one from the $\s$-semistability of $I$ and the third one by \eqref{E:add-chi} together with the fact that $I_Y=\oplus_i I_{Y_i}$ and $\leftindex_{Y^c}{I}=\oplus_j \leftindex_{W_j}{I}$.
We conclude by applying Lemma \ref{L:ss-deg}\eqref{L:ss-deg2} which gives 
$$
\chi(I_{Y_i})\geq \s_{Y_i} \text{ and } \chi(\leftindex_{W_j}{I})\leq \s_{W_j}. 
$$

\end{proof}

\begin{lemma}\label{L:deg-compl}
   Let $Y$ be a subcurve of $X$. Then every connected component of $Y$ is $\s$-degenerate if and only if every connected component of $Y^c$ is $\s$-degenerate.  
\end{lemma}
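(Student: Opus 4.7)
The plan is to use the symmetry of the statement between $Y$ and $Y^\mathsf{c}$ to reduce to one direction: if every connected component of $Y$ lies in $\wh\D(\s)$, I will show every connected component of $Y^\mathsf{c}$ does too. I proceed by induction on $n$, the number of connected components of $Y$. Throughout, I rely on the characterization of membership in $\wh\D(\s)$ from Definition~\ref{D:ext-n} and the useful equivalent form from Remark~\ref{R:ext-s} that, for a connected subcurve $W$, we have $W \in \wh\D(\s)$ precisely when $\s_W + \s_{W^\mathsf{c}} = \chi$.

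The base case $n = 1$ is immediate: $Y$ itself is connected and lies in $\wh\D(\s)$ by hypothesis, so by Definition~\ref{D:ext-n} the connected components of $Y^\mathsf{c}$ lie in $\D(\s) \subseteq \wh\D(\s)$. For the inductive step $n \geq 2$, I fix a connected component $Y_1$ of $Y$ and set $Y' := Y_2 \cup \cdots \cup Y_n$, whose $n-1$ connected components all lie in $\wh\D(\s)$ by hypothesis. Applying the inductive hypothesis to $Y'$ yields that every connected component of $(Y')^\mathsf{c} = Y^\mathsf{c} \cup Y_1$ lies in $\wh\D(\s)$. Letting $J_1$ denote the set of indices $j$ with $W_j \cap Y_1 \neq \emptyset$, these components are: the single subcurve $V := Y_1 \cup \bigcup_{j \in J_1} W_j$ (containing $Y_1$), together with each individual $W_j$ for $j \notin J_1$. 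The desired conclusion for the $W_j$ with $j \notin J_1$ is then immediate.

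For $j \in J_1$, I need to extract the membership $W_j \in \wh\D(\s)$ from the joint membership $V, Y_1 \in \wh\D(\s)$ and the decomposition $V = Y_1 \sqcup \bigsqcup_{j \in J_1} W_j$. This will be the main obstacle. The intended argument combines the additivity of the extended V-function from Corollary~\ref{C:add-whn}, the closure properties of $\D(\s)$ under complement and biconnected disjoint union from Remark~\ref{R:n-deg}, and the V-stability axioms of Definition~\ref{D:VStabX} (particularly Condition~(2) applied to suitable triples in $\BCon(X)$). Concretely, I would run an inner induction on $|J_1|$: at each step one peels off a single $W_{j_0}$ from $V$, applies the triple condition to $\{Y_1 \cup (\bigcup_{j \in J_1 \setminus \{j_0\}} W_j),\, W_{j_0},\, V^\mathsf{c}\}$ (after checking that these pieces lie in $\BCon(X)$), and uses the equality $\s_V + \s_{V^\mathsf{c}} = \chi$ together with analogous equalities for the subcurves splitting off to force $\s_{W_{j_0}} + \s_{W_{j_0}^\mathsf{c}} = \chi$, i.e.\ $W_{j_0} \in \wh\D(\s)$.
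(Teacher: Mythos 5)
Your reduction is sound and genuinely different from the paper's: you induct only on the number $n$ of connected components of $Y$, peel off $Y_1$, and apply the inductive hypothesis to $Y'=Y_2\cup\cdots\cup Y_n$, correctly identifying the connected components of $(Y')^{\mathsf{c}}$ as $V=Y_1\cup\bigcup_{j\in J_1}W_j$ together with the $W_j$, $j\notin J_1$. This isolates exactly one remaining claim: if $V,Y_1\in\wh\D(\s)$ and $V=Y_1\sqcup\bigsqcup_{j\in J_1}W_j$, then each $W_{j_0}$ with $j_0\in J_1$ lies in $\wh\D(\s)$. The claim is true, but the method you sketch for it does not work. Condition \eqref{E:tria-n} of Definition~\ref{D:VStabX} applies only to subcurves in $\BCon(X)$, and the three pieces $Y_1\cup\bigcup_{j\neq j_0}W_j$, $W_{j_0}$, $V^{\mathsf{c}}$ are in general not biconnected: $V^{\mathsf{c}}$ need not be connected (it is the union of the $Y_i$, $i\geq 2$, and the $W_j$, $j\notin J_1$), and $W_{j_0}^{\mathsf{c}}$ need not be connected either (a connected component of $V^{\mathsf{c}}$ may meet $V$ only along $W_{j_0}$). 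Replacing the triple condition by its extension to connected subcurves (Lemma~\ref{L:ExtV-funP}) does not immediately close the argument: with all components of $V^{\mathsf{c}}$ already known to lie in $\D(\s)$, that lemma only yields that the two biconnected subcurves $Y'$ and $Z'$ in its notation are either both degenerate or both nondegenerate, and you have not indicated how the hypothesis $Y_1\in\wh\D(\s)$ excludes the second branch.

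The gap is repairable, and the repair is essentially the paper's key device: restriction of $\s$ to a degenerate subcurve (Lemma-Definition~\ref{LD:V-subgr}). Since $V\in\wh\D(\s)$, the restriction $\s(V)$ is a V-stability condition on $V$ with $\wh\D(\s(V))=\wh\D(\s)\cap\Con(V)$; hence $Y_1\in\wh\D(\s(V))$, and Definition~\ref{D:ext-n} applied inside $V$ shows that every connected component of $\ov{V\setminus Y_1}$, i.e.\ every $W_j$ with $j\in J_1$, lies in $\D(\s(V))\subseteq\wh\D(\s)$. With that substitution your argument closes, and its outer structure is arguably cleaner than the paper's, which inducts on the pair $(n,m)$ and restricts $\s$ to the connected components of $Y_1^{\mathsf{c}}$ instead. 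As written, however, the decisive step is missing and the tool you propose for it would fail.
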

\begin{proof}
Denote by $Y=\coprod_{i=1}^n Y_i$ and $Y^c=\coprod_{j=1}^m W_j$ the decompositions into connected components. 
Since the statement is symmetric in $Y$ and $Y^c$, it is enough to prove that if each $Y_i$ is $\s$-degenerate then each $W_j$ is $\s$-degenerate. We will argue by induction on the pair $(n.m)$. If either $n=1$ or $m=1$ then the statement follows from the definition \ref{D:ext-n} of $\wh D(\s)$. Assume that $n,m\geq 2$. Consider the decomposition $Y_1^c=\coprod_{k=1}^l Z_k$ into connected components. 
Since $Y_1$ is $\s$-degenerate, it follows that each $Z_k$ is $\s$-degenerate (and biconnected). There  exist two functions 
$$
\alpha:\{1,\ldots,n\}\setminus\{1\}\to \{1,\ldots, k\} \text{ and } \beta:\{1,\ldots, m\}\to \{1,\ldots,k\} 
$$
such that 
$$
Z_k=\coprod_{i\in \alpha^{-1}(k)}Y_i\coprod_{j\in \beta^{-1}(k)}W_j.
$$
Consider now the V-stability condition $\s(Z_k)$. Lemma-Definition \ref{LD:V-subgr} and the assumption that $Y_i\in \wh \D(\s)$ for any $i$, imply that $Y_i\in \wh \D(\s(Z_k))$ for any $i\in \alpha^{-1}(k)$. We now apply our induction hypothesis to conclude that 
$W_j\in \wh \D(\s(Z_k))$ for any $j\in \beta^{-1}(k)$. This implies, again by Lemma-Definition \ref{LD:V-subgr}, that $W_j\in \wh \D(\s)$ for any $j\in \beta^{-1}(k)$. Since this is true for any $1\leq k\leq l$, the proof is completed. 
\end{proof}

\begin{proposition}\label{P:morf-ss}
Let $f:I\to J$ be a morphism between $\s$-semistable sheaves. Then $\ker(f)$, $\Im(f)$ and $\coker(f)$ are $\s$-semistable. 
\end{proposition}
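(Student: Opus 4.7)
The plan is to analyze the image $\Im(f)$ and exploit its rank-$1$ torsion-free structure together with Proposition~\ref{P:res-ss}. After disposing of the trivial case $f=0$, I reduce to the case $\supp(I)=\supp(J)=X$: this uses that each connected component of $\supp(I)$ and $\supp(J)$ lies in $\wh\D(\s)$ by $\s$-semistability, so that one may restrict via Lemma-Definition~\ref{LD:V-subgr} and argue component by component. Setting $Y:=\supp(\Im f)$ with decomposition into connected components $Y=\coprod_i Y_i$, the fact that $\Im(f)$ is torsion-free of rank~$1$ on $Y$ identifies it with the maximal torsion-free quotient $I_Y$ of $I$ supported on $Y$, and embeds it into the maximal subsheaf $\leftindex_Y J=\bigoplus_i \leftindex_{Y_i} J$ of $J$ supported on $Y$. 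The inclusion decomposes as $I_{Y_i}\hookrightarrow \leftindex_{Y_i} J$ for each $i$.

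The core step is the chain of (in)equalities, for each connected component $Y_i$ of $Y$:
\[
\s_{Y_i} \leq \chi(I_{Y_i}) \leq \chi(\leftindex_{Y_i} J) \leq |\s|-\s_{Y_i^\mathsf{c}} \leq \s_{Y_i},
\]
where the outer inequalities come from Lemma~\ref{L:ss-deg}\eqref{L:ss-deg2} applied respectively to $I$ and to $J$, the middle inequality from the inclusion $I_{Y_i}\hookrightarrow \leftindex_{Y_i}J$ of two rank-$1$ torsion-free sheaves supported on $Y_i$, and the rightmost inequality is also part of Lemma~\ref{L:ss-deg}\eqref{L:ss-deg2}. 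Equality is forced throughout, and in particular $\s_{Y_i}=|\s|-\s_{Y_i^\mathsf{c}}$; by Remark~\ref{R:ext-s} this forces every connected component of $Y_i^\mathsf{c}$ to be $\s$-degenerate, and hence by Lemma~\ref{L:deg-compl} that $Y_i\in \wh\D(\s)$.

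With $Y_i\in\wh\D(\s)$ and $\chi(I_{Y_i})=\s_{Y_i}$ for each $i$, Proposition~\ref{P:res-ss} applied to $I$ with the subcurve $Y$ yields via the equivalences \eqref{P:res-ss1}$\Leftrightarrow$\eqref{P:res-ss2}$\Leftrightarrow$\eqref{P:res-ss3} that both $\Im(f)=I_Y$ and $\ker(f)=\leftindex_{Y^\mathsf{c}}I$ are $\s$-semistable. For the cokernel, the equality $\chi(I_{Y_i})=\chi(\leftindex_{Y_i}J)$ together with the inclusion $I_{Y_i}\hookrightarrow \leftindex_{Y_i}J$ of rank-$1$ torsion-free sheaves on the same support $Y_i$ forces the saturation $\Im(f)=\leftindex_Y J$; hence $\coker(f)=J/\leftindex_Y J = J_{Y^\mathsf{c}}$, which is $\s$-semistable by Proposition~\ref{P:res-ss} applied to $J$ (the established $\chi(\leftindex_{Y_i}J)=\s_{Y_i}$ verifies condition~\eqref{P:res-ss4} with $Y'=Y^\mathsf{c}$).

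The principal subtlety is coordinating the semistability bounds on $I$ and on $J$ simultaneously: neither set of inequalities alone is sharp, and only by combining them can the values $\chi(I_{Y_i})$ and $\chi(\leftindex_{Y_i}J)$ be pinned down exactly, yielding both $Y_i\in\wh\D(\s)$ and the saturation that guarantees $\coker(f)$ is torsion-free in the first place. A secondary (routine but conceptually necessary) point is the reduction to the case $\supp(I)=\supp(J)=X$, where one must ensure that Lemma~\ref{L:ss-deg} can be applied with the ambient extended V-function; this is handled by passing to $\s(A)$ on each connected component $A$ of the relevant support via Lemma-Definition~\ref{LD:V-subgr}.
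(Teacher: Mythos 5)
Your central computation is correct and is essentially the paper's own argument: for each connected component $Y_i$ of $\supp(\Im f)$ you trap $\chi(\Im(f)_{Y_i})=\chi(I_{Y_i})$ between the lower bound coming from the semistability of $I$ and the upper bound coming from the semistability of $J$ via the inclusion $\Im(f)_{Y_i}\hookrightarrow \leftindex_{Y_i}{J}$, force equality throughout, and read off $Y_i\in\wh\D(\s)$, $\chi(I_{Y_i})=\s_{Y_i}$ and the saturation $\Im(f)=\leftindex_{Y}{J}$; Proposition~\ref{P:res-ss} then handles kernel, image and cokernel exactly as you say.

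The gap is the opening reduction to $\supp(I)=\supp(J)=X$. When the two supports coincide this reduction is harmless (restrict to a common connected component $A\in\wh\D(\s)$ and use that $\s(A)$-semistability agrees with $\s$-semistability for sheaves supported inside $A$, by Lemma-Definition~\ref{LD:V-subgr}). But a morphism of $\s$-semistable sheaves may well have $\supp(I)\neq\supp(J)$ --- and the proposition is used in that generality later, e.g.\ in the proof of Proposition~\ref{P:iso-poly}, where one quotients a sheaf supported on $X$ by a direct factor supported on a proper subcurve. In that situation there is no single curve to restrict to: if $A$ (resp.\ $B$) denotes the connected component of $\supp(I)$ (resp.\ of $\supp(J)$) containing $Y_i$, then Lemma~\ref{L:ss-deg} only yields $\chi((I_A)_{Y_i})\geq\s(A)_{Y_i}$ and $\chi(\leftindex_{Y_i}{(J_B)})\leq|\s(B)|-\s(B)_{\ov{B-Y_i}}$, and to close your chain you must still prove the comparisons $\s(A)_{Y_i}\geq\s_{Y_i}$ and $|\s(B)|-\s(B)_{\ov{B-Y_i}}\leq|\s|-\s_{Y_i^{\mathsf{c}}}$ between the restricted V-functions and the ambient extended V-function. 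These are exactly the two statements of Lemma~\ref{L:ExtVfunCompar}, whose proof goes through Lemma~\ref{L:ExtV-funP} and is not a formal consequence of Lemma-Definition~\ref{LD:V-subgr}. Once that comparison lemma is supplied, your chain of inequalities runs verbatim in the general case.
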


\begin{proof}
We first show the following 

 \un{Claim}: $\Im(f)$ is $\s$-semistable and we have that 
 $$f:I\twoheadrightarrow I_{\supp(\Im(f))}=\Im(f)=\leftindex_{\supp(\Im(f))}{J}\subseteq J. $$

  Indeed, observe that $\Im(f)$ is torsion-free since it is a subsheaf of the torsion-free sheaf $J$. Therefore, we have that 
\begin{equation}\label{E:Imf}
\begin{sis}
    & \Im(f)=I_{\supp(\Im(f))},\\
    &\Im(f)\subseteq \leftindex_{\supp(\Im(f))}{J}  \text{ with torsion cokernel sheaf.}
\end{sis}   
\end{equation}

Pick a connected component of $\supp(\Im(f))$ and call it $W$. Let $Y$ and $Z$ be the connected components of $\supp(I)$ and $\supp(J)$, respectively, such that $W\subseteq Y\cap Z$.  Consider the decomposition of $W^c$ into connected components $W^c=\coprod_i V_i$. We now compute 

\begin{equation}\label{E:ImfIneq}
\begin{aligned}
& \chi(\Im(f)_W)=\chi(I_W)=\chi((I_Y)_W) & \text{by \eqref{E:Imf}}\\
&\geq\s(Y)_W & \text{by Lemma~\ref{L:ss-deg}\eqref{L:ss-deg2} using that $I$ is $\s$-semistable}\\
& \geq\s_W & \text{by Lemma \ref{L:ExtVfunCompar}\eqref{L:ExtVfunCompar1}} \\
& =|\s|-\sum_i \s_{V_i}+|\{V_i: V_i\not\in \D(\s)\}| \geq |\s|- \s_{W^c} & \text{by Definition \ref{D:ext-s}}\\
& \geq|\s(Z)|-\s(Z)_{\ov{Z- W}} & \text{by Lemma \ref{L:ExtVfunCompar}\eqref{L:ExtVfunCompar2}} \\
& \geq \chi(\leftindex_W{(J_Z)})=\chi(\leftindex_W{J})& \text{by Lemma~\ref{L:ss-deg}\eqref{L:ss-deg2} using that $J$ is $\s$-semistable}\\ 
&\geq \chi(\leftindex_W{\Im(f)})=\chi(\Im(f)_W) & \text{by \eqref{E:Imf}}.
\end{aligned}
 \end{equation}

%where the third to last inequality holds, since every connected component $V_i$ of $W^\mathsf{c}$ is the union of connected components of $Z\setminus W$ and $Z^\mathsf{c}$, with the latter being all degenerate. As $V_i\in\BCon(X)$ and is filtered by biconnected subcurves, by Lemma~\ref{L:ExtV-funP}, we obtain
%$$
%|\s(Z)|-\s(Z)_{Z\setminus W}\leq\s_Z-\s_{Z\setminus W}=|\s|-%\s_{Z^\mathsf{c}}-\s_{Z\setminus W}\leq|\s|-\s_{W^\mathsf{c}},
%$$
%where the first inequality is given by Lemma~\ref{L:ExtVfunCompar}.
 
Therefore, all the inequalities in \eqref{E:ImfIneq} must be equalities, from which we deduce that 
\begin{enumerate}[(i)]
 \item \label{E:cond1} $\Im(f)_W=\leftindex_W{J}$.
    \item \label{E:cond2} Each $V_i$ is $\s$-degenerate, which implies that $W\in \wh \D(s)$.
    \item \label{E:cond3} $\chi(\Im(f)_W)=|\s|-\sum_i \s_{V_i}=\s_{W}$ by \eqref{E:cond2} and Corollary \ref{C:add-whn}. 
\end{enumerate}
Since \eqref{E:cond1} is true for any connected component $W$ of $\supp(\Im(f))$, we deduce that 
\begin{equation}\label{E:Imf2}
\Im(f)=\leftindex_{\supp(\Im(f))}{J}
\end{equation}
Moreover, using \eqref{E:cond2} and \eqref{E:cond3} for any connected component $W$ of $\supp(\Im(f))$, Proposition \ref{P:res-ss} implies that 
\begin{equation}\label{E:Imf-ss}
    \Im(f) \text{ is $\s$-semistable.}
\end{equation}
The Claim follows by putting together \eqref{E:Imf}, \eqref{E:Imf2} and \eqref{E:Imf-ss}.

We now show:

\un{End of the proof}: The sheaf $\Im(f)$ is $\s$-semistable by the Claim. The sheaves $\ker(f)$ and $\coker(f)$ are $\s$-semistable by Proposition \ref{P:res-ss} using that 
$$
\begin{sis}
   &\ker(f)=\ker(I\twoheadrightarrow I_{\supp(\Im(f))}=\Im(f))=
   \leftindex_{\supp(\Im(f))^c}{I}, \\
   &\coker(f)=\coker(\Im(f)=\leftindex_{\supp(\Im(f))}{J}\hookrightarrow J)=J_{\supp(\Im(f))^c}.
\end{sis}
$$

\end{proof}

\begin{lemma}\label{L:ExtVfunCompar}
    Let $\s$ be a $V$-stability condition on $X$, and let $Y\in\wh\D(\s)$. Then, for any connected subcurve $W\subseteq Y$, we have that:
    \begin{enumerate}[(i)]
        \item \label{L:ExtVfunCompar1} $\s(Y)_W\geq\s_W$.
        \item \label{L:ExtVfunCompar2} $|\s(Y)|-\s(Y)_{\ov{Y-W}}\leq |\s|-\s_{\ov{X-W}}$.
    \end{enumerate}
\end{lemma}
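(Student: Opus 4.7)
The plan is to reduce both inequalities to assertions about sums over connected components of $\ov{X-W}$ versus the components of $\ov{Y-W}$ together with $Y^\mathsf{c}$, and to prove them by induction on the number $k$ of connected components of $Y^\mathsf{c}$; the inductive step reduces to a ``$k=1$'' sub-case that is handled by iterating Lemma~\ref{L:ExtV-funP}.

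I would first unpack Definition~\ref{D:ext-s}. Writing $\ov{Y-W}=V_1\sqcup\cdots\sqcup V_m$ with each $V_j\in\BCon(Y)$, $Y^\mathsf{c}=Z_1\sqcup\cdots\sqcup Z_k$ with each $Z_i\in\D(\s)$, and $\ov{X-W}=U_1\sqcup\cdots\sqcup U_\ell$ for the connected components in $X$, and using $\s(Y)_{V_j}=\s_{V_j}$ (Lemma-Definition~\ref{LD:V-subgr}) together with $|\s(Y)|=\s_Y=|\s|-\sum_i\s_{Z_i}$ (Remark~\ref{R:ext-s}), direct substitution rewrites (ii) as $\sum_p\s_{U_p}\leq\sum_i\s_{Z_i}+\sum_j\s_{V_j}$ and (i) as
\[\sum_p\s_{U_p}-\sum_i\s_{Z_i}-\sum_j\s_{V_j}\geq|\{p: U_p\notin\D(\s)\}|-|\{j: V_j\notin\D(\s(Y))\}|.\]

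For the induction on $k$, the base $k=0$ is immediate. For the inductive step, I would pick one $Z_k$, set $Y':=Y\cup Z_k\in\wh\D(\s)$ (with $(Y')^\mathsf{c}$ having $k-1$ components), and apply the inductive hypothesis to $(Y',W)$. The remaining inequality bridging $Y'$ to $Y$ is precisely the $k=1$ instance of the lemma for the V-stability $\s(Y')$ on $Y'$, with $Y\in\wh\D(\s(Y'))$ and $(Y)^\mathsf{c}_{Y'}=Z_k\in\D(\s(Y'))$; it thus suffices to prove the $k=1$ sub-lemma, i.e.\ the case $Y\in\BCon(X)$ with $Y^\mathsf{c}=Z\in\D(\s)$. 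In that sub-case, setting $J^Z:=\{j: Z\cap V_j\neq\emptyset\}$, the components of $\ov{X-W}$ are the $V_j$ with $j\notin J^Z$ together with (if $J^Z\neq\emptyset$) a single new component $U_0:=Z\cup\bigcup_{j\in J^Z}V_j\in\BCon(X)$, and the inequalities collapse to ones concerning only $U_0$. I would build $U_0$ by adding one $V_j$ at a time, $U^{(0)}:=Z$ and $U^{(s)}:=U^{(s-1)}\cup V_{j_s}$, and apply Lemma~\ref{L:ExtV-funP} at each step: the key observation is that $(U^{(s)})^\mathsf{c}$ in $X$ is always a single connected component containing $W$, so there is at most one $B$-component, which forces the upper bound $|\{B_q\notin\D(\s)\}|-1\leq 0$ and gives (ii) upon telescoping.

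The main obstacle is combining the lower bounds of Lemma~\ref{L:ExtV-funP} to obtain (i): the step-by-step bounds $1-|\{Y',Z'\notin\D(\s)\}|$ from case~(b) (or $0$ from case~(a)) must telescope to at least $[U_0\notin\D(\s)]-|\{j\in J^Z: V_j\notin\D(\s(Y))\}|$. The crux is the bookkeeping: using the characterization $\wh\D(\s(Y))=\wh\D(\s)\cap\Con(Y)$ from Lemma-Definition~\ref{LD:V-subgr} and the property from Remark~\ref{R:n-deg}\eqref{R:n-deg1} that $\s$-degeneracy of two members of a biconnected triple partitioning $X$ forces the third, one tracks along the construction of $U_0$ when each $V_{j_s}$ fails to be $\s(Y)$-degenerate, and matches these precisely with the drops in the lower bound contributed at that step, yielding (i).
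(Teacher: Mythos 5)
Your proposal is correct and follows essentially the same route as the paper: both unpack Definition~\ref{D:ext-s} to reduce (i) and (ii) to the component-wise inequality $0\geq \s_{B}-\sum_i\s_{V_i}-\sum_j\s_{Z_j}\geq |\{B\notin\D(\s)\}|-|\{V_i\notin\wh\D(\s)\}|$ for each connected component $B$ of $W^{\mathsf{c}}$, and both establish it by iterating Lemma~\ref{L:ExtV-funP} (the paper does this directly for each such $B$, merging all of its constituent $V_i$'s and $Z_j$'s at once, rather than via your outer induction on the number of components of $Y^{\mathsf{c}}$). The only ingredient your route uses that the paper avoids is the transitivity of restriction, $\s(Y')(Y)=\s(Y)$, which is true but would need a short check via Definition~\ref{D:ext-s} and Corollary~\ref{C:add-whn}; otherwise the delicate bookkeeping you flag as the crux is exactly the step the paper also leaves to an iterated application of Lemma~\ref{L:ExtV-funP}.
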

\begin{proof}
    Let $\ov{Y- W}=\bigsqcup_{i\in I}V_i$ with $V_i\in \BCon(Y)$ and $Y^\mathsf{c}=\bigsqcup_{j\in J}Z_j$ with $Z_j\in \D(\s)\subseteq \BCon(X)$ be the respective decompositions into connected components. Then, there exist partitions $\{I(k)\}_{k\in K}$ and $\{J(k)\}_{k\in K}$ of $I$ and $J$ respectively, such that $W^\mathsf{c}=\ov{X-W}$ decomposes into connected components as follows 
    $$
    W^\mathsf{c}=\bigsqcup_{k\in K}B_k,
    $$
    where $B_k=\bigcup_{i\in I(k)}V_i\cup\bigcup_{j\in J(k)}Z_j\in \BCon(X)$.

   % We observe that, for every $k\in K$, the subcurve $B_k$ belongs in $\BCon(X)$ and that it can be filtered by biconnected subcurves. Hence, by inductively applying Property~\ref{D:VStabX}\eqref{E:cond2}, we obtain
   By applying iteratively Lemma \ref{L:ExtV-funP} to $B_k=\bigcup_{i\in I(k)}V_i\cup\bigcup_{j\in J(k)}Z_j$ and using that the subcurves $Z_j$ are all $\s$-degenerate, we get that 
    \begin{equation}\label{E:ExtVfunCompar1}
    0\geq  \s_{B_k}-\sum_{i\in I(k)}\s_{V_i}-\sum_{j\in J(k)}\s_{Z_j}\geq
     |\{B_k\not\in \D(\s)\}|-|\{V_i: i\in I(k) \text{ and } V_i\not\in \wh \D(\s)\}|. 
       % \s_{B_k}-\sum_{i\in I(k)}\s_{V_i}-\sum_{j\in J(k)}\s_{Z_j}\geq\min\{-|\{V_i\textup{ $\s$-nondegenerate}\}|+1,0\},
    \end{equation} 
   Part \eqref{L:ExtVfunCompar1} follows from 
    \begin{equation*}
    \begin{aligned}
        \s_W &=|\s|-\sum_{k\in K}\s_{B_k}+|\{B_k: B_k \not \in \D(\s) \}|&\textup{by Definition~\ref{D:ext-s}}\\
        &\leq |\s|-\sum_{i\in I}\s_{V_i}-\sum_{j\in J}\s_{Z_j}+|\{V_i: V_i\not\in\wh \D(\s)\}|&\textup{by \eqref{E:ExtVfunCompar1}}\\
        &= \s_Y-\sum_{i\in I}\s_{V_i}+|\{V_i: V_i\not\in\wh \D(\s)\}|&\textup{by Corollary \ref{C:add-whn} since $Z_j\in\D(\s)$}\\
        &=|\s(Y)|-\sum_{i\in I}\s(Y)_{V_i}+|\{V_i: V_i\not\in \D(\s(Y))\}|&\textup{by  Lemma-Definition \ref{LD:V-subgr}}\\
        &=\s(Y)_W&\textup{by Definition~\ref{D:ext-s}}.
    \end{aligned}
    \end{equation*}
    Part \eqref{L:ExtVfunCompar2} follows from 
    \begin{equation*}
    \begin{aligned}
     & |\s|-\s_{\ov{X-W}}=\s_Y+\sum_{j\in J}\s_{Z_j}-\sum_{k\in K}\s_{B_k}
     & \textup{by Corollary \ref{C:add-whn} since $Z_j\in\D(\s)$}\\
     & \geq \s_Y-\sum_{i\in I}\s_{V_i}=|\s(Y)|-\s(Y)_{\ov{Y-W}} &\textup{by \eqref{E:ExtVfunCompar1}.}\\
    \end{aligned}
    \end{equation*}
\end{proof}

\begin{proposition}\label{P:ext-ss}
    Let $I$ and $J$ two $\s$-semistable sheaves such that $\supp(I)\wedge \supp(J)=\emptyset$ and $\supp(I)\cup \supp(J)$ is a disjoint union of subcurves in $\wh \D(\s)$. Then any extension 
    \begin{equation}\label{E:ext-K}
    0\to I\to K \to J \to 0
    \end{equation}
    is such that $K$ is $\s$-semistable.
\end{proposition}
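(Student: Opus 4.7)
First, I reduce to the case where $\supp(K)$ is connected. By hypothesis $\supp(K)=\supp(I)\cup\supp(J)$ is a topologically disjoint union $\bigsqcup_\alpha V_\alpha$ of subcurves $V_\alpha\in\wh\D(\s)$, so the given short exact sequence splits as a direct sum over $\alpha$. By Proposition~\ref{P:res-ss}, the restrictions $I_{V_\alpha\wedge\supp(I)}$ and $J_{V_\alpha\wedge\supp(J)}$ are still $\s$-semistable, so it suffices to prove the statement on each $V_\alpha$ separately. Renaming, I assume $\supp(K)=X\in\wh\D(\s)$ is connected, and set $V_I:=\supp(I)$, $V_J:=\supp(J)$: two complementary subcurves of $X$ each of whose connected components lies in $\wh\D(\s)$.

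Second, I verify $\chi(K)=|\s|$. By additivity of Euler characteristic and $\s$-semistability of $I$ and $J$, we have $\chi(K)=\chi(I)+\chi(J)=\s_{V_I}+\s_{V_J}$, using additivity of the extended V-function on disjoint unions of connected components (Definition~\ref{D:ext-s}). The identity $\s_{V_I}+\s_{V_J}=|\s|=\s_X$ follows by iterated application of Corollary~\ref{C:add-whn}: using connectedness of $X$, one successively merges an adjacent pair of $\wh\D(\s)$-pieces (one from $V_I$, one from $V_J$) into a larger $\wh\D(\s)$-subcurve of $X$, until only $X$ remains.

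The main step is to show $\chi(K_Z)\geq \s_Z$ for every $Z\in\BCon(X)$, equivalently $\chi(\leftindex_{Z^\mathsf{c}}{K})\leq |\s|-\s_Z$. Set $Z_I:=Z\wedge V_I$, $Z_I^\mathsf{c}:=Z^\mathsf{c}\wedge V_I$ and analogously for $J$. I first observe that $\leftindex_{Z_I^\mathsf{c}}{K}=\leftindex_{Z_I^\mathsf{c}}{I}$: indeed, the composition $\leftindex_{Z_I^\mathsf{c}}{K}\hookrightarrow K\twoheadrightarrow J$ has image supported on the $0$-dimensional set $Z_I^\mathsf{c}\cap V_J$ (since $V_I\wedge V_J=\emptyset$), hence vanishes by torsion-freeness of $J$, so $\leftindex_{Z_I^\mathsf{c}}{K}\subseteq I$. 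Using this and Lemma~\ref{L:IYZ}\eqref{L:IYZ1}, the defining exact sequence of Definition~\ref{D:IY} applied to the torsion-free rank-$1$ sheaf $\leftindex_{Z^\mathsf{c}}{K}$ on $Z^\mathsf{c}=Z_I^\mathsf{c}\cup Z_J^\mathsf{c}$ reads
\[
0\to \leftindex_{Z_I^\mathsf{c}}{I}\to \leftindex_{Z^\mathsf{c}}{K}\to (\leftindex_{Z^\mathsf{c}}{K})_{Z_J^\mathsf{c}}\to 0,
\]
and the map $\leftindex_{Z^\mathsf{c}}{K}\to J$ factors through the rightmost term and injects it into $\leftindex_{Z_J^\mathsf{c}}{J}$. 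Applying Lemma~\ref{L:ss-deg}\eqref{L:ss-deg2} and Lemma~\ref{L:ExtVfunCompar}\eqref{L:ExtVfunCompar1} component-wise on $V_I$ and $V_J$ then yields $\chi(\leftindex_{Z_I^\mathsf{c}}{I})\leq \s_{V_I}-\s_{Z_I}$ and $\chi(\leftindex_{Z_J^\mathsf{c}}{J})\leq \s_{V_J}-\s_{Z_J}$, so $\chi(K_Z)\geq \s_{Z_I}+\s_{Z_J}$.

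It remains to show the combinatorial inequality $\s_{Z_I}+\s_{Z_J}\geq \s_Z$ for $Z\in\BCon(X)$. I would prove this by induction on the number of connected components of $Z_I$ and $Z_J$, merging an adjacent pair at each step via Lemma~\ref{L:ExtV-funP}. The key point is that when merging two adjacent pieces sitting in $V_I$ and $V_J$ respectively, the ``middle'' subcurves $B_j$ appearing in the lemma come from $\wh\D(\s)$-components of $V_I$ and $V_J$ that lie in $Z^\mathsf{c}$ and hence are themselves $\s$-degenerate; alternative~\eqref{L:ExtV-funPA} of Lemma~\ref{L:ExtV-funP} then applies and forces equality $\s_{Y\cup W}=\s_Y+\s_W$ at each merge. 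This combinatorial bookkeeping of which subcurves lie in $\wh\D(\s)$ at each inductive step is the main obstacle of the proof.
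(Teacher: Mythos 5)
Your reduction to the case of connected $\supp(K)=X$, the verification that $\chi(K)=|\s|$, and the homological step giving $\chi(K_Z)\geq \s_{Z\wedge V_I}+\s_{Z\wedge V_J}$ for $Z\in\BCon(X)$ all match the paper's argument in substance (the paper runs the diagram chase on $K_Z$ directly rather than on $\leftindex_{Z^{\mathsf{c}}}{K}$, but via \eqref{E:add-chi} the two are equivalent), and that part of your write-up is sound. The genuine gap is the final combinatorial inequality $\s_{Z\wedge V_I}+\s_{Z\wedge V_J}\geq\s_Z$: this is precisely the ``Claim'' in the paper's proof, it is the delicate point of the whole proposition, and your sketch of it does not work as stated.

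Concretely: when you merge a connected piece $P\subseteq Z\wedge V_I$ with an adjacent piece $Q\subseteq Z\wedge V_J$, the subcurves $B_j$ of Lemma~\ref{L:ExtV-funP} are the connected components of $(P\cup Q)^{\mathsf{c}}$ \emph{in all of $X$} that meet both $P$ and $Q$; they typically contain the not-yet-merged components of $Z\wedge V_I$ and $Z\wedge V_J$ as well as pieces of $Z^{\mathsf{c}}$, so they do not ``lie in $Z^{\mathsf{c}}$'', and there is no reason for them to be $\s$-degenerate (a connected union of subcurves belonging to $\wh\D(\s)$ need not belong to $\D(\s)$). Moreover, alternative~\eqref{L:ExtV-funPA} of Lemma~\ref{L:ExtV-funP} also requires $P'$ and $Q'$ to be $\s$-degenerate, which is not available here, so equality at each merge is not forced; and alternative~\eqref{L:ExtV-funPB} only yields $\s_{P\cup Q}-\s_P-\s_Q\leq |\{B_j\notin\D(\s)\}|-1$, which is useless unless you can show at most one $B_j$ is nondegenerate. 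The paper closes this gap by a careful choice of which components to merge first --- connected components $Y_0$ of $V_J$ and $Z_0$ of $V_I$ with both $Y_0$ and $Y_0\cup Z_0$ biconnected, so that $(Y_0\cup Z_0)^{\mathsf{c}}\in\D(\s)$ by Lemma~\ref{L:deg-compl} --- together with a separate treatment of the case where the merged piece $W_0$ fails to be biconnected. You have correctly identified this bookkeeping as the main obstacle, but as written the obstacle is not overcome.
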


\begin{proof}
 Since $I$ and $J$ are torsion-free of rank-$1$ with $\supp(I)\wedge \supp(J)=\emptyset$, then $K$ is torsion-free of rank-$1$ on $\supp(K)=\supp(I)\cup \supp(J)$.
 Moreover, since $\supp(K)$ is a disjoint union of subcurves of $\wh \D(\s)$, we can assume, up to resticting to each of these subcurves, that $\supp(K)=X$. Equivalently, if we set $Y:=\supp(J)$ then $Z:=Y^c=\supp(I)$. Note that \eqref{E:ext-K} implies that $J=K_Y$ and $I=\leftindex_{Z}{K}$. Let $Y=\bigsqcup_iY_i$ and $Z=\bigsqcup_jZ_j$ be the respective decompositions into connected components. 
 
 Fix $W\in \BCon(X)$ and consider the following commutative diagram
\begin{equation}\label{E:diagIJK}
    \begin{tikzcd}
&\leftindex_{W^c}K\arrow[r,"\delta"]\arrow[d,hookrightarrow]&  \leftindex_{Y\cap W^c}{(K_Y)}   \arrow[d,hookrightarrow] \\
 \leftindex_{Z}K=I\arrow[r,hookrightarrow] \arrow[d,"\gamma"]& K \arrow[r,twoheadrightarrow] \arrow[d,twoheadrightarrow]& K_Y=J\arrow[d,twoheadrightarrow]\\
\leftindex_{Z\cap W}{(K_W)} \ar[r,hookrightarrow] & K_W\arrow[r,twoheadrightarrow] & K_{Y\cap W}=J_{Y\cap W},
    \end{tikzcd}
\end{equation}
where the two bottom rows and the two right columns are exact. From the above commutative diagram, we deduce that $$\ker(\gamma)=\ker(\delta)=\leftindex_{Z\cap W^c}{I}.$$
 Hence, the map $\gamma$ induces an injective morphism 
\begin{equation}\label{E:quotI}
I_{Z\cap W}\hookrightarrow  \leftindex_{Z\cap W}{(K_W)} \text{ with torsion cokernel.}  
\end{equation}

We now compute 
\begin{equation*}
  \begin{aligned}
   & \chi(K_W)= \chi(J_{Y\cap W})+\chi( \leftindex_{Z\cap W}{(K_W)}) & \text{ by \eqref{E:diagIJK}}\\
   & \geq \chi(J_{Y\cap W})+\chi(I_{Z\cap W}) & \text{ by \eqref{E:quotI}}\\
   & \geq \sum_i\s(Y_i)_{Y_i\cap W} + \sum_j\s(Z_j)_{Z_j\cap W} & \text{by Lemma \ref{L:ss-deg}\eqref{L:ss-deg2} using that $I$ and $J$ are $\s$-semistable } \\
   %since Y\cap W\in \BCon(Y), Y^c\cap W\in \BCon(Y^c)\\
  & \geq \sum_i\s_{Y_i\cap W} + \sum_j\s_{Z_j\cap W}=\s_{Y\cap W} + \s_{Z\cap W} & \text{ by Lemma~\ref{L:ExtVfunCompar}\eqref{L:ExtVfunCompar1}}\\
  & \geq \s_W & \text{ by the following Claim.}
  \end{aligned}  
\end{equation*}
Since this is true for any $W\in \BCon(X)$, we conclude that the sheaf $K$ is $\s$-semistable.

%\underline{Claim 1:} $\tilde\s(Y)_{Y\cap W} + \tilde\s(Z)_{Z\cap W}\geq\tilde\s_{Y\cap W} + \tilde\s_{Z\cap W}$.

\underline{Claim:} $\s_{Y\cap W} + \s_{Z\cap W}\geq\s_W$.

By Definition~\ref{D:ext-s}, we have that 

$$
\s_{Y\cap W}=\sum_i \s_{W\cap Y_i}=\sum_{i,k} s_{W_{Y_i,k}}\quad\textup{and}\quad \s_{Z\cap W}=\sum_j \s_{W\cap Z_j}=\sum_{j,h} s_{W_{Z_j,h}},
$$
where, for every $i$ and $j$, $W\cap Y_i=\bigsqcup_kW_{Y_i,k}$ and $W\cap Z_j=\bigsqcup_hW_{Z_j,h}$ are the decomposition into connected components.

Since $X$ is connected, there exist connected components, that we label $Y_0$ and $Z_0$, of $Y$ and $Z$ respectively such that both $Y_0$ and $Y_0\cup Z_0\in\BCon(X)$ (up to exchanging $Y$ and $Z$).

Choose connected components $W_Y$ and $W_Z$ of $W\cap Y_0$ and $W\cap Z_0$ respectively, such that $W_0:=W_Y\cup W_Z\in\Con(X)$. We can assume $W_0\in\BCon(X)$. Indeed, since both $W$ and $Y_0\cup Z_0$ are biconnected, if $W_0\notin \BCon(X)$, then $W=W_0\cup(Y_0\cup Z_0)^\mathsf{c}$ and $\ov{(Y_0\cup Z_0)- W_0}\in\Con(X)$. Thus, our claim follows from Lemma~\ref{L:ExtV-funP}, since $(Y_0\cup Z_0)^\mathsf{c}\in\D(\s)$.

Let $W_0\in\BCon(X)$.
Then, again by Lemma~\ref{L:ExtV-funP}, we obtain:
$$
\s_{W_Y}+ \s_{W_Z}\geq  \s_{W_0}.
$$

Our claim is proven by repeating this reasoning inductively on the connected components of $Y\cap W$ and $Z\cap W$.
\end{proof}

We end this section  by clarifying the relationship between $\s$-semistable, $\s$-polistable and $\s$-stable sheaves. We first prove that $\s$-polystable sheaves are direct sums of $\s$-stable sheaves.

\begin{proposition}\label{P:poly-sum}
    Let $I$ be an $\s$-semistable sheaf. Then $I$ is $\s$-polystable if and only if its decomposition into indecomposable subsheaves $I=\bigoplus_{j\in J}I_{Y_j}$, is such that $I_{Y_j}$ is $\s$-stable for each  $j\in J$.
\end{proposition}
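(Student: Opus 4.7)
We may reduce to the case where $\supp(I)=Y$ is a single connected subcurve in $\wh\D(\s)$, so that $I$ is $\s(Y)$-semistable, and write the indecomposable decomposition $I=\bigoplus_{j\in J} I_{Y_j}$ with each $Y_j\in\Con(Y)$. Applying Proposition~\ref{P:res-ss} to each direct summand, each $Y_j$ belongs to $\wh\D(\s)\cap\Con(Y)$, one has $\chi(I_{Y_j})=\s_{Y_j}$, and $I_{Y_j}$ is $\s(Y_j)$-semistable. A preliminary lemma, central to both implications, is that any $\s$-stable sheaf is indecomposable: indeed, if $I_{Y_j}$ were stable but split non-trivially as $(I_{Y_j})_W\oplus(I_{Y_j})_{\ov{Y_j-W}}$ with $W$ connected, then one can iteratively absorb all-but-one connected component of $\ov{Y_j-W}$ into $W$ (an operation that preserves the direct-sum structure, generalizing Example~\ref{Ex:IY-nodal}) to arrange $W\in\BCon(Y_j)$. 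Proposition~\ref{P:res-ss} would then give $W\in\D(\s(Y_j))$ together with $\chi((I_{Y_j})_W)=\s(Y_j)_W$, contradicting strict stability.

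For the implication $(\Leftarrow)$, assume each $I_{Y_j}$ is $\s$-stable. Iterated application of Proposition~\ref{P:ext-ss} yields that $I$ is $\s$-semistable. To verify polystability, fix $Z\in\D(\s(Y))$ with $\chi(I_Z)=\s(Y)_Z$. Setting $A_j:=Y_j\cap Z$, the canonical injection $I\hookrightarrow I_Z\oplus I_{\ov{Y-Z}}$ has torsion cokernel equal to $\bigoplus_j \tau_j$, where $\tau_j$ is the cokernel of $I_{Y_j}\hookrightarrow (I_{Y_j})_{A_j}\oplus (I_{Y_j})_{\ov{Y_j-A_j}}$; by indecomposability of $I_{Y_j}$, $\tau_j$ vanishes exactly when $A_j\in\{\emptyset,Y_j\}$. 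A length computation combining Lemma~\ref{L:ss-deg}, Lemma~\ref{L:ExtVfunCompar}, and the hypothesized equality $\chi(I_Z)=\s(Y)_Z$ shows that any $Y_j$ straddling $Z$ and $\ov{Y-Z}$ would force $\chi(I_Z)>\s(Y)_Z$ via strict stability at a suitable component of $A_j$. Hence each $Y_j\subseteq Z$ or $Y_j\cap Z=\emptyset$, so regrouping summands gives $I=I_Z\oplus I_{\ov{Y-Z}}$.

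For the implication $(\Rightarrow)$, suppose $I$ is polystable but some $I_{Y_1}$ fails to be $\s$-stable; choose $Z_1\in\D(\s(Y_1))$ with $Z_1\neq\emptyset,Y_1$ and $\chi((I_{Y_1})_{Z_1})=\s(Y_1)_{Z_1}$. Consider the dual graph of $\{Y_j\}_j$, with the vertex $Y_1$ split into two vertices $Z_1$ and $\ov{Y_1-Z_1}$: since $Y$ is connected, this graph is connected, so one can partition the remaining $Y_k$'s into a subset $S$ such that both $Z:=Z_1\cup\bigcup_{k\in S} Y_k$ and $\ov{Y-Z}=\ov{Y_1-Z_1}\cup\bigcup_{k\notin S\cup\{1\}} Y_k$ are connected in $Y$, i.e.\ $Z\in\BCon(Y)$. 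Using Corollary~\ref{C:add-whn} and the fact that each $Y_k$, $Z_1$, $\ov{Y_1-Z_1}$ lies in $\wh\D(\s)$, one deduces $Z\in\wh\D(\s)$, hence $Z\in\D(\s(Y))$, while additivity of $\chi$ over the direct summands gives $\chi(I_Z)=\s(Y)_Z$. Polystability of $I$ then forces $I=I_Z\oplus I_{\ov{Y-Z}}$, whose restriction to $Y_1$ yields the non-trivial decomposition $I_{Y_1}=(I_{Y_1})_{Z_1}\oplus (I_{Y_1})_{\ov{Y_1-Z_1}}$, contradicting the indecomposability of $I_{Y_1}$. The hardest steps are the inequality bookkeeping in $(\Leftarrow)$ and the combinatorial choice of $S$ together with the verification $Z\in\wh\D(\s)$ in $(\Rightarrow)$.
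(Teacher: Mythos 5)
Your $(\Leftarrow)$ direction is essentially the paper's argument: you decompose $I_Z$ over the indecomposable summands and use strict stability on the pieces $Y_j\wedge Z$ to force each $Y_j$ to lie entirely inside or outside $Z$ (the paper packages the needed strict inequality as Lemma~\ref{L:ps-sDhat}\eqref{L:ps-sDhat-s}, the $\wh\D$-version of stability; your ``length computation'' is left implicit but the ingredients you cite are the right ones). Your preliminary observation that a stable sheaf is indecomposable is also fine. Where you genuinely diverge from the paper is in $(\Rightarrow)$: the paper simply notes that $Z_1\in\D(\s(Y_1))\subseteq\wh\D(\s)$ and invokes the extended form of polystability (Lemma~\ref{L:ps-sDhat}\eqref{L:ps-sDhat-ps}), whereas you insist on producing a subcurve $Z\in\BCon(Y)$ to which the literal Definition~\ref{D:ss} applies, by adjoining whole summand-supports $Y_k$ to $Z_1$.

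That alternative route has a gap at the step ``$Z\in\wh\D(\s)$, hence $Z\in\D(\s(Y))$, deduced from Corollary~\ref{C:add-whn}.'' Corollary~\ref{C:add-whn} takes $W_1\cup W_2\in\wh\D(\s)$ as a \emph{hypothesis} and concludes additivity of $\s$; it cannot be used to establish membership of the union in $\wh\D(\s)$. And the underlying closure statement is false in general: $\wh\D(\s)$ is not closed under connected unions of elements with pairwise empty meet. For instance, on a curve with components $a,b,c,d$ and intersections $a{-}b$, $a{-}c$, $b{-}c$, $a{-}d$, $b{-}d$, one checks that $\s_a=\s_b=\s_c=\s_{ac}=\s_{abc}=\s_{bc}=\s_{acd}=\s_{bcd}=0$ and $\s_d=\s_{abd}=\s_{bd}=\s_{ad}=1$ defines a V-stability of characteristic $0$ with $\D(\s)=\{a,b,acd,bcd\}$; here $a,b\in\wh\D(\s)$ and $a\cup b$ is connected, but $a\cup b\notin\wh\D(\s)$ since $c,d\notin\D(\s)$. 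Your specific $Z$ does turn out to lie in $\D(\s(Y))$, but only because of extra numerical information you have not used: writing $P_0=Z_1$, $P_1=\ov{Y_1-Z_1}$, $P_k=Y_k$ ($k\geq 2$), one has $\sum_i\s_{P_i}=\chi(I)=|\s|$ (from $\chi(I_{Y_k})=\s_{Y_k}$, $\chi((I_{Y_1})_{Z_1})=\s_{Z_1}$ and Corollary~\ref{C:add-whn} applied inside $Y_1$), and then the subadditivity $\s_Z\leq\sum_{i\in\mathcal{A}}\s_{P_i}$, $\s_{Z^\mathsf{c}}\leq\sum_{i\in\mathcal{B}}\s_{P_i}$ (the Claim in the proof of Proposition~\ref{P:ext-ss}) combined with $\s_Z+\s_{Z^\mathsf{c}}-\chi\in\{0,1\}$ forces $\s_Z+\s_{Z^\mathsf{c}}=\chi$, i.e.\ $Z\in\D(\s)$, and simultaneously $\chi(I_Z)=\s_Z$. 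With that repair your argument closes, but as written the key membership claim is unjustified; the shorter fix is to use Lemma~\ref{L:ps-sDhat}\eqref{L:ps-sDhat-ps} directly, as the paper does.
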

\begin{proof}
    Without loss of generality, we can assume that $\supp(I)=X$. First, we observe that $Y_j\in\wh\D(\s)$, for each $j\in J$. Indeed, since $I$ is $\s$-semistable, we have
    $$
    |\s|=\chi(I)=\sum_{j\in J}\chi(I_{Y_j})\geq\sum_{j\in J}\s_{Y_j},
    $$
    and the observation follows from Remark~\ref{R:ext-s}. We now prove the two implications.
    
    $\un{\text{Proof of } \Rightarrow:}$  By contradiction, suppose there exists $Z\in\D(\s(Y_j))$, such that $\chi(I_{Y_j})_Z=\s(Y_j)_Z$. Note that $Z\in \wh \D(\s)$ since $\D(\s(Y_j))\subseteq \wh \D(\s)$ by Lemma-Definition \ref{LD:V-subgr}.
    %Then,
    %$$    \chi(I_Z)=\chi(I_{Y_i})_Z=\s(Y_i)_Z=\s_Z,$$
    %so $Z\in\wh\D(\s)$.
    Hence, since $I$ is polystable, we have that $I=I_Z\oplus I_{Z^c}$. However, this contraddicts the decomposition  $I=\bigoplus_{j\in J}I_{Y_j}$ into indecomposable subsheaves since $\emptyset \subsetneq Z\subsetneq Y_j$.
    %$I_{Y_i}=I_{Z}\oplus I_{\ov{Y_i- Z}}$ by Lemma~\ref{L:ps-sDhat}(\ref{L:ps-sDhat-ps}), which gives a contradiction since $I_{Y_i}$ is indecomposable.7
     
    $\un{\text{Proof of } \Leftarrow:}$ Let $Z\in\D(\s)$ be such that $\chi(I_Z)=\s_Z$ for some $j \in J$. By the decomposition into indecomposable subsheaves, we have $$
     I_Z=\bigoplus_{j\in J}I_{Z\wedge Y_j},
     $$
     and, thus,
     $$
     \chi(I_Z)=\sum_{j\in J}\chi(I_{Z\wedge Y_j}).
     $$
     We notice that, since both $Z$ and $Y_j$ are $\s$-degenerate, then also the connected components of $Z\wedge Y_j$ must be $\s$-degenerate, for each $j\in J$.
     Hence, since $I_{Y_j}$ is $\s(Y_j)$-stable for every $j$, by Lemma~\ref{L:ps-sDhat}(\ref{L:ps-sDhat-s}), we have that 
     $\chi(I_{Z\wedge Y_j})\geq\s(Y_j)_{Z\wedge Y_j}$, with equality if and only if $Y_j\subseteq Z$.
     Therefore, by using Lemma \ref{L:ExtVfunCompar}\eqref{L:ExtVfunCompar1}, we deduce that
     $$
     \chi(I_Z)=\sum_{j\in J}\chi(I_{Z\wedge Y_j})\geq\sum_{j\in J}\s(Y_j)_{Z\wedge Y_j}\geq\sum_{j\in J}\s_{Z\wedge Y_j}=\s_Z,
     $$
     with equality  only if there exists a subset of indices $J'\subseteq J$, such that $Z=\bigcup_{j\in J}Y_j$, that is, if and only if $I$ decomposes at $Z$.

\end{proof}

In the following, we generalize the formal defining properties of stable and polystable (Definition~\ref{D:ss}) in terms of elements of the extended degeneracy set $\wh{\D}$ (instead of $\D$). 

\begin{lemma}
    \label{L:ps-sDhat}
    Let $I$ be an $\s$-semistable sheaf with support equal to $Y\subseteq X$, and let $Y=\coprod_{i=1}^h Y_i$ be its decomposition into connected components. Then, we have that:
    \begin{enumerate}[(i)]
        \item\label{L:ps-sDhat-ps} $I$ is $\s$-polystable if and only if, for each $i$ and for each $Z\in\wh\D(\s(Y_i))$, whenever $\chi(I_Z)=\chi((I_{Y_i})_Z)= \s(Y_i)_Z$, we have $I_{Y_i}=(I_{Y_i})_Z\oplus (I_{Y_i})_{\ov{Y_i-Z}}$;
    \item\label{L:ps-sDhat-s} $I$ is $\s$-stable if and only if, for each $i$ and for each $Z\in\wh\D(\s(Y_i))-\{Y_i\}$, we have $\chi(I_Z)=\chi((I_{Y_i})_Z)> \s(Y_i)_Z$.
    \end{enumerate}
\end{lemma}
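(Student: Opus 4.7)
The plan is to prove both statements by reducing to a single connected component of $\supp(I)$ and then showing that the conditions on elements of $\wh\D$ follow from the conditions on elements of $\D$ via a "splitting" argument. The reverse implications in both (i) and (ii) are immediate, since $\D(\s(Y_i))\subseteq \wh\D(\s(Y_i))-\{Y_i\}$ (using that $Y_i$, having empty complement in itself, is not in $\BCon(Y_i)$). So fix a component $Y_i$ of $\supp(I)$, write $I':=I_{Y_i}$ and $\s':=\s(Y_i)$, and assume $I'$ is $\s'$-semistable on $Y_i$ with the $\D(\s')$-version of polystability (resp. stability).

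Take $Z\in \wh\D(\s')$ and decompose its complement in $Y_i$ as $Z^\mathsf{c}=\bigsqcup_{j=1}^k W_j$, so each $W_j\in \D(\s')$ by the definition of $\wh\D$. The key computation is the following chain of equivalences:
\begin{align*}
\chi(I'_Z)=\s'_Z
&\Longleftrightarrow |\s'|-\sum_{j}\chi(\leftindex_{W_j}{I'}) = |\s'|-\sum_j \s'_{W_j}\\
&\Longleftrightarrow \chi(\leftindex_{W_j}{I'}) = \s'_{W_j}\ \text{ for every } j\\
&\Longleftrightarrow \chi(I'_{(W_j)^\mathsf{c}})=\s'_{(W_j)^\mathsf{c}}\ \text{ for every } j,
\end{align*}
where the first equivalence combines Remark~\ref{R:ext-s} (to expand $\s'_Z$) with the additivity $\chi(I'_Z)+\chi(\leftindex_{Z^\mathsf{c}}I')=|\s'|$ and the decomposition $\leftindex_{Z^\mathsf{c}}{I'}=\bigoplus_j \leftindex_{W_j}I'$; the second equivalence uses Lemma~\ref{L:ss-deg}\eqref{L:ss-deg1}, which gives $\chi(\leftindex_{W_j}{I'})\leq \s'_{W_j}$ for each degenerate $W_j$, so the sums can only agree termwise; and the third equivalence uses $\chi(I'_{(W_j)^\mathsf{c}})+\chi(\leftindex_{W_j}I')=|\s'|$ and $\s'_{W_j}+\s'_{(W_j)^\mathsf{c}}=|\s'|$.

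For part (ii), take $Z\in \wh\D(\s')-\{Y_i\}$, so $k\geq 1$. Since each $(W_j)^\mathsf{c}\in \BCon(Y_i)$ is $\s'$-degenerate (degeneracy being symmetric under complementation, Remark~\ref{R:n-deg}\eqref{R:n-deg1}), $\s'$-stability forces $\chi(I'_{(W_j)^\mathsf{c}})>\s'_{(W_j)^\mathsf{c}}$. By the equivalences above this rules out $\chi(I'_Z)=\s'_Z$, so combined with the semistability bound $\chi(I'_Z)\geq \s'_Z$ we conclude $\chi(I'_Z)>\s'_Z$.

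For part (i), suppose $\chi(I'_Z)=\s'_Z$ with $Z\neq Y_i$ (the case $Z=Y_i$ being vacuous). Then $\chi(I'_{(W_j)^\mathsf{c}})=\s'_{(W_j)^\mathsf{c}}$ for all $j$, and $\s'$-polystability at each $(W_j)^\mathsf{c}\in \D(\s')$ yields a splitting $I'=I'_{(W_j)^\mathsf{c}}\oplus I'_{W_j}$. The remaining task is to combine these individual splittings into a single direct sum decomposition $I'=I'_Z\oplus \bigoplus_j I'_{W_j}$. The cleanest way is to invoke the canonical decomposition~\eqref{E:decI} of $I'$ into indecomposable summands and its Krull--Schmidt uniqueness: the existence of a splitting $I'=I'_{(W_j)^\mathsf{c}}\oplus I'_{W_j}$ forces every indecomposable summand of $I'$ to have support entirely in $W_j$ or entirely in $(W_j)^\mathsf{c}$. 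Doing this for each $j$ and using that the $W_j$ are pairwise disjoint, every indecomposable summand has support either in some (unique) $W_j$ or in $Z=\bigcap_j (W_j)^\mathsf{c}$, giving the desired decomposition $I'=I'_Z\oplus I'_{Z^\mathsf{c}}$. The main delicate point is precisely this last combining step; the numerical equivalences from the second paragraph are then routine.
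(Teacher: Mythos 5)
Your proof is correct and follows essentially the same route as the paper: reduce to the numerical claim that $\chi(I_Z)=\s_Z$ forces $\chi(I_{W_j^\mathsf{c}})=\s_{W_j^\mathsf{c}}$ for every connected component $W_j$ of $Z^\mathsf{c}$, via additivity of $\chi$ and of $\s$ on degenerate pieces together with the one-sided bounds $\chi(\leftindex_{W_j}I)\leq \s_{W_j}$ from semistability. The only difference is that you spell out the final combining step of part (i) (assembling the splittings at each $W_j^\mathsf{c}$ into a splitting at $Z$ via the canonical decomposition~\eqref{E:decI}), which the paper asserts without detail; your Krull--Schmidt argument for that step is valid.
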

\begin{proof}
    We can assume without loss of generality that $\supp(I)=X$. Let $Z\in\wh\D(\s)-\{X\}$ and denote by $Z^\mathsf{c}=\bigsqcup_{j\in J} Z_j$ the decomposition of $Z^c$ into connected components, so that $Z_j\in\D(\s)$ for each $j\in J$. 

    \vspace{0.1cm}
    
    \un{Claim:} If $\chi(I_Z)=\s_Z$ then $\chi(I_{Z_j^c})=\s_{Z_j^c}$ for any $j\in J$. 

    Indeed, we have that 
    \begin{equation*}\label{E:eqz1}
    \sum_{j\in J}\chi(\leftindex_{Z_i}I)= \chi(\leftindex_{Z^c}I)=\chi(I)-\chi(I_Z)=\s-\s_Z=\sum_{j\in J}\s_{Z_j},
    \end{equation*}
    where we have used $\leftindex_{Z^c}I=\oplus_{j\in J}\leftindex_{Z_i}I$ in the first equality, \eqref{E:add-chi} in the second equality, the assumptions on $I$ on the third equality, and Corollary~\ref{C:add-whn} in the last equality. 

    The above equation together with Lemma \ref{L:ss-deg}\eqref{L:ss-deg2} implies that 
    \begin{equation*}\label{E:eqz2}
        \chi(\leftindex_{Z_i}I)=\s_{Z_i}.
    \end{equation*}
    Then, by applying again \eqref{E:add-chi} and using that $Z_i\in \D(\s)$, we get that
    $$
    \chi(I_{Z_i^c})=\chi(I)-\chi(\leftindex_{Z_i}I)=\s-\s_{Z_i}=\s_{Z_i^c},
    $$
    which concludes the proof of the Claim.

\vspace{0.1cm}

    We now use the Claim to conclude the proof. 
Part \eqref{L:ps-sDhat-s} follows from the Claim and the fact that if $I$ is $\s$-stable then $\chi(I_{Z_i^c})>\s_{Z_i^c}$ for any $j\in J$. 
In order to prove Part \eqref{L:ps-sDhat-ps}, take $Z\in \wh \D(\s)-\{X\}$ such that $\chi(I_Z)=\s_Z$. Then the Claim implies that $\chi(I_{Z_j^c})=\s_{Z_j^c}$ for every $j\in J$. Since $I$ is $\s$-polystable, then $I$ decomposed at $Z_j^c$ (or equivalently at $Z_j$) for any $j\in J$. But then this implies that $I$ decomposes at $Z$ as well. 
\end{proof}

As a corollary of the above proposition, we obtain the following characterization of stable sheaves. 

\begin{corollary}\label{C:stab-poly}
  A sheaf $I$ is $\s$-stable if and only if it is $\s$-polystable and simple.  
\end{corollary}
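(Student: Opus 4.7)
The plan is to derive both implications directly from Proposition~\ref{P:poly-sum}, together with the extended characterization of stability provided by Lemma~\ref{L:ps-sDhat}(\ref{L:ps-sDhat-s}).

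For the direction $(\Leftarrow)$: Suppose $I$ is $\s$-polystable and simple. Proposition~\ref{P:poly-sum} produces an indecomposable decomposition $I=\bigoplus_{j\in J}I_{Y_j}$ in which each summand $I_{Y_j}$ is $\s$-stable. Simplicity of $I$, via Equation~\eqref{E:AutI}, forces $|J|=1$, so $I=I_{Y_1}$ is itself $\s$-stable.

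For the direction $(\Rightarrow)$: That $\s$-stability implies $\s$-polystability is immediate, since the equality $\chi(I_Z)=\s(Y_i)_Z$ triggering the polystable splitting requirement in Definition~\ref{D:ss} is never attained under the strict inequality of stability. For simplicity I argue by contradiction. Applying Proposition~\ref{P:poly-sum} to the $\s$-polystable sheaf $I$, I write $I=\bigoplus_{j\in J}I_{Y_j}$ with each $I_{Y_j}$ $\s$-stable on its connected support $Y_j$; a failure of simplicity means $|J|\geq 2$. Pick $j_0\in J$ with $Y_{j_0}$ strictly contained in some connected component $W$ of $\supp(I)$ (this holds unless $I$ has already split across its connected components of support, in which case the argument below is run after replacing $I$ with one such component and $W$ with it). Proposition~\ref{P:res-ss} applied to the splitting $I=I_{Y_{j_0}}\oplus I_{\supp(I)\setminus Y_{j_0}}$ gives both $Y_{j_0}\in\wh{\D}(\s)$ and $\chi(I_{Y_{j_0}})=\s_{Y_{j_0}}$; meanwhile $I_W$ is $\s(W)$-stable, so Lemma~\ref{L:ps-sDhat}(\ref{L:ps-sDhat-s}) applied to $Y_{j_0}\in\wh{\D}(\s(W))\setminus\{W\}$ yields $\chi(I_{Y_{j_0}})>\s(W)_{Y_{j_0}}$. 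Combining with the comparison $\s(W)_{Y_{j_0}}\geq\s_{Y_{j_0}}$ from Lemma~\ref{L:ExtVfunCompar}(\ref{L:ExtVfunCompar1}), one reaches the contradiction $\s_{Y_{j_0}}=\chi(I_{Y_{j_0}})>\s(W)_{Y_{j_0}}\geq\s_{Y_{j_0}}$.

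The main conceptual step is to pair the equality produced by Proposition~\ref{P:res-ss} with the strict inequality produced by Lemma~\ref{L:ps-sDhat}(\ref{L:ps-sDhat-s}) through the comparison of extended V-functions on $X$ and on $W$; no calculation beyond locating the right proper subcurve $Y_{j_0}\subsetneq W$ inside the indecomposable decomposition is needed.
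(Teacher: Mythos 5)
Your two main steps are the right ones and match the derivation the paper intends (it offers no written proof, but places the corollary immediately after Proposition~\ref{P:poly-sum} and Lemma~\ref{L:ps-sDhat}, which is exactly the machinery you use). The direction $(\Leftarrow)$ is correct as written. For $(\Rightarrow)$, in the case where $\supp(I)$ is connected your chain
$\s_{Y_{j_0}}=\chi(I_{Y_{j_0}})>\s(W)_{Y_{j_0}}\geq\s_{Y_{j_0}}$ is valid; note that then $W=\supp(I)$, $\s(W)=\s$ (or $\s(W)$ with $W=\supp(I)\in\wh\D(\s)$), so the detour through Lemma~\ref{L:ExtVfunCompar}\eqref{L:ExtVfunCompar1} is harmless but not needed --- you could apply Lemma~\ref{L:ps-sDhat}\eqref{L:ps-sDhat-s} directly to $I$ and the subcurve $Y_{j_0}\in\wh\D(\s)-\{\supp(I)\}$ produced by Proposition~\ref{P:res-ss}.

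The genuine problem is the parenthetical handling of the case where $I$ has already split across the connected components of its support. If $\supp(I)=W_1\sqcup\cdots\sqcup W_h$ with $h\geq 2$ and each $I_{W_i}$ indecomposable and $\s(W_i)$-stable, then replacing $I$ by one component $I_{W_i}$ and $W$ by $W_i$ gives a simple indecomposable sheaf and produces no contradiction --- and none can be produced, because by Definition~\ref{D:ss} such an $I$ \emph{is} $\s$-stable and $\s$-polystable, yet $\Aut(I)=\Gm^h$ so $I$ is not simple. So for sheaves with disconnected support the statement of the corollary fails as literally written, and your patch cannot repair that. The corollary has to be read (as the paper implicitly does, since its stable sheaves live in $\ov\J_X(\s)\subseteq\TF_X$ and hence have $\supp(I)=X$ connected) under the assumption that $\supp(I)$ is connected; with that assumption in place your case split disappears, only the case $Y_{j_0}\subsetneq W=\supp(I)$ occurs, and your argument is complete. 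You should either state that hypothesis explicitly or delete the parenthetical, which currently reads as if it handled a case that it does not.
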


We finally prove  that every $\s$-semistable sheaf isotrivially specializes to a unique $\s$-polystable sheaf, which moreover does not further isotrivially specialize to other $\s$-semistable sheaves.

\begin{proposition}\label{P:iso-poly}
Let $\s$ be a V-stability condition on $X$. Then we have that:
\begin{enumerate}[(i)]
    \item \label{P:iso-poly1} For any $\s$-semistable sheaf $I$, there exists a unique isotrivial specialization $I\rightsquigarrow I^{pl}$ to some $\s$-polystable sheaf $I^{pl}$.
    \item \label{P:iso-poly2} If we have an isotrivial specialization    $I\rightsquigarrow J$ such that $I$ is $\s$-polystable and $J$ is $\s$-semistable, then we have that $I=J$.
\end{enumerate}
\end{proposition}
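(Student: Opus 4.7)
The plan is to prove part (ii) first via a direct analysis leveraging Propositions~\ref{P:res-ss} and~\ref{P:morf-ss}, and then to establish part (i) by an iterative construction for existence and a Jordan--H\"older argument for uniqueness.

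For part (ii), by Proposition~\ref{P:iso-spec} we write $J = \Gr_{Y_\bullet}(I)$ for some ordered partition $Y_\bullet = (Y_0, \ldots, Y_q)$ of $\supp I$, and proceed by induction on the length $q$. We factor the specialization as $I \rightsquigarrow J_{(2)} := \Gr_{(Y_0, W_1)}(I) = I_{Y_0} \oplus \leftindex_{W_1} I \rightsquigarrow J$. The inequality $\chi(J_{(2), Z}) \geq \chi(J_Z)$ for $Z \in \BCon(X)$, which follows from the natural containment $(\leftindex_{W_1} J_{(2)})_{W_1 \wedge Z} \subseteq \leftindex_{W_1 \wedge Z}(J_{(2), Z})$ provided by Lemma~\ref{L:IY}, shows that semistability of $J$ implies semistability of $J_{(2)}$. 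Writing $I = \bigoplus_\alpha I^\alpha$ with each $I^\alpha$ stable on $Z^\alpha$, the key claim is that $Y_0$ must be a union of some of the $Z^\alpha$'s: this forces the extension $0 \to \leftindex_{W_1} I \to I \to I_{Y_0} \to 0$ to split and hence $J_{(2)} = I$. Given the claim, $\leftindex_{W_1} I = I_{W_1}$ is polystable, and the second summand of $J = I_{Y_0} \oplus \Gr_{(Y_1, \ldots, Y_q)}(I_{W_1})$ is semistable (as the image of a projection endomorphism of $J$, by Proposition~\ref{P:morf-ss}); the inductive hypothesis then yields $J = I$.

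To prove the key claim, suppose for contradiction that $A := Y_0 \wedge Z^\alpha$ is a proper non-trivial subcurve of $Z^\alpha$ for some $\alpha$, and set $B := \overline{Z^\alpha - A}$. Then $J_{(2)}$ decomposes compatibly as $\bigoplus_\alpha J^\alpha$, with the offending summand $J^\alpha = (I^\alpha)_A \oplus \leftindex_B I^\alpha$. Applying Proposition~\ref{P:res-ss} to $J_{(2)}$ with the subcurve $Z^\alpha$ (using $Z^\alpha \in \wh{\D}(\s)$ and $\chi(J^\alpha) = \chi(I^\alpha) = \s_{Z^\alpha}$) shows that $J^\alpha$ is $\s(Z^\alpha)$-semistable. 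The bipartite graph whose vertices are the connected components of $A$ and $B$ and whose edges record shared nodes is connected (since $Z^\alpha$ is), so any spanning tree has a leaf, producing a biconnected subcurve $C \in \BCon(Z^\alpha)$ that is a connected component of either $A$ or $B$. Direct calculation gives $\chi(J^\alpha_{C^c}) = \chi(I^\alpha) - \chi((I^\alpha)_C)$ when $C \subseteq A$, and $\chi(J^\alpha_C) = \chi(\leftindex_C I^\alpha)$ when $C \subseteq B$. Combining the corresponding semistability inequality for $J^\alpha$, the strict stability inequality $\chi((I^\alpha)_{C'}) > \s(Z^\alpha)_{C'}$ from Lemma~\ref{L:ps-sDhat}\eqref{L:ps-sDhat-s} applied at $C' \in \{C, C^c\}$, and the defining identity $\s(Z^\alpha)_C + \s(Z^\alpha)_{C^c} - |\s(Z^\alpha)| \in \{0, 1\}$ from Definition~\ref{D:VStabX}\eqref{E:condi1} (together with Lemma~\ref{L:ss-deg}\eqref{L:ss-deg1} in the non-degenerate subcase), we derive a contradiction in all four sub-cases.

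For part (i) existence, we induct on the number $c(I)$ of indecomposable summands of $I$. If $I$ is already polystable, set $I^{pl} := I$. Otherwise, Proposition~\ref{P:poly-sum} exhibits an indecomposable summand $I^\alpha$ on $Z^\alpha$ and a subcurve $Z \in \D(\s(Z^\alpha))$ at which $\chi((I^\alpha)_Z) = \s(Z^\alpha)_Z$ but $I^\alpha$ does not split; the 2-step isotrivial specialization along $(Z, X \setminus Z)$ replaces $I^\alpha$ by $(I^\alpha)_Z \oplus \leftindex_{\overline{Z^\alpha - Z}} I^\alpha$, producing $I'$ which is $\s$-semistable (by Propositions~\ref{P:res-ss} and~\ref{P:ext-ss}) with $c(I') > c(I)$, so the induction terminates at the desired $I^{pl}$. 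For uniqueness, note that Proposition~\ref{P:morf-ss} makes the full subcategory of $\s$-semistable sheaves abelian, while Lemma~\ref{L:ss-deg} combined with Lemma~\ref{L:ps-sDhat} identifies the $\s$-stable sheaves as precisely its simple objects (any proper semistable subsheaf of a stable $I$ is of the form $\leftindex_W I$ for some $W \subsetneq \supp I$, and strict stability at each connected component of $W$ contradicts the equality required by semistability of $\leftindex_W I$). The Jordan--H\"older theorem then provides a semisimplification unique up to isomorphism, and any polystable isotrivial limit of $I$ must coincide with it. The main obstacle is the Euler-characteristic contradiction in part (ii), combining the graph-theoretic existence of a biconnected witness $C$ with the numerical reconciliation across the four sub-cases of $C$ lying in $A$ or $B$ and being degenerate or non-degenerate.
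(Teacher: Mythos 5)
Your proposal is correct and follows the same overall architecture as the paper's proof: part (ii) by induction on the length of the ordered partition supplied by Proposition~\ref{P:iso-spec}, factoring the specialization through the two-step graded $I_{Y_0}\oplus\leftindex_{Y_0^c}I$ and showing that this first step is trivial; part (i) by iterated splitting for existence and Jordan--H\"older for uniqueness. The one place where you genuinely diverge is the splitting step in part (ii). The paper disposes of it by quoting Lemma~\ref{L:Gr-ss} (semistability of $\Gr_{(Y_0,Y_0^c)}(I)$ forces each connected component $Y_{0,i}$ of $Y_0$ to lie in $\wh\D(\s)$ with $\chi(I_{Y_{0,i}})=\s_{Y_{0,i}}$) followed by Lemma~\ref{L:ps-sDhat}\eqref{L:ps-sDhat-ps} (polystability at extended-degenerate subcurves forces $I$ to split at $Y_0$). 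You instead re-derive this implication by hand: decomposing $I$ into stable indecomposables via Proposition~\ref{P:poly-sum}, restricting to an offending summand supported on $Z^\alpha$, extracting a biconnected leaf component $C$ from the incidence graph of $Y_0\wedge Z^\alpha$ and its complement in $Z^\alpha$, and deriving a numerical contradiction against Lemma~\ref{L:ps-sDhat}\eqref{L:ps-sDhat-s}; I checked that the Euler-characteristic bookkeeping closes up in all four sub-cases (semistability of $J^\alpha$ at $C$ or $C^{\mathsf{c}}$, strict stability of $I^\alpha$, and Equation~\eqref{E:sum-n} for $\s(Z^\alpha)$ are jointly contradictory). Your route is longer but self-contained, whereas the paper's is shorter because Lemmas~\ref{L:Gr-ss} and~\ref{L:ps-sDhat} already package exactly what you prove from scratch. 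The remaining differences are cosmetic: you justify semistability of the intermediate sheaf by the explicit inequality $\chi((J_{(2)})_Z)\geq\chi(J_Z)$ where the paper simply invokes openness of semistability under generization, and in part (i) you locate the splitting subcurve inside an indecomposable summand via $\D(\s(Z^\alpha))$ where the paper picks $Y\in\D(\s)$ globally; these choices are reconciled by Lemma~\ref{L:ps-sDhat}.
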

\begin{proof}
    We assume without loss of generality that $\supp(I)=X$.

Part~\eqref{P:iso-poly1}: let us first prove the existence. If $I$ is not $\s$-polystable, we pick a subcurve $Y\in \D(\s)$ such that $\chi(I_Y)=\s_Y$ and  $I\neq I_Y\oplus I_{Y^c}$, and we apply Lemma \ref{L:Gr-ss} and Proposition~\ref{P:iso-spec} in order to get  a non-trivial isotrivial specialization 
$$
I\rightsquigarrow \Gr_{(Y,Y^c)}(I)=I_Y\oplus \leftindex_{Y^c}I, \quad \text{ with } I_Y \text{ and } \leftindex_{Y^c}I \text{ $\s$-semistable.}
$$
By iterating the same argument for $I_Y$ and $\leftindex_{Y^c}I$, we arrive at an isotrivial specialization of $I$ into a $\s$-polystable sheaf. The uniqueness of such an isotrivial specialization follows from the Jordan-H\"older theorem. 

 Part~\eqref{P:iso-poly2}: let $I\rightsquigarrow J$ be an isotrivial specialization with $I$ $\s$-polystable and $J$ $\s$-semistable. By Proposition~\ref{P:iso-spec}, $J=\Gr_{Y_\bullet}(I)$ for some ordered partition $Y_\bullet=(Y_0,\ldots,Y_q)$ of $X$. We now proceed by induction in $q$ (observe that the conclusion is obvious if  $q=0$). 
By Proposition \ref{P:iso-spec} and \eqref{E:GrY}, the isotrivial specialization $I\rightsquigarrow J$ can be decomposed as
\begin{equation}\label{E:deco-iso}
I\rightsquigarrow \Gr_{(Y_0,Y_0^c)}(I)=I_{Y_0}\oplus \leftindex_{Y_0^c} I\rightsquigarrow I_{Y_0}\oplus \Gr_{(Y_1,\ldots, Y_q)}(\leftindex_{Y_0^c} I)=\Gr_{(Y_0,\ldots, Y_q)}(I)=J.
\end{equation}
Since $J$ is $\s$-semistable and $\s$-semistability is an open condition, also the intermediate sheaf $\Gr_{(Y_0,Y_0^c)}(I)$ is $\s$-semistable. 
We can now apply Lemma \ref{L:Gr-ss} and Lemma \ref{L:ps-sDhat}\eqref{L:ps-sDhat-ps} to the $\s$-polystable sheaf $I$ in order to conclude that the first isotrivial specialization in \eqref{E:deco-iso} is trivial, i.e. 
\begin{equation}\label{E:Iis-triv}
I=\Gr_{(Y_0,Y_0^c)}(I)=I_{Y_0}\oplus \leftindex_{Y_0^c} I.
\end{equation}
Now notice that the sheaves $I_{Y_0}$ and $\leftindex_{Y_0^c} I$ are $\s$-polystable since they are direct summands of the $\s$-polystable sheaf $I$, and the sheaf $\Gr_{(Y_1,\ldots, Y_q)}(\leftindex_{Y_0^c} I)$ is $\s$-semistable since it is the quotient of the $\s$-semistable sheaf $J$ by the $\s$-semistable direct factor $I_{Y_0}$ (by Proposition \ref{P:morf-ss}). Therefore, we can apply the induction hypothesis and conclude that 
\begin{equation}\label{E:IIis-triv}
 \leftindex_{Y_0^c} I= \Gr_{(Y_1,\ldots, Y_q)}(\leftindex_{Y_0^c} I).
\end{equation}
By combining \eqref{E:Iis-triv} and \eqref{E:IIis-triv}, and using the factorization \eqref{E:deco-iso}, we deduce that $I=J$ as required.

\end{proof}

\begin{lemma}\label{L:Gr-ss}
Let $\s$ be a V-stability condition on $X$ and let $I$ be a $\s$-semistable sheaf with $\supp(I)=X$.
Let $Y\subseteq X$ be a subcurve with decomposition into connected components $Y=\coprod_{i} Y_i$.
Then we have that 
$$
\Gr_{(Y,Y^c)}(I)=I_Y\oplus \leftindex_{Y^c}I \text{ is $\s$-semistable} \Longleftrightarrow  Y_i\in \wh \D(\s) \text{ and } \chi(I_{Y_i})=\s_{Y_i} \text{ for every }i. 
$$
\end{lemma}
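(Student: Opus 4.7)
The equivalence reduces to assembling Propositions~\ref{P:res-ss}, \ref{P:ext-ss}, and \ref{P:morf-ss}. The key observation is that the hypothesis on the $Y_i$ is exactly what makes each of $I_Y$ and $\leftindex_{Y^c}{I}$ individually $\s$-semistable, after which the direct sum structure of $\Gr_{(Y,Y^c)}(I) = I_Y \oplus \leftindex_{Y^c}{I}$ lets us pass freely between semistability of the whole sheaf and of each summand.

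For the implication ($\Leftarrow$), assume each $Y_i \in \wh{\D}(\s)$ and $\chi(I_{Y_i}) = \s_{Y_i}$. Applying the implications $(ii) \Rightarrow (i)$ and $(ii) \Rightarrow (iii)$ of Proposition~\ref{P:res-ss} to the $\s$-semistable sheaf $I$ (with subcurve $Y$), one obtains that both $I_Y$ and $\leftindex_{Y^c}{I}$ are $\s$-semistable. Since $\supp(I_Y) \wedge \supp(\leftindex_{Y^c}{I}) = Y \wedge Y^c = \emptyset$ and $\supp(I_Y) \cup \supp(\leftindex_{Y^c}{I}) = X$ is connected and automatically lies in $\wh{\D}(\s)$, the hypotheses of Proposition~\ref{P:ext-ss} are verified. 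Applying it to the split (trivial) extension
\[
0 \longrightarrow \leftindex_{Y^c}{I} \longrightarrow I_Y \oplus \leftindex_{Y^c}{I} \longrightarrow I_Y \longrightarrow 0
\]
yields that $\Gr_{(Y,Y^c)}(I) = I_Y \oplus \leftindex_{Y^c}{I}$ is $\s$-semistable.

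For the implication ($\Rightarrow$), assume $\Gr_{(Y,Y^c)}(I) = I_Y \oplus \leftindex_{Y^c}{I}$ is $\s$-semistable. Consider the endomorphism $p \colon \Gr_{(Y,Y^c)}(I) \to \Gr_{(Y,Y^c)}(I)$ defined by $(a,b) \mapsto (a,0)$, i.e.\ the projection onto the first summand followed by the canonical inclusion. Its source and target are both $\s$-semistable, so Proposition~\ref{P:morf-ss} implies that $\ker(p) \cong \leftindex_{Y^c}{I}$ and $\Im(p) \cong I_Y$ are $\s$-semistable. The implication $(i) \Rightarrow (ii)$ of Proposition~\ref{P:res-ss} applied to $I_Y$ then gives precisely the desired conclusion: each $Y_i \in \wh{\D}(\s)$ and $\chi(I_{Y_i}) = \s_{Y_i}$.

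Since all the heavy lifting has been packaged in the earlier propositions of Section~\ref{Sub:ss}, the argument is largely assembly, and there is no serious obstacle. The only subtlety worth flagging is that $X \in \wh{\D}(\s)$ holds automatically (by the definition of the extended degeneracy set), so no additional degeneracy condition on the ambient curve is required to invoke Proposition~\ref{P:ext-ss} in the $(\Leftarrow)$ direction.
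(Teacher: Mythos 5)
Your proof is correct in both directions. The ($\Leftarrow$) direction is the same as the paper's: Proposition~\ref{P:res-ss} gives semistability of $I_Y$ and of $\leftindex_{Y^c}{I}$, and Proposition~\ref{P:ext-ss}, applied to the split extension (with $X\in\wh{\D}(\s)$ automatic from Definition~\ref{D:ext-n}), gives semistability of the direct sum. In the ($\Rightarrow$) direction you take a genuinely different route. The paper never invokes Proposition~\ref{P:morf-ss} here; instead it works with the connected components $W_j$ of $Y^c$, notes that $\Gr_{(Y,Y^c)}(I)_{W_j}=\leftindex_{W_j}{I}$, and sandwiches $\chi(\leftindex_{W_j}{I})$ between $\s_{W_j}$ (from semistability of the graded object, via Lemma~\ref{L:ss-deg}\eqref{L:ss-deg2}) and $|\s|-\s_{W_j^c}$ (from semistability of $I$); by Definition~\ref{D:ext-s} this forces $W_j\in\wh{\D}(\s)$ and $\chi(\leftindex_{W_j}{I})=\s_{W_j}$, and the conclusion follows from the equivalence of conditions \eqref{P:res-ss4} and \eqref{P:res-ss2} in Proposition~\ref{P:res-ss}. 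Your move --- observing that the idempotent $(a,b)\mapsto(a,0)$ is an endomorphism of the $\s$-semistable sheaf $I_Y\oplus\leftindex_{Y^c}{I}$, so that Proposition~\ref{P:morf-ss} immediately yields semistability of the image $I_Y$, after which \eqref{P:res-ss1}$\Rightarrow$\eqref{P:res-ss2} of Proposition~\ref{P:res-ss} (or just Definition~\ref{D:ss}) finishes --- shortcuts that computation entirely; the inequality chain the paper runs explicitly is essentially the one already carried out inside the proof of Proposition~\ref{P:morf-ss}. Both arguments are valid and rest on the same earlier results; yours is slightly slicker, the paper's slightly more self-contained at this point in the text.
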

\begin{proof}
Observe that the equality $ \Gr_{(Y,Y^c)}(I)=I_Y\oplus \leftindex_{Y^c}I$ follows from \eqref{E:GrY}. 
We prove the two implications separately:

\un{Proof of $\Leftarrow$}: the assumption and Proposition \ref{P:res-ss} imply that $I_Y$ and $\leftindex_{Y^c}I$ are $\s$-semistable. Then Proposition \ref{P:ext-ss} implies that $I_Y\oplus \leftindex_{Y^c}I$ is $\s$-semistable and we are done.

\un{Proof of $\Rightarrow$}: let $J:=\Gr_{(Y,Y^c)}(I)$  and denote by $Y^c=\coprod_{j} W_j$ the decomposition of $Y^c$ into connected components. By the expression for $\Gr_{(Y,Y^c)}(I)$ observed above, we have the decomposition 
$$
J=\bigoplus_{i} I_{Y_i}\bigoplus_{j} \leftindex_{W_j}I,
$$
which implies that $J_{W_j}=\leftindex_{W_j}I$ for any $j$. Then using that $I$ and $J$ are $\s$-semistable and Lemma~\ref{L:ss-deg}\eqref{L:ss-deg2}, we compute
\begin{equation*}\label{E:2ineq}
 \s_{W_j}\leq \chi(J_{W_j})=\chi(\leftindex_{W_j}I)\leq |\s|-\s_{W_j^c}.   
\end{equation*}
The above equation, together with Definition~\ref{D:ext-s}, implies that each connected component of $W_j^c$ is $\s$-degenerate (and hence $W_j\in \wh \D(\s)$) and that $\chi(\leftindex_{W_j}I)=\s_{W_j}$. We now conclude using Proposition~\ref{P:res-ss}.
\end{proof}

\section{V-compactified Jacobians}\label{Sub:VcJ}

In this section, we define V-compactified Jacobians for families of connected reduced curves. 

First of all, we define relative compactified Jacobian stacks and spaces.

\begin{definition}\label{D:cJ-fam}
    Let $\pi:X\to S$ be a family of connected reduced curves. 
A \emph{compactified Jacobian stack} of characteristic $\chi$ for $X/S$ is an open substack $\ov \J_{X/S}^{\chi}\subseteq \TF_{X/S}^{\chi}$, admitting a relative proper good moduli space (in the sense of \cite{Alp}) $F:\ov \J_{X/S}^{\chi}\xrightarrow{\Xi} \ov J_{X/S}^{\chi}\xrightarrow{f} S$, called the associated \emph{compactified Jacobian space}.
\end{definition}

\begin{remark} \label{R: connected}
    Compactified Jacobians of (families of) connected nodal curves and in the fine case in \cite{PTgenus1}, \cite{pagani2023stability} and \cite{viviani2023new} require the further assumption that their fibers over all geometric points should also be \emph{connected}. Here we have dropped the latter assumption motivated by the fact that, to the best of our knowledge, the connectedness is already unknown for the compactified Jacobians defined in \cite{esteves}, even  for a reduced and connected curve $X$ 
 over an algebraically closed field. 

 However, we will later prove  a connectedness result assuming that the curves have planar singularities, as part of Theorem~\ref{T:vcJ-fam-pla}.
\end{remark}

Now we define the V-semistable locus of a relative V-stability condition (recall Definition~\ref{D:VStabXS}). 

\begin{lemma-definition}\label{LD:VStab}
    Let $\pi:X\to S$ be a family of connected reduced curves and let $\s=\{\s^s\}$ be a stability condition on $X/S$ of characteristic $\chi$.

    The associated \emph{V-semistable locus} is the open substack $\ov \J_{X/S}(\s)$ of $\TF^\chi_{X/S}$ whose fiber over $T\xrightarrow{f} S$ is given by 
$$
\ov \J_{X/S}(\s)(T):=\left\{
\begin{aligned}
    I \in  \TF^\chi_{X/S}(T): \: \chi((I_{|X_{t}})_Y)\geq \s^{f(t)}_Y & \text{ for every geometric point $t$ of $T$ }\\
    & \text{ and for every } Y \in \BCon(X_{f(t)})
    \end{aligned}\right\}.
$$
\end{lemma-definition}
\begin{proof}
 We have to check that $\ov \J_{X/S}(\s)$ is open in $\TF_{X/S}^\chi$.
Since $\ov \J_{X/S}(\s)$ is constructible in $\TF_{X/S}^\chi$, it is enough to prove that $\ov \J_{X/S}(\s)$ is stable under generalization. 

Consider an \'etale specialization $\xi: s\rightsquigarrow t$ of geometric points of $S$ and suppose that $I_{|X_t}\in \ov \J_{X/S}(\s)(t)$, which is equivalent to say that 
$$
\chi((I_{|X_t})_Z)\geq \s^t_Z \text{ for every } Z \in \BCon(X_t).
$$
For any $Y\in \BCon(X_s)$, we have that 
$$
\chi((I_{|X_s})_Y)\geq \chi((I_{|X_t})_{\xi_*(Y)})\geq \s^t_{\xi_*(Y)}=\s^s_Y,
$$
where the first inequality follows from the fact that $(I_{|X_t})_{\xi_*(Y)}$ is a quotient of the restriction to $\xi_*(Y)$ of the flat limit of $(I_{|X_s})_Y$ over $\ov{\{s\}}$, the second inequality follows from our assumption on $I_{|X_t}$, and the last equality follows from the equality $\xi^*(\s^t)=\s^s$ of Definition \ref{D:VStabXS}. This implies that $I_{|X_s}\in \ov \J_{X/S}(\s)(s)$ and we are done. 
\end{proof}

\begin{remark} \label{R:open-moregeneral}
    Note that we have not used Conditions~(1) and (2) of Definition~\ref{D:VStabX} in the proof of  Lemma-Definition~\ref{LD:VStab}.  Thus  the  substack $\ov \J_{X/S}(\s)$ of $\TF^\chi_{X/S}$ defined in Lemma-Definition~\ref{LD:VStab} is open under the weaker assumption that $\s=\{\s^s\}$ is \emph{any} collection of functions $\s^s \colon \BCon(X_s) \to \mathbb{Z}$ such that, for every \'etale specialization $\xi: s \rightsquigarrow t$ of geometric points of $S$, we have $\xi^*(\s^t)=\s^s$.  
\end{remark}

\begin{remark}\label{R:inc-VcJ}
    Let $X$ be a connected reduced curve and let $\s,\t\in \VStab(X)$. We have that:
    \begin{enumerate}[(i)]
        \item     If $ \s\geq \t$  then we have an inclusion $\ov\J_X(\s)\subseteq \ov \J_X(\t)\subseteq \TF_X^{|\s|=|\t|}$, as it follows from Definition \ref{D:VStab-pos} and Lemma-Definition \ref{LD:VStab}.
        \item  If $\s$ and $\t$ are equivalent by translation, then there exists a line bundle $L$ on $X$ such that the tensorization by $L$ induces an isomorphism $-\otimes L:\ov \J_X(\s) \xrightarrow{\cong} \ov \J_X(\t)$.

        Indeed, if $\s$ and $\t$ are equivalent by translation then there exists a function $\tau:I(X)\to \ZZ$ such that $\t=\s+\tau$ (see Definition \ref{D:tranVStab}). Then if $L$ is any line bundle on $X$ such that the multidegree of $L$ is equal to $\tau$ (i.e. $\deg(L_{X_v})=\tau_v$ for any $v\in I(X)$), then the tensorization by $L$ induces the required isomorphism using Lemma-Definition \ref{LD:VStab} and the formula $\chi((I\otimes L)_Y)=\chi(I_Y)+\deg(L_Y)$.
        \end{enumerate}
\end{remark}

The main result of this section is the following 

\begin{theorem}\label{T:VcJ}
    Let $\pi:X\to S$ be a family of connected reduced curves over a  quasi-separated and locally Noetherian algebraic space $S$ and let $\s=\{\s^s\}$ be a V-stability condition on $X/S$. Then $\ov \J_{X/S}(\s)$ is a compactified Jacobian stack for $X/S$ of characteristic $|\s|$.
    %i.e. it admits a relative proper good moduli space $\ov J_{X/S}(\s)\to S$.
%with geometrically connected fibers. 
\end{theorem}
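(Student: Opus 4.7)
My plan is to verify the two requirements of Definition \ref{D:cJ-fam}: that $\ov \J_{X/S}(\s)$ is open in $\TF^{|\s|}_{X/S}$, and that it admits a proper good moduli space over $S$. The first is recorded in Lemma-Definition~\ref{LD:VStab}, so the entire burden falls on the second. Because $\TF^{|\s|}_{X/S}$ is locally of finite type and quasi-separated over $S$ with affine, finitely presented diagonal, the same holds for $\ov \J_{X/S}(\s)$, and the appropriate framework is the criterion of Alper--Halpern-Leistner--Heinloth \cite{AHLH}: a stack of this form admits a separated good moduli space once it is $\Theta$-complete and $S$-complete, and the resulting moduli space is proper once, in addition, the existence part of the valuative criterion of properness holds on the stack itself.

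The main obstacle, and the step I would tackle first, is the valuative criterion. Fix a trait $\Delta=\Spec R$ with generic point $\eta$ and closed point $o$, and a $\s$-semistable sheaf $\I_\eta$ on $X_\eta$. By \cite[Lemma 7.8(i)]{altmankleiman} some torsion-free extension $\I$ of $\I_\eta$ to $X/\Delta$ exists; the point is to modify $\I$ within its $\ZZ^{I(X_o)}/\ZZ$-orbit (cf.\ Theorem~\ref{T:limits}) so that its special fiber becomes $\s$-semistable. I would generalize Esteves' algorithm \cite{esteves}: whenever the central fiber fails semistability at some biconnected subcurve $Y\in\BCon(X_o)$, I would apply the twist of Definition~\ref{D:twist} along a carefully chosen such $Y$, with the V-stability axioms \eqref{E:sum-n}--\eqref{E:tria-n} supplying a monovariant that ensures termination. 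Identifying the correct destabilizing subcurve at each iteration and establishing that the algorithm halts after finitely many steps is the main technical difficulty, since the classical arguments use the additivity of a numerical polarization, which is unavailable for a general V-stability.

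For $\Theta$-completeness and $S$-completeness, I would translate each criterion into a statement about filtrations of relative torsion-free sheaves. By Proposition~\ref{P:iso-spec}, morphisms $\Theta_k \to \TF_X$ correspond to decreasing filtrations indexed by an ordered partition $Y_\bullet$ of $X$ with torsion-free associated graded $\Gr_{Y_\bullet}(I)$; the relative analogue furnishes a similar description of morphisms from an $R$-form of $\Theta$. Given such a filtration over the punctured test scheme whose graded lies in $\ov \J_{X/S}(\s)$, the closure of $\s$-semistability under subsheaves, quotients, and extensions (Propositions~\ref{P:res-ss}, \ref{P:morf-ss} and \ref{P:ext-ss}) allows me to extend the filtration across the missing point while remaining in $\ov \J_{X/S}(\s)$. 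The $S$-completeness verification is analogous, with the uniqueness of polystable isotrivial limits (Proposition~\ref{P:iso-poly}\eqref{P:iso-poly2}) supplying the separatedness needed to identify the two extensions. These arguments are organised as the separate Propositions~\ref{P:Th-compl} and \ref{P:S-compl}; combining them with the valuative criterion and the main theorem of \cite{AHLH} produces the proper good moduli space $\ov \J_{X/S}(\s)\to \ov J_{X/S}(\s)\to S$, completing the proof.
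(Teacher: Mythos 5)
Your proposal is correct and follows essentially the same route as the paper: openness via Lemma-Definition~\ref{LD:VStab}, then the \cite{AHLH} criteria verified through $\Theta$-completeness (Proposition~\ref{P:Th-compl}), $S$-completeness (Proposition~\ref{P:S-compl}), and the existence part of the valuative criterion obtained by generalizing Esteves' twisting algorithm (Proposition~\ref{P:univ-cl}), with the closure properties of semistable sheaves under restriction, kernels/images/cokernels and extensions doing the work in the completeness arguments. The only items you elide — that $\ov \J_{X/S}(\s)\to S$ is of finite type (via an Esteves-style $m$-regularity bound) and that stabilizers are tori, hence linearly reductive — are routine verifications the paper also records before invoking \cite{AHLH}.
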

We will call $\ov \J_{X/S}(\s)$ the \textbf{compactified Jacobian stack} associated to $\s$ (or  a \textbf{V-compactified Jacobian stack}) and its relative good moduli space $\ov J_{X/S}(\s)$ the \textbf{compactified Jacobian space} associated to $\s$ (or a \textbf{V-compactified Jacobian space}).
We have morphisms
\begin{equation}\label{E:morVcJ}
    F_{X/S}(\s):\ov \J_{X/S}(\s)\xrightarrow{\Xi_{X/S}(\s)} \ov J_{X/S}(\s)\xrightarrow{f_{X/S}(\s)} S.
\end{equation}

\begin{example}\label{Ex:classcJ}(Classical compactified Jacobians) 
If $\psi\in \Pol^{\chi}(X/S)$ as in Lemma-Definition \ref{LD:numpol-fam}, then the associated compactified Jacobian stacks/spaces 
\begin{equation}\label{E:morclcJ}
    F_{X/S}(\psi):\ov \J_{X/S}(\psi):=\ov \J_{X/S}(\s(\psi))\xrightarrow{\Xi_{X/S}(\psi)} \ov J_{X/S}(\psi):=\ov J_{X/S}(\s(\psi))\xrightarrow{f_{X/S}(\psi)} S
\end{equation}
are called \textbf{classical compactified Jacobian} stacks/spaces.

 This special subclass of compactified Jacobians includes all compactified Jacobians constructed in the literature prior to this work (with the exception of the very recent \cite{PTgenus1}, \cite{pagani2023stability}, \cite{viviani2023new} and \cite{fava2024}, which have inspired this theory):
 \begin{enumerate}
     \item If $\psi=\psi(L,\chi)$ is as in Example \ref{Ex:VBstab}\eqref{Ex:VBstab1}, then 
     $\ov \J_{X/S}(L,\chi):=\ov \J_{X/S}(\psi(L,\chi))$ and $\ov J_{X/S}(L,\chi):=\ov J_{X/S}(\psi(L,\chi))$ are the compactified Jacobian stack/space parameterizing relative rank-$1$ torsion-free sheaves of characteristic $\chi$ on $X/S$ that are $L$-slope semistable (see \cite{simpson}). These include Oda-Seshadri \cite{Oda1979CompactificationsOT} compactified Jacobians for a fixed nodal curve and Caporaso's \cite{caporaso} compactified Jacobian for the universal family over the moduli stack of stable curves. 
 \item If $\psi=\psi(E)$  as in Example \ref{Ex:VBstab}\eqref{Ex:VBstab2}, then 
     $\ov \J_{X/S}(E):=\ov \J_{X/S}(\psi(E))$ and $\ov J_{X/S}(E):=\ov J_{X/S}(\psi(E))$ are the compactified Jacobian stack/spaces parameterizing relative rank-$1$ torsion-free sheaves  on $X/S$ that are $E$-semistable in the sense of Esteves \cite{esteves}. 
\end{enumerate}
 We refer the reader to \cite{alexeev}, \cite{meloviviani}, \cite{CMKVlocal},\cite{MRV} for a comparison of the different constructions.
\end{example}

\begin{example}  \label{Ex: fine nodal} (Fine compactified Jacobians of nodal curves). 
If $\pi \colon X \to S$ is a family of nodal curves with $S$ irreducible and with smooth generic fiber, and if $\s$ is general
%(i.e. if $\Deg(\s^s)=\emptyset$ for every geometric point $s$ of $S$) 
then Theorem~\ref{T:VcJ} specializes to \cite[Theorem~6.3]{pagani2023stability}. (See Example~\ref{Ex; PTfine}). 
\end{example}

In order to prove Theorem \ref{T:VcJ}, we will check the three valuative criteria from \cite{AHLH} that ensure that the morphism $\ov \J_{X/S}(\s)\to S$ admits a relative proper good moduli space.

We first prove that  $\ov \J_{X/S}(\s)\to S$ is $\Theta$-complete (see \cite[Def. 3.10]{AHLH}), which means the following. For any discrete valuation ring R with residue field $k$ and quotient field $K$, consider the test object $\Theta_R:=[\AA^1_R/\Gm]$. The topological space of the stack $\Theta_R$ has  four points, which are related by the  specializations of Figure \ref{F:ThetaR}.

 \begin{figure}[hbt!]
 \[
\begin{tikzcd}
    1_{\eta}:=[\AA^1_K\setminus \{0\}/\Gm] \arrow[r] \arrow[d, rightsquigarrow] &  1:=[\AA^1_k\setminus \{0\}/\Gm] \arrow[d, rightsquigarrow] \\
    0_\eta:=[\Spec K/\Gm] \arrow[r] & 0:=[\Spec k/\Gm]
\end{tikzcd}
\]
\caption{The four points of $\Theta_R$ and their specializations: the horizontal arrows are ordinary specializations, while the vertical arrows are isotrivial specializations.\label{F:ThetaR}}
\end{figure}

Then $\ov \J_{X/S}(\s)\to S$ is $\Theta$-complete if any commutative diagram of solid arrows like the one in Figure~\ref{F:Thcompl} can be completed with a dashed arrow as below:  

\begin{figure}[hbt!]
 \[
\begin{tikzcd}
    \Theta_R\setminus\{0\} \arrow[r, "f"] \arrow[d, hookrightarrow] &  \ov \J_{X/S}(\s)\arrow[d] \\
    \Theta_R \arrow[r, "\phi"] \arrow[ru, dashrightarrow, "F"] & S
\end{tikzcd}
\]
\caption{$\Theta$-completeness of $\ov \J_{X/S}(\s)\to S$.\label{F:Thcompl}}
\end{figure}

\begin{proposition}\label{P:Th-compl}
    Notation as in Theorem \ref{T:VcJ}. Then $\ov \J_{X/S}(\s)\to S$ is $\Theta$-complete. 
\end{proposition}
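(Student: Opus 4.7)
The plan is to extend $f$ along the missing point $0 \in \Theta_R$. Since $\ov\J_{X/S}(\s)$ is open in $\TF_{X/S}^{|\s|}$, it suffices to first construct an extension $\widetilde F : \Theta_R \to \TF_{X/S}$ of $f$ and then verify $\widetilde F(0) \in \ov\J_{X/S}(\s)$. As a preliminary, since $\Theta_R \to \Spec R$ is a good moduli space map and $S$ is an algebraic space, the given $\phi : \Theta_R \to S$ factors uniquely through $\Spec R$; we may therefore replace $S$ by $\Spec R$ and assume $X \to \Spec R$ is a family of connected reduced curves.

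For the construction of $\widetilde F$, we translate $f$ into sheaf-theoretic data via the natural relative version of Proposition \ref{P:iso-spec}. The restriction of $f$ along $\Spec R \hookrightarrow \Theta_R \setminus \{0\}$ gives a relative torsion-free rank-$1$ sheaf $\mathcal{I}$ on $X/R$ whose geometric fibers are $\s$-semistable, while the isotrivial specialization from $f|_{1_\eta}$ to $f|_{0_\eta}$ encodes an ordered partition $Y_\bullet^\eta = (Y_0^\eta, \ldots, Y_q^\eta)$ of $X_\eta$ whose associated graded $\Gr_{Y_\bullet^\eta}(\mathcal{I}_\eta)$ is $\s^\eta$-semistable. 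We globalize the filtration by defining $\mathcal{I}^{(k)} \subseteq \mathcal{I}$ as the unique $R$-flat saturation inside $\mathcal{I}$ of the $k$-th piece of the generic filtration; this yields $\mathcal{I} = \mathcal{I}^{(0)} \supsetneq \cdots \supsetneq \mathcal{I}^{(q+1)} = 0$ with $R$-flat successive quotients that are torsion-free rank-$1$ on the schematic closures of the $Y_k^\eta$. Restricting to the special fiber produces the ordered partition $Y_\bullet^o = \xi_*(Y_\bullet^\eta)$ of $X_o$ and, back through Proposition \ref{P:iso-spec}, the desired $\widetilde F : \Theta_R \to \TF_{X/S}$ with $\widetilde F(0) = \Gr_{Y_\bullet^o}(\mathcal{I}_o)$.

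The main step is to show that $\Gr_{Y_\bullet^o}(\mathcal{I}_o)$ is $\s^o$-semistable. We have two inputs: $\mathcal{I}_o$ is $\s^o$-semistable (from $\widetilde F(1) \in \ov\J_{X/S}(\s)$) and $\Gr_{Y_\bullet^\eta}(\mathcal{I}_\eta)$ is $\s^\eta$-semistable on $X_\eta$. Applying Lemma \ref{L:Gr-ss} iteratively to the filtration of $\mathcal{I}_\eta$ extracts the numerical characterization: for each $k$ and each connected component $Z$ of $Y_k^\eta$, one has $Z \in \wh\D(\s^\eta)$ and $\chi((\mathcal{I}_\eta^{(k)})_Z) = \s^\eta_Z$. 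By the compatibility $\xi^*(\s^o) = \s^\eta$ from Definition \ref{D:VStabXS}, which implies both $\wh\D(\s^\eta) = \xi_*^{-1}(\wh\D(\s^o))$ and $\wh{\s^\eta}_W = \wh{\s^o}_{\xi_*(W)}$, together with the fact that $\xi_*$ preserves the number of connected components and the invariance of Euler characteristics in flat families, the identical numerical conditions transfer to $X_o$ with $\s^o$ and $Y_\bullet^o$ in place of $\s^\eta$ and $Y_\bullet^\eta$. Combined with the $\s^o$-semistability of $\mathcal{I}_o$, a second iterated application of Lemma \ref{L:Gr-ss} (now read from right to left on $X_o$) yields the $\s^o$-semistability of $\Gr_{Y_\bullet^o}(\mathcal{I}_o)$, completing the proof.

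I expect the principal technical obstacle to be the construction of the global filtration $\mathcal{I}^{(\bullet)}$ on $X_R$ in the second paragraph, specifically verifying that each quotient $\mathcal{I}^{(k)}/\mathcal{I}^{(k+1)}$ is $R$-flat with torsion-free rank-$1$ fibers on the expected subcurve; without this, the putative map $\widetilde F$ does not even land in $\TF_{X/S}$. This requires a careful handling of saturation in flat families over a DVR, using crucially that the generic-fiber graded already lies in $\TF$ (so no embedded components can appear generically) and that the supporting subcurves extend flatly from $X_\eta$ to $X_R$.
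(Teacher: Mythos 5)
Your proof is correct and follows essentially the same route as the paper's: reduce to $S=\Spec R$, reinterpret the map out of $\Theta_R\setminus\{0\}$ as a filtered $\s$-semistable sheaf, extend the filtration over $R$ with flat graded pieces (the paper does this via the properness of the relative Quot scheme, which amounts to your flat saturation), and transfer the semistability of the graded from the generic to the special fiber using $\xi^*(\s^o)=\s^{\ov\eta}$, flatness, and the preservation of connected components under $\xi_*$. The only difference is packaging: you invoke Lemma~\ref{L:Gr-ss} iteratively where the paper works directly with Propositions~\ref{P:res-ss} and~\ref{P:ext-ss} (from which Lemma~\ref{L:Gr-ss} is itself derived), so the two arguments are interchangeable.
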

\begin{proof}
Consider a commutative diagram of solid arrows as in Figure \ref{F:Thcompl}. 
We can assume that the residue field $k$ of $R$ is algebraically closed by \cite[Prop. 3.17]{AHLH} (and by using that $\ov \J_{X/S}(\s)\to S$ has affine diagonal, by \cite[\href{https://stacks.math.columbia.edu/tag/0DLZ}{Lemma 0DLZ}]{stacks-project}). 

Since $S$ is an algebraic space, the morphism $\phi$ factors as $\phi: \Theta_R\to \Spec R \xrightarrow{\ov{\phi}} S$. Therefore, we can base change the family $X\to S$ via the map $\ov{\phi}$ and hence assume that we have a family $X\to S=\Spec R=\Delta$.  We will use the same notation as in Subsection~\ref{S:spec}. 

By the description of $\Theta_R$ in Figure \ref{F:ThetaR}, and arguing as in the proof of Proposition \ref{P:iso-spec}, the morphism $f$ corresponds to a family $\I$ of  $\s$-semistable relative rank-$1$ torsion-free sheaves on $X/\Delta$ together with   a decreasing filtration of $\I_{\eta}$:
\begin{equation}\label{E:filt-eta}
\I_{\eta,\geq}: \I_{\eta}=\I_{\eta, 0}\supsetneq \I_{\eta,1}\supsetneq \ldots \supsetneq \I_{\eta, q}\supsetneq \I_{\eta, q+1}=0
\end{equation}
such that 
\begin{equation}\label{E:ssGreta}
\Gr(\I_{\eta, \geq}):=\bigoplus_{i=0}^q \I_{\eta,i}/\I_{\eta,i+1} \in \ov\J_{X_{\eta}}(\s^{\ov \eta}).
\end{equation}
In order to produce the required dashed arrow $F$ in Figure \ref{F:Thcompl}, we have to construct a  decreasing filtration of $\I$:
\begin{equation}\label{E:filt-Delta}
\I_{\geq}: \I=\I_{0}\supsetneq \I_{1}\supsetneq \ldots \supsetneq \I_{q}\supsetneq \I_{q+1}=0
\end{equation}
such that 
\begin{enumerate}[(i)]
    \item \label{E:Icon} $(\I_i)_{\eta}=\I_{\eta, i}$ for any $1\leq i \leq q$,\\
    \item \label{E:IIcon} $\Gr(\I_{\geq}):=\bigoplus_{i=0}^q \I_{i}/\I_{i+1} \in \ov\J_{X/\Delta}(\s)$.
\end{enumerate}

We argue as follows. By using the properness of the relative Quot scheme for $X/\Delta$, and arguing by induction on $i$, we deduce that there exists a unique filtration $\I_{\geq}$ as in \eqref{E:filt-Delta} satisfying \eqref{E:Icon} and such that, moreover, 
\begin{equation}\label{E:flat-filt}
 \I_i/\I_{i+1} \text{ is flat over $\Delta$ for any } 0\leq i \leq q.    
\end{equation}

%This implies that $\Gr(\I_{\geq})$ is a relative rank-$1$ torsion-free sheaf on $X/\Delta$, and hence, in order to prove \eqref{E:IIcon}, it remains to show that 
%\begin{equation}\label{E:IIIcon}
%    \Gr(\I_{\geq})_o:=\bigoplus_{i=0}^q (\I_{i}/\I_{i+1})_o \in \ov\J_{X_o}(\s^o).
%\end{equation}
%In order to show \eqref{E:IIIcon}, we first of all 
Consider now the central fiber of the decreasing filtration of \eqref{E:filt-Delta}:
\begin{equation}\label{E:filt-o}
\I_{o, \geq}: \I_o=\I_{o, 0}\supsetneq \I_{o, 1}\supsetneq \ldots \supsetneq \I_{o, q}\supsetneq \I_{o, q+1}=0
\end{equation}
Arguing as in Subsection~\ref{S:isospec}, we obtain a decreasing filtration of $X_o$ by subcurves  and an associated ordered filtration:
\begin{equation}\label{E:filtXo}
  \begin{aligned}
      & W_{o,\supseteq}: X_o=W_{o,0}\supseteq W_{o, 1}\supseteq \ldots \supseteq  W_{o,q}\supseteq W_{o,q+1}=\emptyset \text{ where } W_{o,i}:=\supp(\I_{o,i}), \\
      & Y_{o,\bullet}:=(Y_{o,0},\ldots, Y_{o,q}) \text{ where } Y_{o,i}:=\supp(\I_{o,i}/\I_{o,i+1}).
  \end{aligned}  
\end{equation}
Similarly, the filtration of \eqref{E:filt-eta} induces a decreasing filtration of $X_\eta$ by subcurves and an associated ordered filtration:
\begin{equation}\label{E:filtXeta}
  \begin{aligned}
      & W_{\eta,\supseteq}: X_\eta=W_{\eta,0}\supseteq W_{\eta,1}\supseteq \ldots \supseteq  W_{\eta,q}\supseteq W_{\eta,q+1}=\emptyset \text{ where } W_{\eta,i}:=\supp(\I_{\eta,i}), \\
      & Y_{\eta,\bullet}:=(Y_{\eta,0},\ldots, Y_{\eta,q}) \text{ where } Y_{\eta,i}:=\supp(\I_{\eta,i}/\I_{\eta,i+1}).
  \end{aligned}  
\end{equation}
Since $\I_i/\I_{i+1}$ is flat over $\Delta$ by \eqref{E:flat-filt} with generic fiber $\I_{\eta,i}/\I_{\eta,i+1}$ and special fiber $\I_{o,i}/\I_{o,i+1}$, it follows that 
\begin{equation}\label{E:2filt}
Y_{o,i}=\ov{Y_{\eta, i}}\cap X_o\subseteq X \text{ and } W_{o,i}=\ov{W_{\eta, i}}\cap X_o\subseteq X.
\end{equation}

%FILTRATIONS together 
%(resp. \eqref{E:filt-o}) as in the proof of Proposition \ref{P:iso-spec}:
%\begin{equation}\label{E:filtX}
%  \begin{aligned}
%      & Y_{\eta,\bullet}:=(Y_{\eta,0},\ldots, Y_{\eta,q}) \text{ where } Y_{\eta,i}:=\supp(\I_{\eta,i}/\I_{\eta,i+1}), \\
%       & Y_{o,\bullet}:=(Y_{o,0},\ldots, Y_{o,q}) \text{ where } Y_{o,i}:=\supp(\I_{i,o}/\I_{i+1,o}).
%  \end{aligned}  
%\end{equation}

\un{Claim 1}: $\I_{o,i}$ is $\s^o$-semistable on $W_{o,i}$ for every $0\leq i \leq q$.

Indeed, since $\I_{\eta}$ is $\s^{\ov \eta}$-semistable by assumption and $\I_{\eta,i}=\leftindex_{W_{\eta,i}}{(\I_{\eta})}$ is $\s^{\ov \eta}$-semistable by  \eqref{E:ssGreta} and Proposition \ref{P:ext-ss}, we deduce that Condition \eqref{P:res-ss4} of Proposition \ref{P:res-ss} holds for $\I_\eta$ and every connected component of  $W_{\eta,i}$. By the flatness of $\I$, the second condition in \eqref{E:2filt} and the  fact that $\s^{\ov \eta}=\xi^*(\s^o)$ for the specialization $\xi: \ov \eta\to o$ (see Definition \ref{D:VStabXS}), we deduce that Condition~\eqref{P:res-ss4} of Proposition~\ref{P:res-ss} holds for $\I_o$ and every connected component of  
$W_{o,i}$. Since $\I_o$ is $\s^o$-semistable by assumption, Proposition~\ref{P:res-ss} implies that $\I_{o,i}=\leftindex_{W_{o,i}}{(\I_{o})}$ is $\s^{o}$-semistable, q.e.d.

\un{Claim 2}: $\I_{o, i}/\I_{o, i+1}$ is $\s^o$-semistable on $Y_{o,i}$ for every $0\leq i \leq q$. 

Indeed, since $\I_{\eta,i}$ and $\I_{\eta,i}/\I_{\eta,i+1}=(\I_{\eta, i})_{Y_{\eta,i}}$ are $\s^{\ov \eta}$-semistable by  \eqref{E:ssGreta} and Proposition \ref{P:ext-ss}, we deduce that Condition \eqref{P:res-ss2} of Proposition \ref{P:res-ss} holds  for $\I_{\eta, i}$ and every connected component of  $Y_{\eta,i}$. By the flatness of $\I_{i}$, the first condition in \eqref{E:2filt} and the  fact that $\s^{\ov \eta}=\xi^*(\s^o)$ for the specialization $\xi: \ov \eta\to o$ (see Definition \ref{D:VStabXS}), we deduce that Condition~\eqref{P:res-ss2} of Proposition \ref{P:res-ss} holds  for $\I_{o,i}$ and every connected component of  $Y_{o,i}$. Since $\I_{o,i}$ is $\s^o$-semistable by Claim 1, Proposition \ref{P:res-ss} implies that $\I_{o,i}/\I_{o,i+1}=(\I_{o, i})_{Y_{o,i}}$ is $\s^{o}$-semistable, q.e.d.

\un{End of the proof}: We have already constructed a filtration as in \eqref{E:filt-Delta} satisfying \eqref{E:Icon}. In order to prove \eqref{E:IIcon}, observe first of all that $\Gr(\I_{\geq})$ is a relative rank-$1$ torsion-free sheaf on $X/\Delta$ because of \eqref{E:flat-filt}. 
Hence, Property \eqref{E:IIcon} amounts to check that 
\begin{equation}\label{E:IIIcon}
    \Gr(\I_{\geq})_o:=\bigoplus_{i=0}^q (\I_{i}/\I_{i+1})_o \in \ov\J_{X_o}(\s^o).
\end{equation}
Again by \eqref{E:IIIcon}, we have that 
\begin{equation}\label{E:Gr=}
   \Gr(\I_{\geq})_o=\Gr(\I_{\geq, o}):=\bigoplus_{i=0}^q \I_{i, o}/\I_{i+1, o}.
\end{equation}
We now conclude that $\Gr(\I_{\geq, o})$ is $\s^o$-semistable by using Claim 2 together with Proposition~\ref{P:ext-ss}.

%OLD FILTRATION
%and its associated ordered filtration (as in \ref{S:isospec}):
%\begin{equation}\label{E:filtXeta}
%  \begin{aligned}
%      & W_{\eta,\supseteq}: X_\eta=W_{\eta,0}\supseteq W_{\eta,1}\supseteq \ldots \supseteq  W_{\eta,q}\supseteq W_{\eta,q+1}=\emptyset \text{ where } W_{\eta,k}:=\supp(\I_{\eta,k}), \\
%      & Y_{\eta,\bullet}:=(Y_{\eta,0},\ldots, Y_{\eta,q}) \text{ where } Y_{\eta,k}:=\supp(\I_{\eta,k}/\I_{\eta,k+1}).
%  \end{aligned}  
%\end{equation}
%And similarly, consider  the decreasing filtration of $X_o$ by subcurves induced by \eqref{E:filt-o} and its associated ordered filtration:
%\begin{equation}\label{E:filtXo}
%  \begin{aligned}
%      & W_{o,\supseteq}: X_o=W_{o,0}\supseteq W_{o, 1}\supseteq \ldots \supseteq  W_{o,q}\supseteq W_{o,q+1}=\emptyset \text{ where } W_{o,k}:=\supp(\I_{\eta,k}), \\
%      & Y_{o,\bullet}:=(Y_{o,0},\ldots, Y_{o,q}) \text{ where } Y_{o,k}:=\supp(\I_{k,o}/\I_{k+1,o}).
%  \end{aligned}  
%\end{equation}
\end{proof}

We now prove that  $\J_X(\s)\to S$ is  S-complete (see \cite[Def. 3.38]{AHLH}),  which means the following. For any discrete valuation ring R with residue field $k$ and quotient field $K$, consider the test object $\ST_R:=[\Spec R[x,y]/(xy-t)/\Gm]$ where $t$ is a uniformizer of $R$ and the action of $\Gm$ is trivial on $R$ and it has weight $1$ on $x$ and weight $-1$ on $y$.
 The topological space of the stack $\ST_R$ has  four points which are related by the  specializations described in Figure \ref{F:STR}.

 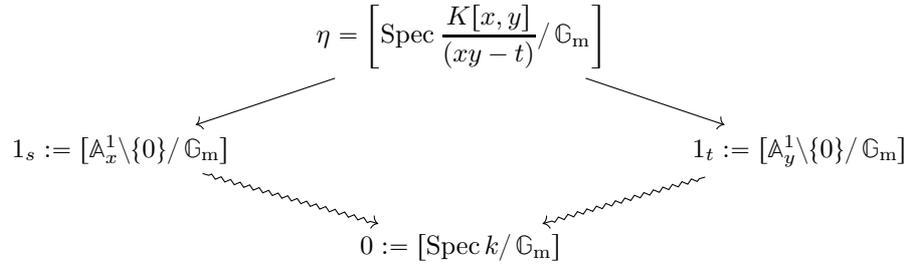
\begin{figure}[hbt!]
 \[
\begin{tikzcd}
    & \eta=\displaystyle\left[\Spec \frac{K[x,y]}{(xy-t)}/\Gm\right]\arrow[rd] \arrow[dl] \\
    1_s:=[\AA^1_x\setminus \{0\}/\Gm] \arrow[dr, rightsquigarrow] & & 
    1_t:=[\AA^1_y\setminus \{0\}/\Gm]  \arrow[dl, rightsquigarrow] \\
    & 0:=[\Spec k/\Gm]
\end{tikzcd}
\]
\caption{The four points of $\ST_R$ and their specializations: the solid arrows are ordinary specializations while the squiggly arrows are isotrivial specializations.}\label{F:STR}
\end{figure}

Then $\ov \J_{X/S}(\s)\to S$ is S-complete if any commutative diagram of solid arrows like the one in Figure~\ref{F:Scompl} below  can be completed with a dashed arrow:

\begin{figure}[hbt!]
 \[
\begin{tikzcd}
    \ST_R\setminus\{0\} \arrow[r, "f"] \arrow[d, hookrightarrow] &  \ov \J_{X/S}(\s)\arrow[d] \\
    \ST_R \arrow[r, "\phi"] \arrow[ru, dashrightarrow, "F"] & S
\end{tikzcd}
\]
\caption{S-completeness of $\ov \J_{X/S}(\s)\to S$.\label{F:Scompl}}
\end{figure}

\begin{proposition}\label{P:S-compl}
     Notation as in Theorem \ref{T:VcJ}. Then $\ov \J_{X/S}(\s)\to S$ is S-complete.
\end{proposition}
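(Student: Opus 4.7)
The proof strategy parallels Proposition~\ref{P:Th-compl}. Since $\ov\J_{X/S}(\s)\to S$ has affine diagonal, by \cite[Prop.~3.17]{AHLH} we may assume $k=\ov k$ is algebraically closed, and since $S$ is an algebraic space we may base change $\phi$ through $\Spec R$ and assume $S=\Delta=\Spec R$. Next, the open substack $\ST_R\setminus\{0\}$ is covered by the two charts $[\{y\neq 0\}/\Gm]\cong\Spec R$ and $[\{x\neq 0\}/\Gm]\cong\Spec R$ glued along their overlap $\eta=\Spec K$; hence the map $f$ is precisely the datum of two $\s$-semistable relative rank-$1$ torsion-free sheaves $\I,\I'$ on $X/\Delta$ together with an isomorphism $\varphi\colon\I_\eta\xrightarrow{\sim}\I'_\eta$.

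By Theorem~\ref{T:limits} there exists $g\colon I(X_o)\to\NN$ with $\min g=0$ such that $\varphi$ identifies $\I'$ with the iterated twist $g(\I)=\I^{\sum_v g(v)(X_o)_v}$. Setting $N:=\max g$, $W_k:=\{v\colon g(v)\geq k\}$ for $0\leq k\leq N+1$ (so $W_0=X_o$, $W_{N+1}=\emptyset$) and $Y_k:=W_k\setminus W_{k+1}$, the subcurves $Y_\bullet=(Y_0,\ldots,Y_N)$ form an ordered partition of $X_o$ determined by the level sets of $g$. By Proposition~\ref{P:iso-spec}, the graded sheaf
\[
J_0:=\Gr_{Y_\bullet}(\I(o))=\bigoplus_{k=0}^N\bigl(\leftindex_{W_k}\I(o)\bigr)_{Y_k}
\]
is an isotrivial specialization of $\I(o)$ (and symmetrically of $\I'(o)$).

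To construct the $\ST_R$-extension, we lift the filtration $\leftindex_{W_0}\I(o)\supsetneq\cdots\supsetneq\leftindex_{W_{N+1}}\I(o)=0$ of $\I(o)$ to a descending chain $\tilde J_k:=\I^{W_k^\mathsf{c}}$ of subsheaves of $\I$ (so $\tilde J_0=\I$ and $\tilde J_{N+1}=t\I$), and similarly for $\I'$. By Lemma~\ref{L:twist} the graded quotients $\tilde J_k/\tilde J_{k+1}$ are $\Delta$-flat deformations of $(\leftindex_{W_k}\I(o))_{Y_k}$, supported on $Y_k$. Gluing these two $\Gm$-equivariant filtrations into a single graded $A$-module on $X\times_\Delta\Spec A$ (with $A=R[x,y]/(xy-t)$, $x$ acting by multiplication by $t$ and $y$ by the natural inclusions) yields a $\Gm$-equivariant $A$-flat rank-$1$ torsion-free sheaf whose restrictions to the four points of $\ST_R$ recover $\I$, $\I'$, $\I_\eta\cong\I'_\eta$, and $J_0$ respectively, and therefore a morphism $F\colon\ST_R\to\TF_{X/\Delta}$ extending $f$.

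The main obstacle is verifying that $J_0$ is $\s^o$-semistable, so that $F$ factors through $\ov\J_{X/\Delta}(\s)$. By Lemma~\ref{L:Gr-ss}, this amounts to proving that every connected component of $Y_k$ belongs to $\wh\D(\s^o)$ and that $\chi(\I(o)_{Y_k})=\s^o_{Y_k}$ for all $0\leq k\leq N$. The $\s^o$-semistability of $\I(o)$ yields, via Lemma~\ref{L:ss-deg}\eqref{L:ss-deg2}, the inequalities $\chi(\I(o)_{W_k})\geq\s^o_{W_k}$ and $\chi(\leftindex_{W_k}\I(o))\leq|\s|-\s^o_{W_k^\mathsf{c}}$. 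Using Lemma~\ref{L:twist}\eqref{L:twist2}\eqref{L:twist5} one identifies restrictions of the intermediate twists $\I^{W_j}(o)$ with those of $\I(o)$ away from the twisted loci, so the $\s^o$-semistability of $\I'(o)=g(\I)(o)$ translates into the reverse inequalities for each $W_k$. Forcing equality in all these simultaneously produces conditions (i) and (ii) above, after an induction on $k$ that uses Lemma~\ref{L:ExtV-funP} to propagate $\wh\D(\s^o)$-membership across the partition $Y_\bullet$. Finally, Proposition~\ref{P:ext-ss} applied to $J_0$, whose summands have pairwise disjoint supports contained in $\wh\D(\s^o)$, concludes that $J_0$ is $\s^o$-semistable, completing the proof.
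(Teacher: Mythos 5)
Your overall strategy coincides with the paper's: reduce to $\Delta$, encode $f$ as two $\s$-semistable families $\I,\I'$ with isomorphic generic fibres, invoke Theorem~\ref{T:limits} to write $\I'$ as an iterated twist of $\I$, read off an ordered partition $Y_\bullet$ from the level sets of the twisting function, and verify that the associated graded sheaf is $\s^o$-semistable. The gap is in that last verification. You claim that the semistability of $\I'(o)=g(\I)(o)$, combined with Lemma~\ref{L:twist}\eqref{L:twist2},\eqref{L:twist5}, yields the ``reverse'' inequalities for \emph{every} level set $W_k$, so that equality is forced. This works for the innermost subcurve $W_N$, where $\leftindex_{W_N}{\I'(o)}$ is directly comparable to $\I(o)_{W_N}$ via Lemma~\ref{L:twist}\eqref{L:twist2}; but for an intermediate $W_k$ with $0<k<N$ the sheaves $\I'(o)_{W_k}$ and $\leftindex_{W_k}{\I'(o)}$ are expressed through the \emph{intermediate} twists $\I^{W_1+\cdots+W_j}(o)$, and Lemma~\ref{L:twist}\eqref{L:twist5} only strips off twists by subcurves meeting $W_k$ trivially. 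Checking the case $N=2$ and the subcurve $W_1$ explicitly, semistability of $\I(o)$ and of $\I'(o)$ produce the \emph{same} lower bound $\chi(\I(o)_{W_1})\geq\s^o_{W_1}$ twice, and no upper bound: one finds $\chi(\leftindex_{W_1^{\mathsf{c}}}{\I'(o)})=\chi(\leftindex_{W_1^{\mathsf{c}}}{\I(o)})$, so the two semistability hypotheses give no new information at $W_1$. The missing ingredient is the inductive statement (Claim~1 of the paper's proof) that every intermediate twist has $\s^o$-semistable special fibre. That is exactly where Proposition~\ref{P:morf-ss} enters: the comparison morphism $\lambda_{i-1}(o)\colon\I(o)\to\J_{i-1}(o)$ between two sheaves already known to be semistable has semistable image and cokernel, and the next twist is an extension of these, hence semistable by Proposition~\ref{P:ext-ss}. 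Your argument never invokes Proposition~\ref{P:morf-ss} and never establishes intermediate semistability, so the ``forcing equality'' step does not close.

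A secondary issue: you assert that $J_0=\Gr_{Y_\bullet}(\I(o))$ is ``symmetrically'' an isotrivial specialization of $\I'(o)$. What is actually required is the identity $\Gr_{Y_\bullet}(\I(o))\cong\Gr_{\ov{Y_\bullet}}(\I'(o))$ with the \emph{opposite} ordering on the $\I'$-side, and this is neither symmetric nor formal: it is a genuine computation with the twisting operations (the paper's Claim~2, which uses Lemma~\ref{L:IYZ} together with Lemma~\ref{L:twist}\eqref{L:twist4},\eqref{L:twist5} and a careful bookkeeping of multiplicities). Your Rees-type gluing over $R[x,y]/(xy-t)$ would indeed force this identity once constructed, but the construction itself presupposes that the two filtrations induce the same graded object, so this must be proved rather than asserted.
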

\begin{proof}
Consider a commutative diagram of solid arrows as in Figure \ref{F:Scompl}. 
We can assume that the residue field $k$ of $R$ is algebraically closed by \cite[Prop. 3.41]{AHLH} (using that $\ov \J_{X/S}(\s)\to S$ has affine diagonal by \cite[\href{https://stacks.math.columbia.edu/tag/0DLZ}{Lemma 0DLZ}]{stacks-project}). 

Since $S$ is an algebraic space, the morphism $\phi$ factors as $\phi: \ST_R\to \Spec R \xrightarrow{\ov{\phi}} S$. Therefore, we can base change the family $X\to S$ via the map $\ov{\phi}$ and hence  assume that we have a family $X\to S=\Spec R=\Delta$.  We will use the same notation as in Subsection~\ref{S:spec}. 

By the description of $\ST_R$ in Figure \ref{F:STR}, the morphism $f$ corresponds to two families $\I$ and $\J$ of  $\s$-semistable relative rank-$1$ torsion-free sheaves on $X/\Delta$ such that $\I(\eta)\cong \J(\eta)$.  Moreover, according to Proposition \ref{P:iso-spec} and using that the action of $\Gm$ on $R[x,y]/(xy-t)$ has opposite weights on $x$ and $y$, the required dashed arrow $F$ in Figure \ref{F:Scompl} corresponds to the choice of an ordered partition $Y_\bullet=(Y_0,\ldots, Y_q)$  such that 
\begin{equation}\label{E:GR=}
\Gr_{Y_\bullet}(\I(o))\cong \Gr_{\ov{Y_\bullet}}(\J(o))\in \ov \J_{X_o}(\s^o),
\end{equation}
where $\ov{Y_\bullet}=(Y_q,\ldots, Y_0)$ is the opposite order partition. 

From the proof of Theorem \ref{T:limits}, there exists  a non-increasing chain of subcurves 
\begin{equation}\label{E:chain}
X_o=:Z_0\supsetneq Z_1\supseteq Z_2\supseteq \ldots \supseteq Z_r\supsetneq Z_{r+1}:=\emptyset \text{ such that } \I \xrightarrow{\cong} \J^{\sum_i Z_i}.
\end{equation}
We will denote by $\lambda_i:\I\xrightarrow{\cong}\J^{\sum_j Z_j}\hookrightarrow  \J_i:=\J^{\sum_{1\leq j\leq i}Z_j}$ the successive morphisms (for any $0\leq i \leq r$). Recall that, by the construction in Theorem \ref{T:limits}, we have that 
\begin{equation}\label{E:Imlambda}
\Im \lambda_i(o)=\I(o)_{Z_{i+1}^c}.
\end{equation}

\vspace{0.1cm}

\un{Claim 1:} The sheaves $\I(o)_{Z_{i}^c}$ and $\J_i$ are $\s$-semistable (for any $0\leq i \leq r$).

Indeed, the Claim is true for $i=0$ because  $\I(o)_{Z_{0}^c}=0$ and  $\J_0=\J$ is $\s$-semistable by assumption. We now assume, by induction, that the Claim is true for $i-1$ and we prove it for $i\geq 1$. 
 Consider the morphism $\lambda_{i-1}(o):\I(o)\to \J_{i-1}(o)$ between $\s$-semistable sheaves. Proposition \ref{P:morf-ss} implies that $\Im(\lambda_{i-1}(o))$ and $\coker(\lambda_{i-1}(o))$ are $\s$-semistable. In particular, $\coker(\lambda_{i-1}(o))$ is torsion-free rank-$1$ and supported on a subcurve of $X_o$ and hence, using that $\Im(\lambda_{i-1}(o))=\I(o)_{Z_i^c}$, we deduce that 
\begin{equation}\label{E:Im-coker}
\begin{sis}
& \Im(\lambda_{i-1}(o))=\leftindex_{Z_i^c}{\J_{i-1}(o)}, \\
&  \coker(\lambda_{i-1}(o))=\J_{i-1}(o)_{Z_i}.
\end{sis}
\end{equation}
Lemma \ref{L:twist} implies that $\J_{i}(o)=\J_{i-1}^{Z_i}(o)$ sits in the following exact sequence
$$
0\to \J_{i-1}(o)_{Z_i}= \leftindex_{Z_i}{\J_i(o)}\to \J_i(o)\to \J_i(o)_{Z_i^c}= \leftindex_{Z_i^c}{\J_{i-1}(o)}\to 0.
$$
Therefore, $\J_i(o)$ is $\s$-semistable by Proposition \ref{P:ext-ss}, and the Claim is proved. 

%VECCHIO CLAIM: vero ma non  serve
%\vspace{0.1cm}

%\un{Claim 2:} If $Z_i=Z_{i+1}$ for a certain $1\leq i \leq r-1$, then $\J_i(o)= {\J_i(o)}_{Z_i}\oplus \J_i(o)_{Z_i^c}=\leftindex_{Z_i}\J_i(o)\oplus\leftindex_{Z_i^c}{\J_i(o)}$.

%Indeed, by applying \eqref{E:Im-coker} and our assumption, we get that 
%$$
%\Im(\lambda_{i}(o))=\I(o)_{Z_{i+1}^c}=\I(o)_{Z_i^c}=\Im(\lambda_{i-1}(o))=\leftindex_{Z_i^c}{\J_{i-1}(o)}.
%$$
%Therefore, since $\J_i=\J_{i-1}^{Z_i}$, the subsheaf $\Im(\lambda_{i}(o))\subset \J_i(o)$ provides a splitting of the exact sequence of Lemma  \ref{L:twist}
%$$
%0\to \leftindex_{Z_i}\J_i(o)\to \J_i(o) \to \J_i(o)_{Z_i^c}=\leftindex_{Z_i^c}{\J_{i-1}(o)}\to 0,
%$$
%which proves the Claim. 

\vspace{0.1cm}

Denote now by 
\begin{equation}\label{E:chain2}
W_{\supseteq}: X_o=W_0\supsetneq W_1\supsetneq W_2\supsetneq \ldots \supsetneq W_q\supsetneq W_{q+1}=\emptyset
\end{equation}
the unique decreasing subchain of \eqref{E:chain} consisting of distinct subcurves, and denote by $m_i$ (for any $1\leq i \leq q$) the number of times that $W_i$ appears in \eqref{E:chain} so that 
\begin{equation}\label{E:WZ}
W_i=Z_{m_1+\ldots + m_{i-1}+k} \text{ for any } 1\leq k \leq m_i.
\end{equation}
Consider the ordered partition of $X_o$ associated to the chain \eqref{E:chain2}:
\begin{equation}\label{E:ordpart}
Y_\bullet:=Y_\bullet(W_{\supseteq})=(Y_0,\ldots, Y_q) \text{ defined by } Y_i:=W_i- W_{i+1}.    
\end{equation}

We now prove that \eqref{E:GR=} holds for the ordered partition $Y_\bullet$ by combining  the following two claims.

\vspace{0.1cm}

\un{Claim 2:} We have that $\Gr_{Y_\bullet}(\I(o))\cong \Gr_{\ov{Y_\bullet}}(\J(o))$.

Consider the decreasing chain of subcurves of $X_o$ which is opposite to the chain $W_{\supseteq}$ of \eqref{E:chain2}:
\begin{equation}\label{E:chain3}
\ov{W_{\supseteq}}: X_o=W_{q+1}^c\supsetneq W_q^c\supsetneq W_{q-1}^c\supsetneq \ldots \supsetneq W_1^c\supsetneq W_{0}^c=\emptyset.
\end{equation}
Observe that the ordered partition $Y_{\bullet}(\ov{W_{\supseteq}})$ of $X_o$ associated to $\ov{W_{\supseteq}}$ is equal to $\ov{Y_{\bullet}}$. 

From \eqref{E:GrY}, it follows that 
\begin{equation}\label{E:2Gr}
  \begin{sis}
    & \Gr_{Y_\bullet}(\I(o))=\bigoplus_{i=0}^q (\leftindex_{W_i}{\I(o)})_{Y_i},\\ 
    & \Gr_{\ov{Y_\bullet}}(\J(o))=\bigoplus_{i=0}^q (\leftindex_{W_{i+1}^c}{\J(o)})_{Y_i}.
  \end{sis}  
\end{equation}
Fix now an index $0\leq i \leq q$. Applying \eqref{E:Imlambda} and \eqref{E:Im-coker} to $m_1+\ldots+m_i$ (which we set to be equal to $0$ if $i=0$) and using \eqref{E:WZ}, we get that 
\begin{equation}\label{E:IJ}
    \I(o)_{W_{i+1}^c}=\leftindex_{W_{i+1}^c}{\J_{m_1+\ldots+m_i}(o)}=\leftindex_{W_{i+1}^c}{\J^{m_1W_1+\ldots+m_iW_i}(o)}.
\end{equation}
This implies, using Lemma \ref{L:IYZ} and the fact that $Y_i=W_i-W_{i+1}$, that 
\begin{equation}\label{E:IJres}
     (\leftindex_{W_i}{\I(o)})_{Y_i}=\leftindex_{Y_i}{(\I(o)_{W_{i+1}^c})}=\leftindex_{Y_i}{(\leftindex_{W_{i+1}^c}{\J^{m_1W_1+\ldots+m_iW_i}(o)})}=\leftindex_{Y_i}{\J^{m_1W_1+\ldots+m_iW_i}(o)}.
\end{equation}
If $i=0$ this implies, using that $W_0=X_o$ and $W_1^c=Y_0$, that 
\begin{equation}\label{E:Gr=0}
    (\leftindex_{W_0}{\I(o)})_{Y_0}=\I(o)_{Y_0}=\leftindex_{Y_0}{\J(o)}=(\leftindex_{W_1^c}{\J(o)})_{Y_0}. 
\end{equation}
Assume from now on that $1\leq i \leq q$. Then,  using that $W_j=Y_j+\ldots+Y_q$ for any $1\leq j \leq q$ by \eqref{E:ordpart}, we compute in $\ZZ^{I(X_o)}$:
\begin{equation*}
m_1W_1+\ldots+m_iW_i=\sum_{1\leq h \leq i-1} (m_1+\ldots+m_{h})Y_h +(m_1+\ldots+m_i) \sum_{i\leq k \leq q} Y_k=    
\end{equation*}
\begin{equation}\label{E:eqsub}
=(m_1+\ldots+m_i-1)X_o+W_i-\sum_{1\leq h \leq i-1} (m_{h+1}+\ldots+m_{i}-1)Y_h.
\end{equation}
Consider now the two families of rank-$1$ torsion free sheaves on $X/\Delta$ defined by  
\begin{equation}\label{E:newsh}
   \wt{\J}:=\J^{(m_1+\ldots+m_i-1)X_o} \text{ and } \K:=\wt{\J}^{W_1}.
\end{equation}
Observe that 
\begin{equation}\label{E:newsh2}
  \begin{sis}
     & \wt{\J}\cong \J & \text{ by Lemma \ref{L:twist}\eqref{L:twist4}}, \\
     & \K=\J^{m_1W_1+\ldots+m_iW_i+\sum_{1\leq h \leq i-1} (m_{h+1}+\ldots+m_{i}-1)Y_h} & \text{ by \eqref{E:eqsub}}.
  \end{sis}  
\end{equation}
We now compute 
\begin{equation}\label{E:IJres2}
\begin{aligned}
 \leftindex_{Y_i}{\J^{m_1W_1+\ldots+m_iW_i}(o)}&= \leftindex_{Y_i}{\K(o)} 
& \text{ by \eqref{E:newsh2} and Lemma \ref{L:twist}\eqref{L:twist5},}\\
&= \leftindex_{Y_i}{\wt\J^{W_i}(o)} & \text{ by \eqref{E:newsh}}\\
&=\leftindex_{Y_i}{(\leftindex_{W_i}{\wt\J^{W_i}(o)})}=\leftindex_{Y_i}{(\wt\J(o)_{W_i})} & \text{ by Lemmas \ref{L:IYZ}\eqref{L:IYZ1} and  \ref{L:twist}\eqref{L:twist2}}\\
&= \leftindex_{Y_i}{(\J(o)_{W_i})} & \text{ by \eqref{E:newsh2}}\\
& = (\leftindex_{W_{i+1}^c}{\J(o)})_{Y_i} & \text{ by Lemma \ref{L:IYZ}\eqref{L:IYZ2},}
\end{aligned}
\end{equation}
where we have used that $Y_i=W_i-W_{i+1}=W_{i+1}^c-W_i^c$ in the last identity. 

The Claim now follows by using Formulas \eqref{E:2Gr}, and putting together \eqref{E:IJres} and \eqref{E:IJres2} for $1\leq i \leq q$ and \eqref{E:Gr=0} for $i=0$.

\vspace{0.1cm}

\un{Claim 3:} We have that $\Gr_{Y_\bullet}(\I(o))\in \ov \J_{X_o}(\s^o)$.

Indeed, using Formula \eqref{E:2Gr} and Proposition \ref{P:ext-ss}, it is enough to show that $(\leftindex_{W_i}{\I(o)})_{Y_i}$ is $\s$-semistable for any $0\leq i \leq q$. Using Lemma \ref{L:IYZ}\eqref{L:IYZ2},  we compute 
$$
(\leftindex_{W_i}{\I(o)})_{Y_i}=\leftindex_{Y_i}{(\I(o)_{W_{i+1}^c})}=\ker(\I(o)_{W_{i+1}^c}\twoheadrightarrow \I(o)_{W_i^c}).
$$
This, together with Proposition \ref{P:res-ss} and Claim 1, implies that $
(\leftindex_{W_i}{\I(o)})_{Y_i}$ is $\s$-semistable.    
\end{proof}

Finally, we prove that $\ov \J_{X/S}(\s)\to S$ satisfies the existence part of the valuative criterion for properness, i.e. for any discrete valuation ring $R$ with residue field $k$ and quotient field $K$, any commutative diagram of solid arrows as in Figure \ref{F:UnivClosed} can be completed with a dashed arrow:

\begin{figure}[hbt!]
 \[
\begin{tikzcd}
    \Spec K \arrow[r, "f"] \arrow[d, hookrightarrow] &  \ov \J_{X/S}(\s)\arrow[d] \\
    \Spec R \arrow[r, "\phi"] \arrow[ru, dashrightarrow, "F"] & S
\end{tikzcd}
\]
\caption{Existence part of the valuative criterion of properness for  $\ov \J_{X/S}(\s)\to S$.\label{F:UnivClosed}}
\end{figure}
We can base change the family $X/S$ along the map $\phi$, and assume that the family $X\to S=\Spec R=\Delta$ is defined over a DVR. We will use the same notation as in Subsection~\ref{S:spec}.

Let $\I$ be a torsion free rank $1$ sheaf on $X/\Delta$, and let $\s$ be a relative V-stability condition. Consider the function 
\begin{align*}
    \beta_{\I(o)}\colon\{\textup{Subcurves of }X_o\}&\to\ZZ\\
    Z&\mapsto \chi(\I(o)_Z)-\s^o_Z,
\end{align*}
where $\s^o$ is the extended $V$-function associated to the stability condition $\s^o$ on the central fiber $X_o$. Clearly, the sheaf $\I$ is $\s$-semistable if and only if $\I(o)$ is $\s^o$-semistable (by Lemma-Definition \ref{LD:VStab}) if and only if $\beta_{\I(o)}(Z)\geq 0$ for each $Z\in \BCon(X_o)$.

\begin{comment}

Let $\I$ be a torsion free rank $1$ sheaf on $X/S$, and let $\s$ be a relative V-stability condition. For any geometric point $s$ of $S$, we define the function
\begin{align*}
    \beta^{\s}_{\I(s)}=\beta_{\I(s)}\colon\{\textup{Subcurves of }X_s\}&\to\ZZ\\
    Z&\mapsto \chi(\I(s)_Z)-\s^s_Z,
\end{align*}
where $\s^s$ is the extended $V$-function associated to the stability condition $\s^s$. Clearly, the sheaf $\I(s)$ is $\s^s$-semistable if and only if $\beta_\I(s)(Z)\geq0$ for each $Z\in \BCon(X_s)$.

We will prove that the open substack $\ov \J_{X/S}(\s)\to S$ of $\TF^\chi_{X/S}$ is universally closed, by checking that it satisfies the existence part of the valuative criterion for properness. In other words, for any discrete valuation ring $R$ with residue field $k$ and quotient field $K$, any commutative diagram of solid arrows as in Figure \ref{F:UnivClosed} can be completed with a dashed arrow:

\begin{figure}[hbt!]
 \[
\begin{tikzcd}
    \Spec K \arrow[r, "f"] \arrow[d, hookrightarrow] &  \ov \J_{X/S}(\s)\arrow[d] \\
    \Spec R \arrow[r, "\phi"] \arrow[ru, dashrightarrow, "F"] & S
\end{tikzcd}
\]
\caption{Universal closedness of $\ov \J_{X/S}(\s)\to S$.\label{F:UnivClosed}}
\end{figure}

We can base change the family $X/S$ along the map $\phi$, and assume that the family $X\to S=\Spec R=\Delta$ is defined over a DVR. We will use the same notation as in \ref{S:spec}.

\end{comment}

\begin{definition}\label{LD:twistingsubcurve}
Let $\I$ be a torsion free rank $1$ sheaf on $X/\Delta$, and let $\s$ be a relative V-stability condition.
We define, inductively in $k \in \NN$, a sheaf $I_k$ and a subcurve $Y_k \subseteq X_o$, as follows. 

Firstly, let $I_0:=\I(o)$, and let $Y_0$ be any maximal subcurve that realizes the equality
\begin{equation*}
    \beta_{I_0}(Y_0)= \min_{Z \subseteq X} \beta_{I_0}(Z).
\end{equation*}

Assume that $Y_{k-1}$ has been defined, and set $I_{k}:=\I^{Y_{k-1}}(o)$. 
If 
$$\min_{Z \subseteq X} \beta_{I_k}(Z)\leq\min_{Z \subseteq X} \beta_{I_0}(Z),$$ 
and there exists a %maximal 
subcurve $W\subseteq X$ such that $\beta_{I_k}(W)=\min_{Z \subseteq X} \beta_{I_k}(Z)$, with $W\nsubseteq Y_{k-1}$, then we define $$ Y_{k} := Y_{k-1}  \cup W. $$
Otherwise, we set $Y_k=Y_{k-1}$.

 We define $Y(\mathcal{I}(o), \s^o)$ to equal $\bigcup_{k = 0}^\infty Y_k$.
\end{definition}

\begin{remark}\label{R:twistingsubcurve}
    We observe that the above sequence $(Y_k)_{k=0}^{\infty}$ satisfies the following.
    
    \begin{enumerate}
    \item There exists $k$ such that $Y_{k}=Y_{k-1}$, since $\{Y_k\}_{k=0}^{\infty}$ is a non-decreasing sequence of subcurves of $X_o$.
        \item  We have that $Y_{k}=Y_{k-1}$ if and only if the following condition holds for the sheaf $I_k$: either $\min_{Z \subseteq X} \beta_{I_k}(Z)>\min_{Z \subseteq X} \beta_{I_0}(Z)$ or all the subcurves $W\subseteq X_o$ such that  $\beta_{I_k}(W)=\min_{Z \subseteq X} \beta_{I_k}(Z)$ are contained in $Y_{k-1}$. 
       If the above condition is satisfied, then  $I_{k+1} = I_k$, so the same condition is verified for the sheaf $I_{k+1}$ and we deduce that $Y_{k+1}=Y_k$. Iterating the argument, we deduce that  $Y_j=Y_{k-1}$ for all $j \geq k-1$ and hence $Y(\mathcal{I}(o), \s^o)=Y_{k-1}$.
    %$Y_j=Y_{k-1}$ for all $j \geq k-1$. This follows, inductively, from the fact that $I_{k+1} = I_k$, hence $\beta_{I_{k+1}}=\beta_{I_k}$, which implies that $Y_{k+1}=Y_{k}$ and so on. 
    \end{enumerate}
\end{remark}

\begin{lemma}\label{L:twistY}
    Let $\I$ be a torsion free rank $1$ sheaf on $X/\Delta$, and let $\s$ be a relative $V$-stability. Let $Y_0$ be any maximal subcurve of $X_o$ such that $\beta_{I_0}(Y_0)= \min_{Z \subseteq X} \beta_{I_0}(Z)$
    and construct a curve $Y_0\subseteq Y:=Y(\mathcal{I}(o), \s^o)\subseteq X_o$ as in Definition \ref{LD:twistingsubcurve}. 
    
   Then, for each subcurve $Z\subseteq X_o$ we have the inequality $\beta_{\I^Y(o)}(Z)\geq\beta_{\I(o)}(Y_0)$, and the equality holds only if $Z\subseteq Y$. 

    Moreover, if $\beta_{\I^Y(o)}(Y)=\beta_{\I(o)}(Y_0)$, then $Y=Y_0$ and $\I^Y(o)=\I^Y(o)_Y\oplus\I^Y(o)_{Y^\mathsf{c}}$.
\end{lemma}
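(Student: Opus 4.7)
The plan is to exploit the termination of the algorithm defining $Y$ together with an invariant that the twisting operation preserves the minimum of $\beta$. By Remark~\ref{R:twistingsubcurve}, the algorithm terminates at some step $k$, so $Y = Y_{k-1}$ and $\I^Y(o) = I_k$. The stopping criterion yields two cases: either (a) $\min_Z \beta_{I_k}(Z) > \min_Z \beta_{I_0}(Z) = \beta_{\I(o)}(Y_0)$, in which case the first assertion holds strictly for every $Z$ and the equality condition is vacuous; or (b) $\min \beta_{I_k} \leq \min \beta_{I_0}$ and every subcurve minimizing $\beta_{I_k}$ is contained in $Y$.

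The central step is to establish the invariant
\[
\min_Z \beta_{I_j}(Z) \;\geq\; \min_Z \beta_{I_0}(Z) \quad \text{for all } j,
\]
by induction on $j$. Combined with case (b), this forces $\min \beta_{I_k} = \min \beta_{I_0} = \beta_{\I(o)}(Y_0)$, so $\beta_{I_k}(Z) \geq \beta_{\I(o)}(Y_0)$ for every $Z$, with equality forcing $Z$ to minimize $\beta_{I_k}$, hence $Z \subseteq Y$ by the stopping condition. For the inductive step, if the algorithm continues past $j$, then by hypothesis $\min \beta_{I_j} = \min \beta_{I_0}$ and one sets $Y_j = Y_{j-1} \cup W$ for some minimizer $W \nsubseteq Y_{j-1}$. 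Writing $W' = W - (W \wedge Y_{j-1})$, Lemma~\ref{L:twist}\eqref{L:twist3} identifies $I_{j+1}$ with the twist of $I_j$ by $W'$ via the short exact sequence $0\to I_j(o)_{W'}\to I_{j+1}\to \leftindex_{(W')^\mathsf{c}}{I_j(o)}\to 0$ of Lemma~\ref{L:twist}\eqref{L:twist2}; the goal then reduces to showing that twisting by a minimizing subcurve does not lower $\min \beta$. This is carried out by a direct Euler-characteristic computation of $\chi((I_{j+1})_Z)$ for arbitrary $Z$, using the additivity of $\chi$ under restriction (Equation~\eqref{E:add-chi}) and the subadditivity properties of the extended V-function (Lemma~\ref{L:ExtV-funP}); the minimality of $W$ controls the torsion correction at nodes in $W' \cap (W')^\mathsf{c}$.

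For the second assertion, Lemma~\ref{L:twist}\eqref{L:twist2} gives $\leftindex_Y \I^Y(o) \cong \I(o)_Y$, and Lemma~\ref{L:IY}\eqref{L:IY2} identifies $(\I^Y(o))_Y / \leftindex_Y \I^Y(o)$ with the cokernel of the injection $\I^Y(o) \hookrightarrow (\I^Y(o))_Y \oplus (\I^Y(o))_{Y^\mathsf{c}}$. Therefore
\[
\chi\bigl((\I^Y(o))_Y\bigr) \;\geq\; \chi(\I(o)_Y),
\]
with equality iff this cokernel vanishes, i.e.\ iff $\I^Y(o) = (\I^Y(o))_Y \oplus (\I^Y(o))_{Y^\mathsf{c}}$. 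Rephrased, $\beta_{\I^Y(o)}(Y) \geq \beta_{\I(o)}(Y)$, with equality equivalent to the splitting. Under the hypothesis $\beta_{\I^Y(o)}(Y) = \beta_{\I(o)}(Y_0)$ combined with the minimality $\beta_{\I(o)}(Y_0) \leq \beta_{\I(o)}(Y)$, we deduce $\beta_{\I(o)}(Y) = \beta_{\I(o)}(Y_0)$, so $Y$ is itself a minimizer of $\beta_{\I(o)}$ containing $Y_0$, forcing $Y = Y_0$ by the maximality of $Y_0$; simultaneously the splitting of $\I^Y(o)$ holds.

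The main obstacle is the invariant in the middle paragraph: verifying $\min \beta_{I_{j+1}} \geq \min \beta_{I_j}$ under twisting by a minimizing subcurve requires careful Euler-characteristic bookkeeping for $(\I^{W'}(o))_Z$ in every relative position of $Z$ and $W'$, and appealing to the subadditivity constraints for the extended V-stability function $\s^o$.
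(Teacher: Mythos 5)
Your handling of the ``Moreover'' part and of the stopping criterion is correct and matches the paper, but the first (and main) assertion has a genuine gap: the key inequality $\min_Z\beta_{\I^Y(o)}(Z)\geq\beta_{\I(o)}(Y_0)$ is never actually proven. You reduce it to an inductive invariant $\min_Z\beta_{I_j}(Z)\geq\min_Z\beta_{I_0}(Z)$ holding at \emph{every} intermediate step $j$ of the algorithm, and then defer the inductive step to ``careful Euler-characteristic bookkeeping'' via Lemma~\ref{L:ExtV-funP} --- which you yourself flag as the main obstacle. That step is exactly where the difficulty sits: when the algorithm continues past step $j$, there is by construction a minimizer $W\nsubseteq Y_{j-1}$ of $\beta_{I_j}$ with value $\leq\min\beta_{I_0}$, and for such an \emph{external} subcurve no restriction-type comparison between $I_{j+1}$ and $I_j$ (or $I_0$) is available; ruling out a strict drop of $\beta$ there is not a routine computation and is not supplied. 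Note also that Lemma~\ref{L:ExtV-funP} is a red herring here: in any comparison of $\beta_{\I'}(V)$ with $\beta_{\I''}(V)$ at the \emph{same} subcurve $V$ the terms $\s^o_V$ cancel, so no property of the V-stability condition should be needed anywhere in this lemma.

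The paper avoids the induction entirely. The only comparison it needs is at the terminal twist and only for subcurves $V\subseteq Y$: restricting the exact sequence $0\to\I(o)_Y\to\I^Y(o)\to\I^Y(o)_{Y^{\mathsf c}}\to 0$ of Lemma~\ref{L:twist}\eqref{L:twist2} to $Y^{\mathsf c}\cup V$ gives $0\to\I(o)_V\to\I^Y(o)_{Y^{\mathsf c}\cup V}\to\I^Y(o)_{Y^{\mathsf c}}\to 0$, whence $\chi(\I^Y(o)_V)\geq\chi(\I(o)_V)$ and therefore $\beta_{\I^Y(o)}(V)\geq\beta_{\I(o)}(V)\geq\beta_{\I(o)}(Y_0)$, with equality forcing the splitting $\I^Y(o)_{Y^{\mathsf c}\cup V}=\I^Y(o)_{Y^{\mathsf c}}\oplus\I^Y(o)_V$. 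Now take any minimizer $W$ of $\beta_{\I^Y(o)}$: the terminal stopping condition (Remark~\ref{R:twistingsubcurve}) says that either $\beta_{\I^Y(o)}(W)>\beta_{\I(o)}(Y_0)$ or $W\subseteq Y$, and in the latter case the displayed comparison applies to $W$. Either way $\min\beta_{\I^Y(o)}\geq\beta_{\I(o)}(Y_0)$, and equality at some $Z$ makes $Z$ a minimizer of value $\leq\beta_{\I(o)}(Y_0)$, hence $Z\subseteq Y$. No statement about the intermediate sheaves $I_j$ is required. To salvage your route you would have to actually prove the intermediate invariant; the terminal-step argument is both shorter and all the lemma needs.
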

\begin{proof} 
    Let $W\subseteq X$ be any %maximal 
    subcurve of $X_o$ such that $\beta_{\I^Y(o)}(W)= \min_{Z \subseteq X} \beta_{\I^Y(o)}(Z)$. Then by Remark~\ref{R:twistingsubcurve}
    %construction of $Y=Y(\mathcal{I}(o), \s^o)$, 
    we have that 
    \begin{equation}\label{E:UnivClosed1}
    \text{either } \quad  \beta_{\I^Y(o)}(W)>\beta_{\I(o)}(Y_0) \quad \text{ or }\quad 
    \beta_{\I^Y(o)}(W)\leq\beta_{\I(o)}(Y_0)\textup{ and }W\subseteq Y.
    \end{equation}
   % or
   % \begin{equation}\label{E:UnivClosed2}
   % \beta_{\I^Y(o)}(W)\leq\beta_{\I(o)}(Y_0)\textup{ and }W\subseteq Y.
   % \end{equation}

    On the other hand, for any subcurve  $V\subseteq Y$,  by restricting the exact sequence (see Lemma~\ref{L:twist}\eqref{L:twist2})
    $$
    0\to \I(o)_Y\to\I^Y(o)\to\I^Y(o)_{Y^\mathsf{c}}\to 0
    $$
   to $Y^\mathsf{c}\cup V$, we obtain the exact sequence
    $$
    0\to \I(o)_V\to\I^Y(o)_{Y^\mathsf{c}\cup V}\to\I^Y(o)_{Y^\mathsf{c}}\to 0.
    $$
   Then Lemma~\ref{L:IY}\eqref{L:IY1} gives
     \begin{equation}\label{E:UnivClosed2}
    \chi(\I^Y(o)_V)\geq\chi(\I(o)_V) \text{ with equality if and only if } \I^Y(o)_{Y^\mathsf{c}\cup V}=\I^Y(o)_{Y^\mathsf{c}}\oplus \I^Y(o)_{V}.
    \end{equation}
    From this, we deduce that  
    \begin{equation}\label{E:UnivClosed3}
    V\subseteq Y \Rightarrow \beta_{\I^Y(o)}(V)\geq\beta_{\I(o)}(V)\geq\beta_{\I(o)}(Y_0).
    \end{equation}
    %We address the second case as follows. 
    %From Lemma~\ref{L:twist}(\ref{L:twist2}), there exists an exact sequence
    %$$
    %0\to \I(o)_Y\to\I^Y(o)\to\I^Y(o)_{Y^\mathsf{c}}\to0.
    %$$
    %Restricting it to $Y^\mathsf{c}\cup W$, we obtain the exact sequence
    %$$
    %0\to \I(o)_W\to\I^Y(o)_{Y^\mathsf{c}\cup W}\to\I^Y(o)_{Y^\mathsf{c}}\to0,
    %$$
    %which, combined with Lemma~\ref{L:IY}(\ref{L:IY1}), gives
    %$$
    %\chi(\I^Y(o)_W)\geq\chi(\I(o)_W),
    %$$
    %and, subsequently,
    %\begin{equation}\label{E:UnivClosed3}
    %\beta_{\I^Y(o)}(W)\geq\beta_{\I(o)}(W)\geq\beta_{\I(o)}(Y_0).
    %\end{equation}
    Therefore, by \eqref{E:UnivClosed1} and \eqref{E:UnivClosed3}, for any subcurve $Z\subseteq X_o$, we have 
    $$
    \beta_{\I^Y(o)}(Z)\geq\beta_{\I^Y(o)}(W)\geq\beta_{\I(o)}(Y_0) \text{ with equality only if  } Z\subseteq Y,
    $$
    which proves the first claim. 
    
   % Moreover, by \eqref{E:UnivClosed2}, if the equality holds, then 
   % $$
   %Z\subseteq W\subseteq Y,
   % $$
   % by construction of $Y=Y(\I(o),\s^o)$.
    
    %We, now, prove the last statement.
    %Since $Y_0$ is a minimum point for $\beta_{\I(o)}$, if  $\beta_{\I^Y(o)}(Y)=\beta_{\I(o)}(Y_0)$, then, again by combining Lemma~\ref{L:twist}(\ref{L:twist2}) and Lemma~\ref{L:IY}(\ref{L:IY1}), we have that
    %$$
    %\beta_{\I^Y(o)}(Y)=\beta_{\I(o)}(Y)=\beta_{\I(o)}(Y_0).
    %$$
    %By maximality of $Y_0$, the second equality shows that $Y=Y_0$, while the first gives $$\I^Y(o)=\I^Y(o)_Y\oplus\I^Y(o)_{Y^\mathsf{c}},$$ since 
    %$$
    %\chi(\I^Y(o)_Y)\geq\chi(\I(o)_Y),
    %$$
    %with equality if and only if the sheaf $\I^Y(o)$ decomposes at $Y$.

     We now prove the last statement. If  $\beta_{\I^Y(o)}(Y)=\beta_{\I(o)}(Y_0)$, then \eqref{E:UnivClosed3} implies that 
    $$
    \beta_{\I^Y(o)}(Y)=\beta_{\I(o)}(Y)=\beta_{\I(o)}(Y_0),
    $$
    while \eqref{E:UnivClosed3} implies that  $$\I^Y(o)=\I^Y(o)_Y\oplus\I^Y(o)_{Y^\mathsf{c}},$$
    and we are done.
\end{proof}

%\begin{proposition}
 %   Let $\pi:X\to S$ be a family of connected reduced curves over a  quasi-separated and locally Noetherian algebraic space $S$ and let $\s=\{\s^s\}$ be a V-stability condition on $X/S$. Then $\ov \J_{X/S}(\s)$ is universally closed.
%\end{proposition}
\begin{proposition}\label{P:univ-cl}
    Notation as in Theorem \ref{T:VcJ}. Then $\ov \J_{X/S}(\s)\to S$ satisfies the existence part of the valuative criterion of properness. 
\end{proposition}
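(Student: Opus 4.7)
The plan is to base change $X/S$ along $\phi$ to reduce to the case $S = \Spec R = \Delta$. Then $f$ will correspond to a $\s^{\ov\eta}$-semistable sheaf $\I_\eta$ on $X_\eta$, which by \cite[Lemma~7.8(i)]{altmankleiman} extends to some torsion-free rank-$1$ sheaf $\I$ on $X/\Delta$. Since central-fiber twisting leaves $\I_\eta$ intact by Lemma~\ref{L:twist}\eqref{L:twist1}, the goal will be to modify $\I$ by successive twists until $\I(o)$ is $\s^o$-semistable, which by Lemma~\ref{L:ss-deg}\eqref{L:ss-deg2} is equivalent to $\beta_{\I(o)}(Z) \geq 0$ for every subcurve $Z \subseteq X_o$ (not just the biconnected ones).

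I would then run the following algorithm: whenever $m(\I) := \min_{Z \subseteq X_o}\beta_{\I(o)}(Z) < 0$, replace $\I$ by $\I^Y$, where $Y := Y(\I(o), \s^o)$ is the subcurve of Definition~\ref{LD:twistingsubcurve}. Lemma~\ref{L:twistY} then guarantees $m(\I^Y) \geq m(\I)$, with equality forcing every minimizer of $\beta_{\I^Y(o)}$ to lie inside $Y$, and with equality attained at $Z = Y$ forcing the decomposition $\I^Y(o) = \I^Y(o)_Y \oplus \I^Y(o)_{Y^c}$. Note that $m(\I) \leq 0$ always, since $\beta_{\I(o)}(X_o) = \chi(\I(o)) - \s^o_{X_o} = 0$.

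The hard part will be establishing termination. If $m$ strictly increases under a twist, direct progress is made. Otherwise, every new minimizer lies inside $Y$, and iterating Definition~\ref{LD:twistingsubcurve} should produce a strictly decreasing chain of ambient subcurves containing all minimizers; such a chain must stabilize (since $X_o$ has only finitely many subcurves), at which stage either $m$ has strictly increased or the decomposition $\I(o) = \I(o)_Y \oplus \I(o)_{Y^c}$ occurs. In the decomposition case, Lemma~\ref{L:twist}\eqref{L:twist5} ensures that subsequent twisting by subcurves of $Y$ does not affect the summand on $Y^c$, so the problem decouples into two strictly smaller instances on $Y$ and $Y^c$, each endowed with its restricted V-stability condition (Lemma-Definition~\ref{LD:V-subgr}). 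Since decomposition strictly reduces the number of irreducible components in play, and $m \in \ZZ_{\leq 0}$ can strictly increase only finitely many times before reaching $0$, induction on $|I(X_o)|$ should close the argument and yield the desired morphism $F : \Delta \to \ov\J_{X/\Delta}(\s)$ extending $f$.
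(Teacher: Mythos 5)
Your setup coincides with the paper's: reduce to $S=\Delta$, extend $\I_\eta$ by \cite[Lemma 7.8(i)]{altmankleiman}, and run the twisting algorithm of Definition~\ref{LD:twistingsubcurve} controlled by Lemma~\ref{L:twistY}. The gap is in the branch where $m$ fails to increase and the central fibre decomposes, $\I^Y(o)=\I^Y(o)_Y\oplus\I^Y(o)_{Y^{\mathsf c}}$ with $\beta_{\I^Y(o)}(Y)=m<0$. You propose to recurse on $Y$ and $Y^{\mathsf c}$ with the restricted V-stability of Lemma-Definition~\ref{LD:V-subgr}, but that restriction is only defined for $Y\in\wh\D(\s^o)$, which need not hold here; and, more seriously, the sub-instance on $Y$ is unsolvable rather than smaller: the decomposition gives $\chi(\I^Y(o)_Y)=\s^o_Y+m<\s^o_Y=|\s^o(Y)|$, and further twists by subcurves contained in $Y$ preserve $\chi((-)_Y)$ (combine Lemma~\ref{L:twist}\eqref{L:twist5} with $\chi(\I^Z(o))=\chi(\I(o))$), so no amount of twisting inside $Y$ can reach the Euler characteristic that $\s^o(Y)$-semistability requires. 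The recursion therefore has nothing to land on.

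The missing idea is that this stuck configuration must be excluded by invoking the $\s^{\ov\eta}$-semistability of $\I_\eta$, a hypothesis your termination argument never uses. It cannot be dispensed with: semistability is stable under generalization (Lemma-Definition~\ref{LD:VStab}), so if $\I_\eta$ is not semistable then \emph{no} extension has semistable special fibre, and hence no purely combinatorial argument on the central fibre alone can terminate. The paper's proof handles this as follows: after finitely many steps one may assume $Y=Y^i=Y^i_0$ is independent of $i$ and that every $\I^i(o)$ decomposes at $Y$; then \cite[Lemma 24]{esteves} produces a $\Delta$-flat quotient $\F$ of $\I$ with $\F(o)=\I(o)_Y$, so that $\chi(\F(\eta))=\chi(\I(o)_Y)<\s^o_Y$, and via Proposition~\ref{P:morf-ss} this contradicts the semistability of $\I(\eta)\cong\I_\eta$. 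Replacing your recursion in the decomposition branch by this contradiction repairs the argument.
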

\begin{proof}
    We follow the proof strategy of Part~3 of \cite[Theorem~32]{esteves}.
    
    Let $I_\eta$ be a $\s^\eta$-semistable torsion-free rank $1$ sheaf on $X_\eta$, and let $\I$ be an extension of $I_\eta$ to an element of $\TF^{|s|}_{X/\Delta}$, which exists by \cite[Lemma 7.8(i)]{altmankleiman}. Consider the infinite filtration
    $$
    \ldots\subseteq\I^i\subseteq\ldots\subseteq\I^0:=\I,
    $$
    where the quotients are 
    $$
    \frac{\I^i}{\I^{i+1}}=\I^i(o)_{Y^i},
    $$
    and the subcurves $Y^i$ are defined as follows. For each $i\in \mathbb{N}$, we let $Y_0^i$ be a maximal subcurve such that $\beta_{\I^i(o)}(Y_0^i)= \min_{Z \subseteq X_o} \beta_{\I^i(o)}(Z)$, and  then let $Y^i:=Y(\I^i(o),\s^0)$, as produced by Definition~\ref{LD:twistingsubcurve}.
    
    We claim that there exists $i$ such that $\I^i(o)$ is $\s^o$-semistable. Suppose, by contradiction, that the claim is false. Then $\beta_{\I^i(o)}(Y_0^i)<0$ for each $i \in \mathbb{N}$ and, by Lemma~\ref{L:twistY}, up to replacing $\I$ with $\I^j$ for some $j \in \mathbb{N}$, we may assume that $Y:=Y^i=Y_0^i$ does not depend on $i$ and that $\I^i(o)$ decomposes at $Y$ for each $i \in \mathbb{N}$. By \cite[Lemma~24]{esteves}, there exists an $S$-flat quotient $\F$ of $\I$ such that $\F(o)=\I(o)_Y$. By flatness, since $\chi(\F(o))=\chi(\I(o)_Y)<\s^o$, we also have $\chi(\F(\eta))<\s^\eta$; hence $\F(\eta)$ is not $s^\eta$-semistable. By Proposition~\ref{P:morf-ss}  we deduce that $\I(\eta)\cong\I_\eta$ is also not $\s^\eta$-semistable, a contradiction that proves our claim.
\end{proof}

\begin{remark}
   Note that we have not used Condition~(2)  of Definition~\ref{D:VStabX} in the proof of  Proposition~\ref{P:univ-cl}.  Thus  the  substack $\ov \J_{X/S}(\s)$ of $\TF^\chi_{X/S}$ satisfies the existence part of the valuative criterion of properness under the weaker assumption that $\s=\{\s^s\}$ is \emph{any} collection of functions $\s^s \colon \BCon(X_s) \to \mathbb{Z}$ such that (a) $\s^s_Y+(s^s)_Y^\mathsf{c} - |\s^s| \in \{0,1\}$ for all $Y \in \BCon(X_s)$; and (b) for every \'etale specialization $\xi: s \rightsquigarrow t$ of geometric points of $S$, we have $\xi^*(\s^t)=\s^s$. (See also Remark~\ref{R:open-moregeneral}). 
    %Proposition~\ref{P:univ-cl} holds with a weaker hypothesis. Our proof shows that, given a collection $\s=\{\s^s\}$  of maps $\s^s:\BCon(X_s)\to\ZZ$ for each geometric point $s$ of $S$ that satisfies Condition~\ref{D:VStabX}\eqref{E:condi1}, the associated open substack $\ov \J_{X/S}(\s)\to S$ of $\TF^\chi_{X/S}$ (see Remark~\ref{R:open-moregeneral}) is universally closed.
\end{remark}
The following generalizes the fact, established by Esteves \cite{esteves}, that $\Simp_{X/S} \subseteq \tf_{X/S}$ satisfies the existence part of the valuative criterion of properness.
\begin{remark} 
Let $X\to \Delta=\Spec R$ be a family of connected reduced curves over a DVR and let $\s$ be a relative V-stability condition for $X/\Delta$. Given a sheaf $\I_{\ov \eta}\in\ov\J_{X_{\ov \eta}}(\s^{\ov \eta})$ (resp. $\I_{\ov \eta}\in\TF_{X_{\ov \eta}}$)  whose decomposition into indecomposable subsheaves is 
    $$
\I_{\ov \eta}=\bigoplus_{i=1}^k(\I_{\ov \eta})_{W_i},
    $$
then there exists an extension $\I\in\ov\J_{X/\Delta}(\s)$ of $\I_{\ov \eta}$ (resp. $\I\in \TF_{X/\Delta}$) such that the special fiber has the following decomposition into indecomposable subsheaves
    $$
\I(o)=\bigoplus_{i=1}^k\I(o)_{\xi_*(W_i)},
    $$
    where $\xi:\ov \eta\rightsquigarrow o$ is the \'etale specialization from the geometric generic point to the special point of $\Delta$.

    This follows from Parts $(2)$ and $(3)$ of the proof of \cite[Theorem~32]{esteves}, applied to the indecomposable subsheaves $\I(t)_{W_i}$ of $\I(t)$.
\end{remark}

We can now prove the main result of this section. 

\begin{proof}[Proof of Theorem \ref{T:VcJ}]
We apply the necessary and sufficient conditions for the existence of proper good moduli spaces in \cite{AHLH}. Note that:
\begin{itemize}
\item The stack $\TF_{X/S}$ (and hence also its open substack $\ov \J_{X/S}(\s)$) is locally of finite type and it has affine diagonal over $S$ by \cite[\href{https://stacks.math.columbia.edu/tag/0DLY}{Lemma 0DLY}]{stacks-project} and \cite[\href{https://stacks.math.columbia.edu/tag/0DLZ}{Lemma 0DLZ}]{stacks-project}.
    \item The morphism $\ov \J_{X/S}(\s)\to S$ is of finite type (and hence of finite presentation) by the same argument of \cite[p.~3071-3072]{esteves}: upon restricting to a Noetherian open subspace of $S$ and fixing a relatively ample line bundle $\O_X(1)$ on $X/S$, one shows (using Lemma-Definition~\ref{LD:VStab}) that there exists an integer $m(\s)\gg 0$ such that $\ov \J_{X/S}(\s)$ is contained in the open substack of $m(\s)$-regular sheaves of $\TF_{X/S}^{|\s|}$. The latter open substack is of finite type over $S$ by \cite[Lemma~7.3 and Thm.~7.4, pp.~98-99]{altmankleiman}. 
    \item The stabilizers of all geometric points of $\ov\J_{X/S}(\s)$ are isomorphic to $\mathbb{G}_m^r$ for some $r\geq 0$ (see~\eqref{E:AutI}), and hence they are linearly reductive.
    \item $\ov \J_{X/S}(\s)\to S$ is $\Theta$-complete by Proposition \ref{P:Th-compl}.
    \item $\ov \J_{X/S}(\s)\to S$ is S-complete by Proposition \ref{P:S-compl}.
    \item $\ov \J_{X/S}(\s)\to S$ satisfies the existence part of the vaulative criterion of properness by Proposition~\ref{P:univ-cl}.
\end{itemize}
Therefore, we can apply \cite[Theorems A, 5.2 and 5.4]{AHLH} and deduce that $\ov \J_{X/S}(\s)\to S$ admits a relative proper good moduli over $S$. 
\end{proof}

We  now describe the closed points of a V-compactified Jacobian space and the map from a V-compactified Jacobian stack to its good moduli space in terms of the isotrivial specialization   s of Proposition~\ref{P:iso-poly}.

\begin{proposition}\label{P:pt-VcJspa}
 Let $X$ be a connected reduced curve over $k=\ov k$ and let $\s$ be a V-stability condition on $X$. Consider the good moduli space morphism $\Xi_X(\s):\ov \J_X(\s)\to \ov J_X(\s)$. Then
 \begin{enumerate}[(i)]
     \item we have a bijection 
     $$
     \left\{\s\text{-polystable sheaves on } X\right\}\xrightarrow[\cong]{\Xi_X(\s)} \ov J_X(\s)(k).
     $$
     \item the map $\Xi_X(\s)$ on $k$-points sends an $\s$-semistable sheaf $I$ on $X$ onto $\Xi_X(\s)(I)=\Xi_X(\s)(I^{pl})$, where $I^{pl}$ is the unique polystable isotrival specialization of $I$, as defined in Proposition~\ref{P:iso-poly}. 
     \item the restriction of the map $\Xi_X(\s)$ to the open locus $\ov \J_X(\s)^{st}$ parameterizing $\s$-stable sheaves is the $\Gm$-rigidification. 
 \end{enumerate}
\end{proposition}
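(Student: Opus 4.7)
The plan is to derive the proposition from general properties of good moduli spaces applied to $\Xi_X(\s):\ov\J_X(\s)\to \ov J_X(\s)$ (which exists by Theorem~\ref{T:VcJ}), combined with Proposition~\ref{P:iso-poly}. The standard facts I will use about a good moduli space $\Xi:\ov{\mathcal{M}}\to \ov M$ (from \cite{Alp}) are: $\Xi$ is surjective on $k$-points; it induces a bijection between closed $k$-points of $\ov{\mathcal{M}}$ and $k$-points of $\ov M$; and the closure of every $k$-point $[I]$ in $\ov{\mathcal{M}}$ contains a unique closed point whose image in $\ov M$ coincides with $\Xi([I])$.

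First I will identify the closed $k$-points of $\ov\J_X(\s)$ as exactly the $\s$-polystable sheaves. For an algebraic stack with affine diagonal and linearly reductive stabilizers that satisfies the $\Theta$- and $S$-completeness criteria of \cite{AHLH} (which for $\ov\J_X(\s)$ were established in Propositions~\ref{P:Th-compl} and~\ref{P:S-compl}), a $k$-point $[I]$ is closed if and only if it admits no non-trivial isotrivial specialization $\Theta_k\to \ov\J_X(\s)$ starting at $[I]$. Now Proposition~\ref{P:iso-poly}\eqref{P:iso-poly2} shows that every $\s$-polystable sheaf is closed, while Proposition~\ref{P:iso-poly}\eqref{P:iso-poly1} exhibits, for every non-polystable $\s$-semistable sheaf $I$, a non-trivial isotrivial specialization $I\rightsquigarrow I^{pl}$ inside $\ov\J_X(\s)$, so that such $I$ are not closed. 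Part (i) then follows immediately from the bijection between closed points of the stack and points of the good moduli space. For part (ii), the same specialization places $[I^{pl}]$ in the closure of $[I]$, and since $[I^{pl}]$ is the unique closed point of that closure, we get $\Xi_X(\s)(I)=\Xi_X(\s)(I^{pl})$.

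For part (iii), I will combine Corollary~\ref{C:stab-poly}, which identifies $\s$-stable sheaves with simple $\s$-polystable sheaves, with the description~\eqref{E:AutI} of automorphism groups. On the open substack $\ov\J_X(\s)^{st}$ every $k$-point is therefore closed and has automorphism group exactly $\Gm$ acting by scalars. A quasi-separated algebraic stack whose $k$-points are all closed with stabilizer $\Gm$ and which admits a good moduli space is a $\Gm$-gerbe over that space, and by the uniqueness of good moduli spaces the target of the restriction of $\Xi_X(\s)$ is the image of $\ov\J_X(\s)^{st}$ in $\ov J_X(\s)$; the resulting morphism is by definition the $\Gm$-rigidification. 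The main subtlety is the precise identification of closed points via isotrivial (i.e.\ $\Theta_k$-) specializations rather than arbitrary specializations; this is where one must invoke the general criterion from \cite{AHLH} for stacks with linearly reductive inertia and verify that its hypotheses are met by $\ov\J_X(\s)$, after which the remaining steps are direct consequences of Propositions~\ref{P:iso-spec} and~\ref{P:iso-poly} and of Corollary~\ref{C:stab-poly}.
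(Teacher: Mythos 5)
Your proposal is correct and follows essentially the same route as the paper: the paper's proof likewise identifies the closed $k$-points of $\ov \J_X(\s)$ with the $\s$-polystable sheaves via Proposition~\ref{P:iso-poly} and then invokes the general properties of good moduli spaces from \cite{Alp} (Thm.~4.16(iv)) to conclude all three parts. Your write-up merely makes explicit two steps the paper leaves implicit, namely the detection of closedness by isotrivial ($\Theta_k$-) specializations via the \cite{AHLH} framework and the use of Corollary~\ref{C:stab-poly} for part (iii).
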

\begin{proof}
Proposition \ref{P:iso-poly} implies that the closed $k$-points of $\ov \J_X(\s)$ are the $\s$-polystable sheaves on $X$. Hence, the result follows from \cite[Thm. 4.16(iv)]{Alp}.
\end{proof}

\begin{corollary}\label{C:VcJgeneral}
Let $\pi:X\to S$ be a family of connected reduced curves over a  quasi-separated and locally Noetherian algebraic space $S$ and let $\s=\{\s^s\}$ be a \emph{general} V-stability condition on $X/S$. Then $\ov J_{X/S}(\s)=\ov \J_{X/S}(\s)\fatslash \Gm$ and $\Xi_{X/S}(\s): \ov \J_{X/S}(\s)\to \ov J_{X/S}(\s)$ is a tame (and hence also a coarse) moduli space morphism. 
\end{corollary}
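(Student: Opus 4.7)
The plan is to reduce the statement to Proposition~\ref{P:pt-VcJspa}(iii), which already identifies the restriction of $\Xi_{X/S}(\s)$ to the $\s$-stable locus with the $\Gm$-rigidification morphism. Hence the only substantive thing to verify is that, when $\s$ is general, the entire V-semistable stack $\ov\J_{X/S}(\s)$ coincides with its open substack of $\s$-stable sheaves.

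To see this, I would argue fiberwise. For each geometric point $s$ of $S$, Remark~\ref{R:gen-s} shows that $\s^s$-semistability, $\s^s$-polystability, and $\s^s$-stability all coincide under the generality assumption $\D(\s^s)=\emptyset$. Combining this with Corollary~\ref{C:stab-poly}, every $\s^s$-semistable sheaf on $X_s$ is simple, so its automorphism group equals $\Gm$ (the scalar automorphisms) by~\eqref{E:AutI}. Since this holds on every geometric fiber, the inertia of $\ov\J_{X/S}(\s)$ over $S$ is exactly $\Gm$, so $\ov\J_{X/S}(\s)$ is a $\Gm$-gerbe over the algebraic space $\ov\J_{X/S}(\s)\fatslash\Gm$.

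Applying Proposition~\ref{P:pt-VcJspa}(iii) (now globalized since the stable locus equals the entire stack), I identify $\Xi_{X/S}(\s)$ with the $\Gm$-rigidification, thus establishing $\ov J_{X/S}(\s)=\ov\J_{X/S}(\s)\fatslash\Gm$. For the tameness assertion: a $\Gm$-gerbe is fpqc-locally trivial of the form $B\Gm\times Y\to Y$, and $\Gm$ is linearly reductive in every characteristic, so $\Xi_{X/S}(\s)$ commutes with arbitrary base change and induces a bijection on geometric isomorphism classes of points. This is precisely the property of being a tame (hence coarse) moduli space morphism in Alper's sense.

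I do not anticipate any serious obstacle; all the hard work has been carried out in establishing Proposition~\ref{P:pt-VcJspa}, the equivalence of the three notions of stability under the generality hypothesis, and the standard theory of good moduli spaces for $\Gm$-gerbes. The only minor subtlety is confirming that the pointwise statement of Proposition~\ref{P:pt-VcJspa}(iii) globalizes, but this is automatic because the rigidification construction is intrinsic and the identification of stabilizers with $\Gm$ is uniform over $S$.
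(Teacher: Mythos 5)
Your proposal is correct and follows essentially the same route as the paper, which proves this corollary by combining Proposition~\ref{P:pt-VcJspa} with Remark~\ref{R:gen-s} (so that semistable $=$ polystable $=$ stable, hence every point is simple with automorphism group $\Gm$) and then invoking Alper's notion of a tame moduli space. Your additional detail on the $\Gm$-gerbe structure and Corollary~\ref{C:stab-poly} is a faithful unpacking of that one-line argument.
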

\begin{proof}
 This follows by combining Proposition \ref{P:pt-VcJspa} and Remark  \ref{R:gen-s}, and using \cite[Def. 7.1]{Alp}.
\end{proof}

\section{Projectivity and Schematicity of compactified Jacobian spaces}\label{sub:scheme}

Theorem \ref{T:VcJ} provides, for any $\s\in \VStab^{\chi}(X/S)$, a compactified Jacobian  space $\ov J_{X/S}(\s)$, which is a proper algebraic space over $S$. It is natural to ask whether these compactified Jacobian spaces are projective, or at least schematic, over $S$. In this section we provide a partial answer to these questions.

First of all, we show how the results of Esteves \cite{esteves} imply that  \emph{classical} compactified Jacobians are locally projective, under same natural additional assumptions on the family $X/S$.

\begin{theorem}\label{T:projective}
Let $\pi:X\to S$ be a family of connected reduced curves over a quasi-separated and locally Noetherian  algebraic space  $S$. 
Let $E$ be a vector bundle on $X/S$ with constant integral relative slope. 
%$\mu_{X/S}(E):=\mu(E_{|X_s})\in \ZZ$ and let $\psi(E)$ be its associated numerical polarization on $X/S$ as in Example \ref{Ex:VBstab}\eqref{Ex:VBstab2}.

Then the associated compactified Jacobian space $\ov J_{X/S}(E)$ (see Example \ref{Ex:classcJ}) is locally projective over $S$.
\end{theorem}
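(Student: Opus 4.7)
The plan is to reduce the statement to Esteves' theorem on the projectivity of classical compactified Jacobians attached to vector bundle polarizations, as established in \cite{esteves}.

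The first step is to identify $\ov{J}_{X/S}(E)$ with Esteves' moduli space of $E$-semistable torsion-free rank-1 sheaves. This is a direct bookkeeping check using Example~\ref{Ex:classcJ}\eqref{Ex:VBstab2}: the associated V-stability condition $\s(\psi(E))$ has characteristic $\chi := -\mu_{X/S}(E)$, and its defining inequality
\begin{equation*}
\chi(I_Y) \geq \lceil \psi(E)^s_Y \rceil = \lceil -\mu(E_{|Y}) \rceil \quad \text{for all } Y \in \BCon(X_s),
\end{equation*}
is equivalent, since both sides are integers under the integrality of $\mu_{X/S}(E)$, to Esteves' $E$-semistability inequality $\rk(E) \cdot \chi(I_Y) + \deg(E_{|Y}) \geq 0$. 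The equivalence on biconnected subcurves extends to arbitrary subcurves by additivity.

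The second step is to invoke Esteves' construction of a relatively ample line bundle. With $\mathcal{I}$ the universal sheaf on $X \times_S \ov{\J}_{X/S}(E)$ and $\pi, p_1$ the projections to the two factors, one forms the determinantal theta line bundle
\begin{equation*}
\lambda_E := \det\bigl(R\pi_*(\mathcal{I} \otimes p_1^* E)\bigr)^{-1}
\end{equation*}
on the stack $\ov{\J}_{X/S}(E)$. By Riemann--Roch, $\chi(I \otimes E) = \rk(E) \chi(I) + \deg(E)$ for a torsion-free rank-1 sheaf $I$ on a geometric fiber $X_s$; integrality of the relative slope forces this to vanish for $I \in \ov{\J}_{X/S}(E)$, so $\lambda_E$ has weight zero under the scalar $\Gm$-action on $\mathcal{I}$. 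Esteves verifies, locally on $S$, that the descended line bundle on $\ov{J}_{X/S}(E)$ is $S$-ample by embedding the moduli problem into a suitable Quot scheme and identifying $\lambda_E$ with the pullback of a GIT polarization. Combined with the properness of $\ov{J}_{X/S}(E) \to S$ from Theorem~\ref{T:VcJ}, this yields local projectivity.

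The main subtle point is verifying descent of $\lambda_E$ past Alper's good moduli space map. Closed points of $\ov{\J}_{X/S}(E)$ correspond to polystable sheaves $I = \bigoplus_i I_{Y_i}$ with automorphism group $\Gm^{c(I)}$ (see \eqref{E:AutI}), and one must check that every $\Gm$-factor --- not only the scalar one --- acts with weight zero on the fiber of $\lambda_E$. By Riemann--Roch, this reduces to the equality $\chi(I_{Y_i}) = -\mu(E_{|Y_i})$ for each summand, which follows from Proposition~\ref{P:poly-sum} together with the fact that, for polystable decompositions, each $Y_i$ lies in $\wh{\D}(\s(\psi(E)))$, forcing $\psi(E)_{Y_i} = -\mu(E_{|Y_i})$ to be an integer equal to $\chi(I_{Y_i})$. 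This is the place where the integrality assumption on the relative slope of $E$ is essentially used.
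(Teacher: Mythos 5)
Your overall strategy --- identify $\ov J_{X/S}(E)$ with Esteves' moduli space, form the determinant-of-cohomology theta line bundle $\L_E=\D_{p_2}(\I\otimes p_1^*E)$, and check that it descends along the good moduli space map by verifying weight-zero action of the full stabilizer $\Gm^{c(I)}$ at polystable points --- is the same as the paper's, and your descent computation is essentially the paper's Step 1: for each indecomposable summand $I_{Y_i}$ of a polystable $I$, stability forces $\chi(I_{Y_i})=-\mu(E_{|Y_i})$, whence $\chi(E_{|Y_i}\otimes I_{Y_i})=\rk(E_{|Y_i})[\mu(E_{|Y_i})+\chi(I_{Y_i})]=0$ by Riemann--Roch. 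That part is fine.

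The gap is in the ampleness step. You write that ``Esteves verifies, locally on $S$, that the descended line bundle on $\ov J_{X/S}(E)$ is $S$-ample by embedding the moduli problem into a suitable Quot scheme and identifying $\lambda_E$ with the pullback of a GIT polarization.'' This is not available: Esteves' Theorem~C proves relative projectivity only for the \emph{fine} compactified Jacobians (where the moduli space is the $\Gm$-rigidification of the stack of quasistable simple sheaves), and his method is precisely a GIT-free one via theta functions, not a Quot-scheme/GIT embedding. In the non-fine case at hand the target of the descent is Alper's good moduli space, whose closed points are S-equivalence classes, and nothing in Esteves' paper asserts ampleness of $L_E$ there. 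One must still prove it: the paper does so by showing that for $m\gg 0$ the theta functions $\theta_F$, for auxiliary bundles $F$ with $\rk(F)=m\rk(E)$, $\det(F)\cong\det(E)^{\otimes m}$ and $\D_\pi(F)\cong\D_\pi(E)^{\otimes m}$, give sections of $L_E^{\otimes m}$ that (a) globally generate relative to $S$ and (b) separate any two points of $\ov J_{X/S}(E)$ lying over the same $s\in S$, i.e.\ any two non-S-equivalent polystable sheaves; step (b) requires combining Esteves' Lemmas~17 and~46 with the separation result of \cite[Lemma~10]{Est-sep}, and it is exactly the point where the non-fine situation demands a new input. Ampleness then follows because the induced map to $\Proj\bigoplus_n f_*(L_E^n)$ is quasi-finite, hence finite by properness. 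Without supplying (a) and (b) --- or an actual GIT construction of the good moduli space with a matching polarization, which is not what Esteves does --- your proof does not close.
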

When the compactified Jacobian space $\ov J_{X/S}(E)$ is  \emph{fine}, the above result follows immediately from \cite[Thm.~C]{esteves}. We now show how to adapt the argument of loc. cit. to deal with the non-fine case.
\begin{proof}
 Consider the fiber product $X\times_S \ov \J_{X/S}(E)$ and denote by $p_1$ and $p_2$ the two projections. For any vector bundle $F$ on $X/S$ of relative slope $\mu_{X/S}(F)=\mu_{X/S}(E)$, consider the \emph{theta line bundle} associated to $F$:
$$
\L_F:=\D_{p_2}(\I\otimes p_1^*(F))\in \Pic(\ov \J_{X/S}(E))
$$
where $\D_{p_2}$ denotes the determinant of cohomology with respect to $p_2$ and $\I$ denotes the universal sheaf on $X\times_S \ov \J_{X/S}(E)$. Since $\mu_{X/S}(F)=\mu_{X/S}(E)$,  the sheaf $\I\otimes p_1^*(F)$ has $p_2$-relative Euler characteristic equal to $0$, and hence $\mathcal{L}_F$ comes with a canonical section $\theta_F\in H^0(\ov \J_{X/S}(E), \L_F)$, called the \emph{theta function} associated to $F$. The zero locus of $\theta_F$ is equal to 
$$
Z(\theta_F)=\{[I]\in \ov \J_{X_s}(E_s): h^0(X_{\ov s},I\otimes F_{X_{\ov s}})=h^1(X_{\ov s},I\otimes F_{X_{\ov s}})\neq 0\},
$$
where $I$ is a $\psi(E_{X_{\ov s}})$-semistable rank-one torsion free sheaf on the geometric fiber $X_{\ov s}$ above $s$. 
We refer to \cite[Sec. 6.2]{esteves} for details. 

It is proved in \cite[p. 3083]{esteves} that if the vector bundle $F$ satisfies the following conditions 
\begin{equation}\tag{*}
    \begin{sis}
    & \rk_{X/S}(F)=m\rk_{X/S}(E), \\
    & \det(F)\cong \det(E)^{\otimes m},\\
    & \D_{\pi}(F)\cong \D_{\pi}(E)^{\otimes m},
    \end{sis}
    \quad \text{ for some } m\geq 1,
\end{equation}
then there exists an isomorphism $\L_F\cong \L_E^{\otimes m}$ (uniquely determined  up to an invertible regular function on $S$), by which we can interpret $\theta_F$ as a section of $\L_E^{\otimes m}$ (up to an invertible regular function on $S$).

We now show that $\L_E$ descends, along the good moduli space morphism $\Xi: \ov \J_{X/S}(E)\to \ov J_{X/S}(E)$, to an $S$-ample line bundle on $\ov J_{X/S}(E)$. This will complete the proof. We split the argument in steps.

\un{Step 1:} The line bundle $\L_E$ descends to a (unique) line bundle $L_E$ on $\ov J_{X/S}(E)$ such that $\Xi^*:H^0(\ov J_{X/S}(E),L_E^{\otimes m})\xrightarrow{\cong} H^0(\ov \J_{X/S}(E),\L_E^{\otimes m})$ for any $m\geq 1$. 

Indeed, the second assertion follows from the first one and the fact that $\Xi_*\O_{\ov \J_{X/S}(E)}=\O_{\ov J_{X/S}(E)}$ by definition of a good moduli space. In order to show that the line bundle $\L_E$ on $\ov \J_{X/S}(E)$ descends to a line bundle $L_E$ on $\ov J_{X/S}(E)$ (which is necessarily unique since $\Xi_*(\L_E)=L_E$), it is enough by \cite[Thm.~10.3]{Alp} to show that $\L_E$ has trivial stabilizer action at all closed points. Fix one such closed point $[I]\in \ov \J_{X_{\ov s}}(E_{X_{\ov s}})$ lying over a geometric point $\ov s$ of $S$. 
The canonical decomposition \eqref{E:decI} of $I$
$$I=I_{Y_1}\oplus \ldots \oplus I_{Y_r}, $$
is such that each sheaf $I_{Y_i}$ is a $\psi(E_{\ov s})$-stable (and hence simple) rank-one torsion-free sheaf on the connected subcurve $Y_i$ (for $1\leq i \leq r$). 
The automorphism group of $I$ is equal to $\Aut(I)=\Gm^r$, where the $i$-th copy of $\Gm$ acts via scalar multiplication on the $i$-th factor of the decomposition of $I$. 
By the definition of $\L_E$ via the determinant of cohomology, an element $g=(g_i)_{i=1}^r\in \Aut(I)$ acts on the fiber of $\L_E$ at $I$ via multiplication by 
$$
\prod_{i=1}^r g_i^{\chi(E_{Y_i}\otimes I_{Y_i})}. 
$$
Since $I_{Y_i}$ is  $\psi(E_{\ov s})$-stable, we have  that $\chi(I_{Y_i})=-\mu(E_{Y_i})$ and then we compute (using Hirzebruch-Riemann-Roch)
$$\chi(E_{Y_i}\otimes I_{Y_i})=\rk(E_{Y_i})[\mu(E_{Y_i})+\chi(I_{Y_i})]=0.$$
Therefore, the action of $\Aut(I)$ on the fiber of $\L_E$ at $I$ is trivial, which shows that $\L_E$ has trivial stabilizer action at all closed points and we are done. 
  
\un{Step 2:} The line bundle $L_E^m$ is $S$-relatively globally generated for $m\gg 0$.

Indeed, fix a point $[I]$ of $\ov \J_{X/S}(E)$ lying over $s\in S$.
%which is the image of a sheaf $I\in \ov \J_{X_{\ov s}}(E_{\ov s})$ where $\ov s$ is a geometric point lying over $s$. 
By \cite[Thm. 19, Lemma 46]{esteves}, there exists, up to passing to an \'etale neighboorhood of $s\in S$, a vector bundle $F$ on $X$ satisfying conditions (*) for some $m\geq 1$ and such that 
$$
h^0(X_{\ov s}, I\otimes F_{X_{\ov s}})=h^1(X_{\ov s}, I\otimes F_{X_{\ov s}})=0.
$$
This implies that $\theta_F(s)\neq 0$, and we are done.  

\un{Step 3:} The line bundle $L_E^m$ separates points relative to $S$ for $m\gg 0$.

Indeed, fix two points $[I_1], [I_2]$ of $\ov \J_{X/S}(E)$ lying over $s\in S$ such that $\Xi([I_1])\neq \Xi([I_2]$.
By combining \cite[Lemma 10]{Est-sep} with \cite[Lemmas 17 and 46]{esteves}, there exists, up to passing to an \'etale neighboorhood of $s\in S$, a vector bundle $F$ on $X$ satisfying conditions (*) for some $m\geq 1$ and such that 
$$
\begin{aligned}
& h^0(X_{\ov s}, I_1\otimes F_{X_{\ov s}})=h^1(X_{\ov s}, I_1\otimes F_{X_{\ov s}})=0,\\
&  h^0(X_{\ov s}, I_2\otimes F_{X_{\ov s}})=h^1(X_{\ov s}, I_2\otimes F_{X_{\ov s}})\neq 0.\\
\end{aligned}
$$
This implies that $\theta_F([I_1])\neq 0$ and $\theta_F([I_2])=0$, and we are done.

\un{Step 4:} The line bundle $L_E$ is $S$-relatively ample. 

Indeed, consider the locally projective morphism 
$$
\Proj \bigoplus_{n\geq 0} f_*(L_E^n)\to S
$$
where $f=f(\psi(E)):\ov J_{X/S}(E)\to S$, together with its canonical $S$-relatively ample line bundle $\O(1)$. By Step~2, we have a regular $S$-morphism 
$$
H:\ov J_{X/S}(E)\to \Proj \bigoplus_{n\geq 0} f_*(L_E^n),
$$
such that $H^*\O(1)=L_E$.
Step 3 implies that $H$ is quasi-finite, and hence finite since the domain and codomain are proper over $S$. We conclude that $L_E=H^*\O(1)$ is $S$-relatively ample.    
\end{proof}
Under some additional assumptions, we can extend the conclusion of Theorem~\ref{T:projective} to all classical (as defined in Example~\ref{Ex:classcJ}) compactified Jacobian spaces.
\begin{corollary}\label{C:projective}
Let $\pi:X\to S$ be a family of connected reduced curves over a quasi-separated and locally Noetherian  algebraic space  $S$. 
Assume that Condition (*) of Theorem \ref{ThmB} holds true for $X/S$.

%Old Assumptions
%\begin{enumerate}[(i)]
%    \item For any $s\in S$, any irreducible component of the fiber $X_s:=X\times_S s$ is geometrically integral.
%    \item There are sections $\{\sigma_i:S\to X\}_{i=1}^n$ of $\pi$ factoring through the smooth locus of $\pi$ such that, for any $s\in S$, any irreducible component of the fiber $X_s$ contains $\sigma_i(s)$ for some $i$. 
%\end{enumerate}
Then any classical compactified Jacobian space $\ov J_{X/S}(\psi)$ of $X/S$ is locally projective over $S$.    

In particular, if $X$ is a connected reduced curve over $k=\ov k$, then  any classical compactified Jacobian space $\ov J_X(\psi)$ is projective.
\end{corollary}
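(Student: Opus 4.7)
The plan is to reduce Corollary~\ref{C:projective} to Theorem~\ref{T:projective} by realizing, étale-locally on $S$, the classical compactified Jacobian space $\ov{J}_{X/S}(\psi)$ as the Esteves-type space $\ov{J}_{X/S}(E)$ for a vector bundle $E$ built from the sections of Condition~(*). Since being locally projective over the base descends along étale covers of $S$, it is enough to work near a geometric point $s_0$ of $S$; after passing to the strict henselization if needed, every geometric point of $S$ admits a unique étale specialization to $s_0$, so by Definition~\ref{D:VStabXS} any relative V-stability condition on $X/S$ is entirely determined by its fibre over $s_0$.

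Next I would construct $E$. The region of $\Pol^{|\psi|}(X_{s_0})$ carved out by the hyperplane arrangement $\A_{X_{s_0}}$ of Equation~\eqref{E:arr-hyper} that contains $\psi^{s_0}$ is defined by linear equalities and strict inequalities with integer coefficients, so it contains rational polarizations $q$ with any sufficiently divisible common denominator $D \geq |I(X_{s_0})|$ and satisfying $\s(q)=\s(\psi)^{s_0}$. Writing $q_{X_v}=-c_v/D$ with $c_v\in \ZZ$ and $\sum_v c_v = -D\,|\psi|$, and invoking Condition~(*) to pick, for each $v \in I(X_{s_0})$, a section $\sigma_{i(v)}$ of $X/S$ whose value at $s_0$ lies in $X_v$, I set $L_v:=\O_X(\sigma_{i(v)})$ and define
$$
E := \bigoplus_{v \in I(X_{s_0})} L_v^{\otimes c_v} \ \oplus\ \O_X^{\oplus (D - |I(X_{s_0})|)},
$$
a vector bundle on $X$ of rank $D$.

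The verification is then essentially bookkeeping. Since $\sigma_{i(v)}(s_0)$ is a smooth point of $X_{s_0}$ lying only in $X_v$, one has $\deg(L_v|_{X_w})=\delta_{v,w}$, hence $\psi(E)^{s_0}_{X_w}=-c_w/D=q_{X_w}$; by additivity $\psi(E)^{s_0}=q$, and so $\s(\psi(E))^{s_0}=\s(q)=\s(\psi)^{s_0}$. The relative slope $\mu(E|_{X_s})$ is locally constant on $S$ (by flatness of $X/S$) and equals $\sum_v c_v/D=-|\psi|\in \ZZ$ on $X_{s_0}$, so $E$ has constant integral relative slope and $\psi(E)$ is a relative numerical polarization by Example~\ref{Ex:VBstab}\eqref{Ex:VBstab2}. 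Propagating the equality $\s(\psi(E))^{s_0}=\s(\psi)^{s_0}$ along the specialization from every $s$ to $s_0$ yields $\s(\psi(E))=\s(\psi)$ as relative V-stabilities on $X/S$; hence $\ov{J}_{X/S}(\psi)=\ov{J}_{X/S}(\psi(E))=\ov{J}_{X/S}(E)$, which is locally projective over $S$ by Theorem~\ref{T:projective}. Taking $S=\Spec k$, Condition~(*) is automatic and the argument delivers outright projectivity, which proves the final sentence.

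The main obstacle is the arrangement step: producing a rational polarization $q$ in the correct face of $\A_{X_{s_0}}$ with a common denominator divisible enough to exceed $|I(X_{s_0})|$. One must simultaneously arrange that $q_Y=\psi^{s_0}_Y$ for every $\s$-degenerate biconnected $Y$, that the ceilings of $q_Y$ agree with those of $\psi^{s_0}_Y$ on every remaining biconnected $Y$, and that the common denominator is divisible by enough to apply the construction of $E$. This is handled by a routine clearing-denominators argument within the rational affine subspace cut out by the equalities, after which Theorem~\ref{T:projective} finishes the job.
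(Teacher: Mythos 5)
Your strategy is the same as the paper's: perturb $\psi$ to a rational polarization in the same region of the arrangement $\A_{X_{s_0}}$ of \eqref{E:arr-hyper}, realize it as $\psi(E)$ for a vector bundle $E$ of constant integral slope built from the sections supplied by Condition~(*), and invoke Theorem~\ref{T:projective}. The explicit construction of $E$ that you give (a sum of powers of the $\O_X(\sigma_{i(v)})$ padded by trivial summands, after clearing denominators) is exactly the content of the reference the paper leans on at this point, namely Esteves' Lemma~35, so the substance of your argument matches the paper's.

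The one step I would not let stand as written is the opening claim that ``being locally projective over the base descends along \'etale covers of $S$.'' For a proper morphism, projectivity is not an \'etale-local property of the base in general (Hironaka-type examples give proper non-projective families that become projective after an \'etale base change); what descends is relative ampleness of a \emph{given} line bundle, and your ample bundle $L_E$ only lives over the \'etale neighbourhood where $E$ is defined, so there is nothing to descend. Fortunately the detour through the strict henselization is unnecessary: the sections $\sigma_i$, hence the line bundles $\O_X(\sigma_{i(v)})$ and the bundle $E$, are defined globally on $X$, and the locus of geometric points $s$ where $\s(\psi(E))^s=\s(\psi)^s$ is constructible and stable under generization (both sides pull back along \'etale specializations), hence open in the locally Noetherian space $S$. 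So the identification $\ov J_{X/S}(\psi)=\ov J_{X/S}(E)$ already holds over a Zariski open neighbourhood of the image of $s_0$, which is how the paper phrases the reduction, and no descent of projectivity is needed.
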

Observe that Condition (*) implies that, for any $s\in S$, any irreducible component of the fiber $X_s$ is geometrically integral by \cite[\href{https://stacks.math.columbia.edu/tag/0CDW}{Tag 0CDW}]{stacks-project}.
\begin{proof}
Let $s_o\in S$ and let $s$ be a geometric point lying over $s_o$. Up to shrinking $S$ around $s_o$, by the discussion after Lemma-Definition \ref{LD:numpol}, we can perturb the relative numerical polarization $\psi$ without changing the associated compactified Jacobians stack/space so that $\psi^{s}$ takes rational values. Then, using the two assumptions on the family and arguing as in \cite[Lemma 35]{esteves}, we can find a vector bundle $E$ on $X/S$ with constant integral slope such that $\psi^s=\psi(E)^s$. Up to shrinking again the family $S$ around $s_o$, we can assume that $\psi=\psi(E)$. Then the conclusion follows from Theorem \ref{T:projective}.
\end{proof}

The above Theorem prompts the following natural

\begin{question}
 Is \emph{any} compactified Jacobian space $\ov J^\chi_{X/S}$ locally projective over $S$? Is this true at least for V-compactified Jacobian spaces?   
\end{question}

As a step towards the above Question, we now prove that, under some mild assumption on the family of curves $X/S$, any relative compactified Jacobian space is represented by schemes (and not only by algebraic spaces) over $S$. 

\begin{theorem}\label{T:scheme}
Let $\pi:X\to S$ be a family of connected reduced curves over a quasi-separated and locally Noetherian algebraic space  $S$. Assume that Condition (*) of Theorem \ref{ThmB} is satisfied.
%VECCHIE IPOTESI
%\begin{enumerate}[(i)]
%    \item For any $s\in S$, any irreducible component of the fiber $X_s:=X\times_S s$ is geometrically integral.
%    \item There are sections $\{\sigma_i:S\to X\}_{i=1}^n$ of $\pi$ factoring through the smooth locus of $\pi$ such that, for any $s\in S$, any irreducible component of $X_s$ contains $\sigma_i(s)$ for some $i$. 
%\end{enumerate}
Then any compactified Jacobian space $\ov J_{X/S}^{\chi}$ of $X/S$ is representable by schemes over $S$.   
\end{theorem}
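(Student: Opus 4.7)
The plan is to combine the Alper--Hall--Rydh étale local structure theorem for good moduli spaces with the rigidification provided by the sections of Condition~(*). First, I would reduce to the case $S = \Spec R$ affine and Noetherian, since representability by schemes is a Zariski-local property on $S$. Given a closed point $[I^{pl}] \in \ov J^\chi_{X/S}$, Proposition~\ref{P:pt-VcJspa} identifies it with an $\s$-polystable sheaf $I^{pl} = \bigoplus_{j=1}^c I_j^{pl}$ whose stabilizer is $G := \Aut(I^{pl}) = \Gm^c$. Since $\ov \J^\chi_{X/S}$ has affine diagonal, linearly reductive stabilizers, and admits the good moduli space $\ov J^\chi_{X/S}$ (Theorem~\ref{T:VcJ}), the Alper--Hall--Rydh local structure theorem would produce an étale neighborhood $\Spec A^G \to \ov J^\chi_{X/S}$ of $[I^{pl}]$, with $\Spec A$ an affine $G$-scheme providing an étale local model for $\ov\J^\chi_{X/S}$ near $[I^{pl}]$.

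Next, I would exploit the sections $\sigma_1,\dots,\sigma_n$ from Condition~(*) to construct line bundles $L_i := \sigma_i^*\I$ on $\ov \J^\chi_{X/S}$, where $\I$ is the universal sheaf; this is well-defined because each $\sigma_i(s)$ is a smooth point of $X_s$, so every $I \in \TF_{X/S}$ is locally free there. The hypothesis of Condition~(*) that every irreducible component of $X_s$ meets some $\sigma_i(s)$ forces each indecomposable summand $I_j^{pl}$ (supported on a connected subcurve) to contain some $\sigma_i$, and as a consequence the $G$-weights of the fibers $L_i|_{[I^{pl}]}$ span the entire character lattice of $G = \Gm^c$. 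Thus, suitable monomials in the $L_i$ are $G$-invariant and descend to actual line bundles on $\ov J^\chi_{X/S}$, while others separate $G$-orbits in the étale local model. By classical torus GIT, the étale affine neighborhood $\Spec A^G$ is then a quasi-projective $R$-scheme.

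The main obstacle is to upgrade this étale-local schematicity to a global Zariski cover of $\ov J^\chi_{X/S}$ by open subschemes. I would try to use the \emph{global} line bundles constructed above to build a morphism $\ov J^\chi_{X/S} \to \PP^N_S$ which is quasi-finite in a neighborhood of each closed point; combined with properness of $\ov J^\chi_{X/S} \to S$, this should produce Zariski affine open neighborhoods. An alternative, perhaps cleaner route is to transfer schematicity along the finite morphism from a fully rigidified Esteves-type compactified Jacobian (which is locally projective by Theorem~\ref{T:projective}) down to $\ov J^\chi_{X/S}$, via a descent result for schematicity under finite surjective morphisms of proper algebraic spaces. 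The delicate step, where the acknowledged input from Weissmann presumably enters, is carrying out such a descent in families over $S$ and verifying that the resulting étale affine neighborhoods of closed points assemble into a genuine Zariski open cover.
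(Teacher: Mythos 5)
Your proposal does not close the central difficulty, and the machinery you set up in the first two paragraphs does not advance toward it. Producing an \'etale affine neighbourhood $\Spec A^G\to \ov J^{\chi}_{X/S}$ via Alper--Hall--Rydh is vacuous for the purpose of schematicity: every algebraic space is by definition \'etale-locally an affine scheme, so the AHR/torus-GIT discussion buys nothing beyond what is already known. Likewise, while it is true that the weights of the fibres $L_i|_{[I^{pl}]}$ span the character lattice of $\Aut(I^{pl})=\Gm^c$ (each indecomposable summand is supported on a connected subcurve meeting some $\sigma_i$), you do not show that the descended invariant monomials are relatively ample, or even that they separate points of the good moduli space, and there is no reason they should; the ample classes the paper has on \emph{classical} compactified Jacobians come from theta line bundles of vector bundles (Theorem~\ref{T:projective}), not from $\sigma_i^*\I$.

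The actual proof goes through a local comparison that is absent from your proposal. Given a closed point $p\in\ov J^{\chi}_{X/S}$ with unique closed point $I$ in $\Xi^{-1}(p)$, Lemma~\ref{L:clospoint} produces a numerical polarization $\psi$ for which $I$ is a \emph{closed} point of the classical stack $\ov\J_{X_s}(\psi)$; Condition~(*) then lets one realize $\psi$ as $\psi(E)$ for a vector bundle $E$ on $X/S$. Since $I$ is the unique closed point of both fibres, $\Xi^{-1}(\Xi(I))$ and $\wt\Xi^{-1}(\wt\Xi(I))$ coincide as topological subspaces of $|\TF^{\chi}_{X_{s_o}}|$, and the universal property of good moduli spaces yields a morphism $g$ from an open neighbourhood of $p$ in $\ov J^{\chi}_{X/S}$ \emph{to} $\ov J_{X/S}(E)$ that is quasi-finite over $\wt\Xi(I)$. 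The conclusion then follows from Zariski's Main Theorem: a separated quasi-finite algebraic space over a scheme factors as an open immersion into a finite cover, hence is a scheme, and $\ov J_{X/S}(E)$ is (locally on $S$) a projective scheme by Theorem~\ref{T:projective}. Your second route instead invokes a finite morphism \emph{from} an Esteves-type space \emph{down to} $\ov J^{\chi}_{X/S}$, which does not exist in general and runs in the wrong direction; without Lemma~\ref{L:clospoint} and the universal-property argument there is no bridge between an arbitrary compactified Jacobian space and a projective classical one, and your first route never produces the required quasi-finite map to projective space.
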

This result was proved for \emph{fine} and \emph{classical} relative compactified Jacobian spaces in \cite[Thm.~B]{esteves}. (As we observed immediately after Corollary~\ref{C:projective}, the additional condition of loc. cit., requiring  each irreducible component of every fiber $X_s$   to be geometrically integral, follows from Condition~(*), and it is therefore redundant).

\begin{proof}\footnote{We thank Dario Weissmann for some inspiring discussions that led to this proof.}
We can assume that $S$ is a scheme and prove that $\ov J_{X/S}^{\chi}$ is a scheme. Let $p\in \ov J_{X/S}^{\chi}$ be a closed point lying over the closed point $s_o\in S$. It is enough to find an open neighbourhood of $p$ in $\ov J_{X/S}^{\chi}$, that is a scheme. 

By the properties of the relative good moduli morphism $\Xi:\ov \J_{X/S}^{\chi}\to \ov J_{X/S}^\chi$, there exists a unique closed point in $\Xi^{-1}(p)$, let us call it $I\in \ov \J_{X_{s_o}}^{\chi}$. We now apply Lemma \ref{L:clospoint} below to the base change $\ov I$ of $I$ to the geometric fiber $X_s$, where $s$ is any geometric point lying over $s_o$: we find $\psi\in \Pol^{\chi}(X_s)$ such that $\ov I$ is a closed point of $\ov \J_{X_s}(\psi)$. Then we argue as in the proof of Corollary \ref{C:projective}: up to perturbing slightly $\psi$, by using the two assumptions on the family and arguing as in \cite[Lemma 35]{esteves}, we  find a vector bundle $E$ on $X/S$ of constant integral slope such that $\psi=\psi(E)^s$. By construction, we have that $I$ is also the unique closed point of the fiber of the relative good moduli morphism $\wt\Xi:=\Xi(E):\ov \J_{X/S}(E)\to \ov J_{X/S}(E)$ above $\wt \Xi(I)$.
Therefore, since $I$ is the unique closed point of the fibers $\Xi^{-1}(\Xi(I))$ and $\wt\Xi^{-1}(\wt \Xi(I))$, we have the following identity of topological subspaces inside the topological space $|\TF_{X_{s_o}}^{\chi}|$ of the stack $\TF_{X_{s_o}}^{\chi}$:
\begin{equation}\label{E:2fibers}
    |\Xi^{-1}(\Xi(I))|=\big\{J\in |\TF_{X_{s_o}}^{\chi}|\: : I\in \ov{\{J\}}\big\}= |\wt \Xi^{-1}(\wt \Xi(I))|.
 \end{equation}
 
Consider now the open substack $\V:=\ov \J_{X/S}^{\chi}\cap \ov \J_{X/S}(E)\subseteq \TF_{X/S}^{\chi}$, which contains the Subspace \eqref{E:2fibers} by construction. Since $\Xi$ is universally closed (as it is a good moduli morphism), the subset 
$$U:=\ov J_{X/S}^\chi\setminus \Xi(\ov \J_{X/S}^{\chi}\setminus (\ov \J_{X/S}^{\chi}\cap \ov \J_{X/S}(E)))$$
is an open subspace of $\ov J_{X/S}^{\chi}$ containing $p=\Xi(I)$. Since relative good moduli morphisms are stable under base change, the open substack $\U:=\Xi^{-1}(U)$, which is contained in $\V$ and contains \eqref{E:2fibers} by construction, is such that 
$\Xi:\U \to U$ is a relative good moduli morphism. We have the following diagram over $S$
 \begin{equation}\label{E:2gms}
\begin{tikzcd}
    \U \arrow[r] \arrow[d, "\Xi"] \arrow[r, hookrightarrow] &  \ov \J_{X/S}(E) \arrow[d, "\wt \Xi"] \\
   U \arrow[r, dashrightarrow, "g"] & \ov J_{X/S}(E),
\end{tikzcd}
\end{equation}
where the top horizontal arrow is an open inclusion, and the two vertical arrows are relative good moduli space morphisms. Because relative good moduli space morphisms are universal with respect to maps to relative algebraic spaces, we get a morphism $g$ making Diagram~\eqref{E:2gms} commutative. From the commutativity, we deduce
$$
\wt \Xi^{-1}(\wt \Xi(I))=\U\cap \wt \Xi^{-1}(\wt \Xi(I))=\Xi^{-1}(g^{-1}(\wt \Xi(I))).
$$
This implies that $g^{-1}(\wt \Xi(I))=\{\Xi(I)\}$, from which we deduce that $g$ is a quasi-finite morphism above $\wt \Xi(I)$. Hence, there exists an open neighborhood $W$ of $\wt \Xi(I)$ in $\ov J_{X/S}(E)$ such that 
$$
U\supseteq V:=g^{-1}(W)\xrightarrow{h:=g_{|V}} W\subseteq \ov J_{X/S}(E) \text{ is quasi-finite.}
$$
Note that $g$, and hence also $h$, is separated since $V\subseteq U\subseteq \ov J_{X/S}^{\chi}$ are open and $\ov J_{X/S}^{\chi}$ is proper over $S$. Therefore, we can apply Zariski's Main Theorem  \cite[\href{https://stacks.math.columbia.edu/tag/082K}{Tag 082K}]{stacks-project} for algebraic spaces and obtain a  factorization of $h$ as
\begin{equation}\label{E:fact-h}
h: V\stackrel{j}{\hookrightarrow} T \xrightarrow{p} W,
\end{equation}
where $j$ is an open embedding and $p$ is a finite morphism. Since $W$ is a scheme (being an open subset of $\ov J_{X/S}(E)$ which is locally projective over $S$ by Theorem \ref{T:projective}), the factorization of $h$ in \eqref{E:fact-h} implies that $V$ is also a scheme. Since $V$ is an open neighborhood of $p=\Xi(I)$ in $\ov J_{X/S}^{\chi}$, we are done.    
\end{proof}

\begin{lemma}\label{L:clospoint}
    Let $X$ be a connected reduced curve over $k=\ov k$ and let $I\in \TF_X^{\chi}(k)$. Then there exists a numerical polarization $\psi$ on $X$ of characteristic $\chi$ such that $I$ is a closed point of the compactified Jacobian stack $\ov \J_X(\psi)$.
\end{lemma}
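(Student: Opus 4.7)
By Proposition~\ref{P:pt-VcJspa}, the statement reduces to exhibiting $\psi\in\Pol^\chi(X)$ such that $I$ is $\s(\psi)$-polystable. The plan is to construct such a $\psi$ by exploiting the canonical decomposition $I=\bigoplus_{i=1}^r I_{Y_i}$ into indecomposable summands (supported on connected subcurves $Y_i$ whose irreducible components partition $I(X)$). In view of Proposition~\ref{P:poly-sum}, it then suffices to arrange that (i) $I$ is $\s(\psi)$-semistable globally on $X$, and (ii) each $I_{Y_i}$ is $\s(\psi)(Y_i)$-stable on $Y_i$.

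For each $i=1,\dots,r$, I would first produce a general polarization $\psi^{(i)}\in\Pol^{\chi(I_{Y_i})}(Y_i)$ such that $I_{Y_i}$ is $\s(\psi^{(i)})$-stable. Since $I_{Y_i}$ is indecomposable on $Y_i$, Lemma~\ref{L:IY} gives $\chi(\leftindex_Z{I_{Y_i}})<\chi((I_{Y_i})_Z)$ for every $Z\in\BCon(Y_i)\setminus\{Y_i\}$, so each open interval is non-empty. To show the simultaneous open polytope
\[
R_{I_{Y_i}}:=\bigl\{\phi\in\Pol^{\chi(I_{Y_i})}(Y_i):\chi(\leftindex_Z{I_{Y_i}})<\phi_Z<\chi((I_{Y_i})_Z)\text{ for all }Z\in\BCon(Y_i)\setminus\{Y_i\}\bigr\}
\]
is non-empty, I would propose the explicit barycentric polarization defined on irreducible components by $\psi^{(i)}_v:=\tfrac12\bigl(\chi((I_{Y_i})_{X_v})+\chi(\leftindex_{X_v}{I_{Y_i}})\bigr)$, and verify by induction on the number of components of $Z$ (using the exact sequences of Lemmas~\ref{L:IY} and \ref{L:IYZ}) that the additive extension satisfies $\psi^{(i)}_Z=\tfrac12\bigl(\chi((I_{Y_i})_Z)+\chi(\leftindex_Z{I_{Y_i}})\bigr)$, which lies at the midpoint of the required interval. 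A generic perturbation inside $R_{I_{Y_i}}$ then produces a polarization with $\D(\s(\psi^{(i)}))=\emptyset$, so that $\s(\psi^{(i)})$-stability coincides with $\s(\psi^{(i)})$-semistability.

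Next, I would define $\psi$ on $X$ by $\psi|_{Y_i}:=\psi^{(i)}$ on irreducible components; since $I(X)=\bigsqcup_iI(Y_i)$ this is well-defined, and $|\psi|=\sum_i\chi(I_{Y_i})=\chi$. To verify $\s(\psi)$-polystability of $I$: (a) Any connected component $W$ of $Y_i^\mathsf{c}$ is biconnected in $X$, because $X$ is connected, so each such $W$ meets $Y_i$ and $X\setminus W=Y_i\cup(\text{other components of }Y_i^\mathsf{c})$ remains connected through $Y_i$; moreover $\psi_W=\sum_{Y_j\subseteq W}\chi(I_{Y_j})\in\ZZ$, whence $W\in\D(\s(\psi))$ and therefore $Y_i\in\wh\D(\s(\psi))$. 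By Remark~\ref{R:restr-clas} (interpreted as $\s(\psi)(Y_i)=\s(\psi|_{Y_i})=\s(\psi^{(i)})$), the stability of $I_{Y_i}$ transfers. (b) Global semistability of $I$ reduces, via the decomposition $I_Z=\bigoplus_j(I_{Y_j})_{Y_j\cap Z}$, to checking $\chi((I_{Y_j})_{Y_j\cap Z})\geq\psi^{(j)}_{Y_j\cap Z}$ for each $j$: this is immediate when $Y_j\cap Z$ is empty or equal to $Y_j$, and otherwise Lemma~\ref{L:ss-deg}\eqref{L:ss-deg2} applied to each connected component $W$ of $Y_j\cap Z$ gives $\chi((I_{Y_j})_W)\geq\s(\psi^{(j)})_W$, while Definition~\ref{D:ext-s} together with the genericity of $\psi^{(j)}$ yields $\s(\psi^{(j)})_W\geq\psi^{(j)}_W$; summing over $j$ and over the components $W$ gives $\chi(I_Z)\geq\psi_Z$.

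The hard part will be proving the non-emptiness of $R_{I_{Y_i}}$, which via the barycentric approach amounts to the additivity identity $\chi(J_{Z_1\cup Z_2})+\chi(\leftindex_{Z_1\cup Z_2}{J})=\chi(J_{Z_1})+\chi(\leftindex_{Z_1}{J})+\chi(J_{Z_2})+\chi(\leftindex_{Z_2}{J})$ for subcurves $Z_1,Z_2$ of $Y_i$ with empty meet; this should follow by comparing the length of the cokernel of $J_{Z_1\cup Z_2}\hookrightarrow J_{Z_1}\oplus J_{Z_2}$ with that of the saturated inclusion $\leftindex_{Z_1}{J}\oplus\leftindex_{Z_2}{J}\hookrightarrow\leftindex_{Z_1\cup Z_2}{J}$ via Lemma~\ref{L:IY}\eqref{L:IY2}. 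An alternative route circumventing this identity entirely is to invoke Esteves' theorem that every simple rank-one torsion-free sheaf is $\psi(E)$-stable for some vector bundle $E$ of integer slope~\cite{esteves}, producing $\psi^{(i)}=\psi(E)$ directly.
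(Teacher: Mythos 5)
Your overall strategy is the same as the paper's: decompose $I=\bigoplus_i I_{Y_i}$ into indecomposable summands, find for each $Y_i$ a polarization making $I_{Y_i}$ stable, glue these into $\psi\in\Pol^\chi(X)$, observe that each connected component of $Y_i^{\mathsf{c}}$ is biconnected with integral $\psi$-value so that $Y_i\in\wh\D(\s(\psi))$ and $\s(\psi)(Y_i)=\s(\psi^{(i)})$, and deduce semistability of $I$. Your endgame (reduce closedness to polystability via Proposition~\ref{P:pt-VcJspa} and then to stability of the summands via Proposition~\ref{P:poly-sum}) is a legitimate repackaging of the paper's final step, which instead rules out nontrivial isotrivial specializations directly via Proposition~\ref{P:iso-spec}; the two are equivalent by Proposition~\ref{P:iso-poly}.

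The genuine gap is in your primary route for producing $\psi^{(i)}$. The claimed additivity of the barycentric function $m_Z:=\tfrac12\bigl(\chi(J_Z)+\chi(\leftindex_Z J)\bigr)=\tfrac12\bigl(\chi(J_Z)-\chi(J_{Z^{\mathsf c}})+\chi(J)\bigr)$ is \emph{false} for general reduced curves: it is equivalent to the additivity $\delta(Z_1\cup Z_2,W)=\delta(Z_1,W)+\delta(Z_2,W)$ of the defect $\delta(A,B):=\chi(J_A)+\chi(J_B)-\chi(J_{A\cup B})$, which holds for nodal curves but fails as soon as three branches meet at a point. Concretely, for $X$ three concurrent, non-coplanar lines in $\PP^3$ and $J=\O_X$ one has $\chi(\O_X)=\chi(\O_{Z_i})=\chi(\O_{Z_i\cup Z_j})=1$, so $m_{Z_i}=\tfrac12$ for each line while $m_{Z_1\cup Z_2}=\tfrac12\neq m_{Z_1}+m_{Z_2}=1$; worse, the additive extension of the component-wise barycentric values gives $\psi_{Z_1\cup Z_2}=1=\chi(\O_{Z_1\cup Z_2})$, which sits on the boundary of your region $R_{\O_X}$ rather than in its interior, so $\O_X$ would be only semistable, not stable, there. (The region $R_{\O_X}$ is still non-empty in this example, e.g.\ $\psi_{Z_i}=\tfrac13$, but your explicit witness does not land in it, and the inductive argument you sketch cannot be repaired because the identity it rests on is false.) Your fallback — invoking Esteves' result that every simple rank-one torsion-free sheaf is $\psi(E)$-stable for some vector bundle $E$ — is exactly what the paper does (it cites \cite[Cor.~15]{esteves} and then perturbs to a nearby general polarization), and with that substitution your proof is correct. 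So the proposal should be read as the paper's proof with an unnecessary, and in general invalid, attempt to make Esteves' existence statement explicit.
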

\begin{proof}
Consider the decomposition of \eqref{E:decI} of $I$
$$
I=I_{Y_1}\oplus \ldots \oplus I_{Y_c},
$$
so that each $I_{Y_i}$ is a simple rank-$1$ torsion-free sheaf on the subcurve $Y_i\subseteq X$ (which implies that each $Y_i$ is connected). In particular we have that 
\begin{equation}\label{E:decomI}
\chi=\chi(I)=\sum_{i=1}^c \chi(I_{Y_i}).
\end{equation}
We now apply \cite[Cor. 15]{esteves} in order to find a vector bundle $E_i$ on $Y_i$ such that $I_{Y_i}$ is $\psi(E_i)$-stable (in the sense of Definition \ref{D:ss}). 
%i.e. $\s(\psi(E_i))$-semistable with all the inequalities in Definition \ref{D:ss} being strict. 
Pick now a general numerical polarization $\psi_i$ that lies on a chamber whose closure contains $\psi(E_i)$ in the wall and chamber decomposition \eqref{E:arr-hyper}. Then, it is easy to see that $I_{Y_i}$ is also $\psi_i$-stable.

The numerical polarizations $\psi_i\in \Pol^{\chi(I_{Y_i})}(Y_i)$ define a unique numerical polarization $\psi\in \Pol^{\chi}(X)$ by setting 
$$
\psi_Z=\sum_{i=1}^c (\psi_i)_{Y_i\cap Z} \text{ for any subcurve } Z\subseteq X.
$$
Notice that $Y_i\in \wh \D(\s(\psi))$ and $\s(\psi)(Y_i)=\s(\psi_i)$ by construction. Therefore, $I_{Y_i}$ is $\s(\psi)$-semistable because it is $\s(\psi_i)$-semistable. This implies, using Proposition \ref{P:ext-ss}, that $I$ is $\s(\psi)$-semistable, i.e. $I\in \ov \J_X(\psi)$. 

We now claim that $I$ is a closed point of $\J_X(\psi)$. Indeed, consider an isotrivial specialization of $I$  to another sheaf $J\in \ov \J_X(\psi)$, which then by Proposition \ref{P:iso-spec} is of the form $J=\Gr_{W_\bullet}(I)$ for some ordered partition $W_\bullet$ of $X$. Since we have the decomposition \eqref{E:decomI} of $I$, the ordered partition $W_\bullet$ must be a refinement of the ordered partition $Y_\bullet:=(Y_1,\ldots, Y_c)$. However, we must have that $W_\bullet=Y_\bullet$, for otherwise some $I_{Y_i}$ would isotrivially specialize to a $\s(\psi_i)$-semistable sheaf of the form $I_Z\oplus I_{\ov{Y_i-Z}}$, for some non-trivial subcurve $Z\subset Y_i$, which is however impossible because $\s(\psi_i)$ is a general V-stability on $Y_i$. 
Then we have that 
$$
J=\Gr_{W_\bullet}(I)=\Gr_{Y_\bullet}(I)=I.
$$
In other words, $I$ does not have non-trivial isotrivial specializations in $\ov \J_X(\psi)$, which  implies that $I$ is closed point of $\J_X(\psi)$.
\end{proof}

As a special case of Theorem \ref{T:scheme}, we get the following

\begin{corollary}\label{C:scheme}
    Let $X$ be a reduced connected curve over a field $k=\ov k$. Then any compactified Jacobian space $\ov J_X^{\chi}$ is a scheme. 
\end{corollary}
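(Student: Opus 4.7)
The plan is to deduce this immediately from Theorem~\ref{T:scheme} by checking that the hypothesis of that theorem, namely Condition~(*), is automatically satisfied when $S=\Spec k$ with $k$ algebraically closed. Setting $\pi:X\to S=\Spec k$, any closed point $p$ of $X$ is a $k$-point and thus corresponds to a section $\sigma_p:\Spec k\to X$ of $\pi$; the smooth locus $X_{\rm sm}\subseteq X$ meets every irreducible component of $X$ in a nonempty (hence infinite, since $k=\bar k$) open subset. So for each $v\in I(X)$ pick a closed point $p_v\in X_v\cap X_{\rm sm}$ and let $\sigma_v$ be the associated smooth section. The finite family $\{\sigma_v\}_{v\in I(X)}$ witnesses Condition~(*).

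With Condition~(*) verified, the algebraic space $S=\Spec k$ is trivially quasi-separated and locally Noetherian, so Theorem~\ref{T:scheme} applies and yields that any compactified Jacobian space $\ov J_X^\chi$, viewed as a proper algebraic space over $\Spec k$, is representable by schemes over $\Spec k$, i.e.\ is itself a scheme.

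The only conceptual content is the trivial check that smooth sections exist, so there is no real obstacle; the work has already been done in Theorem~\ref{T:scheme} (which in turn relied on the projectivity of Esteves' classical compactified Jacobians from Theorem~\ref{T:projective} and a Zariski's Main Theorem argument comparing the two good moduli morphisms $\Xi$ and $\wt\Xi=\Xi(E)$ for a suitable vector bundle $E$ obtained from Lemma~\ref{L:clospoint}).
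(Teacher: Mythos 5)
Your proposal is correct and is exactly the paper's (implicit) argument: the paper states Corollary~\ref{C:scheme} as an immediate special case of Theorem~\ref{T:scheme}, having already observed in the introduction that Condition~(*) is automatic for $S=\Spec k$ with $k$ algebraically closed. Your verification of Condition~(*) via smooth $k$-rational points on each irreducible component is the right (and only) detail to check.
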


\section{V-compactified Jacobians for reduced curves with planar singularities}\label{sec:VcJ-planar}

The aim of this section is to study V-compactified Jacobians for families of connected reduced curves with planar singularities. The case of fine classical compactified Jacobians was studied in \cite{MRV}, and our goal is to extend most of the results of loc. cit. to V-compactified Jacobian stacks/spaces (not necessarily classical, not necessarily fine). 

%We will extend to this more general class of compactified Jacobians most of the results obtained by Melo-Rapagnetta-Viviani in \cite{MRV} for fine classical compactified Jacobians. 

Consider first a reduced connected curve $X$ over $k=\ov k$ with planar singularities, i.e. such that for any $p\in X$ the complete local ring $\wh \O_{X,p}$ has embedded dimension at most two or, equivalently, $\wh \O_{X,p}=\frac{k[[x,y]]}{f(x,y)}$ for some reduced series $f(x,y)\in k[[x,y]]$. For such curves, the stack of rank-$1$ torsion-free sheaves is particularly well-behaved.

\begin{theorem}\label{T:TFX-pla}
    Let $X$ be a reduced connected curve over $k=\ov k$ with planar singularities.
    Then
    \begin{enumerate}[(i)]
        \item the stack $\TF_X$ is  reduced with complete intersection singularities and embedded dimension at most $2g(X)$ at any point. 
        \item The open substack $\PIC_X\subseteq \TF_X$ is dense and it is the smooth locus of $\TF_X$. In particular, $\TF_X$ is equidimensional of dimension $g(X)-1$.
    \end{enumerate}
\end{theorem}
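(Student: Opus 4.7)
The plan is to deform $X$ to a smooth curve and appeal to the classical smoothness of the relative stack of torsion-free sheaves on the smoothing family.

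First I would note that $\PIC_X$ is smooth of dimension $g(X)-1$. The tangent space at any line bundle $L$ is $H^1(X, \O_X)$ of dimension $g(X)$, and the obstructions in $H^2(X, \O_X)$ vanish since $X$ is one-dimensional; the $\Gm$-gerbe structure accounts for the shift.

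Next I would invoke a smoothing: since the singularities of $X$ are planar, there is a flat projective family $\pi\colon \mathcal{X} \to \Delta$ over a smooth pointed curve $(\Delta, 0)$ with $\mathcal{X}_0 = X$, smooth total space $\mathcal{X}$, and smooth fibers away from $0$. This follows from local smoothability of planar curve singularities, globalized by a Bertini-type argument (working henselian-locally around $0 \in \Delta$ if needed). The key technical input, imported as a classical theorem of Altman-Iarrobino-Kleiman, is that the associated relative stack $\TF_{\mathcal{X}/\Delta} \to \Delta$ has smooth total space over $k$. This ultimately rests on Fogarty's theorem that the Hilbert scheme of points $\mathcal{X}^{[n]}$ on the smooth surface $\mathcal{X}$ is smooth, together with an Abel-type identification of an open substack of $\TF_{\mathcal{X}/\Delta}$ (at sufficiently high Euler characteristic) with a projective bundle over an open piece of the relative Hilbert scheme. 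Concretely, the formal neighborhood of any $I \in \TF_X$ in $\TF_{\mathcal{X}/\Delta}$ decomposes \'etale-locally as a product of a smooth factor modeling $\PIC_X$ and local factors $U_p$ at each point $p$ of $X_{\mathrm{sing}}$ where $\hat I_p$ is not free, with each $U_p$ an \'etale-local piece of the smooth Hilbert scheme of points on $\Spec k[[x,y]]$.

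Granting this smoothness, the theorem follows. The stack $\TF_X$ is the fiber over $0$ of a flat morphism from a smooth ambient to a smooth curve, hence a Cartier divisor in a smooth stack, and therefore reduced, locally a complete intersection, and equidimensional of dimension $g(X)-1$. The embedded dimension at any $I$ is bounded by that of the smooth ambient $\TF_{\mathcal{X}/\Delta}$ at $I$, which via the local factorization is at most $\dim \PIC_X + \sum_{p \in X_{\mathrm{sing}}} 2\delta_p \leq 2 g(X)$ (using $\sum_p \delta_p \leq g(X)$). Smoothness of $\TF_X$ at $I$ forces each factor $U_p$ to be trivial, i.e., $\hat I_p$ to be free at every $p$, i.e., $I \in \PIC_X$; density of $\PIC_X$ in $\TF_X$ follows since any $I \in \TF_X$ lifts locally to a section of the smooth total space whose generic fiber is automatically a line bundle over the smooth $\mathcal{X}_t$.

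The main obstacle is establishing the smoothness of $\TF_{\mathcal{X}/\Delta}$ itself. A self-contained proof would either proceed through the Hilbert-scheme-of-points description via Fogarty's theorem, or through a direct deformation-theoretic analysis at each planar singularity using Eisenbud's matrix factorizations for rank-$1$ modules over hypersurface rings.
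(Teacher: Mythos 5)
Your argument hinges on the claim that the relative stack $\TF_{\mathcal{X}/\Delta}$ of a one\hyp{}parameter smoothing with smooth total surface is smooth, and that claim is false --- not merely hard to establish. Via the smooth, locally surjective Abel maps you yourself invoke, smoothness of $\TF_{\mathcal{X}/\Delta}$ would force smoothness of the relative Hilbert scheme $\Hilb^n_{\mathcal{X}/\Delta}$ of subschemes of the \emph{fibers}; this is not the Hilbert scheme of points of the smooth surface $\mathcal{X}$ (to which Fogarty applies), but the locus in it where the local equation of the family restricts on $Z$ to an element of the image of the tangent space of the base, and your local factors $U_p$ are non-transverse slices of $\Hilb^n(\Spec k[[x,y]])$, not \'etale-local pieces of it. Concretely, take the $A_4$ singularity $f=y^2-x^5$, any local one-parameter smoothing $f=tg$, and the length-two punctual subscheme $Z=V(y,x^2)$. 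Since $y^2$ and $x^5=x\cdot x^4$ both lie in $I_Z^2=(y^2,yx^2,x^4)$, we have $f\in I_Z^2$, so every $\phi\in\operatorname{Hom}(I_Z/I_Z^2,\O_Z)$ kills $f$; the tangent space of $\Hilb^2_{\mathcal{X}/\Delta}$ at $(Z,0)$ is then $\{(\phi,\tau):\phi(f)=\tau\, g|_Z\}=\{(\phi,\tau):\tau\,g|_Z=0\}$, of dimension at least $4$, while the scheme is pure of dimension $3$ (its fibers are $2$-dimensional by Altman--Iarrobino--Kleiman). So $\Hilb^2_{\mathcal{X}/\Delta}$, hence $\TF_{\mathcal{X}/\Delta}$, is singular along the central fiber, and no choice of smoothing direction $g$ helps, since a one-dimensional base cannot surject onto the two-dimensional cokernel $\O_Z$. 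This matches the paper's own hypotheses elsewhere: regularity of total spaces is asserted over the full semiuniversal deformation (Theorem~\ref{T:TF-univ}) or under a versality assumption, never over an arbitrary trait. Once ambient smoothness fails, your deduction that $\TF_X$ is a Cartier divisor in a smooth stack (hence reduced, lci, with the stated embedded-dimension bound) collapses, as does the density argument for $\PIC_X$.

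The paper's proof runs in the opposite direction and never smooths $X$: it uses the $M$-twisted Abel maps $A_M\colon\Hilb^{g(X)}_X\to\TF_X$, $D\mapsto I_D\otimes M$, from the Hilbert scheme of the singular curve itself, shows these are smooth and surjective locally on the target, and imports all the asserted properties (reducedness, complete intersection singularities, embedded dimension at most $2g(X)$, smooth locus equal to the invertible sheaves, equidimensionality) from the known local structure of punctual Hilbert schemes of reduced planar curve singularities (Altman--Iarrobino--Kleiman, Brian\c{c}on--Granger--Speder, Rego). The semiuniversal-deformation statement is a separate, later theorem that uses the equidimensionality of the fibers established here as an input to miracle flatness. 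A further, more minor defect: your bound relies on $\sum_p\delta_p\le g(X)$, which fails for reducible curves (two rational curves meeting in two points have $g=1$ and $\sum_p\delta_p=2$), so the numerology of the embedded-dimension estimate would need repair even if the ambient smoothness held.
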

Note that the dimension of the $\Gm$-rigidification $\TF_X\fatslash \Gm$ is $g(X)$. 
\begin{proof}
    The proof is similar to the proof of \cite[Thm. 2.3]{MRV}: one shows that the $M$-twisted Abel maps of degree $g(X)$ (where $M\in \PIC_X$)
    $$
    \begin{aligned}
    A_M:\Hilb^{g(X)}_X& \longrightarrow \TF_X\\
    D & \mapsto I_D\otimes M
    \end{aligned}
    $$
   are smooth and surjective locally on the codomain (same proof as \cite[Prop. 2.5]{MRV}) and then the results follow from the properties of $\Hilb^d_X$ (see \cite[Fact 2.4]{MRV} and the references therein).
\end{proof}
From the fact that compactified Jacobians are open in $\TF_X$, we immediately deduce the following 

\begin{corollary}\label{C:cJ-pla}
   Let $X$ be as in Theorem \ref{T:TFX-pla}. Then any compactified Jacobian stack $\ov \J_X$ of $X$ satisfies:
   \begin{enumerate}[(i)]
       \item $\ov \J_X$ is reduced with complete intersection singularities of embedded dimension at most $2g(X)$ at every point;
       \item $\ov \J_X$ is equidimensional of dimension $g(X)-1$;
       \item the smooth locus of $\ov \J_X$ coincides with the open substack parameterizing line bundles. 
   \end{enumerate}
\end{corollary}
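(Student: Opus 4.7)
The plan is to derive all three statements as direct consequences of Theorem \ref{T:TFX-pla}, exploiting the defining property (Definition \ref{D:cJ-fam}) that any compactified Jacobian stack $\ov \J_X$ is, by construction, an open substack of $\TF_X^{\chi}$ for some $\chi \in \ZZ$. Since each of the three properties claimed concerns local structure at a point or the dimension of irreducible components, all three behave well under open immersions, so I would transfer the corresponding assertions of Theorem \ref{T:TFX-pla} to $\ov \J_X$ directly.

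For part (i), I would use that reducedness, the local complete intersection property, and the embedded dimension at a given point are all local properties which are preserved under open immersions. Theorem \ref{T:TFX-pla}(i) supplies these properties for $\TF_X$, so they pass to $\ov \J_X$ with no further work. Similarly, for part (iii), smoothness is local on the source and, by Theorem \ref{T:TFX-pla}(ii), the smooth locus of $\TF_X$ is precisely $\PIC_X$; intersecting with the open substack $\ov \J_X$ identifies the smooth locus of $\ov \J_X$ with $\ov \J_X \cap \PIC_X^{\chi}$, i.e.\ with the open substack of $\ov \J_X$ parametrizing line bundles.

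For part (ii), Theorem \ref{T:TFX-pla}(ii) gives that $\TF_X^{\chi}$ is equidimensional of dimension $g(X)-1$. The key observation is that any irreducible component $C$ of the open substack $\ov \J_X$ is a (maximal) irreducible closed subset of $\ov \J_X$, and its closure in $\TF_X^{\chi}$ is an irreducible closed subset, hence contained in some irreducible component $C'$ of $\TF_X^{\chi}$; then $C = C' \cap \ov \J_X$ is open in $C'$ and therefore has the same dimension $g(X)-1$. This makes $\ov \J_X$ equidimensional of dimension $g(X)-1$ (the statement being vacuous if $\ov \J_X$ is empty).

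There is no genuine obstacle to carrying out this argument: the entire geometric content resides in Theorem \ref{T:TFX-pla}, whose proof relies on the smoothness and local surjectivity of the twisted Abel maps $A_M \colon \Hilb^{g(X)}_X \to \TF_X$ together with the known structure of $\Hilb^{g(X)}_X$ for curves with planar singularities. The corollary itself is purely a matter of unpacking Definition \ref{D:cJ-fam} and noting that each of the three properties in question is stable under restriction to an open substack.
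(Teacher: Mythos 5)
Your proposal is correct and follows exactly the paper's route: the paper derives this corollary immediately from the openness of $\ov \J_X$ in $\TF_X$ together with Theorem~\ref{T:TFX-pla}, which is precisely the argument you spell out. The details you add (locality of the properties in (i) and (iii), and the fact that an irreducible component of an open substack is open in a component of the ambient stack for (ii)) are the standard facts implicitly invoked by the paper's one-line deduction.
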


Consider now the effective semiuniversal deformation of $X$:
\begin{equation}\label{E:univdef}
    \begin{tikzcd}
     X \ar[r,hookrightarrow] \arrow[d] \ar[dr, phantom, "\square"] & \fX \arrow[d, "\Phi"]\\ 
     o=\Spec(k) \arrow[r,hookrightarrow] & \Spec R_X
    \end{tikzcd}
\end{equation}
where $R_X$ is a complete local $k$-algebra whose tangent space has dimension equal to $\dim \Ext^1(\Omega_X^1,\O_X)$, $\Phi$ is a projective family of curves with central fiber isomorphic to $X$, and with the property that any deformation of $X$ is obtained, \'etale locally, by pull-back from the family $\Phi$. Since the curve $X$ has locally planar singularities, then $R_X$ is a regular ring, which implies that  $R_X=k[[X_1,\ldots, X_N]]$ where $N=\dim \Ext^1(\Omega_X^1,\O_X)$. 

The stack of relative torsion-free rank-$1$ sheaves on the family $\Phi:\fX\to \Spec R_X$ has the following nice properties.

\begin{theorem}\label{T:TF-univ}
    Let $X$ be a reduced connected curve over $k=\ov k$ with planar singularities and consider its effective semiuniversal deformation as in \eqref{E:univdef}. 
    Then
    \begin{enumerate}[(i)]
        \item \label{T:TF-univ1} The stack $\TF_{\fX/R_X}$ is  regular.
        \item \label{T:TF-univ2} The morphism $\TF_{\fX/\Spec R_X}\to \Spec R_X$ is flat of relative dimension $g(X)-1$. 
    \end{enumerate}
\end{theorem}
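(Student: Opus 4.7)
My plan is to adapt the strategy employed in the proof of Theorem~\ref{T:TFX-pla} to the relative setting over $\Spec R_X$, leveraging two facts specific to planar singularities.

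First, since $X$ has planar singularities, all its singularities are smoothable and the total space $\fX$ of the effective semiuniversal deformation $\Phi$ is regular of dimension $\dim R_X+1=N+1$. Consequently, the relative Hilbert scheme $\Hilb^{g(X)}_{\fX/\Spec R_X}$ is regular, and is flat over $\Spec R_X$ of relative dimension $g(X)$ (this can be verified either directly via the fact that the Hilbert scheme of points on a regular surface is regular, or via the relative version of \cite[Fact 2.4]{MRV}).

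Next, I would construct, for any relative line bundle $M$ on $\fX/\Spec R_X$, the $M$-twisted relative Abel map
$$
A_M:\Hilb^{g(X)}_{\fX/\Spec R_X}\longrightarrow \TF_{\fX/\Spec R_X}, \qquad D\mapsto I_D\otimes M,
$$
and show that $A_M$ is smooth, and that as $M$ ranges over a finite collection of relative line bundles of all possible multidegrees on the central fiber, the maps $A_M$ jointly cover $\TF_{\fX/\Spec R_X}$. The relative version of the argument of \cite[Prop.~2.5]{MRV} should identify the relative tangent space to $A_M$ with a relative $\Ext^0$ that has constant rank, the key input being once again the regularity of $\fX$.

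Putting these together, Part~\eqref{T:TF-univ1} follows immediately: $\TF_{\fX/\Spec R_X}$ is smooth-locally on its codomain the image of the regular stack $\Hilb^{g(X)}_{\fX/\Spec R_X}$ under a smooth surjective morphism, hence it is regular. For Part~\eqref{T:TF-univ2}, once regularity is established, miracle flatness (\cite[\href{https://stacks.math.columbia.edu/tag/00R4}{Tag 00R4}]{stacks-project}) reduces flatness of $\TF_{\fX/\Spec R_X}\to \Spec R_X$ to equidimensionality of fibers. Every geometric fiber of $\Phi$ is a reduced curve with planar singularities (planarity is preserved in a versal family) of arithmetic genus $g(X)$, so Theorem~\ref{T:TFX-pla}(ii) applied fiberwise gives equidimensionality of dimension $g(X)-1$, yielding the desired relative dimension.

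The main obstacle I anticipate is verifying smoothness of the relative Abel maps $A_M$ with the correct relative dimension: while the fiberwise statement is \cite[Prop.~2.5]{MRV}, the relative argument requires one to check that the tangent–obstruction computation goes through $R_X$-relatively, which in turn reduces to the regularity of $\fX$ and a careful analysis of the relative deformation theory of finite subschemes inside $\fX$.
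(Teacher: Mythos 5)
Your outline is essentially the paper's proof: the paper disposes of Part~(i) by citing \cite[Thm.~4.5(iii)]{MRV}, whose argument is exactly the relative Abel-map strategy you describe (smooth surjections $A_M$ from the relative Hilbert scheme onto $\TF_{\fX/R_X}$, locally on the codomain), and Part~(ii) is obtained, as you do, by miracle flatness from regularity of $\Spec R_X$ and of $\TF_{\fX/R_X}$ together with fiberwise equidimensionality from Theorem~\ref{T:TFX-pla}(ii).

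One justification deserves correction. The regularity of $\Hilb^{g(X)}_{\fX/\Spec R_X}$ does \emph{not} follow from Fogarty's theorem that the Hilbert scheme of points on a regular surface is regular: the relative Hilbert scheme of a family of \emph{curves} is a proper subvariety of the Hilbert scheme of points of any ambient smooth space, cut out by incidence conditions, and its smoothness is a genuinely deeper fact. The correct input (and the one used in \cite{MRV}) is the smoothness of the relative Hilbert scheme of a \emph{versal} family of curves with planar singularities, i.e.\ Shende's theorem on relative Hilbert schemes of versal deformations, applied locally at each singular point together with the regularity of the total space $\fX$. Your alternative phrasing (``the relative version of \cite[Fact~2.4]{MRV}'') points in the right direction, but this is where the real content of Part~(i) lies, so it should be invoked explicitly rather than as a throwaway option.
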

\begin{proof}
Part \eqref{T:TF-univ1} is proved as \cite[Thm. 4.5(iii)]{MRV}. 

Part \eqref{T:TF-univ2} follows from miracle flatness \cite[\href{https://stacks.math.columbia.edu/tag/00R4}{Tag 00R4}]{stacks-project} using that $\Spec R_X$ is regular, $\TF_{\fX/R_X}$ is regular by Part \eqref{T:TF-univ1}, hence Cohen-Macaulay, and the fibers of the morphism are equidimensional of dimension $g(X)-1$ by Theorem \ref{T:TFX-pla}.      
\end{proof}

We  now describe the relative V-stability conditions on the family $\Phi:\fX\to \Spec R_X$. Any V-stability condition $\s$ on $X$ determines a relative V-stability condition $\ov \s$ on $\fX/\Spec R_X$ as follows. Since $\Spec R_X$ is a local scheme with closed point $o$, for any geometric point $s$ of $\Spec R_X$ there is a unique specialization $\xi_s:s\rightsquigarrow o$. Then we set 
$$
\ov \s:=\{\ov\s^s:=\xi_s^*(\s)\}\in \VStab(\fX/R_X). 
$$
This is well-defined since any \'etale specialization $\xi:s\rightsquigarrow t$ of geometric points of $\Spec R_X$ is such that $\xi_t\circ \xi=\xi_s$. The following Lemma is obvious.

\begin{lemma}\label{L:VStab-un}
   The map 
   \begin{equation}\label{E:VStab-un}
   \begin{aligned}
   \VStab(X) & \to \VStab(\fX/R_X)  \\
   \s &\mapsto \ov \s
   \end{aligned}
   \end{equation}
is a bijection with inverse given by sending $\s\in \VStab(\fX/R_X)$ to $\s^o\in \VStab(X)$.
\end{lemma}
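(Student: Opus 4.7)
The plan is to verify that the map $\s \mapsto \ov\s$ and the proposed inverse $\t \mapsto \t^o$ are mutually inverse, taking advantage of the fact that $\Spec R_X$ is a local scheme with closed point $o$, so that the pattern of \'etale specializations of geometric points is completely controlled by the specializations to $o$.

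First I would check that the map $\s \mapsto \ov\s$ is well-defined, i.e.\ that for every \'etale specialization $\xi: s \rightsquigarrow t$ of geometric points of $\Spec R_X$ we have $\xi^*(\ov\s^t) = \ov\s^s$. Since $\Spec R_X$ is local with closed point $o$, every geometric point $s$ admits a unique \'etale specialization $\xi_s : s \rightsquigarrow o$, and the composition $\xi_t \circ \xi$ of two \'etale specializations (into $t$, then $o$) agrees with $\xi_s$. Functoriality of the pull-back map \eqref{E:VStab-fun} then yields
\begin{equation*}
\xi^*(\ov\s^t) \;=\; \xi^*(\xi_t^*(\s)) \;=\; (\xi_t\circ \xi)^*(\s) \;=\; \xi_s^*(\s) \;=\; \ov\s^s,
\end{equation*}
as required by Definition~\ref{D:VStabXS}.

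Next I would verify that the two compositions of the asserted bijection are identities. For $\s \in \VStab(X)$, the specialization $\xi_o$ is the identity, so $\ov\s^o = \xi_o^*(\s) = \s$. Conversely, given $\t = \{\t^s\} \in \VStab(\fX/R_X)$, at any geometric point $s$ of $\Spec R_X$, the compatibility of $\t$ along the \'etale specialization $\xi_s : s \rightsquigarrow o$ yields
\begin{equation*}
\ov{\t^o}{}^s \;=\; \xi_s^*(\t^o) \;=\; \t^s,
\end{equation*}
so that $\ov{\t^o} = \t$.

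There is no real obstacle here: the key fact is simply that specializations of geometric points of the local scheme $\Spec R_X$ factor uniquely through the closed point, so a relative V-stability condition on $\fX/R_X$ is the same data as its value at $o$, transported by pullback.
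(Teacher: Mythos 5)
Your proof is correct and follows exactly the argument the paper intends: the paper declares the lemma "obvious" after noting, just as you do, that every geometric point of the local scheme $\Spec R_X$ has a unique \'etale specialization to $o$ and that $\xi_t\circ\xi=\xi_s$, from which both well-definedness and the two inverse identities follow by functoriality of pull-back. Nothing is missing.
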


We now consider relative V-compactified Jacobians stacks and spaces for $\Phi: \fX\to \Spec R_X$.

\begin{theorem}\label{T:VcJ-univ}
   Notation as in Theorem \ref{T:TF-univ}. Let $\s$ be a V-stability condition on $X$ and let $\ov s$ be its associated V-stability condition on $\fX/R_X$ via Lemma~\ref{L:VStab-un}. 
   \begin{enumerate}[(i)]
\item \label{T:VcJ-univ1} $\ov \J_{\fX/R_X}(\ov \s)$ is regular and the morphism $F_{\fX/R_X}(\s):\ov \J_{\fX/R_X}(\ov \s)\to \Spec R_X$ is flat of relative dimension $g(X)-1$.
\item \label{T:VcJ-univ2} the morphism $f_{\fX/R_X}(\ov \s):\ov J_{\fX/R_X}(\ov \s)\to \Spec R_X$ is proper and flat of relative dimension $g(X)$.
   \end{enumerate}
\end{theorem}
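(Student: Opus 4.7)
The plan is to deduce both parts from Theorem~\ref{T:TF-univ}, with part (ii) requiring additional input from the étale local structure of good moduli spaces.

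For part (i), Lemma-Definition~\ref{LD:VStab} identifies $\bar{\mathcal{J}}_{\mathfrak{X}/R_X}(\bar\s)$ as an open substack of $\TF^{|\s|}_{\mathfrak{X}/R_X}$, which is regular and flat over $\Spec R_X$ of relative dimension $g(X)-1$ by Theorem~\ref{T:TF-univ}. Both regularity and flatness pass to open substacks. The relative dimension on each geometric fiber is witnessed by Corollary~\ref{C:cJ-pla}, which gives $\dim \bar{\mathcal{J}}_{X_s}(\bar\s^s) = g(X_s)-1 = g(X)-1$, once one observes that reducedness and planar singularities are preserved under deformations in the semiuniversal family $\mathfrak{X}/R_X$ of a planar curve (planarity is an open condition), and that the arithmetic genus is constant in flat projective families.

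For part (ii), properness of $f_{\mathfrak{X}/R_X}(\bar\s)$ is built into the definition of a compactified Jacobian space (Theorem~\ref{T:VcJ}). The expected relative dimension $g(X)$ follows from part (i) together with the fact that the generic point of $\bar{\mathcal{J}}_{X_s}(\bar\s^s)$ parametrizes line bundles, which have automorphism group $\mathbb{G}_m$ (see \eqref{E:AutI}), so passing to the good moduli space on each fiber adds exactly one to the dimension.

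The main obstacle is flatness of $f_{\mathfrak{X}/R_X}(\bar\s)$, and the plan is to invoke miracle flatness (\cite[\href{https://stacks.math.columbia.edu/tag/00R4}{Tag 00R4}]{stacks-project}). Since $\Spec R_X$ is regular and each geometric fiber of $f_{\mathfrak{X}/R_X}(\bar\s)$ is equidimensional of the expected dimension $g(X) = \dim \bar{J}_{\mathfrak{X}/R_X}(\bar\s) - \dim \Spec R_X$, it suffices to verify that $\bar{J}_{\mathfrak{X}/R_X}(\bar\s)$ is Cohen-Macaulay. For this, one applies the étale local structure theorem for good moduli spaces (\cite{AHLH}): around any closed point, $\bar{\mathcal{J}}_{\mathfrak{X}/R_X}(\bar\s)$ admits an étale-local presentation of the form $[\Spec B / G]$, where $G \cong \mathbb{G}_m^r$ is the reductive stabilizer of the corresponding $\bar\s$-polystable sheaf (see Proposition~\ref{P:pt-VcJspa} and \eqref{E:AutI}), and $\bar{J}_{\mathfrak{X}/R_X}(\bar\s)$ is étale-locally $\Spec B^G$. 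Regularity from part~(i) forces $B$ to be regular, and then the Hochster-Roberts theorem guarantees that $B^G$ is Cohen-Macaulay, closing the argument.
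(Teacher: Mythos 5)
Part (i) is correct and is exactly the paper's argument (openness in $\TF_{\fX/R_X}$ plus Theorem~\ref{T:TF-univ}). For part (ii), the properness is fine, but your flatness argument has a gap in the input to miracle flatness. You need to know, \emph{before} proving flatness, that every geometric fiber $\ov J_{X_s}(\ov\s^s)$ is equidimensional of dimension $g(X)$ and that $\dim \ov J_{\fX/R_X}(\ov\s)=\dim R_X+g(X)$. Your justification --- ``the generic point parametrizes line bundles with stabilizer $\Gm$, so passing to the good moduli space adds exactly one to the dimension'' --- does not establish this. The multidegree is locally constant on $\PIC_{X_s}$, so an entire irreducible component of the semistable locus can consist generically of \emph{strictly} semistable line bundles (namely when $\chi(L_Y)=\s_Y$ for some $\s$-degenerate $Y$); on such a component the good moduli space morphism is not a $\Gm$-gerbe, it contracts positive-dimensional families of extensions onto their polystable graded pieces, and the dimension of the image is not ``stack dimension plus one''. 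Even on components with stable generic point you only get a lower bound; an upper bound on $\dim\ov J_{X_s}(\ov\s^s)$ requires a separate argument. Note that in the paper the equidimensionality of $\ov J_X(\s)$ in dimension $g(X)$ (Corollary~\ref{C:VcJ-pla}(i)) is deduced \emph{from} Theorem~\ref{T:VcJ-univ}(ii), so your argument reverses the logical order and cannot borrow that statement.

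The rest of your strategy for part (ii) is a legitimate alternative to the paper's: the paper simply observes that flatness of $F_{\fX/R_X}(\ov\s)$ descends to the good moduli space by \cite[Thm.~4.16(ix)]{Alp}, and then reads off the relative dimension $g(X)$ from the generic fiber of $\fX/R_X$, which is a smooth genus-$g(X)$ curve whose compactified Jacobian is the usual Jacobian. Your route via the \'etale local structure theorem (which is due to Alper--Hall--Rydh rather than \cite{AHLH}), regularity of the slice $\Spec B$, and Hochster--Roberts to get Cohen--Macaulayness of $B^G$ is sound and is the standard way to prove CM-ness of good moduli spaces of regular stacks; it would yield a valid proof once the fiber-dimension hypothesis of miracle flatness is actually established (for instance by first proving flatness the paper's way, or by a separate dimension count on the polystable strata).
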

\begin{proof}
Part \eqref{T:VcJ-univ1} follows from Theorem \ref{T:TF-univ} using that $\ov \J_{\fX/R_X}(\ov \s)$ is open in $\TF_{\fX/R_X}$. 

Part \eqref{T:VcJ-univ2}: the properness of $f_{\fX/R_X}(\ov \s)$ follows from Theorem \ref{T:VcJ}; the flatness of $f_{\fX/R_X}(\ov \s)$ follows from the flatness of $F_{\fX/R_X}(\ov \s)$ by Part~\eqref{T:VcJ-univ1} together with the fact that the flatness is preserved when passing to  the relative good moduli space (see \cite[Thm. 4.16(ix)]{Alp});  the relative dimension of $f_{\fX/R_X}(\ov \s)$ equals $g(X)$ since the generic fiber is the Jacobian of characteristic $|\s|$ of the generic fiber of $\fX/R_X$, which is a smooth curve of genus $g(X)$.
\end{proof}

\begin{corollary}\label{C:VcJ-pla}
Let $X$ be a reduced connected curve over $k=\ov k$ with planar singularities and let $\s\in \VStab^\chi(X)$. Then we have that:
 \begin{enumerate}[(i)]
 \item \label{C:VcJ-pla1}  $\ov J_X(\s)$ is connected, reduced and equidimensional of dimension $g(X)$.
 \item \label{C:VcJ-pla2} $\ov \J_X(\s)$ is connected,  equidimensional of dimension $g(X)-1$, reduced with locally complete intersection singularities and trivial dualizing sheaf.  
 \end{enumerate}
\end{corollary}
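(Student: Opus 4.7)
The strategy is to derive all assertions from Theorem~\ref{T:VcJ-univ}, applied to the effective semiuniversal deformation $\Phi\colon \fX\to \Spec R_X$ of $X$, whose closed fibres over $o\in \Spec R_X$ are $\ov\J_X(\s)$ and $\ov J_X(\s)$. The equidimensionality of $\ov\J_X(\s)$ of dimension $g(X)-1$, its reducedness, and the fact that its smooth locus is $\PIC^{|\s|}_X$ are already contained in Corollary~\ref{C:cJ-pla}. The locally complete intersection property then follows at once from Theorem~\ref{T:VcJ-univ}\eqref{T:VcJ-univ1}: since $\ov\J_{\fX/R_X}(\ov\s)$ is regular and $R_X=k[[x_1,\dots,x_N]]$ is regular, the closed fibre $\ov\J_X(\s)$ is cut out by the regular sequence $(x_1,\dots,x_N)$ inside a regular stack, hence is l.c.i. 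The equidimensionality of $\ov J_X(\s)$ of dimension $g(X)$ follows from the properness and flatness of $f_{\fX/R_X}(\ov\s)$ in Theorem~\ref{T:VcJ-univ}\eqref{T:VcJ-univ2}, and its reducedness is inherited from that of $\ov\J_X(\s)$ via the identity $\Xi_*\O_{\ov\J_X(\s)}=\O_{\ov J_X(\s)}$ characterising the good moduli space.

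For connectedness, I would first show that $\ov\J_{\fX/R_X}(\ov\s)$ is irreducible: by regularity, its connected components coincide with its irreducible components; by flatness, each dominates $\Spec R_X$; and the generic fibre of $F_{\fX/R_X}(\ov\s)$ is the Jacobian of the smooth generic fibre of $\Phi$ (recall that planar singularities are smoothable), which is geometrically irreducible. Consequently $\ov J_{\fX/R_X}(\ov\s)$ is connected. Now apply Stein factorization to the proper flat morphism $f:=f_{\fX/R_X}(\ov\s)$: the finite $R_X$-algebra $A:=\Gamma(\ov J_{\fX/R_X}(\ov\s),\O)$ is connected (since its $\Spec$ is) and reduced (since $\ov J_{\fX/R_X}(\ov\s)$ is), hence a domain; flat base change yields $A\otimes_{R_X}\mathrm{Frac}(R_X)=\mathrm{Frac}(R_X)$, so $A\hookrightarrow \mathrm{Frac}(R_X)$ is integral over the normal ring $R_X$, forcing $A=R_X$. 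Thus every fibre of $f$ is connected, and in particular so are $\ov J_X(\s)$ and hence $\ov\J_X(\s)$.

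It remains to establish the triviality of $\omega_{\ov\J_X(\s)}$. The relative dualizing line bundle $\omega_{\mathrm{rel}}$ of $F_{\fX/R_X}(\ov\s)$ (which exists because the fibres are Gorenstein, by the l.c.i.\ property) restricts to $\omega_{\ov\J_X(\s)}$ on the closed fibre, so it suffices to show $\omega_{\mathrm{rel}}\cong\O$. On the dense open $\PIC^{|\s|}_{\fX/R_X}\subseteq \ov\J_{\fX/R_X}(\ov\s)$, which is a torsor under the commutative group stack $\PIC^0_{\fX/R_X}$, the sheaf $\omega_{\mathrm{rel}}$ is canonically trivial. The main obstacle, and the heart of the argument, is then to prove that the complement $\ov\J_{\fX/R_X}(\ov\s) - \PIC^{|\s|}_{\fX/R_X}$ has codimension at least two in every geometric fibre---a codimension estimate parallel to the one in \cite[Sec.~5]{MRV}, obtained from a local analysis of the strata defined by the loci on which the universal sheaf fails to be locally free. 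Granting this estimate, the regularity of the total space (together with Hartogs' extension for line bundles on regular schemes) extends the trivialization of $\omega_{\mathrm{rel}}$ from $\PIC^{|\s|}_{\fX/R_X}$ to all of $\ov\J_{\fX/R_X}(\ov\s)$, whence $\omega_{\ov\J_X(\s)}\cong \O$ by restriction to the closed fibre.
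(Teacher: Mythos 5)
Your overall strategy coincides with the paper's: everything is deduced from Theorem~\ref{T:VcJ-univ} applied to the semiuniversal deformation $\fX\to\Spec R_X$, with the fibrewise statements (reducedness, equidimensionality, lci) coming from Corollary~\ref{C:cJ-pla} and flatness. Your connectedness argument is a correct hands-on unwinding of what the paper obtains by citing \cite[Tag 0BUI]{stacks-project}: note only that ``connected and reduced, hence a domain'' is not a valid implication in general --- you need irreducibility of $\ov J_{\fX/R_X}(\ov\s)$, which you do have since it is the good moduli image of the irreducible stack $\ov\J_{\fX/R_X}(\ov\s)$, so the slip is harmless.

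The one genuine inaccuracy is in the dualizing-sheaf step. The codimension estimate you propose to grant --- that $\ov\J_{\fX/R_X}(\ov\s)\setminus\PIC_{\fX/R_X}$ has codimension at least two \emph{in every geometric fibre} --- is false. Already for a curve with a single node, the locus of sheaves that fail to be locally free at the node is a torsor under the Picard scheme of the normalization and has codimension exactly one in the fibre; the boundary of a compactified Jacobian of a nodal curve is a divisor. The estimate that actually holds, and that \cite{MRV} proves and uses, is that this complement has codimension at least two \emph{in the total space} $\ov\J_{\fX/R_X}(\ov\s)$: the extra codimension comes from the base direction, because sheaves non-free at a point of $\delta$-invariant $\delta$ contribute codimension $\geq\delta$ in the fibre while the corresponding curves lie over a locus of codimension $\geq\delta$ in $\Spec R_X$ (for a node, $1+1=2$). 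With the estimate corrected in this way, your Hartogs argument on the regular, hence locally factorial, total space goes through and gives $\omega_{\mathrm{rel}}\cong\O$, whence triviality on the central fibre; the paper simply outsources this entire step to the proof of \cite[Cor.~5.7]{MRV}.
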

\begin{proof}
 Since the morphism  $f_{\fX/R_X}(\s):\ov J_{\fX/R_X}(\ov \s)\to \Spec R_X$ is proper and flat by Theorem \ref{T:VcJ-univ}\eqref{T:VcJ-univ2} and the generic fiber is geometrically connected (being the Jacobian of characteristic $|\s|$ of the smooth generic fiber of $\Phi$), we deduce that also the central fiber $\ov J_X(\s)$ must be connected by  \cite[\href{https://stacks.math.columbia.edu/tag/0BUI}{Tag 0BUI}]{stacks-project}. We deduce that also $\ov \J_X(\s)$ is connected since the good moduli morphism $\ov \J_X(\s)\to \ov J_X(\s)$ has geometrically connected fibers by  \cite[Thm. 4.16(ix)]{Alp}). Moreover:
 \begin{itemize}
     \item $\ov \J_X(\s)$ is reduced with locally complete intersection singularities and equidimensional of dimension $g(X)-1$ by Corollary \ref{C:cJ-pla}.
     \item $\ov \J_X(\s)$ has trivial dualizing sheaf by the same proof of \cite[Cor. 5.7]{MRV}. 
     \item $\ov J_X(\s)$ is reduced since $\ov \J_X(\s)$ is reduced and the property of being reduced descends along good moduli spaces (see \cite[Thm. 4.16(viii)]{Alp}).
     \item $\ov J_X(\s)$ is equidimensional of dimension $g(X)$ by Theorem \ref{T:VcJ-univ}\eqref{T:VcJ-univ2}.
 \end{itemize}

The fact that $\ov \J_X(\s)$ is reduced with locally complete intersection singularities follows from Corollary~\ref{C:cJ-pla} and the fact that it has trivial dualizing sheaf is proved as in \cite[Cor.~5.7]{MRV}.

\end{proof}

%Consider now a $1$-parameter regular smoothing $\pi:\X\to \Delta=\Spec R$ of $X$. Note that such a $1$-parameter regular smoothing exists if and only if $X$ has  planar singularities  (see \cite[]{MRV}). 

%The V-stability condition $\s$ on $X=\X_o$ gives rise to a relative V-stability condition on $\X/S$, which we also call $\s$ by abuse of notation, by adding to it the unique V-stability of characteristic $|\s|$ on the smooth curve $\X_{\ov \eta}$. Theorem \ref{T:VcJ} now implies that  $\ov \J_{\X/S}(\s)$ admits a relative proper good moduli space $\ov J_{\X/S}(\s)\to \Delta$. Since the generic fiber of $\ov J_{\X/S}(\s)_{\eta}$  is isomorphic to the Jacobian of characteristic $|\s|$ of the generic fiber $\X_{\eta}$, and hence it is geometrically connected, then also the special fiber $\ov J_{\X/S}(\s)_o=\ov J_X(\s)$ is connected. This implies that also the V-compactified Jacobian stack $\ov \J_X(\s)$ is connected. 

Finally, we consider relative V-compactified Jacobian stacks/spaces for families of reduced curves with planar singularities. First, we collect in the following theorem the properties of relative V-compactified Jacobian stacks/spaces. 

\begin{theorem}\label{T:vcJ-fam-pla}
 Let $\pi:X\to S$ be a family of connected reduced curves with planar singularities and let $\s$ be a relative V-stability condition on $X/S$.
 Then we have that: 
 \begin{enumerate}[(i)]
     \item\label{T:vcJ-fam-pla1} The morphism $F_{X/S}(\s):\ov \J_{X/S}(\s)\to S$ is flat of relative dimension $g(X)-1$ with geometrically connected and reduced fibers.
     \item\label{T:vcJ-fam-pla2} The morphism $f_{X/S}(\s):\ov J_{X/S}(\s)\to S$ is proper and flat of relative dimension $g(X)$ with geometrically connected and reduced fibers.
 \end{enumerate}
\end{theorem}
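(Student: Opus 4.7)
The strategy is to reduce, étale-locally on $S$, to the effective semiuniversal deformation of a geometric fiber, where the analogous statement has been established by Theorem~\ref{T:VcJ-univ} and Corollary~\ref{C:VcJ-pla}; this mirrors the approach used in \cite{MRV} for fine classical compactified Jacobians.

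First observe that the geometric fibers of $F_{X/S}(\s)$ and $f_{X/S}(\s)$ over a geometric point $s$ of $S$ are respectively $\ov\J_{X_s}(\s^s)$ and $\ov J_{X_s}(\s^s)$, which are connected, reduced and equidimensional of dimensions $g(X)-1$ and $g(X)$ respectively by Corollary~\ref{C:VcJ-pla}. So the statements about geometric fibers and relative dimension reduce to proving flatness. The properness of $f_{X/S}(\s)$ is already part of Theorem~\ref{T:VcJ}.

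To establish flatness, we first note that the formation of $\ov \J_{X/S}(\s)$ commutes with arbitrary base change $T\to S$, since $\TF^{|\s|}_{X/S}$ commutes with base change and the $\s$-semistability condition is fiberwise (Lemma-Definition~\ref{LD:VStab}); moreover $\ov J_{X/S}(\s)$ commutes with flat base change by the corresponding property of good moduli spaces. Now fix a closed point $s_0$ of $S$ with associated geometric point $s$. Since $X_s$ has planar singularities, the ring $R_{X_s}$ of the effective semiuniversal deformation $\fX/\Spec R_{X_s}$ of $X_s$ is regular, and by Artin approximation (applied to the formal completion of $X/S$ at $s_0$ together with the semiuniversal property of $\fX$) there exist an étale neighborhood $p: U \to S$ of $s_0$ and a morphism $q: U \to \Spec R_{X_s}$ such that $X\times_S U$ is isomorphic to $\fX\times_{\Spec R_{X_s}} U$ as a family of curves, compatibly with the relative V-stability conditions; this last compatibility follows from Lemma~\ref{L:VStab-un}, which shows that the relative V-stability $\ov \s$ on $\fX/R_{X_s}$ is uniquely determined by its value $\s^s$ on the closed fiber. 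By Theorem~\ref{T:VcJ-univ}\eqref{T:VcJ-univ1}, $\ov \J_{\fX/R_{X_s}}(\ov\s) \to \Spec R_{X_s}$ is flat of relative dimension $g(X)-1$; base-changing along $q$ and using that $p$ is faithfully flat, we conclude that $F_{X/S}(\s)$ is flat of relative dimension $g(X)-1$ in a neighborhood of $s_0$. Letting $s_0$ vary over $S$ establishes~\eqref{T:vcJ-fam-pla1}.

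For~\eqref{T:vcJ-fam-pla2}, flatness of $f_{X/S}(\s)$ follows from the flatness of $F_{X/S}(\s)$ by \cite[Thm.~4.16(ix)]{Alp} applied to the decomposition $F_{X/S}(\s) = f_{X/S}(\s) \circ \Xi_{X/S}(\s)$ into the good moduli morphism followed by $f_{X/S}(\s)$. The principal technical obstacle is the Artin-approximation step: one must produce an étale neighbourhood on which the family $X/S$ \emph{together with} its relative V-stability condition is pulled back from the semiuniversal deformation. The first part is standard for families of reduced curves with planar singularities; the compatibility on V-stabilities is automatic from Lemma~\ref{L:VStab-un}, and this is the crucial place where the combinatorial rigidity of V-stability conditions (encoded in the bijection $\VStab(X)\cong \VStab(\fX/R_X)$) enters the proof.
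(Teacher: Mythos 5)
Your proposal is correct and follows essentially the same route as the paper: geometric fibers and properness from Corollary~\ref{C:VcJ-pla} and Theorem~\ref{T:VcJ}, flatness of $F_{X/S}(\s)$ by identifying it \'etale-locally with the pull-back of $F_{\fX_s/R_{X_s}}(\ov{\s^s})$ and invoking Theorem~\ref{T:VcJ-univ}, and flatness of $f_{X/S}(\s)$ via descent of flatness along good moduli morphisms. Your extra discussion of the Artin-approximation step and the matching of V-stability conditions via Lemma~\ref{L:VStab-un} just makes explicit what the paper leaves implicit.
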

\begin{proof}
The properties of the geometric fibers of $F_{X/S}(\s)$ and $f_{X/S}(\s)$ follow from Corollary \ref{C:VcJ-pla}. The properness of  $f_{X/S}(\s)$ follows from Theorem \ref{T:VcJ}. The flatness of $F_{X/S}(\s)$ (which then implies the flatness of $f_{X/S}(\s)$ as in Theorem \ref{T:VcJ-univ}\eqref{T:VcJ-univ2}) follows from Theorem \ref{T:VcJ-univ}\eqref{T:VcJ-univ1} using that $F_{X/S}(\s)$ is \'etale locally at any geometric point $s$ of $S$ the pull-back of $F_{\fX_s/R_{X_s}}(\ov{\s^{s}})$.    
\end{proof}

Next, we prove that a compactified Jacobian stack for a family is a V-compactified Jacobian as soon as it is such on every geometric fiber. 

\begin{theorem}\label{T:char-vcJ-pla}
 Let $\pi:X\to S$ be a family of connected reduced curves with planar singularities. A compactified Jacobian stack for $X/S$ is a V-compactified Jacobian stack if and only if it is such on every geometric fiber of $X/S$.
\end{theorem}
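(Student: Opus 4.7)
The \emph{only if} direction is immediate: if $\ov \J^\chi_{X/S} = \ov \J_{X/S}(\s)$ for some $\s = \{\s^s\} \in \VStab^\chi(X/S)$, then base-changing along each geometric fiber exhibits $\ov \J^\chi_{X_s}$ as the V-compactified Jacobian $\ov \J_{X_s}(\s^s)$. For the \emph{if} direction, suppose that for each geometric point $s$ of $S$ there exists a (necessarily unique, by Corollary~\ref{C:unic-s}) $\s^s \in \VStab^\chi(X_s)$ with $\ov \J^\chi_{X_s} = \ov \J_{X_s}(\s^s)$. Setting $\s := \{\s^s\}$, it suffices to show that $\s$ is a relative V-stability condition, i.e.\ that $\xi^*(\s^t) = \s^s$ for every \'etale specialization $\xi \colon s \rightsquigarrow t$; once this is established, the equality $\ov \J^\chi_{X/S} = \ov \J_{X/S}(\s)$ follows, because both are open substacks of $\TF^\chi_{X/S}$ with the same set of geometric points. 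Working \'etale-locally reduces us to $S = \Delta = \Spec R$ for a strictly henselian DVR $R$ with generic point $s$ specializing to the closed point $t$; fix $Y \in \BCon(X_s)$ and set $W := \xi_*(Y) \in \BCon(X_t)$.

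The central tool is the following invariance: for any line bundle $L$ on $X/\Delta$ of relative Euler characteristic $\chi$,
$$\chi(L(s)_Y) = \deg(L|_{\overline Y}) + 1 - g(\overline Y_\tau) = \chi(L(t)_W),$$
since the degree of $L$ is constant on each connected component of $X$ and arithmetic genus is preserved under flat specialization along $\overline Y/\Delta$. To prove the inequality $\s^s_Y \leq \s^t_W$: by Corollary~\ref{C:cJ-pla} the Jacobian is dense in $\ov \J_{X_t}(\s^t)$, so a standard multidegree argument produces a $\s^t$-semistable line bundle $L_t$ on $X_t$ with $\chi((L_t)_W) = \s^t_W$. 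Since the relative Picard stack $\PIC_{X/\Delta} \to \Delta$ is smooth (as $H^2(X_\tau, \mathcal O) = 0$), we may lift $L_t$ to a line bundle $L$ on $X$ after a finite \'etale base change. Because $\ov \J^\chi_{X/\Delta}$ is open in $\TF^\chi_{X/\Delta}$, the preimage of $\ov \J^\chi_{X/\Delta}$ under the classifying map $L\colon \Delta \to \TF^\chi_{X/\Delta}$ is an open subset of $\Delta$ containing the closed point $t$, hence equals $\Delta$. Therefore $L(s) \in \ov \J_{X_s}(\s^s)$, giving $\chi(L(s)_Y) \geq \s^s_Y$, and combining with the invariance yields $\s^t_W \geq \s^s_Y$. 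Applied to $Y^\mathsf{c}$ (with $\xi_*(Y^\mathsf{c}) = W^\mathsf{c}$), the same argument gives $\s^s_{Y^\mathsf{c}} \leq \s^t_{W^\mathsf{c}}$.

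Combining these two componentwise inequalities with the fact that $\s_Y + \s_{Y^\mathsf{c}} \in \{\chi,\chi+1\}$ on both sides (Condition~\eqref{E:sum-n}) yields the desired equality $\s^s_Y = \s^t_W$ in every case except the following exceptional configuration: $Y, Y^\mathsf{c}$ are $\s^s$-degenerate while $W, W^\mathsf{c}$ are $\s^t$-nondegenerate, with exactly one of the two inequalities strict. To rule this out, use that $Y \in \D(\s^s)$ guarantees the existence of a $\s^s$-polystable sheaf $I_s = I_Y \oplus I_{Y^\mathsf{c}}$ whose stable summands are supported on $Y$ and $Y^\mathsf{c}$ respectively (by Proposition~\ref{P:poly-sum} and Lemma-Definition~\ref{LD:V-subgr}). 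By properness of $\ov J^\chi_{X/\Delta} \to \Delta$, the closed point $\Xi_{X/\Delta}(I_s) \in \ov J^\chi_{X_s}$ extends (after base change) to a $\Delta$-section, whose special fiber corresponds to some $\s^t$-polystable $I_t$ on $X_t$; we claim $I_t$ decomposes along $(W, W^\mathsf{c})$. Indeed, using the planar singularity assumption and Theorem~\ref{T:TF-univ}, the stack $\TF_{\overline Y/\Delta}$ is flat over $\Delta$, so each stable summand $I_Y$ (resp.\ $I_{Y^\mathsf{c}}$) extends to a flat family on $\overline Y/\Delta$ (resp.\ $\overline{Y^\mathsf{c}}/\Delta$); the direct sum of the special fibers is a $\s^t$-semistable sheaf (by Proposition~\ref{P:ext-ss}) mapping to the same closed point of $\ov J^\chi_{X_t}$, and hence equals $I_t$ by Proposition~\ref{P:pt-VcJspa}. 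Thus $W \in \D(\s^t)$, contradicting the exceptional configuration, and equality holds.

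The main technical point is the splitting-compatibility in the last paragraph: showing that separate extensions of the two stable summands remain $\s$-semistable in the relative setting and assemble into the polystable specialization. This hinges on combining the openness of $\ov \J^\chi_{X/\Delta}$ with the $\s(Y)$- and $\s(Y^\mathsf{c})$-semistability of the extensions on the respective relative subcurves, together with the identification via the good moduli space. The rest of the argument is, by comparison, direct: a line-bundle lifting argument controlled by the Picard-smoothness and an elementary case analysis on the degeneracy sums.
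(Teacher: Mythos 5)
Your overall architecture is the right one and matches the paper's: reduce to a strictly henselian DVR, and compare $\s^s$ with $\xi^*(\s^t)$ by producing line bundles that achieve the stability bound on a given biconnected subcurve and moving them across the family using openness (resp.\ properness). However, there are two genuine gaps.

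First, the assertion that density of line bundles (Corollary~\ref{C:cJ-pla}) plus ``a standard multidegree argument'' yields a $\s^t$-semistable \emph{line bundle} $L_t$ with $\chi((L_t)_W)=\s^t_W$ is not a standard argument; it is precisely the content of Lemma~\ref{L:exis-sh}, whose proof occupies a page and genuinely uses the planar hypothesis: one smooths out all singularities of $X_t$ except those on $W\cap W^{\mathsf c}$ to get a family degenerating from a vine curve, extends a suitable line bundle from the vine curve by universal closedness, checks the limit is locally free along $W\cap W^{\mathsf c}$ with $\chi(\cdot_W)=\s^t_W$, and only then deforms it to a line bundle using Theorem~\ref{T:TFX-pla}. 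Density alone only tells you that semistable sheaves are limits of semistable line bundles, and since $\chi(L_W)$ can only drop (weakly) under such specialization, it does not produce a line bundle attaining the minimum $\s^t_W$.

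Second, your treatment of the exceptional configuration is circular. To show that the polystable limit $I_t$ splits along $(W,W^{\mathsf c})$ you invoke Proposition~\ref{P:ext-ss} to conclude that the direct sum of the separately extended summands is $\s^t$-semistable; but semistability of each summand in the sense of Definition~\ref{D:ss} already requires its support ($W$, resp.\ $W^{\mathsf c}$) to lie in $\wh\D(\s^t)$ --- which is exactly the conclusion you are after. (There is also no identified stability condition on $\ov Y/\Delta$ governing the extension of $I_Y$, and Theorem~\ref{T:TF-univ} concerns the semiuniversal deformation, not $\ov Y/\Delta$.) The paper sidesteps this case entirely: instead of squeezing one-sided inequalities for $Y$ and $Y^{\mathsf c}$ against Condition~\eqref{E:sum-n}, it proves the reverse inequality $\xi^*(\s^t)\le\s^s$ directly, by applying Lemma~\ref{L:exis-sh} on the \emph{generic} fiber to get a line bundle with $\chi(L_{Y})=\s^s_{Y}$, extending it over $\Delta$ by universal closedness, and using $\chi((\I_t)_{W})\le\chi((\I_t)_{|W})=\chi(L_Y)$ together with semistability of the special fiber (this is Proposition~\ref{P:s-comp}). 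If you replace your ``multidegree argument'' by Lemma~\ref{L:exis-sh} and your exceptional-case analysis by this second application of the same lemma on the generic fiber, your proof becomes the paper's.
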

\begin{proof}
The implication only if is obvious. Let us prove the if implication.

Consider a compactified Jacobian stack $\ov \J_{X/S}^{\chi}$ for $X/S$ and assume that, for every geometric point $s$ of $S$, the fiber of $\ov \J_{X/S}^{\chi}$ over $s$ is equal to $\ov \J_{X_s}(\s^s)$ for some (uniquely determined) V-stability condition $\s^s\in \VStab^{\chi}(X_s)$.
Proposition \ref{P:s-comp} below implies that $\s=\{\s^s\}$ is a V-stability condition on $X/S$ of characteristic $\chi$. By construction, we have that  
$$
\ov \J_{X/S}^{\chi}=\ov \J_{X/S}(\s),
$$
and we are done.     
\end{proof}

\begin{proposition}\label{P:s-comp}
 Let $\pi:X\to \Delta=\Spec R$ be a family of connected reduced curves with planar singularities, with $R$ a strictly Henselian DVR with residue field $k=\ov k$. Denote by $o=\Spec k$ the special point of $\Delta$, by $\eta$ (resp. $\ov \eta$) its  generic (resp. geometric generic) point and by $\xi:\ov\eta \rightsquigarrow o$ the \'etale specialization induced by $o\in \ov{\{\eta\}}$. 
 
 Let $\s^o$ be a V-stability condition on the special fiber $X_o$ and $\s^{\ov \eta}$ be a V-stability condition on the geometric generic fiber $X_{\ov \eta}$, and assume that $|\s^o|=|\s^{\ov \eta}|=:\chi$.
Consider the constructible subset  $\ov \J_{X/\Delta}(\s^o,\s^{\ov \eta})\subseteq \TF_{X/\Delta}^{\chi}$  whose fiber over $T\xrightarrow{f} \Delta$ is given by 
$$
\ov \J_{X/S}(\s^o,\s^{\ov \eta})(T):=\left\{
\begin{aligned}
    I \in  \TF^\chi_{X/S}(T): \: \chi((I_{|X_{t}})_Y)\geq \s^{f(t)}_Y & \text{ for every geometric point $t$ of $T$ }\\
    & \text{ and for every } Y \in \BCon(X_{f(t)})
    \end{aligned}\right\}.
$$
Then we have that 
\begin{enumerate}
\item \label{P:s-comp1} $\ov \J_{X/\Delta}(\s^o,\s^{\ov \eta})\subseteq \TF_{X/\Delta}^{\chi} \text{ is open } \Longleftrightarrow \s^{\ov \eta}\leq \xi^*(\s^o)$.
\item \label{P:s-comp2} The following conditions are equivalent:
\begin{enumerate}[(i)]
\item \label{P:s-comp2i} $(o \mapsto \s^o, \eta \mapsto \s^{\ov \eta})$ defines a V-stability on  $X/\Delta$, i.e. $\s^{\ov \eta}= \xi^*(\s^o)$.
\item \label{P:s-comp2ii} $\ov \J_{X/\Delta}(\s^o,\s^{\ov \eta})$ is a compactified Jacobian stack for $X/\Delta$.
\item \label{P:s-comp2iii} $\ov \J_{X/\Delta}(\s^o,\s^{\ov \eta})\subseteq \TF_{X/\Delta}^{\chi}$ is open and the map $\ov \J_{X/\Delta}(\s^o,\s^{\ov \eta})\to \Delta$ is universally closed. 
\end{enumerate}
\end{enumerate}
\end{proposition}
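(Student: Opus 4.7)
The plan revolves around the fact that by Theorem~\ref{T:VcJ-univ}, the relative V-stability $(\s^o, \xi^*(\s^o))$ on $X/\Delta$ always yields a genuine relative compactified Jacobian $\ov\J_{X/\Delta}(\s^o, \xi^*(\s^o))$ (in particular open in $\TF^\chi_{X/\Delta}$ with a proper good moduli space over $\Delta$), and that flatness of $\I\in \TF^\chi_{X/\Delta}(\Delta)$ implies
$\chi(\I(\ov\eta)_Y)=\chi(\I(o)_{\xi_*(Y)})$ for every $Y\in \BCon(X_{\ov\eta})$.

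For Part (1), the subset $\ov\J_{X/\Delta}(\s^o,\s^{\ov\eta})$ is cut out in $\TF^\chi_{X/\Delta}$ by integer-valued Euler-characteristic constraints, so it is constructible and its openness is equivalent to stability under the unique nontrivial specialization $\ov\eta\rightsquigarrow o$ in $\Delta$. The implication ($\Leftarrow$) is then immediate from the chain
\[
\chi(\I(\ov\eta)_Y) = \chi(\I(o)_{\xi_*(Y)}) \geq \s^o_{\xi_*(Y)} = \xi^*(\s^o)_Y \geq \s^{\ov\eta}_Y,
\]
which forces $\s^{\ov\eta}$-semistability of $\I(\ov\eta)$. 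For ($\Rightarrow$), openness together with the valuative criterion applied to $\ov\J_{X/\Delta}(\s^o,\xi^*(\s^o))\to \Delta$ yields the inclusion $\ov\J_{X_{\ov\eta}}(\xi^*(\s^o))\subseteq \ov\J_{X_{\ov\eta}}(\s^{\ov\eta})$: every $\xi^*(\s^o)$-semistable $I_{\ov\eta}$ extends to $\I$ with $\I(o)\in \ov\J_{X_o}(\s^o)$, and openness of $\ov\J_{X/\Delta}(\s^o,\s^{\ov\eta})$ then forces $I_{\ov\eta}$ to be $\s^{\ov\eta}$-semistable. For each $Y_0\in \BCon(X_{\ov\eta})$, the existence of a $\xi^*(\s^o)$-semistable sheaf with $\chi(I_{Y_0})=\xi^*(\s^o)_{Y_0}$ (the \emph{tightness} property, discussed below) then forces $\s^{\ov\eta}_{Y_0}\leq \xi^*(\s^o)_{Y_0}$.

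For Part (2), the implications (i)$\Rightarrow$(ii) (Theorem~\ref{T:VcJ} applied to the relative V-stability $(\s^o,\s^{\ov\eta})$) and (ii)$\Rightarrow$(iii) (a compactified Jacobian stack has a proper good moduli space, hence is universally closed over $\Delta$) are immediate, so the crux is (iii)$\Rightarrow$(i). By Part~(1), openness yields $\s^{\ov\eta}\leq \xi^*(\s^o)$. Suppose for contradiction that $\s^{\ov\eta}_{Y_0} < \xi^*(\s^o)_{Y_0}$ for some $Y_0\in \BCon(X_{\ov\eta})$. By tightness of $\s^{\ov\eta}$ on $X_{\ov\eta}$, pick an $\s^{\ov\eta}$-semistable sheaf $I_{\ov\eta}$ with $\chi((I_{\ov\eta})_{Y_0})=\s^{\ov\eta}_{Y_0}$. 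Universal closedness of $\ov\J_{X/\Delta}(\s^o,\s^{\ov\eta})\to \Delta$ extends this to a relative sheaf $\I$ on $X/\Delta$ with $\I(o)\in \ov\J_{X_o}(\s^o)$. But flatness then forces
\[
\chi(\I(o)_{\xi_*(Y_0)})=\chi((I_{\ov\eta})_{Y_0})=\s^{\ov\eta}_{Y_0}< \xi^*(\s^o)_{Y_0}=\s^o_{\xi_*(Y_0)},
\]
contradicting the $\s^o$-semistability of $\I(o)$. Hence $\s^{\ov\eta}=\xi^*(\s^o)$, giving (i).

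The main obstacle, used in both parts, is the \textbf{tightness property}: for any V-stability $\t$ on a connected reduced curve $Z$ with planar singularities and any $W\in \BCon(Z)$, there exists a $\t$-semistable sheaf $I$ on $Z$ with $\chi(I_W)=\t_W$. When $W\in \D(\t)$ this follows via a polystable direct sum $I_W\oplus I_{W^c}$ (using Proposition~\ref{P:ext-ss} applied to $W,W^c\in \wh\D(\t)$, together with the non-emptiness of $\ov\J_W(\t(W))$ and $\ov\J_{W^c}(\t(W^c))$ guaranteed by Lemma-Definition~\ref{LD:V-subgr} and Theorem~\ref{T:VcJ}). The non-degenerate case, which is more delicate, would be handled by lifting $\t$ to a general V-stability $\t' \geq \t$ via the upper lifting property of Proposition~\ref{P:D-pos}(ii), arranged so that $\t'_W=\t_W$ (which is forced since $W\notin \D(\t)$ by Remark~\ref{R:VStab-pos}), and then using a $\t'$-stable sheaf from the non-empty fine compactified Jacobian $\ov J_Z(\t')$, which automatically satisfies $\chi(I_W)=\t'_W=\t_W$ on the component of the moduli space realising the minimum.
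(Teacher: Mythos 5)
Your overall architecture is sound and close in spirit to the paper's: both reduce everything to the existence, for a given V-stability $\t$ on a connected curve $Z$ with planar singularities and a given $W\in\BCon(Z)$, of a $\t$-semistable sheaf $I$ with $\chi(I_W)=\t_W$ exactly (your ``tightness property'', which is the paper's Lemma~\ref{L:exis-sh}). Your routing is mildly different --- you transfer sheaves from the special to the generic fiber via the valuative criterion for the auxiliary V-compactified Jacobian $\ov\J_{X/\Delta}(\s^o,\xi^*(\s^o))$ and apply tightness on $X_{\ov\eta}$, whereas the paper applies tightness on $X_o$, lifts a line bundle over the strictly Henselian base using smoothness of $\PIC_{X/\Delta}$, and uses openness directly --- but the two are of comparable difficulty and both rest entirely on that lemma. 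A small inaccuracy: the relation $\chi(\I(\ov\eta)_Y)=\chi(\I(o)_{\xi_*(Y)})$ you invoke twice is false in general; flatness only gives $\chi(\I(\ov\eta)_{|Y})=\chi(\I(o)_{|\xi_*(Y)})$, and since $\I(o)_{\xi_*(Y)}$ is the quotient of $\I(o)_{|\xi_*(Y)}$ by a torsion subsheaf one only gets $\chi(\I(\ov\eta)_Y)\geq\chi(\I(o)_{\xi_*(Y)})$. Fortunately this inequality points in the direction you need in both places where you use it, so the slip is harmless.

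The genuine gap is in your proof of tightness in the $\s$-nondegenerate case. Lifting $\t$ to a general $\t'\geq\t$ with $\t'_W=\t_W$ is fine (Proposition~\ref{P:D-pos}\eqref{P:D-pos2} and Remark~\ref{R:VStab-pos}), but the assertion that a $\t'$-stable sheaf ``automatically satisfies $\chi(I_W)=\t'_W$ on the component of the moduli space realising the minimum'' is exactly the statement to be proved, not an argument for it: semistability only gives $\chi(I_W)\geq\t'_W$ for every point of $\ov\J_Z(\t')$, and nothing you say explains why the minimum of $\chi(I_W)$ over this moduli space is $\t'_W$ rather than some larger value. Reducing to a general V-stability does not make this easier --- the question is equally hard there. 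The paper's Lemma~\ref{L:exis-sh} handles precisely this point, and its proof is genuinely geometric: one smooths all singularities of $Z$ except those in $W\cap W^{\mathsf{c}}$ (possible because planar singularities can be smoothed independently), reduces to a vine curve where the equality $\chi(M_{W})=\t_W$ is achieved by an explicit line bundle, takes the limit by universal closedness, checks the limit is locally free along $W\cap W^{\mathsf{c}}$, and finally deforms it to a line bundle on $Z$ itself using the density of $\PIC_Z$ in $\TF_Z$ (Theorem~\ref{T:TFX-pla}) --- so the planar hypothesis enters essentially, which your sketch never uses at this step. Your degenerate case via $I_W\oplus I_{W^{\mathsf{c}}}$ and Proposition~\ref{P:ext-ss} is correct, though note that the non-emptiness of $\ov\J_W(\t(W))$ does not follow from Theorem~\ref{T:VcJ} alone; it needs the flatness and properness over the semiuniversal deformation space (Theorem~\ref{T:VcJ-univ}, Corollary~\ref{C:VcJ-pla}), i.e.\ again the planar hypothesis. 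Without a proof of tightness in the nondegenerate case, both the forward implication of Part~(1) and the implication (iii)$\Rightarrow$(i) of Part~(2) remain open in your write-up.
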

\begin{proof}
Part \eqref{P:s-comp1}: assume first that $\s^{\ov \eta}\leq \xi^*(\s^o)$. 
Consider the V-stability condition $\wt \s:=(\s^o,\xi^*(\s^o))$ on $X/\Delta$ and its associated compactified Jacobian stack $\ov \J_{X/\Delta}(\wt \s)$ by Theorem \ref{T:VcJ}. Remark \ref{R:inc-VcJ} and the assumption $\s^{\ov \eta}\leq \xi^*(\s^o)$ imply that $\ov \J_{X/\Delta}(\s^o,\s^{\ov \eta})=\ov \J_{X/\Delta}(\wt \s)\bigcup \ov \J_{X_{\eta}}(\s^{\ov \eta})$. Hence, $\ov \J_{X/\Delta}(\s^o,\s^{\ov \eta})$ is open in $\TF_{X/\Delta}^{\chi}$ since $\ov \J_{X_{\eta}}(\s^{\ov \eta})$ is open in $\TF_{X_\eta}^{\chi}$ by Lemma-Definition \ref{LD:VStab}, and hence also in $\TF_{X/\Delta}^{\chi}$.

Conversely, assume that $\ov \J_{X/\Delta}(\s^o,\s^{\ov \eta})\subseteq \TF_{X/\Delta}^{\chi}$ is open. Since $R$ is stricly Henselian and $\pi:X\to \Delta$ is a family of reduced curves, we have that the irreducible components of $X_{\eta}$ are geometrically irreducible (see e.g. the proof of \cite[Lemma 5.1]{MRV}) so that any subcurve of $X_{\ov \eta}$ is given by $Y_{\ov \eta}$ for some unique subcurve $Y$ of $X_{\eta}$. Fix now a biconnected subcurve $Y_{\ov \eta}$ of $X_{\ov \eta}$ and consider the biconnected subcurve (see \eqref{E:xi*})
$$Y_o:=\xi_*(Y_{\ov \eta})=\ov Y\cap X_o\subset X_o.$$ 
Lemma \ref{L:exis-sh} below implies that there exists a line bundle $L\in \ov J_{X_o}(\s^o)$ such that $\chi(L_{Y_o})=\s^o_{Y_o}$. Since $\PIC_{X/\Delta}^{\chi}\to \Delta=\Spec R$ is smooth and $R$ is strictly Henselian, we can find $\L\in \PIC_{X/\Delta}^{\chi}(\Delta)$ such that $\L_{o}=L$ (see \cite[Sec. 2.3, Prop. 5]{Neron}). Since $\ov \J_{X/\Delta}(\s^o,\s^{\ov \eta})\subseteq \TF_{X/\Delta}^{\chi}$ is open by assumption, we must have that $\L\in \ov \J_{X/\Delta}(\s^o,\s^{\ov \eta})(\Delta)$. We now compute
\begin{equation*}
  \begin{aligned}
   & (\xi^*(\s^o))_{Y_{\ov \eta}}=\s^o_{Y_o}=\chi(L_{Y_o}) &\text{ by the definition }\eqref{E:VStab-fun} \text{ of } \xi^*(\s^o) \text{ and the property of $L$} \\ 
   &  =\chi((\L_{o})_{Y_{o}})=\chi((\L_{\eta})_{Y_{\eta}}) =\chi((\L_{\ov \eta})_{Y_{\ov \eta}}) & \text{ since $\L$ is a line bundle and $\ov{Y}\to \Delta$ is a flat family}\\ 
   & \geq \s^{\ov \eta}_{Y_{\ov \eta}} & \text{ since } \L_{\eta}\in \ov \J_{X_\eta}(\s^{\ov \eta}).
  \end{aligned}  
\end{equation*}
Since this is true for any $Y_{\ov \eta}\in \BCon(X_{\ov \eta})$, we deduce that $\s^{\ov \eta}\leq \xi^*(\s^o)$.

Part \eqref{P:s-comp2}: the implication $\eqref{P:s-comp2i}\Rightarrow \eqref{P:s-comp2ii}$ follows from Theorem \ref{T:VcJ}, while the implication $\eqref{P:s-comp2ii}\Rightarrow \eqref{P:s-comp2iii}$ follows from the definition of compactified Jacobian stack. Assume now Property \eqref{P:s-comp2iii} and let us show that \eqref{P:s-comp2i} holds true. 
Using that $\ov \J_{X/\Delta}(\s^o,\s^{\ov \eta})\subseteq \TF_{X/\Delta}^{\chi}$ is open and Part \eqref{P:s-comp1}, we obtain the inequality $\s^{\ov \eta}\leq \xi^*(\s^o)$. In order to show the other inequality $\s^{\ov \eta}\geq \xi^*(\s^o)$, fix,  as before, a biconnected subcurve $Y_{\ov \eta}$ of $X_{\ov \eta}$ and consider the biconnected subcurve 
$$Y_o:=\xi_*(Y_{\ov \eta})=\ov Y\cap X_o\in \BCon(X_o).$$ 
Lemma \ref{L:exis-sh} below implies that, up to a finite base change of $R$, there exists a line bundle $L\in \ov J_{X_\eta}(\s^{\ov \eta})(\eta)$ such that $\chi(L_{Y_\eta})=\s^{\ov \eta}_{Y_{\ov \eta}}$. Since $\ov \J_{X/\Delta}(\s^o,\s^{\ov \eta})\to \Delta$ is universally closed, up to a finite base change of $R$, we can find $\I\in \ov \J_{X/\Delta}(\s^o,\s^{\ov \eta})(\Delta)$ such that $\I_{\eta}=L$. 
We now compute
\begin{equation*}
  \begin{aligned}
& \s^{\ov \eta}_{Y_{\ov \eta}}=\chi(L_{Y_\eta})=\chi((\I_\eta)_{|Y_\eta}) &
\text{ by the property of $L$ and $\I_\eta=L$}\\
&=\chi((\I_o)_{|Y_o}) & \text{ since $\ov{Y}\to \Delta$ is a flat family and $\I$ is flat over $\Delta$}\\
& \geq \chi((\I_o)_{Y_o}) & \text{ since $(\I_o)_{Y_o}$ is the quotient of $(\I_o)_{|Y_o}$ by the torsion subsheaf}\\
   & \geq \s^o_{Y_o}=(\xi^*(\s^o))_{Y_{\ov \eta}} & \text{ since } \I_o\in \J_{X_o}(\s^o)   \text{ and by the  definition }\eqref{E:VStab-fun} \text{ of } \xi^*(\s^o).  \\ 
  \end{aligned}  
\end{equation*}
Since this is true for any $Y_{\ov \eta}\in \BCon(X_{\ov \eta})$, we deduce that $\s^{\ov \eta}\geq \xi^*(\s^o)$, as required. 
\end{proof}

\begin{lemma}\label{L:exis-sh}
Let $X$ be a connected reduced curve with planar singularities over $k=\ov k$ and let $\s\in \VStab^{\chi}(X)$.
Then for any $Y\in \BCon(X)$, there exists a line bundle $L$ belonging to $\ov\J_X(\s)$ such that 
$$
\chi(L_Y)=\s_Y.
$$
\end{lemma}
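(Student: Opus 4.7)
The approach is to introduce the subspace
\[
W := \{I \in \bar{\J}_X(\s) : \chi(I_Y) = \s_Y\}
\]
of $\bar{\J}_X(\s)$, show it is open and nonempty, and then exploit the density of line bundles in $\bar{\J}_X(\s)$ (following from Corollary~\ref{C:cJ-pla} and Corollary~\ref{C:VcJ-pla}) to locate a line bundle inside $W$.

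For the openness of $W$, the plan is to show that $I\mapsto\chi(I_Y)$ is upper semi-continuous on $\TF_X^\chi$. Using the exact sequence $0 \to {}_{Y^c}I \to I \to I_Y \to 0$ of Definition~\ref{D:IY} one has $\chi(I) = \chi(I_Y) + \chi({}_{Y^c}I)$; constancy of $\chi(I)$ in flat families combined with the lower semi-continuity of $\chi({}_{Y^c}I)$ (which holds because ${}_{Y^c}I$ is the maximal subsheaf supported on $Y^c$, and such maximal subsheaves can only grow upon specialization to less locally-free sheaves) gives the upper semi-continuity of $\chi(I_Y)$. Since $\chi(I_Y)\geq \s_Y$ is automatic on $\bar{\J}_X(\s)$, the set $W$ is the open locus $\{\chi(I_Y) \leq \s_Y\} \cap \bar{\J}_X(\s)$.

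For the nonemptiness of $W$ I would split into cases. When $Y \in \D(\s)$, Lemma-Definition~\ref{LD:V-subgr} yields V-stabilities $\s(Y)$ on $Y$ and $\s(Y^c)$ on $Y^c$ of characteristics $\s_Y$ and $\s_{Y^c}$; Corollary~\ref{C:VcJ-pla}(ii) inductively applied to the planar subcurves $Y$ and $Y^c$ provides nonempty compactified Jacobians, so pick $I_1 \in \bar{\J}_Y(\s(Y))$ and $I_2 \in \bar{\J}_{Y^c}(\s(Y^c))$; then Proposition~\ref{P:ext-ss} (noting $Y, Y^c, X \in \hat{\D}(\s)$) shows that the direct sum $I = I_1 \oplus I_2$ on $X$ is $\s$-semistable with $\chi(I_Y) = \chi(I_1) = \s_Y$, so $I \in W$. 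When $Y \notin \D(\s)$, the identity $\chi(I_Y) + \chi(I_{Y^c}) = \chi(I) + \chi(I_Y/{}_YI)$ combined with $\s$-semistability forces $\chi(I_Y/{}_YI) \geq 1$, which rules out sheaves that decompose at $Y$; instead one constructs the desired element directly as a line bundle, by producing a multidegree $d = (d_v)_{v \in I(X)}$ satisfying the equality $\sum_{v \in I(Y)} d_v = \s_Y + g(Y) - 1$, the total-degree condition $\sum_v d_v = |\s| + g(X) - 1$, and the inequalities $\sum_{v \in I(Z)} d_v \geq \s_Z + g(Z) - 1$ for all $Z \in \BCon(X)$. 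The existence of such $d$ is a combinatorial consequence of the V-stability axioms of Definition~\ref{D:VStabX}.

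Finally, Corollary~\ref{C:cJ-pla}(ii)--(iii) ensures that $\PIC_X$ is the smooth, open, dense locus of the reduced equidimensional stack $\TF_X$, and Corollary~\ref{C:VcJ-pla}(ii) says that $\bar{\J}_X(\s)$ is reduced and equidimensional of dimension $g(X)-1$; consequently line bundles are dense in every irreducible component of the open substack $\bar{\J}_X(\s) \subseteq \TF_X$, and hence dense in the nonempty open $W$. Picking any line bundle $L \in W$ concludes the proof. The main obstacle is the case $Y \notin \D(\s)$ in the nonemptiness step, where establishing the combinatorial existence of a multidegree lying exactly on the face $\{e_Y = \s_Y + g(Y) - 1\}$ of the feasible polytope cut out by the V-stability axioms requires the most care.
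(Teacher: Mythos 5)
The central step of your argument --- that $W=\{I\in\ov\J_X(\s):\chi(I_Y)=\s_Y\}$ is open --- fails: the semicontinuity of $\chi(I_Y)$ goes in exactly the opposite direction to what you need. As in the proof of Lemma-Definition~\ref{LD:VStab}, under a specialization in a flat family the sheaf $(I_t)_Y$ is a quotient of the restriction to $Y$ of the flat limit of $(I_s)_Y$, so $\chi((I_t)_Y)\le\chi((I_s)_Y)$: the function $\chi(I_Y)$ can only \emph{drop} under specialization. (You record the correct direction when you say that $\leftindex_{Y^c}{I}$ can only grow upon specialization, but you then draw the wrong topological conclusion from it.) Hence $\{\chi(I_Y)\ge\s_Y+1\}$ is the open set, and $W=\{\chi(I_Y)\le \s_Y\}\cap\ov\J_X(\s)$ is \emph{closed} in $\ov\J_X(\s)$, not open. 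Concretely, for two rational curves meeting in one node $p$ with $Y$ one of the components, the line bundle $\O_X(-q)$ with $q\in Y^{\mathsf{c}}$ smooth has $\chi(\cdot_Y)=1$ and specializes to $\mathfrak m_p=\O_Y(-p)\oplus\O_{Y^{\mathsf{c}}}(-p)$, which has $\chi(\cdot_Y)=0$. Since $\ov\J_X(\s)$ is connected (Corollary~\ref{C:VcJ-pla}) and $W$ is in general a proper nonempty closed subset, density of line bundles in $\ov\J_X(\s)$ gives you nothing: a priori every line bundle could lie in the complement of $W$. This is precisely where the difficulty sits: the sheaf $I_1\oplus I_2$ you build when $Y\in\D(\s)$ does lie in $W$, but it is maximally non-locally-free along $Y\cap Y^{\mathsf{c}}$, and nearby line bundles generically satisfy $\chi(L_Y)>\s_Y$.

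Two further points. In the case $Y\notin\D(\s)$ you reduce to the existence of an integral multidegree lying on the face $\{\deg_Y=\s_Y-\chi(\O_Y)\}$ of the region cut out by the inequalities $\deg_Z\ge\s_Z-\chi(\O_Z)$; this unproved combinatorial claim is essentially the whole content of the lemma in your approach (and, if true, would also cover the degenerate case, making the case split unnecessary). The paper avoids all of this geometrically: it smooths every singularity of $X$ except those on $Y\cap Y^{\mathsf{c}}$, obtaining a family over a trait whose generic fibre has exactly two components specializing to $Y$ and $Y^{\mathsf{c}}$; a line bundle $M$ on the generic fibre with $\chi(M_{\X_{\ov\eta}^1})=\s_Y$ is automatically semistable there, extends across the trait by Proposition~\ref{P:univ-cl}, and the squeeze $\s_Y\le\chi((\I_o)_Y)\le\chi((\I_o)_{|Y})=\s_Y$ forces the limit $\I_o$ to be \emph{locally free along $Y\cap Y^{\mathsf{c}}$} with $\chi((\I_o)_Y)=\s_Y$; only then can one deform $\I_o$ to a line bundle via Theorem~\ref{T:TFX-pla} without changing $\chi(\cdot_Y)$. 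To salvage your strategy you would need to exhibit a point of $W$ that is locally free along $Y\cap Y^{\mathsf{c}}$, and that is the step your proposal is missing.
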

\begin{proof}
 Since planar singularities can be smoothened out independently (see \cite{Sernesi}), we can smoothen out all the singularities of $X$ except the ones in $Y\cap Y^c$ in order to produce a family $\pi:\X\to \Delta=\Spec R$ of connected reduced curves over a Henselian DVR such that $\X_o=X$ and $\X_{\eta}$ has two irreducible components $\X_\eta^1$ and $\X_\eta^2$, which are geometrically irreducible (see e.g. the proof of \cite[Lemma 5.1]{MRV}), and such that 
 $$
 Y=\xi_*(\X_{\ov \eta}^1)=\ov{\X_\eta^1}\cap X_o \text{ and } Y^c=\xi_*(\X_{\ov \eta}^2)=\ov{\X_\eta^2}\cap X_o.
 $$
 Denote by $\xi:\ov \eta\rightsquigarrow o$ the \'etale specialization induced by $\Delta$.  Consider the V-stability $\xi^{*}(\s)$ on $\X_{\ov \eta}$, which, by definition, is such that 
 $$
 \xi^*(\s)_{\X_{\ov \eta}^1}=\s_Y \text{ and } \xi^*(\s)_{\X_{\ov \eta}^2}=\s_{Y^c}.
 $$
Take a line bundle $M$ on $\X_{\eta}$ such that 
\begin{equation}\label{E:prop-M}
\chi(M_{\X_{\ov \eta}^1})=\s_Y \text{ and } \chi(M)=|\s|=|\xi_*(\s)|.
\end{equation}
We claim that $M\in \ov \J_{\X_{\eta}}(\xi^*(\s))(\eta)$ because $\X_{\ov \eta}$ has only two irreducible components, $\X_{\ov \eta}^1$ and $\ov \X_{\ov \eta}^2$, and we have \eqref{E:prop-M} together with 
\begin{equation*}
  \begin{aligned}
     & \chi(M_{\X_{\ov \eta}^2})>\chi(\leftindex_{\X_{\ov \eta}^2}M) & \text{ by Lemma \ref{L:IY} and using that } M_{\ov \eta}\neq M_{\X_{\ov \eta}^1}\oplus M_{\X_{\ov \eta}^2},  \\ 
     & = \chi(M)-\chi(M_{\X_{\ov \eta}^1})=|\s|-\s_Y & \text{ by \eqref{E:add-chi} and the properties of } M,\\
     & \geq s_{Y^c}-1=\xi^*(\s)_{\X_{\ov \eta}^2}-1 & \text{ by \eqref{E:sum-n}.} 
  \end{aligned}  
\end{equation*}
 Consider now the relative V-stability condition $\wt \s=(\s,\xi^*(\s))$ on $\X/\Delta$ and the associated relative V-compactified Jacobian stack $\ov \J_{\X/\Delta}(\wt \s)$. Since $\ov \J_{\X/\Delta}(\wt \s)\to \Delta$ is universally closed by Proposition~\ref{P:univ-cl}, there exists, up to a finite base change of $\Delta$, a sheaf  $\I\in \ov \J_{\X/\Delta}(\wt \s)$ such that $\I_{\eta}=M$. The central fiber $\I_o\in \ov \J_{X}(\s)$ is such that 
 \begin{equation}\label{E:prop-Io}
     \begin{sis}
&      \chi(\I_o)=\chi(M)=\chi \text{ and }  \chi((\I_o)_Y)\geq \s_Y & \text{ since $\I_o$ is $\s$-semistable,}\\ 
& \chi((\I_o)_Y)\leq \chi((\I_o)_{|Y})=\chi((\I_{\eta})_{|\X_{\eta}^1})=\chi(M_{\X_\eta^1})=\s_Y & \text{ by the flatness of $\ov \X_{\eta}^1\to \Delta$ and \eqref{E:prop-M}.} \\
     \end{sis}
 \end{equation}
We deduce that 
 \begin{equation}\label{E:prop-Io2}
\begin{sis}
& \chi((\I_o)_Y)=\s_Y,\\
&  (\I_o)_{|Y}=(\I_o)_Y \Longrightarrow  \I_o \text{ is locally free at the points } Y\cap Y^c.
\end{sis}
 \end{equation}
Since $X$ has planar singularities, we can deform $\I_o$ to a line bundle $L$ on $X$ by Theorem \ref{T:TFX-pla}. This line bundle $L$  belongs to $\ov \J_X(\s)$ since $\ov \J_X(\s)\subseteq \TF_X$ is open. Moreover, since $\I_o$ is locally free at the points of $Y\cap Y^c$, we have 
$$
\chi(L_Y)=\chi((\I_o)_Y)=\s_Y,
$$
which concludes our proof. 
\end{proof}

The above Lemma implies that a V-stability condition is uniquely determined by its associated V-compactified Jacobian stack.

\begin{corollary}\label{C:unic-s}
Let $X$ be a connected reduced curve with planar singularities over $k=\ov k$ and let $\s, \t\in \VStab(X)$. If $\ov\J_X(\s)=\ov \J_X(\t)$ then $\s=\t$. 
\end{corollary}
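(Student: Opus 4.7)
The plan is to deduce the statement directly from Lemma~\ref{L:exis-sh}, which has already supplied the only non-trivial input: the existence, for every biconnected subcurve, of a line bundle in $\ov\J_X(\s)$ on which the Euler characteristic of the restriction to that subcurve meets the V-function value exactly.

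First, I would observe that the hypothesis $\ov\J_X(\s)=\ov\J_X(\t)$ forces $|\s|=|\t|=:\chi$. Indeed, $\ov\J_X(\s)$ is a non-empty open substack of $\TF_X^{|\s|}$ (it contains the line bundle produced by Lemma~\ref{L:exis-sh} applied to any $Y\in\BCon(X)$), and analogously $\ov\J_X(\t)$ is a non-empty open substack of $\TF_X^{|\t|}$. Since the decomposition $\TF_X=\coprod_\chi \TF_X^\chi$ of \eqref{E:Torsd} is into open and closed substacks, equality of the two semistable loci implies equality of the Euler characteristics.

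Next, I would fix an arbitrary $Y\in\BCon(X)$ and apply Lemma~\ref{L:exis-sh} to $\s$: this provides a line bundle $L\in\ov\J_X(\s)$ with $\chi(L_Y)=\s_Y$. By the hypothesis $\ov\J_X(\s)=\ov\J_X(\t)$, the sheaf $L$ also lies in $\ov\J_X(\t)$, hence by Lemma-Definition~\ref{LD:VStab} (the defining inequality of the V-semistable locus) we obtain $\chi(L_Y)\geq \t_Y$, i.e.\ $\s_Y\geq \t_Y$. Exchanging the roles of $\s$ and $\t$ and applying Lemma~\ref{L:exis-sh} to $\t$ instead yields $\t_Y\geq \s_Y$. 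Since $Y\in\BCon(X)$ was arbitrary, we conclude $\s_Y=\t_Y$ for all $Y\in\BCon(X)$, so $\s=\t$ as functions on $\BCon(X)$.

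There is essentially no obstacle here: all the work has been done in Lemma~\ref{L:exis-sh}, which uses the planarity of the singularities in a crucial way to deform the polystable sheaves constructed on a partial smoothing into actual line bundles on $X$. The corollary itself is a short symmetric comparison.
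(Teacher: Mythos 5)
Your proof is correct and is precisely the argument the paper intends: the corollary is stated with no written proof beyond the remark that it follows from Lemma~\ref{L:exis-sh}, and your symmetric comparison via the line bundles $L$ with $\chi(L_Y)=\s_Y$ is exactly that deduction. (The only cosmetic point is that for irreducible $X$ one has $\BCon(X)=\emptyset$ and the statement reduces to $|\s|=|\t|$, which your opening observation already covers without needing Lemma~\ref{L:exis-sh}.)
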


    \bibliographystyle{alpha}	
    \bibliography{bibtex}

\newcommand{\etalchar}[1]{$^{#1}$}
\begin{thebibliography}{CMKV15}

\bibitem[AHLH23]{AHLH}
Jarod Alper, Daniel Halpern-Leistner, and Jochen Heinloth.
\newblock Existence of moduli spaces for algebraic stacks.
\newblock {\em Invent. Math.}, 234(3):949--1038, 2023.

\bibitem[AK80]{altmankleiman}
Allen~B Altman and Steven~L Kleiman.
\newblock Compactifying the picard scheme.
\newblock {\em Advances in Mathematics}, 35(1):50--112, 1980.

\bibitem[Ale04]{alexeev}
Valery Alexeev.
\newblock Compactified {J}acobians and {T}orelli map.
\newblock {\em Publications of the Research Institute for Mathematical Sciences}, 40, 12 2004.

\bibitem[Alp13]{Alp}
Jarod Alper.
\newblock Good moduli spaces for {A}rtin stacks.
\newblock {\em Ann. Inst. Fourier (Grenoble)}, 63(6):2349--2402, 2013.

\bibitem[BLR90]{Neron}
Siegfried Bosch, Werner L\"utkebohmert, and Michel Raynaud.
\newblock {\em N\'eron models}, volume~21 of {\em Ergebnisse der Mathematik und ihrer Grenzgebiete (3) [Results in Mathematics and Related Areas (3)]}.
\newblock Springer-Verlag, Berlin, 1990.

\bibitem[Cap94]{caporaso}
Lucia Caporaso.
\newblock A compactification of the universal {P}icard variety over the moduli space of stable curves.
\newblock {\em Journal of the American Mathematical Society}, 7(3):589--660, 1994.

\bibitem[CMKV15]{CMKVlocal}
Sebastian Casalaina-Martin, Jesse~Leo Kass, and Filippo Viviani.
\newblock The local structure of compactified {J}acobians.
\newblock {\em Proc. Lond. Math. Soc. (3)}, 110(2):510--542, 2015.

\bibitem[Est99]{Est-sep}
Eduardo Esteves.
\newblock Separation properties of theta functions.
\newblock {\em Duke Math. J.}, 98(3):565--593, 1999.

\bibitem[Est01]{esteves}
Eduardo Esteves.
\newblock Compactifying the relative {J}acobian over families of reduced curves.
\newblock {\em Trans. Amer. Math. Soc.}, 353(8):3045--3095, 2001.

\bibitem[Fav]{fava2024}
Marco Fava.
\newblock On a combinatorial classification of fine compactified universal {J}acobians.
\newblock Preprint arXiv:2403.17871.

\bibitem[FPV24]{FPV}
Marco Fava, Nicola Pagani, and Filippo Viviani.
\newblock A complete theory of smoothable compactified {J}acobians of nodal curves, 2024.
\newblock Preprint arXiv:2412.03532.

\bibitem[FPV25]{FPV3}
Marco Fava, Nicola Pagani, and Filippo Viviani.
\newblock A complete classification of compactified universal {J}acobians., 2025.
\newblock to appear.

\bibitem[Hei17]{heinloth-HM}
Jochen Heinloth.
\newblock Hilbert-{M}umford stability on algebraic stacks and applications to {$\mathcal{G}$}-bundles on curves.
\newblock {\em \'{E}pijournal G\'{e}om. Alg\'{e}brique}, 1:Art. 11, 37, 2017.

\bibitem[HMP{\etalchar{+}}25]{panda}
David Holmes, Samouil Molcho, Rahul Pandharipande, Aaron Pixton, and Johannes Schmitt.
\newblock Logarithmic double ramification cycles, 2025.
\newblock Preprint arXiv:2207.06778.

\bibitem[Igu56]{igusa}
Jun-ichi Igusa.
\newblock Fibre systems of {J}acobian varieties.
\newblock {\em Amer. J. Math.}, 78:171--199, 1956.

\bibitem[Lan75]{langton}
Stacy~G. Langton.
\newblock Valuative criteria for families of vector bundles on algebraic varieties.
\newblock {\em Ann. of Math. (2)}, 101:88--110, 1975.

\bibitem[MM]{mayermumford}
A.~Mayer and D.~Mumford.
\newblock Further comments on boundary points.
\newblock Unpublished lecture notes distributed at the Amer. Math. Soc. Summer Institute, Woods Hole, 1964.

\bibitem[MRV17]{MRV}
Margarida Melo, Antonio Rapagnetta, and Filippo Viviani.
\newblock Fine compactified {J}acobians of reduced curves.
\newblock {\em Trans. Amer. Math. Soc.}, 369(8):5341--5402, 2017.

\bibitem[MSV21]{Migliorini_2021}
Luca Migliorini, Vivek Shende, and Filippo Viviani.
\newblock A support theorem for hilbert schemes of planar curves, {II}.
\newblock {\em Compositio Mathematica}, 157(4):835--882, apr 2021.

\bibitem[MSY24]{MSY}
Davesh Maulik, Junliang Shen, and Qizheng Yin.
\newblock Algebraic cycles and {H}itchin systems, 2024.
\newblock Preprint arXiv:2407.05177.

\bibitem[MT18]{mauliktoda}
Davesh Maulik and Yukinobu Toda.
\newblock Gopakumar-{V}afa invariants via vanishing cycles.
\newblock {\em Invent. Math.}, 213(3):1017--1097, 2018.

\bibitem[MV12]{meloviviani}
Margarida Melo and Filippo Viviani.
\newblock Fine compactified {J}acobians.
\newblock {\em Mathematische Nachrichten}, 285(8–9):997–1031, January 2012.

\bibitem[OS79]{Oda1979CompactificationsOT}
Tadao Oda and C.~S. Seshadri.
\newblock Compactifications of the generalized {J}acobian variety.
\newblock {\em Transactions of the American Mathematical Society}, 253:1--90, 1979.

\bibitem[PT23]{PTgenus1}
Nicola Pagani and Orsola Tommasi.
\newblock Geometry of genus one fine compactified universal {J}acobians.
\newblock {\em Int. Math. Res. Not. IMRN}, 2023(10):8495--8543, 2023.

\bibitem[PT24]{pagani2023stability}
Nicola Pagani and Orsola Tommasi.
\newblock Stability conditions for line bundles on nodal curves.
\newblock {\em Forum Math. Sigma}, 12:Paper No. e87, 31, 2024.

\bibitem[Ray70]{raynaud70}
M.~Raynaud.
\newblock Sp\'{e}cialisation du foncteur de {P}icard.
\newblock {\em Inst. Hautes \'{E}tudes Sci. Publ. Math.}, (38):27--76, 1970.

\bibitem[Ser06]{Sernesi}
Edoardo Sernesi.
\newblock {\em Deformations of algebraic schemes}, volume 334 of {\em Grundlehren der mathematischen Wissenschaften [Fundamental Principles of Mathematical Sciences]}.
\newblock Springer-Verlag, Berlin, 2006.

\bibitem[Sim94]{simpson}
Carlos~T. Simpson.
\newblock Moduli of representations of the fundamental group of a smooth projective variety {I}.
\newblock {\em Publications Math\'ematiques de l'IH\'ES}, 79:47--129, 1994.

\bibitem[{Sta}24]{stacks-project}
The {Stacks project authors}.
\newblock The stacks project.
\newblock \url{https://stacks.math.columbia.edu}, 2024.

\bibitem[Viv]{viviani2023new}
Filippo Viviani.
\newblock On the classification of fine compactified {J}acobians of nodal curves.
\newblock Preprint arXiv:2310.20317.

\end{thebibliography}
\end{document}